\newtheorem{theorem}{Theorem}[section]
\newtheorem{proposition}[theorem]{Proposition}
\newtheorem{lemma}[theorem]{Lemma}
\theoremstyle{definition}
\newtheorem{definition}[theorem]{Definition}
\newtheorem{example}[theorem]{Example}
\newtheorem{notation}{Notation}[section]
\newtheorem{corollary}[theorem]{Corollary}
\theoremstyle{remark}
\newtheorem{remark}[theorem]{Remark}
\newcommand{\ndash}{\nobreakdash-\hspace{0pt}}
\DeclareMathOperator{\Mexp}{Mexp}
\DeclareMathOperator{\Id}{Id}
\begin{document}

\title{Holomorphic completions of affine Kac-Moody groups}
\author{Walter Freyn}

\begin{abstract}
We construct holomorphic loop groups and their associated affine Kac-Moody groups and prove that they are tame Fr\'echet manifolds. These results form the functional analytic basis for the theory of affine Kac-Moody symmetric spaces, presented first in the authors thesis. Our approach also solves completely the problem of complexification of loop groups; it allows a complete description of complex Kac-Moody groups and their non-compact real forms.
\end{abstract}

\maketitle



\section{Introduction}
This is the first one of a series of $5$ papers devoted to the construction and investigation of \emph{affine Kac-Moody symmetric spaces}. The existence of \emph{affine Kac-Moody symmetric spaces} was originally conjectured by Chuu-Lian Terng in her seminal paper~\cite{Terng95} noting severe technical obstacles on the way to their construction. The challenge to construct Kac-Moody symmetric spaces was completely settled in the authors thesis~\cite{Freyn09}. The study of Kac-Moody symmetric spaces unites functional analytic, algebraic and geometric aspects. This first part focuses on the functional analytic basics; the second part focuses on algebraic aspects (including the classification) while the third part investigates the geometry. This paper contains the core of the first part, describing the functional analytic constructions at the basis of Kac-Moody symmetric spaces.

Let us start by giving some general remarks about the theory:  Affine Kac-Moody symmetric spaces are the fundamental global objects of a wider theory, called \emph{(affine) Kac-Moody geometry}~\cite{Freyn10a}, describing various classes of mathematical objects, that are united by their shared structure properties. Let us start by giving a short overview of Kac-Moody geometry. Affine Kac-Moody geometry is the geometry of affine Kac-Moody, that are equipped with a manifold structure; to this end one uses completions of affine Kac-Moody algebras and Kac-Moody groups with respect to certain (sets of) seminorms. Affine Kac-Moody groups can be described as certain torus extensions $\widehat{L}(G, \sigma)$ of (possibly twisted) loop groups $L(G, \sigma)$. Correspondingly affine Kac-Moody algebras are $2$\ndash dimensional extensions of (possibly twisted) loop algebras $L(\mathfrak{g}, \sigma)$. Here $G$ denotes a compact or complex simple Lie group, $\mathfrak{g}$ its Lie algebra and $\sigma\in \textrm{Aut}(G)$ a diagram automorphism, defining the ``twist''. Depending on the regularity assumptions on the loops (e.g. of Sobolev class $H^k$), one gets families of completions of the minimal (=algebraic) affine Kac-Moody groups $\widehat{L_{alg}G}^{\sigma}$.  The minimal algebraic loop group just consists of Laurent polynomials. Following Jacques Tits, completions defined by imposing regularity conditions on the loops are called ``analytic completions'' in contrast to the more abstract formal completion. Various analytic completions and objects closely related to them play an important role in different branches of mathematics and physics, especially quantum field theory, integrable systems and differential geometry. In most cases their use is motivated by the requirement to use functional analytic methods or by the need to work with manifolds and Lie groups \cite{PressleySegal86, SegalWilson85, PalaisTerng88, Tsvelik95, Guest97, Popescu05, Kobayashi11, Khesin09, HPTT, HeintzeLiu, Heintze06} and references therein. For the study of completions of Kac-Moody algebras see also~\cite{GoodmanWallach84, Suto87, Suto88, Suto88b, Rodriguez89, Rodriguez89b, Suto92} 

To describe the scope of the theory and the content of this article let us start with some general remarks: From an algebraic point of view affine Kac-Moody algebras and simple Lie algebras are closely related as they are both realization of (generalized) Cartan matrices~\cite{Kac90},\cite{Carter05}. As a consequence, they share important parts of their structure theories. The philosophy of Kac-Moody geometry asserts that this similarity extends to all types of geometric objects whose structure reflects actions of those groups. More precisely Kac-Moody geometry claims the existence of infinite dimensional counterparts to all finite dimensional differential geometry objects, whose symmetries are described by real or complex simple Lie groups~\cite{Heintze06}, \cite{Freyn10a}. The symmetries of those infinite dimensional objects are supposed to be the corresponding complex or real affine Kac-Moody groups. Thus, let us start by recalling the finite dimensional blueprint.

 In this case the symmetry group is a simple complex or real Lie group. A complex simple Lie group has up to conjugation a unique compact real from. All further real forms are non-compact. 
 
The fundamental geometric objects associated to simple Lie groups are Riemannian symmetric spaces. Recall that a Riemannian manifold $(M,g)$ is a symmetric space if the isometry group contains for any point $p\in M$ contains an element $\sigma_p$ such that $s\sigma_p|_{T_pM}=i\Id$. The construction and classification of Riemannian symmetric spaces by \'Elie Cartan was a central result of Riemannian geometry in the first half of the 20'th century, opening up the way to new directions of research. Besides being a class of examples of Riemannian manifolds of fundamental importance, Riemannian symmetric spaces display numerous connections to Lie theory, mathematical physics, analysis and combinatorics. Hence there was intensive research (also begun by \'Elie Cartan) devoted to the construction of a class of infinite dimensional symmetric spaces, displaying similar structural properties. From the point of view of structural properties and the numerous (in part conjectured) connections to other fields such as supergravity and $M$-theory, especially symmetric spaces associated to Kac-Moody groups are interesting.

 Attached to any compact or complex simple Lie group, or more generally to any symmetric space, there is a spherical building~\cite{AbramenkoBrown08}. Combinatorically a building is described as a simplicial complex or as a graph, satisfying several axioms. Geometrically the building corresponding to a symmetric space $M$ has a clear intuitive description~\cite{BridsonHaeflinger99}: suppose the dimension $k$ of maximal flats (maximal flat subspaces), called the rank, satisfies $k \geq2$ (otherwise the building degenerates to  a set of points) and choose a point $p\in M$. The building can be seen in the tangent space $T_pM$ as follows: Consider first all $k$\ndash dimensional tangent spaces to maximal flats through the point $p$ (for a compact simple Lie group choose $p=Id$ the identity, then $T_pM$ can be identified with the Lie algebra, maximal flats through the identity are maximal tori and the tangent spaces to maximal flats are just Abelian subalgebras of the Lie algebra $\mathfrak{g}$) and remove all intersection points (the so\ndash called singular points). Then each flat is partitioned into chambers and the set of all those chambers for all flats through $p$ corresponds exactly to the chambers of the building. All other cells of the building can be seen in the set of singular points. Chambers belonging to one flat form an apartment. Nevertheless, in this description, one does not see all apartments. To this end one has to pass to the sphere at infinity, to disenthral the construction from the dependence of a base point. 

Via the geometric description of the building in the tangent space the connections between the building and polar actions become manifest:
Let $M$ be a symmetric space and  $p\in M$. The isotropy representation of $M$ is the representation of $K$,  the group of all isometries of $M$ fixing $p$ on $T_pM$: 

\begin{displaymath}
K:T_pM\longrightarrow T_pM,
\end{displaymath}

For compact Lie groups the isotropy representation coincides with the Adjoint representation.

The principal orbits of the isotropy representation
have the following properties:
\begin{enumerate}
 \item The principal orbits meet the tangent spaces to the flats orthogonally and have complementary dimension. In particular, the normal spaces to the orbits are integrable. Actions with this property are called polar. Conversely, any polar action  is orbit equivalent to the isotropy representation of a symmetric space~\cite{Berndt03}. 
\item The geometry of the principal orbits as submanifolds of Euclidean space is particularly nice and simple. They are so\ndash called isoparametric submanifolds~\cite{PalaisTerng88}. Conversely, by a theorem of Thorbergsson~\cite{Thorbergsson91}, isoparametric submanifolds are principal orbits of polar representations if they are full, irreducible and the codimension is not $2$. 
\end{enumerate}

To summarize the finite dimensional blueprint:

\begin{itemize}
 \item Polar representations correspond to symmetric spaces, isoparametric submanifolds correspond roughly to polar representations. 
 \item Chambers in buildings correspond to points in isoparametric submanifolds.
 \item Buildings describe the structure at infinity of symmetric spaces of noncompact type.
 \item The geometric realization of buildings can be equivariantly embedded via the isotropy representation into the tangent space of a symmetric space.
\end{itemize}

Let us now turn to the infinite dimensional counterpart:
Affine twin buildings have been investigated by Ronan and Tits as a tool to understand minimal affine Kac-Moody groups \cite{Ronan03}. They play a similar role for affine Kac-Moody groups as classical buildings play for finite dimensional semisimple Lie groups. 

Affine Kac-Moody groups are distinguished among other classes of infinite dimensional Lie groups by two important properties:

\begin{itemize}
 \item They share the most important structure properties of finite dimensional simple Lie groups, i.e.\ they have $BN$-pairs, Iwasawa and Bruhat decompositions and highest weight representations~\cite{PressleySegal86}, \cite{Kac90}, \cite{Kumar02}, \cite{Moody95}. The main differences can be traced back to the Weyl group being finite for Lie groups and infinite for Kac-Moody groups.
 \item They have good explicit linear realizations in terms of $2$\ndash dimensional extensions of loop algebras and loop groups; thus they allow the use of functional analytic methods and the definition of manifold structures. The resulting Kac-Moody algebras and Kac-Moody groups are called of ``analytic'' type, in contrast for example to formal completions~\cite{Tits84}. Depending on the type of completions, analytic Kac-Moody algebras (groups) are Hilbert\ndash, Banach\ndash, Fr\'echet\ndash, etc.\ Lie algebras (groups)~\cite{PressleySegal86}.
\end{itemize}
 
Work done during the last 25 years by various authors, notably Ernst Heintze and Chuu-Lian Terng shows that analytic completions of affine Kac-Moody groups describe the symmetries of various interesting objects, that have finite dimensional counterparts, thus confirming the philosophy of Kac-Moody geometry~\cite{PressleySegal86,  SegalWilson85, Terng89, HPTT, Terng91,  Terng95, HeintzeLiu, Popescu06,   Heintze06,  Khesin09, Freyn09, Freyn10a}. 

For example, it is well known that affine analytic Kac-Moody groups are symmetry groups of (Kac-Moody) symmetric spaces~\cite{Freyn09}. The structure theory and the classification of those spaces parallels closely the theory of finite dimensional Riemannian symmetric spaces, making them the appropriate generalization. Furthermore there is a theory of polar actions on Hilbert spaces~\cite{Terng95} and proper Fredholm isoparametric submanifolds in Hilbert spaces~\cite{Terng89}. Principal orbits of polar action on Hilbert spaces are proper Fredholm isoparametric submanifolds in Hilbert spaces.  Thus a broad generalization of the finite dimensional blueprint appears. We find the same classes of objects, satisfying similar relations among each other and sharing similar structure properties ~\cite{Heintze06} \cite{Freyn10a}.  Nevertheless, a serious gap in the infinite dimensional theory described so far is the lack of an object generalizing spherical buildings to Kac-Moody geometry. For algebraic Kac-Moody groups there are algebraic twin buildings, but they do not work well with completions as the action of any completed Kac-Moody group does not preserve the twinning; thus affine twin buildings are of restricted use in the realm of a Kac-Moody geometry. This problem can be seen also on the group level, as analytic Kac-Moody groups do not allow for the definition of a twin $BN$\ndash pair or a $BN$\ndash pair~\cite{AbramenkoBrown08}, \cite{PressleySegal86}. We gave a solution to this problem introducing the notion of \emph{twin cities} in~\cite{Freyn10d, Freyn10b}.

 As Kac-Moody symmetric spaces are tame Fr\'echet manifolds, we start in section~\ref{sect:tame_frechet_manifolds} by reviewing the theory of tame Fr\'echet structures following the presentation given by Richard Hamilton~\cite{Hamilton82}; then in section~\ref{sect:Some_tame_Frechet_spaces} we introduce certain spaces of holomorphic maps and prove, that they are tame Fr\'echet; these spaces reappear in our construction of the tame structures on affine Kac-Moody groups.  In section~\ref{chap:tame} we prove then, that holomorphic loop algebras are tame Fr\'echet Lie algebras and that holomorphic loop groups are tame Fr\'echet Lie groups. To achieve this result, we need to establish various technical results about Fr\'echet manifolds. In section~\ref{PolaractionsontameFrechetspaces} we extend the theory of polar actions from the classical Hilbert space setting~\cite{Terng95} to our setting of tame Fr\'echet spaces; in the final section~\ref{Kac-Moody groups} we turn our attention to holomorphic Kac-Moody groups and prove that they are tame Fr\'echet Lie groups.

\section{Tame Fr\'echet manifolds}
\label{sect:tame_frechet_manifolds}

\subsection{Fr\'echet spaces}

In this introductory section we collect some standard results about Fr\'echet spaces, Fr\'echet manifolds and Fr\'echet Lie groups. Further details or omitted proofs can be found in Hamilton's article~\cite{Hamilton82}.

\begin{definition}[Fr\'echet space]
A Fr\'echet vector space is a locally convex topological vector space which is complete, Hausdorff and metrizable.
\end{definition}

\begin{lemma}[Metrizable topology]
A topology on a vector space is metrizable iff it can be defined by a countable collection of seminorms.
\end{lemma}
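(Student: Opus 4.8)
The plan is to prove both implications through the standard dictionary between seminorms and their Minkowski gauges; since this section works with locally convex spaces, I will freely use local convexity, which is exactly what makes the reverse implication true.

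\textbf{From seminorms to a metric.} Suppose $\tau$ is generated by a countable family of seminorms. Replacing $p_n$ by $\max(p_1,\dots,p_n)$ I may assume the family $(p_n)_{n\geq 1}$ is nondecreasing, and — since a metric topology is Hausdorff — the family must separate points, i.e.\ $p_n(x)=0$ for all $n$ forces $x=0$; assume this. Set
\[
d(x,y)=\sum_{n=1}^{\infty} 2^{-n}\,\frac{p_n(x-y)}{1+p_n(x-y)}.
\]
First I would check that $d$ is a translation-invariant metric: symmetry and finiteness are clear, $d(x,y)=0$ gives $p_n(x-y)=0$ for all $n$ hence $x=y$ by separation, and the triangle inequality follows from $p_n(x-z)\le p_n(x-y)+p_n(y-z)$ together with the fact that $t\mapsto t/(1+t)$ is nondecreasing and subadditive on $[0,\infty)$. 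Then I would show the metric topology equals $\tau$. For one inclusion, a $\tau$-basic neighbourhood $\{x:p_n(x)<\varepsilon\}$ contains a $d$-ball, because $d(x,0)$ small forces $2^{-n}p_n(x)/(1+p_n(x))$ small and hence $p_n(x)$ small. For the other, given $\varepsilon>0$ choose $N$ with $\sum_{n>N}2^{-n}<\varepsilon/2$; then, using monotonicity of the $p_n$, the set $\{x:p_N(x)<\delta\}$ lies in the $d$-ball of radius $\varepsilon$ once $\delta$ is small enough. Both topologies being translation invariant, agreement of the neighbourhood filters at $0$ gives agreement everywhere.

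\textbf{From a metric to seminorms.} Conversely, assume $\tau$ is metrizable. Then $0$ has a countable neighbourhood basis, and by local convexity I may refine it to a decreasing countable basis $(U_n)$ of open, convex, balanced (hence absorbing) sets: pass to balanced, then convex, neighbourhoods contained in the given ones, intersect finitely many to make them decreasing, and take interiors. Let $p_n$ be the Minkowski functional $p_n(x)=\inf\{t>0:x\in tU_n\}$; convexity and balancedness of $U_n$ make $p_n$ a seminorm, and $\{p_n<1\}\subseteq U_n\subseteq\{p_n\le 1\}$. Hence the topology generated by $(p_n)$ has $(U_n)$ as a neighbourhood basis at $0$ and conversely, so it shares a neighbourhood basis at $0$ with $\tau$; by translation invariance the two topologies coincide, and they are defined by a countable family of seminorms.

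\textbf{Main obstacle.} The only substantive point is the reverse direction, and the step that really consumes the hypotheses is passing to a neighbourhood basis of \emph{convex} balanced sets: without local convexity the statement is false (metrizable topological vector spaces such as $L^p$, $0<p<1$, admit no defining seminorms at all), so this is where local convexity is essential. Everything else is the routine, if mildly fiddly, $\varepsilon$--$\delta$ comparison of the two neighbourhood filters, and in the ambient locally convex setting of this section no difficulty arises.
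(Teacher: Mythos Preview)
Your argument is correct and is the standard textbook proof; there is nothing to fault in either direction, and your remark that local convexity is the essential hypothesis in the reverse implication is exactly right.

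There is, however, nothing to compare against: the paper does not give its own proof but simply refers the reader to Hamilton~\cite{Hamilton82}. What you have written is precisely the argument one finds there (or in any graduate functional analysis text), so in that sense you have reproduced the intended proof rather than found an alternative route.
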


\begin{proof}
For the proof see~\cite{Hamilton82}.
\end{proof}

\noindent Let us look at some examples:

\begin{example}[Fr\'echet spaces]~
\label{frechetexamples}
\begin{enumerate}
\item Every Banach space is a Fr\'echet space. The countable collection of norms contains just one element.
\item Let $\textrm{Hol}(\mathbb{C}, \mathbb{C})$ denote the space of holomorphic functions $f: \mathbb{C} \longrightarrow \mathbb{C}$. Let furthermore $K_n$ be a sequence of simply connected compact sets in $\mathbb C$, such that $K_n \subset K_{n+1}$ and $\bigcup K_n=\mathbb{C}$. Let $\|f\|_n:= \displaystyle\sup_{z\in K_n} |f(z)|$. Then $\textrm{Hol}(\mathbb {C}, \mathbb {C}; \|\hspace{3pt}\|_n)$ is a Fr\'echet space.
\item More generally, for every Riemann surface $S$ the sheaf of holomorphic functions carries a Fr\'echet structure which is defined similarly as in the special case $S=\mathbb{C}$ discussed in the second example.
\end{enumerate}
\end{example}
 
\begin{definition}
 A Fr\'echet manifold is a (possibly infinite dimensional) manifold with charts in a Fr\'echet space such that the chart transition functions are smooth. 
\end{definition}

While it is possible to define Fr\'echet manifolds in this way, there are two strong impediments to the development of analysis and geometry of those spaces:
\begin{enumerate}
\item In general there is no inverse function theorem for smooth maps between Fr\'echet spaces. For counterexamples and examples showing features special to Fr\'echet spaces, see~\cite{Hamilton82}.
\item In general the dual space of a Fr\'echet space is not a Fr\'echet space. 
\end{enumerate}

Dealing with those problems  is difficult:  
let us investigate as a typical example the space of holomorphic functions $\textrm{Hol}(\mathbb{C}, \mathbb{C})$.  $\mathbb{C}$ can be interpreted as a direct limit of the sets $K_n:=B_n(0)$ with respect to inclusion; the space of functions on a direct limit is an inverse limit; $\textrm{Hol}(\mathbb{C}, \mathbb{C})$ should thus be interpreted as the inverse limit of a sequence of function spaces $\textrm{Hol}(K_n, \mathbb{C})$, where 
\begin{displaymath}\textrm{Hol}(K_n, \mathbb{C}):=\bigcup_{K_n\subset U_{n}^m}\{\textrm{Hol}(U_{n}^m, \mathbb{C})\}\, .\end{displaymath}

 By a choice of appropriate norms on the spaces $\textrm{Hol}(K_n, \mathbb{C})$, one can give them structures as Bergmann-(or Hardy-) spaces. See for example~\cite{HKZ00} and \cite{Duren00}. Hence $\textrm{Hol}(\mathbb{C}, \mathbb{C})$ can be interpreted as inverse limit of Hilbert spaces. By category theory, the categorical dual space of an inverse limit is a direct limit and vice versa. Thus it is clear that we cannot expect the dual space of a Fr\'echet space to be Fr\'echet. It is Fr\'echet iff the projective limit and the inductive limit coincide, that is, if they both stabilize. This is the case exactly for Banach spaces~\cite{Schaefer80}.

The solution to the first problem is based on a more refined control of the projective limits. Using this structure, the inverse function theorems on the Hilbert-(resp.\ Banach-) spaces in the sequence piece together to give an inverse function theorem on the limit space; this is the famous Nash-Moser inverse function theorem. In the next sections we formalize those concepts. 

Let us note that there are other ways to deal with these analytic problems. A recently proposed method is the concept of bounded Fr\'echet geometry developed by Olaf M\"uller~\cite{Muller06, Muller08}. 

The solution to the second problem consists in avoiding dual spaces. Let us remark that the theory could also be formulated by defining dual spaces as direct limit locally convex topological vector spaces.

\subsection{Tame Fr\'echet spaces}

The central problem for all further structure theory of Fr\'echet spaces is a better control of the set of seminorms. For a single Fr\'echet space this is done by the concept of a grading; much deeper is the question of comparing collections of seminorms on two different Fr\'echet spaces: for two Fr\'echet spaces $F$ and $G$ and a map $\varphi: F\longrightarrow G$ this is done by imposing estimates similar in spirit to the concept of quasi isometries relating the sequences of norms $\|\varphi(f)\|_n$ and $\|f\|_m$, leading to the concept of ``tame'' maps. The property of a Fr\'echet space to be tame is defined then by ``tame equivalence'' to some model space $\Sigma(B)$, which is essentially a space of holomorphic functions; the standard reference for these considerations is the article~\cite{Hamilton82}. We are slightly sharpening the concept of ``tame equivalence'' introduced by Hamilton to define $(r,b,C(n))$-equivalence and emphasize the nature of tame spaces as a generalization of spaces of holomorphic functions.

The prerequisite for estimating norms under maps between Fr\'echet spaces is a comparison of the norms on the Fr\'echet space itself. This leads to the notion of a grading:

\begin{definition}[grading]
Let $F$ be a Fr\'echet space. A grading on $F$ is a collection of seminorms $\{\|\hspace{3pt}\|_{n}, n\in \mathbb N_0\}$ that define the topology and satisfy
$$\|f \|_0\leq \|f\|_1 \leq \|f\|_2 \leq\| f \|_3 \leq \dots \,.$$
\end{definition}

\begin{lemma}[Constructions of graded Fr\'echet spaces]~
\begin{enumerate}
	\item A closed subspace of a graded Fr\'echet space is a graded Fr\'echet space.
	\item Direct sums of graded Fr\'echet spaces are graded Fr\'echet spaces.
\end{enumerate}
\end{lemma}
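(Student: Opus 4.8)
The plan is to verify, in each case, that the restricted (respectively combined) collection of seminorms both induces the correct topology and satisfies the monotonicity axiom of a grading, and that completeness, the Hausdorff property, and metrizability survive the construction. Both parts are soft once the right seminorms are written down; the only points requiring a word of care are (i) checking that restriction to a subspace does not destroy the property of the seminorm family of \emph{defining} the topology, and (ii) specifying, in the countably infinite case, which topology is meant on the direct sum so that the result is still Fr\'echet.

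For part (1), let $(F, \{\|\hspace{3pt}\|_n\})$ be a graded Fr\'echet space and $E\subseteq F$ a closed linear subspace. I would set $\|e\|_n^E := \|e\|_n$ for $e\in E$ and first observe that $\{\|\hspace{3pt}\|_n^E\}$ defines the subspace topology on $E$: a basic $0$\ndash neighbourhood in the topology generated by a seminorm family is a finite intersection of seminorm balls, and the trace on $E$ of such a set is exactly the corresponding finite intersection of restricted\ndash seminorm balls. The chain $\|e\|_0^E\leq\|e\|_1^E\leq\cdots$ is immediate, since it already holds in $F$. Completeness of $E$ follows because a Cauchy sequence in $E$ is Cauchy in $F$, hence converges in $F$, and its limit lies in $E$ by closedness; Hausdorffness and metrizability pass to subspaces automatically. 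Hence $(E,\{\|\hspace{3pt}\|_n^E\})$ is a graded Fr\'echet space.

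For part (2), given graded Fr\'echet spaces $(F,\{\|\hspace{3pt}\|_n^F\})$ and $(G,\{\|\hspace{3pt}\|_n^G\})$, I would equip $F\oplus G$ with $\|(f,g)\|_n := \max\{\|f\|_n^F,\ \|g\|_n^G\}$ (the sum of the two gives a mutually dominated, hence equivalent, family). These seminorms generate the product topology, inherit monotonicity in $n$ from the two factors, and $F\oplus G$ is complete, Hausdorff, and metrizable as a finite product of spaces with those properties; induction then covers any finite direct sum. For a countable direct sum one should read $\bigoplus_k F_k$ as the subspace of the product on which every $\|(f_k)_k\|_n := \sum_k \|f_k\|_n^{F_k}$ is finite (the coproduct with the finest topology leaves the category of Fr\'echet spaces), and with that reading the same verification applies. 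The step I expect to be the ``main obstacle'' is purely bookkeeping rather than analysis: making sure in part (1) that the restricted family genuinely recovers the subspace topology and not merely a coarser one, and being careful in part (2) about the countable case so that the asserted Fr\'echet property really holds. Neither point uses any of the Nash--Moser machinery.
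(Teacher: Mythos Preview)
Your proof is correct and is precisely the standard elementary verification. The paper itself does not give a proof of this lemma; it simply refers the reader to Hamilton's article~\cite{Hamilton82}, where the same direct argument appears. Your treatment of part~(1) and of finite direct sums in part~(2) matches that reference.

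One small remark on part~(2): in this paper (and in Hamilton) the only direct sums that arise are finite ones --- for instance $\widehat{M\mathfrak{g}} = M\mathfrak{g}\oplus\mathbb{C}c\oplus\mathbb{C}d$ --- so your induction over finitely many summands already covers all that is needed. Your caution about the countable case is well placed (the locally convex coproduct of infinitely many nontrivial Fr\'echet spaces is indeed not metrizable), but the lemma as stated and used here is only intended for finite direct sums, so there is no need to introduce an alternative reading of $\bigoplus_k F_k$.
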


\begin{proof}
For the proof see~\cite{Hamilton82}.
\end{proof}

\noindent Every Fr\'echet space admits a grading. Let $(F, \|\hspace{3pt}\|_{n, n\in \mathbb{N}})$ be a Fr\'echet space. Then 
\begin{displaymath}
\left(\widetilde{F}, \widetilde{\|\hspace{3pt}\|}_{n, n\in \mathbb{N}}\right)
\end{displaymath} 
such that $\widetilde{F}=F$ as a set and $\widetilde{\|\hspace{3pt} \|}_{n}:=\displaystyle\sum_{i=1}^n \|\hspace{3pt}\|_i$ is a graded Fr\'echet space. The Fr\'echet topologies of $F$ and $\widetilde{F}$ coincide. The existence of a grading is thus not a property of the Fr\'echet space as a metrizable topological space. It is an \emph{additional structure} which is geometric in nature.

\begin{definition}[Tame equivalence of gradings]
Let $F$ be a graded Fr\'echet space, $r,b \in \mathbb{N}$ and $C(n), n\in \mathbb{N}$ a sequence with values in $\mathbb{R}^+$. The two gradings  $\{\|\hspace{3pt}\|_n\}$ and $\{\widetilde{\|\hspace{3pt}\|}\}$ are called $\left(r,b,C(n)\right)$-equivalent iff 
\begin{equation*}
 \|f\|_n \leq C(n) \widetilde{\|f\|}_{n+r} \text{ and }  \widetilde{\|f\|}_n \leq C(n)\|f\|_{n+r} \text{ for all } n\geq b
\,.
\end{equation*}
They are called tame equivalent iff they are $(r,b,C(n))$-equivalent for some $(r,b,C(n))$.
\end{definition}

\noindent The following example is basic:

\begin{example}
Let $B$ be a Banach space with norm $\| \hspace{3pt} \|_B$. Denote by $\Sigma(B)$ the space of all exponentially decreasing sequences $\{f_k\}$, ${k\in \mathbb N_0}$ of elements of $B$.
On this space, we can define different gradings:

\begin{align}
\|f\|_{l_1^n} &:= \sum_{k=0}^{\infty}e^{nk} \|f_k\|_B\\
\|f\|_{l_{\infty}^n}&:= \sup_{k\in \mathbb N_0} e^{nk}\|f_k\|_B
\end{align}
\end{example}

\begin{lemma}
On the space $\Sigma(B)$ the two gradings $\|f\|_{l_1^n}$ and $\|f\|_{l_{\infty}^n}$ are tame equivalent.
\end{lemma}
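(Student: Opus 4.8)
The plan is to verify the two estimates in the definition of $(r,b,C(n))$-equivalence directly, taking $r=1$, $b=0$, and the constant sequence $C(n)\equiv e/(e-1)$; no limiting or approximation argument is needed, since both inequalities hold termwise.

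First I would dispose of the easy direction, which requires no index shift at all: because the supremum of a family of nonnegative real numbers never exceeds their sum, $\|f\|_{l_\infty^n}\le\|f\|_{l_1^n}$ for every $n\in\mathbb N_0$, and \emph{a fortiori} $\|f\|_{l_\infty^n}\le C(n)\,\|f\|_{l_1^{n+1}}$.

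For the reverse direction I would exploit the geometric gap between $e^{nk}$ and $e^{(n+1)k}$. Writing $e^{nk}=e^{-k}e^{(n+1)k}$ and pulling the supremum out of the sum gives
\[
\|f\|_{l_1^n}=\sum_{k\ge 0}e^{-k}\,e^{(n+1)k}\|f_k\|_B\le\Bigl(\sup_{k\ge 0}e^{(n+1)k}\|f_k\|_B\Bigr)\sum_{k\ge 0}e^{-k}=\frac{e}{e-1}\,\|f\|_{l_\infty^{n+1}},
\]
where the geometric series $\sum_{k\ge0}e^{-k}=1/(1-e^{-1})=e/(e-1)$ converges precisely because the shift $r=1$ is available; this is the sole reason a nonzero $r$ enters.

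I do not expect any genuine obstacle here: combining the two estimates shows that $\{\|\hspace{3pt}\|_{l_1^n}\}$ and $\{\|\hspace{3pt}\|_{l_\infty^n}\}$ are $(1,0,e/(e-1))$-equivalent, hence tame equivalent. The only additional point worth recording is that each of these collections really is a grading in the sense defined above — the monotonicity $\|f\|_{l_1^n}\le\|f\|_{l_1^{n+1}}$ and $\|f\|_{l_\infty^n}\le\|f\|_{l_\infty^{n+1}}$ is immediate from $e^{nk}\le e^{(n+1)k}$ — so that the notion of tame equivalence applies in the first place.
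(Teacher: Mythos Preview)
Your argument is correct; this is exactly the standard computation. The paper itself does not supply a proof but defers to Hamilton's article~\cite{Hamilton82}, where the same geometric-series estimate with $r=1$ is used, so your approach matches the intended one.
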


\begin{proof}
For the proof see~\cite{Hamilton82}.
\end{proof}

\begin{example}
This example is used in section~\ref{sect:Some_tame_Frechet_spaces}.

\begin{itemize}
\item The space $\Sigma(\mathbb{C}^2, \textrm{Eucl})$ of exponentially decreasing sequences of elements in $\mathbb C^2$ where $\mathbb{C}^2$ is equipped with the Euclidean norm is a tame Fr\'echet space.
\item The space $\Sigma(\mathbb{C}^2, \textrm{Sup})$  of exponentially decreasing sequences of elements in $\mathbb C^2$ where $\mathbb{C}^2$ is equipped with the supremum-norm $\|(c_1, c_1')\|_B :=\sup(|c_1|,|c_1'|)$ is a tame Fr\'echet space.
\end{itemize}
\end{example}

\noindent Let us give an intuitive characterization of tame spaces

\begin{lemma}
Let $B$ be a complex Banach space. The space $\Sigma(B)$ of exponentially decreasing sequences in $B$ is isomorphic to the space $\textrm{Hol}(\mathbb{C},B)$ of $B$\ndash valued holomorphic functions.
\end{lemma}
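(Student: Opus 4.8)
The plan is to exhibit an explicit isomorphism via Taylor coefficients at the origin, and to check that under this map the two topologies correspond. Given a holomorphic function $f \in \textrm{Hol}(\mathbb{C},B)$, write its (globally convergent, since $f$ is entire) Taylor expansion $f(z) = \sum_{k=0}^{\infty} f_k z^k$ with $f_k \in B$, and define $\Phi(f) := \{f_k\}_{k\in\mathbb{N}_0}$. Conversely, given a sequence $\{f_k\}$ that is exponentially decreasing --- meaning $e^{nk}\|f_k\|_B \to 0$ for every $n$, equivalently $\limsup_k \|f_k\|_B^{1/k} = 0$ --- the series $\sum f_k z^k$ has infinite radius of convergence and so defines an element of $\textrm{Hol}(\mathbb{C},B)$. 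The first step is to verify that $\Phi$ is a well-defined linear bijection between the two sets: well-definedness of $\Phi$ needs the Cauchy estimate $\|f_k\|_B \le R^{-k}\sup_{|z|=R}\|f(z)\|_B$ for all $R>0$, which forces $\|f_k\|_B^{1/k}\to 0$ and hence exponential decrease; linearity and injectivity are immediate from uniqueness of Taylor coefficients; surjectivity is the convergence remark just made.

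The second step is to transport a grading. On $\textrm{Hol}(\mathbb{C},B)$ one uses the natural Fr\'echet grading by sup-norms on an exhausting sequence of compact disks, $\|f\|_n := \sup_{|z|\le e^n}\|f(z)\|_B$ (the disks $K_n = \overline{B_{e^n}(0)}$ exhaust $\mathbb{C}$ and are nested), exactly as in Example~\ref{frechetexamples}. On $\Sigma(B)$ one uses $\|\cdot\|_{l_\infty^n}$ (equivalently, by the preceding lemma, $\|\cdot\|_{l_1^n}$). The claim is that these two gradings match up to tame equivalence under $\Phi$. One direction is the Cauchy estimate applied on the circle $|z| = e^n$: $e^{nk}\|f_k\|_B \le \|f\|_n$ for all $k$, hence $\|\Phi(f)\|_{l_\infty^n} \le \|f\|_n$. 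The other direction is the triangle inequality on the series: for $|z| \le e^n$,
\begin{displaymath}
\|f(z)\|_B \le \sum_{k=0}^{\infty} e^{nk}\|f_k\|_B = \|\Phi(f)\|_{l_1^n} \le C(n)\,\|\Phi(f)\|_{l_\infty^{n+1}},
\end{displaymath}
where the last inequality is the tame equivalence of $l_1$ and $l_\infty$ gradings from the earlier lemma (here $C(n)$ absorbs the geometric-series factor $\sum_k e^{-k} = (1-e^{-1})^{-1}$ coming from comparing weight $e^{nk}$ against $e^{(n+1)k}$). Taking the supremum over $|z|\le e^n$ gives $\|f\|_n \le C(n)\|\Phi(f)\|_{l_\infty^{n+1}}$. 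Together these two estimates say precisely that $\Phi$ is a tame isomorphism (an $(r,b,C(n))$-equivalence with $r=1$, $b=0$).

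I expect the main obstacle to be bookkeeping rather than conceptual: making sure the chosen exhaustion of $\mathbb{C}$ and the exponential weights $e^{nk}$ are calibrated so that the Cauchy estimates land on the correct grading index, and handling the slight index shift $r$ and the $n$-dependent constant $C(n)$ cleanly. One must also be slightly careful about what ``isomorphic'' is being asserted --- it is an isomorphism of \emph{tame Fr\'echet spaces}, i.e.\ a linear homeomorphism that is tame with tame inverse --- and to note that the underlying topologies genuinely coincide (not merely the gradings up to tame equivalence), which follows because a tame equivalence of gradings induces the same topology. Finally, one should remark that the statement is robust under the choice of equivalent gradings on both sides (Banach norm on $B$, $l_1$ versus $l_\infty$ on $\Sigma(B)$, the particular exhausting disks on $\textrm{Hol}(\mathbb{C},B)$), since all these choices differ only by tame equivalences already established above.
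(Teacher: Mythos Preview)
Your proof is correct and follows essentially the same approach as the paper: the map is the Taylor-coefficient map, and both directions rest on the Cauchy estimate $\|f_k\|_B e^{nk}\le \sup_{|z|\le e^n}\|f(z)\|_B$ and the triangle-inequality bound $\|f(z)\|_B\le \sum_k \|f_k\|_B e^{nk}$ on the disc of radius $e^n$. The paper records exactly these two inequalities but stops short of naming the result a tame isomorphism; your additional step of reading off the tame equivalence of gradings (with the $l_1$--$l_\infty$ comparison absorbing the index shift) is a welcome sharpening that makes precise what ``isomorphic'' means here.
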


\begin{proof}
We have to prove both inclusions:
Let first $f\in \textrm{Hol}(\mathbb{C},B)$ with its Taylor series expansion:
\begin{displaymath}
f(z)=\sum_{k=0}^{\infty}f_k z^k.
\end{displaymath} 
The coefficients $f_k$ are elements of $B$ and we have to show that $(f_k)$ is an exponentially decreasing sequence. We want to use the $l_{\infty}$\ndash norm.
For the estimate of $f_k$ we need three ingredients:
\begin{enumerate}
 \item As $f(z)$ is entire the expansion $\|f(z)\|=\sum_{n=0}^{\infty}f_k z^k$ converges for all $z\in \mathbb{C}$ - hence $\sup_{z\in B(0,e^n)}|f(z)|<\infty$ of all $n$.
\item Differentiation of $f$ yields the identity: $f_k=\frac{1}{k!}f^{(k)}(0)$.
\item The Cauchy inequality~\cite{Berenstein91}, 2.1.20: Let $f$ be holomorphic on $B(0,r)$. Then
\begin{displaymath}
 |f^{(k)}(0)|\leq k!\frac{\sup_{z\in B(0,r)}|f(z)|}{r^k}\,.
\end{displaymath}
\end{enumerate}
Combining these ingredients we get for $n\in \mathbb{N}$
\begin{align*}
 \sup_{k}|f_k|e^{nk}&=\sup\left|\frac{f^{(k)}(0)}{k!} \right|e^{kn}\leq\\
&\leq\sup_k\frac{e^{kn}}{k!}\left|k!\frac{\sup_{z\in B(0,e^n)}|f(z)|}{e^{nk}}\right|=\\
&=\sup_{z\in B(0,e^n)}|f(z)|\leq \infty\,.
\end{align*}
Hence 
\begin{displaymath}
 \|f\|_{l_{\infty}^{n}}<\infty\, .
\end{displaymath}

As $f(z)$ is entire the expansion $\|f(z)\|=\sum_{n=0}^{\infty}f_k z^k$ converges for all $z\in \mathbb{C}$ yielding the result for all $n$.\\

Conversely for any exponentially decreasing sequence $(f_n)_{n\in \mathbb{N}}$, $f_n\in B$ we define the function
\begin{displaymath}
f(z):=\sum_{n=0}^{\infty}f_n z^n\, .
\end{displaymath} 
We have to prove that this defines an entire holomorphic function as we get for any $z\in B(0,e^n)$ the estimate
\begin{displaymath}
 |f(z)|=\left|\sum_k f_k z^k\right|\leq \sum_k |f_k||z^k|\leq \sum |f_k|e^{kn}<\infty\,.
\end{displaymath}
\end{proof}

\begin{corollary}
Let $B$ be a real Banach space. Then $\Sigma(B)$ consists of all $B\otimes \mathbb{C}$\ndash valued holomorphic functions on $\mathbb{C}$ such that 
\begin{displaymath}
 \overline{f(z)}=f(\overline{z})\, .
\end{displaymath}
  
\end{corollary}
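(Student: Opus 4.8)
The plan is to deduce this corollary directly from the preceding lemma, which identifies $\Sigma(B)$ with $\textrm{Hol}(\mathbb{C},B)$ for a \emph{complex} Banach space $B$. First I would apply that lemma to the complexification $B\otimes\mathbb{C}$, which is a complex Banach space once equipped with a suitable norm (e.g. the projective or injective tensor norm, or any of the standard compatible norms, the choice being irrelevant up to tame equivalence). This gives an identification of $\Sigma(B\otimes\mathbb{C})$ with $\textrm{Hol}(\mathbb{C},B\otimes\mathbb{C})$ via the Taylor coefficient map $f\mapsto(f_k)$. It then remains to pin down which subspace corresponds to $\Sigma(B)$, i.e.\ to sequences whose terms lie in the \emph{real} subspace $B\subset B\otimes\mathbb{C}$.

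The key observation is that complex conjugation on $B\otimes\mathbb{C}$ (the conjugate-linear involution fixing exactly $B$) acts coefficientwise on Taylor series, and that for a series $f(z)=\sum_k f_k z^k$ one has $\overline{f(\bar z)}=\sum_k \overline{f_k}\, z^k$. Hence $f_k\in B$ for all $k$ if and only if $\overline{f_k}=f_k$ for all $k$, which by uniqueness of Taylor coefficients is equivalent to the functional identity $\overline{f(\bar z)}=f(z)$ for all $z\in\mathbb{C}$, i.e.\ $\overline{f(z)}=f(\bar z)$. So the real-sequence condition on the $\Sigma$ side translates exactly into the stated reality condition on the $\textrm{Hol}$ side. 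Conversely, any holomorphic $B\otimes\mathbb{C}$-valued entire function satisfying $\overline{f(z)}=f(\bar z)$ has all Taylor coefficients fixed by conjugation, hence lying in $B$, and by the lemma its coefficient sequence is exponentially decreasing, so it defines an element of $\Sigma(B)$.

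I would organize the write-up in two inclusions mirroring the proof of the lemma: given $f\in\textrm{Hol}(\mathbb{C},B\otimes\mathbb{C})$ with $\overline{f(z)}=f(\bar z)$, expand in a Taylor series, note the coefficients are real and exponentially decreasing (the latter by the lemma applied to $B\otimes\mathbb{C}$), so $(f_k)\in\Sigma(B)$; conversely, given $(f_k)\in\Sigma(B)$, the function $f(z)=\sum_k f_k z^k$ is entire and $B\otimes\mathbb{C}$-valued by the lemma, and since each $f_k$ is conjugation-invariant one computes $\overline{f(z)}=\sum_k \overline{f_k}\,\overline{z}^k=\sum_k f_k \bar z^k=f(\bar z)$.

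There is no serious obstacle here; the only point requiring a word of care is making the complexification $B\otimes\mathbb{C}$ genuinely a complex Banach space so that the lemma applies verbatim — one should remark that a norm on $B\otimes\mathbb{C}$ extending the one on $B$ and making conjugation isometric always exists, and that the notion of exponential decrease, and hence the space $\Sigma$, is insensitive to replacing it by an equivalent norm. Everything else is bookkeeping about Taylor coefficients and the conjugation involution.
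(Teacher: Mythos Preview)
Your proposal is correct and is exactly the natural way to deduce the corollary from the preceding lemma. The paper itself gives no proof for this corollary; it is stated as an immediate consequence of the lemma identifying $\Sigma(B)\cong\textrm{Hol}(\mathbb{C},B)$ for complex $B$, and your argument (complexify, then characterize real coefficients by the conjugation identity on Taylor series) is precisely the intended one-line justification.
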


\begin{example}
 The easiest example is the choice $B=\mathbb{C}$. Then we find
\begin{displaymath}
 \Sigma(B):=\left\{(a_k)_{k\in \mathbb{N}_0}|\sum_k |a_k|e^{kn}<\infty \forall n\in \mathbb{N}\right\}\,.
\end{displaymath}
For fixed $n$ the sequence $(a_k)$ corresponds to the coefficients of the Laurent series of a holomorphic function converging on the disc
$B(0,e^n)$. For $f(z):=\sum a_k z^k$ this is proved by the estimates
\begin{displaymath}
 |f(z)|=|\sum_k a_k z^k|\leq \sum_k |a_k||z^k|\underbrace{\leq}_{\textrm{for}\quad z\in B(0,e^n)} \sum |a_k|e^{kn}<\infty
\end{displaymath}
As this condition is satisfied by assumption for all $n\in \mathbb{N}$ we get that the function defined by the series $(a_k)_{k\in \mathbb{N}}$ is holomorphic on $\mathbb{C}$. Hence $\Sigma(B)=\textrm{Hol}(\mathbb{C},\mathbb{C})$.
\end{example}

\noindent Let $F$, $G$, $G_1$ and $G_2$  denote graded Fr\'echet spaces.

\begin{definition}[Tame linear map]
A linear map $\varphi: F\longrightarrow G$ is called $(r,b,C(n))$-tame if it satisfies the inequality
$$\|\varphi(f)\|_n \leq C(n)\|f\|_{n+r}\,.$$
$\varphi$ is called tame iff it is $(r,b,C(n))$-tame for some $(r,b, C(n))$.
\end{definition}

Let us give an example showing that tameness of maps depends on the grading. Let us look at the following variant of an example of~\cite{Hamilton82}:

\begin{example}
Let $\mathcal{P}$ be the space of entire holomorphic functions periodic with period
$2\pi i$ and bounded in each left half-plane. Define $L: \mathcal{P} \longrightarrow \mathcal{P}$ by $Lf(z) = f(2z)$.
\begin{enumerate}
\item Define first the grading on $\mathcal{P}$ by:
\begin{displaymath}
\|f \|_n = \sup{|f(z)|:\Re(z) = n}\,.
\end{displaymath}
 Then $\| Lf\|_n\leq \| f \|_{2n}$, hence $L$ is not tame.
 \item Define now the grading on $\mathcal{P}$ by
\begin{displaymath}
\|f \|_n = \sup{|f(z)|:\Re(z) = 2^n}\,.
\end{displaymath}
Then $\| Lf\|_n\leq \| f \|_{n+1}$, hence $L$ is $(1,0,1)$-tame.
 \end{enumerate}
Clearly the two gradings are not tame equivalent. 
\end{example}

\begin{definition}[Tame isomorphism]
A map $\varphi:F\longrightarrow G$ is called a tame isomorphism iff it is a linear isomorphism and $\varphi$ and $\varphi^{-1}$ are tame maps.
\end{definition}

\begin{definition}[Tame direct summand]
$F$ is a tame direct summand of $G$ iff there exist tame linear maps $\varphi: F\longrightarrow G$ and $\psi: G \longrightarrow F$ such that $\psi \circ \varphi: F \longrightarrow F$ is the identity.
\end{definition}

\begin{definition}[Tame Fr\'echet space]
\label{tame Frechet space}
$F$ is a tame Fr\'echet space iff there is a Banach space $B$ such that $F$ is a tame direct summand of $\Sigma(B)$.
\end{definition}

\begin{lemma}[Constructions of tame Fr\'echet spaces]~
\label{constructionoftamespaces}
\begin{enumerate}
	\item A tame direct summand of a tame Fr\'echet space is tame.
	\item A cartesian product of two tame Fr\'echet spaces is tame.
\end{enumerate}
\end{lemma}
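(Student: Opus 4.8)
The plan is to reduce both assertions to two elementary closure properties of tame \emph{linear} maps, which follow at once from the defining inequality: (a) a composition of tame maps is tame, and (b) a cartesian product of tame maps is tame with respect to the product gradings $\|(f,g)\|_n:=\|f\|_n+\|g\|_n$. For (a), if $\alpha\colon F\to G$ is $(r_1,b_1,C_1(n))$-tame and $\beta\colon G\to H$ is $(r_2,b_2,C_2(n))$-tame, then for $n\ge\max(b_1,b_2)$ one estimates $\|\beta\alpha(f)\|_n\le C_2(n)\|\alpha(f)\|_{n+r_2}\le C_2(n)\,C_1(n+r_2)\,\|f\|_{n+r_1+r_2}$, so $\beta\circ\alpha$ is $\bigl(r_1+r_2,\max(b_1,b_2),C_2(n)C_1(n+r_2)\bigr)$-tame; for (b) the estimate is even more direct, with the same $r$, $b$ and $C(n)=C_1(n)+C_2(n)$.

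\textbf{Part (1).} Let $F$ be a tame direct summand of the tame Fr\'echet space $G$, via tame maps $\varphi\colon F\to G$, $\psi\colon G\to F$ with $\psi\circ\varphi=\Id$, and let $G$ be a tame direct summand of $\Sigma(B)$ for a Banach space $B$, via tame maps $i\colon G\to\Sigma(B)$, $p\colon\Sigma(B)\to G$ with $p\circ i=\Id$. By (a) the maps $i\circ\varphi\colon F\to\Sigma(B)$ and $\psi\circ p\colon\Sigma(B)\to F$ are tame, and $(\psi\circ p)\circ(i\circ\varphi)=\psi\circ(p\circ i)\circ\varphi=\psi\circ\varphi=\Id_F$. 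Hence $F$ is a tame direct summand of $\Sigma(B)$, i.e.\ $F$ is tame.

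\textbf{Part (2).} Let $F_1,F_2$ be tame, so $F_j$ is a tame direct summand of $\Sigma(B_j)$ for Banach spaces $B_1,B_2$. I would first show that $\Sigma(B_1)\times\Sigma(B_2)$ is tamely isomorphic to $\Sigma(B_1\oplus B_2)$, where $B_1\oplus B_2$ carries the norm $\|(b_1,b_2)\|:=\|b_1\|_{B_1}+\|b_2\|_{B_2}$: a pair of exponentially decreasing sequences $\bigl((f_k),(g_k)\bigr)$ corresponds to the sequence $(f_k,g_k)$ in $B_1\oplus B_2$, which is again exponentially decreasing, and under this bijection the $l_1^n$-gradings satisfy $\bigl\|\bigl((f_k),(g_k)\bigr)\bigr\|_{l_1^n}=\sum_k e^{nk}\bigl(\|f_k\|_{B_1}+\|g_k\|_{B_2}\bigr)=\bigl\|(f_k,g_k)\bigr\|_{l_1^n}$, so the identification and its inverse are $(0,0,1)$-tame; in particular $\Sigma(B_1)\times\Sigma(B_2)$ is a tame Fr\'echet space. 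Now by (b) the product maps $\varphi_1\times\varphi_2\colon F_1\times F_2\to\Sigma(B_1)\times\Sigma(B_2)$ and $\psi_1\times\psi_2$ in the other direction are tame and compose to $\Id_{F_1\times F_2}$, so $F_1\times F_2$ is a tame direct summand of the tame space $\Sigma(B_1)\times\Sigma(B_2)$; by Part (1) it is tame.

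The only step that requires genuine care — and where I expect the bulk of the work to sit — is the tame isomorphism $\Sigma(B_1)\times\Sigma(B_2)\cong\Sigma(B_1\oplus B_2)$: one must fix the grading placed on the cartesian product, check that "pairing" sends exponentially decreasing sequences to exponentially decreasing sequences with no loss in the decay rate, and verify the grading inequalities in both directions with shift $r=0$ and threshold $b=0$ (with the $l_1$-grading this is in fact an isometry, and by the earlier lemma comparing the $l_1^n$- and $l_\infty^n$-gradings on $\Sigma(B)$ the choice of model grading is immaterial). Everything else is bookkeeping with the defining inequalities of tameness.
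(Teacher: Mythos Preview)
Your proof is correct and is exactly the standard argument (as in Hamilton's article, which the paper follows throughout this section). The paper itself does not supply a proof of this lemma; it is stated without proof, in the same spirit as the nearby lemmas on graded Fr\'echet spaces for which the paper simply refers the reader to~\cite{Hamilton82}. So there is nothing to compare against beyond noting that your argument is the intended one: transitivity of ``tame direct summand'' for Part~(1), and the tame identification $\Sigma(B_1)\times\Sigma(B_2)\cong\Sigma(B_1\oplus B_2)$ followed by Part~(1) for Part~(2).
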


\noindent There are many different examples of tame Fr\'echet spaces. One can show that all examples described in \ref{frechetexamples} are tame Fr\'echet spaces. For proofs and additional examples see~\cite{Hamilton82}. In section \ref{sect:Some_tame_Frechet_spaces} we study in detail the tame Fr\'echet spaces of holomorphic functions which we need for the construction of Kac-Moody symmetric spaces.

Let us now introduce the notion of a tame Fr\'echet Lie algebra:

\begin{definition}[Tame Fr\'echet Lie algebra]
\label{tamefrechetLie algebra}
A Fr\'echet Lie algebra $\mathfrak{g}$ is a tame Lie algebra iff it is a tame vector space and
$\textrm{ad}(X)$ is a tame linear map for every $X\in \mathfrak{g}$.
\end{definition}

The condition on $\textrm{ad}(X)$ assures the tame structure to be invariant under the adjoint action. We clearly have the following class of examples:

\begin{example}
 Any finite dimensional Lie algebra is tame.
\end{example}

\begin{example}
The realizations of the loop algebras $L(\mathfrak{g},\sigma)$ are tame Fr\'echet Lie algebras for $H^0$\ndash Sobolev  loops, smooth loops and holomorphic loops --- compare section \ref{holomorphicstructuresonkacmoodyalgebras}.
\end{example}

Up to now all maps, we studied were linear maps. Let us now proceed by a short review of some nonlinear tame Fr\'echet objects:

\begin{definition}
A nonlinear map $\Phi: U\subset F \longrightarrow G$ is called $(r, b, C(n))$-tame iff it satisfies the inequality 
\begin{displaymath}
\|\Phi(f)\|_{n}\leq C(n)(1+\| f\|_{n+r})\,\forall n>b\,.
\end{displaymath}
$\Phi$ is called tame iff it is $(r,b,C(n))$-tame for some $(r,b, C(n))$.
\end{definition}

\begin{example}
 Suppose $F$ and $G$ are Banach space (hence the collection of norms consists of one norm) and $\Phi:F\longrightarrow G$ is a $(r_1,b_1,C_1)$\ndash tame isomorphism with a $(r_2, b_2,C_2)$\ndash tame inverse $\Phi^{-1}$. If $b_1\geq 2$ and $b_2\geq 2$ the condition on the norms vanishes. If $b_1=r_1=b_2=r_2=0$ we get 
\begin{displaymath}
\|\Phi(f)\|\leq C_1(1+\| f\|)\,\quad\textrm{and}\quad \|\Phi^{-1}(g)\|\leq C_2(1+\| g\|)\, .
\end{displaymath}
After some manipulations we find:
\begin{displaymath}
 \frac{1}{C_2}\|f\|-1\leq \|\Phi(f)\|\leq C_1(1+\|f\|)
\end{displaymath}
which is similar to the definition of a quasi isometry~\cite{Burago01}
\end{example}

\begin{lemma}[Construction of tame maps]~
\label{constructionoftamemaps}
\begin{enumerate}
\item Let $\Phi: U\subset F \longrightarrow G_1\times G_2$ be a tame map. Define the projections $\pi_i:G_1\times G_2 \longrightarrow G_i, i=1,2$. The maps
\begin{displaymath}
\Phi_i:=\pi_i \circ \Phi: U\longrightarrow G_i
\end{displaymath}
are tame as well.
\item Let $\Phi_i: U \subset F\longrightarrow G_i, i\in \{1,2\}$ be $(r_i, b_i, C_i(n))$-tame maps. Then the map
\begin{displaymath}
\Phi:=(\Phi_1, \Phi_2): U \longrightarrow G_1 \times G_2
\end{displaymath}
is $(\max(r_1, r_2),\max(b_1, b_2), C_1(n)+C_2(n))$-tame.
\end{enumerate}
\end{lemma}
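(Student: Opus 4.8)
The plan is to prove each of the two statements of Lemma~\ref{constructionoftamemaps} directly from the definition of a tame nonlinear map, tracking the tameness constants explicitly; both parts are essentially bookkeeping exercises with the defining inequality $\|\Phi(f)\|_n \leq C(n)(1+\|f\|_{n+r})$.

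For part (1), I would start from the assumption that $\Phi$ is $(r,b,C(n))$-tame as a map into $G_1\times G_2$. The only thing to pin down is what the grading on the product $G_1\times G_2$ is; following the construction in Lemma (Constructions of tame Fr\'echet spaces), the natural grading is $\|(g_1,g_2)\|_n := \|g_1\|_n + \|g_2\|_n$ (or, tame-equivalently, the max). With that convention, for each $i$ one has $\|\pi_i(g_1,g_2)\|_n = \|g_i\|_n \leq \|g_1\|_n + \|g_2\|_n = \|(g_1,g_2)\|_n$, i.e.\ each projection $\pi_i$ is $(0,0,1)$-tame as a linear map. Then $\Phi_i = \pi_i\circ\Phi$ and one invokes composition of tame maps: composing the $(0,0,1)$-tame linear map $\pi_i$ with the $(r,b,C(n))$-tame map $\Phi$ yields a $(r,b,C(n))$-tame map. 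Concretely, $\|\Phi_i(f)\|_n = \|\pi_i\Phi(f)\|_n \leq \|\Phi(f)\|_n \leq C(n)(1+\|f\|_{n+r})$ for all $n>b$, which is exactly the tameness estimate for $\Phi_i$.

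For part (2), suppose $\Phi_i$ is $(r_i,b_i,C_i(n))$-tame, $i=1,2$. Set $r:=\max(r_1,r_2)$, $b:=\max(b_1,b_2)$ and $C(n):=C_1(n)+C_2(n)$. For $n>b$ we have $n>b_1$ and $n>b_2$, so both defining inequalities hold; moreover, since the grading is increasing, $\|f\|_{n+r_i}\leq\|f\|_{n+r}$. Hence
\begin{align*}
\|\Phi(f)\|_n &= \|\Phi_1(f)\|_n + \|\Phi_2(f)\|_n \\
&\leq C_1(n)(1+\|f\|_{n+r_1}) + C_2(n)(1+\|f\|_{n+r_2}) \\
&\leq (C_1(n)+C_2(n))(1+\|f\|_{n+r}) = C(n)(1+\|f\|_{n+r})\,,
\end{align*}
which is the required estimate, so $\Phi=(\Phi_1,\Phi_2)$ is $(\max(r_1,r_2),\max(b_1,b_2),C_1(n)+C_2(n))$-tame.

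I do not expect any genuine obstacle here; the one point that requires care rather than ingenuity is making the grading on $G_1\times G_2$ explicit and consistent between the two parts (and observing it is tame-equivalent to the max-grading, so the choice is immaterial up to tame equivalence), together with the routine use of monotonicity of the seminorms in a grading to align the shifts $r_i$ to their maximum. The composition step in part (1) can alternatively be written out by hand as above rather than citing a separate composition lemma, which keeps the argument self-contained.
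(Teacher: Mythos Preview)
Your proof is correct and follows essentially the same approach as the paper: for part (1) you observe that projections are $(0,0,1)$-tame and compose, and for part (2) you use the sum grading on the product together with monotonicity of the seminorms to align the shifts $r_i$ to their maximum. Your version is slightly more explicit about the grading convention on $G_1\times G_2$, but otherwise the arguments coincide.
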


\begin{proof}~
\begin{enumerate}
\item Projections onto a direct factor are $\left(0,0,(1)_{n\in \mathbb{N}}\right)$-tame. The composition of tame maps is tame. Thus $\Phi_{i}$ is tame.
\item Let $f\in U\subset F$. \begin{align*}
\|\Phi(f)\|_n &= \|\Phi(f)\|^1_n+\|\Phi(f)\|^2_n \leq\\
              &\leq C_1(n)(1+\| f\|_{n+r_1})+ C_2(n)(1+\|f\|_{n+r_2}) \leq\\
							&\leq C_1(n)(1+\| f\|_{n+\max(r_1, r_2)})+ C_2(n)(1+\|f\|_{n+\max(r_1, r_2)})= \\
							&\leq (C_1(n)+C_2(n))(1+\| f\|_{n+\max(r_1, r_2)})
\end{align*}
for all $n\geq \max(b_1, b_2)$.
\end{enumerate}
\end{proof}

\subsection{Tame Fr\'echet manifolds}

\begin{definition}[Tame Fr\'echet manifold]
A tame Fr\'echet  manifold is a Fr\'echet manifold with charts in a tame Fr\'echet space
such that the chart transition functions are smooth tame maps.
\end{definition}

\begin{example}
Every Banach manifold is a tame Fr\'echet manifold.
\end{example}

\begin{definition}
Let $M$ and $N$ be two tame Fr\'echet manifolds modeled on $F$ resp. $G$. A map $f: M\longrightarrow N$ is tame iff for every pair of charts $\psi_i:V_i \subset N \longrightarrow V_i'$ and $\varphi_j: U_i \subset M \longrightarrow U_i'$, the map $\psi_i \circ f \circ \varphi_j^{-1}$ is tame whenever it is defined. 
\end{definition}

For the construction of tame structures on Kac-Moody groups we need to introduce the following new notion:

\begin{definition}[Tame Fr\'echet submanifold of finite type]
\label{Tamesubmanifoldoffinitetype}
Let $n\in \mathbb{N}$. A subset $M\subset F$ is a $n$-codimensional smooth submanifold of $F$ iff for every $m\in M$ there are open sets $U(m)\subset F$, $V(m)\subset G\times \mathbb{R}^n$ and a tame Fr\'echet chart $\varphi_m:U(m)\longrightarrow V(m)\subset G\times \mathbb{R}^n$ such that
$$\varphi_{M}(M\cap U(m))= G\cap V(m)\,.$$
\end{definition}

\begin{lemma}
A tame submanifold of finite type is a tame Fr\'echet manifold.
\end{lemma}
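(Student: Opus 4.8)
The plan is to show that the slice charts $\varphi_m$ supplied by Definition~\ref{Tamesubmanifoldoffinitetype} restrict to an atlas on $M$ whose charts take values in the tame Fr\'echet space $G$ and whose transition maps are smooth and tame; by the definition of a tame Fr\'echet manifold nothing more is needed. So let $M\subset F$ be a tame submanifold of finite type, say of codimension $n$, with the data $U(m)$, $V(m)\subset G\times\mathbb{R}^n$ and $\varphi_m\colon U(m)\to V(m)$ as in the definition. First I would record that $G$ itself is a tame Fr\'echet space: the model space $G\times\mathbb{R}^n$ of the ambient charts is a tame Fr\'echet space (this is part of what it means for the $\varphi_m$ to be tame Fr\'echet charts of the tame Fr\'echet manifold $F$), and the inclusion $\iota_G\colon G\to G\times\mathbb{R}^n$, $x\mapsto(x,0)$, together with the projection $\pi_G\colon G\times\mathbb{R}^n\to G$ are $(0,0,1)$-tame linear maps with $\pi_G\circ\iota_G=\Id_G$; hence $G$ is a tame direct summand of $G\times\mathbb{R}^n$ and therefore tame by Lemma~\ref{constructionoftamespaces}(1).

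Next I would define the atlas. For $m\in M$ set $\widetilde U(m):=M\cap U(m)$ with the subspace topology and let $\widetilde\varphi_m:=\pi_G\circ\varphi_m|_{\widetilde U(m)}\colon \widetilde U(m)\to G$. Since $\varphi_m$ is a homeomorphism $U(m)\to V(m)$ carrying $M\cap U(m)$ onto $(G\times\{0\})\cap V(m)$, and since $\pi_G$ restricts to a homeomorphism of $G\times\{0\}$ onto $G$, the map $\widetilde\varphi_m$ is a homeomorphism of $\widetilde U(m)$ onto the open subset $\{g\in G : (g,0)\in V(m)\}$ of $G$. The sets $\widetilde U(m)$ cover $M$, so $\{(\widetilde U(m),\widetilde\varphi_m)\}_{m\in M}$ is a Fr\'echet atlas on $M$ with charts in the tame Fr\'echet space $G$.

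It remains to check the transition maps. For $m,m'\in M$ one has, on the relevant open domain, the identity
\begin{displaymath}
\widetilde\varphi_{m'}\circ\widetilde\varphi_m^{-1}=\pi_G\circ\bigl(\varphi_{m'}\circ\varphi_m^{-1}\bigr)\circ\iota_G\,.
\end{displaymath}
Here $\varphi_{m'}\circ\varphi_m^{-1}$ is a smooth tame map between open subsets of $G\times\mathbb{R}^n$, because the $\varphi_m$ are tame Fr\'echet charts of $F$ and the composition of smooth tame maps is smooth tame; restricting it to an open subset preserves smooth tameness, and pre- and post-composing with the tame linear maps $\iota_G$ and $\pi_G$ again preserves it. Hence every transition map is smooth and tame, and $M$ is a tame Fr\'echet manifold.

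The only mildly delicate point is the bookkeeping in this last step: one has to check that the slice $(G\times\{0\})\cap V(m)$ really corresponds to an open subset of $G$, and that the domains of the restricted composite transition maps match up correctly, together with the (elementary) observations that tameness of maps is inherited under restriction to open subsets and under composition with the finite-dimensional inclusion/projection $\iota_G,\pi_G$. None of this constitutes a genuine obstacle, since all the required stability properties of tame maps and tame spaces -- composition, restriction, and splitting off a finite-dimensional factor -- are already available from the preceding lemmas and from~\cite{Hamilton82}.
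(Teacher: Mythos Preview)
Your argument is correct and is exactly the natural proof: restrict the ambient slice charts to $M$, observe that $G$ is tame as a tame direct summand of $G\times\mathbb{R}^n$, and check that the induced transition maps are smooth tame because they factor as $\pi_G\circ(\varphi_{m'}\circ\varphi_m^{-1})\circ\iota_G$.

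Note, however, that the paper does not actually prove this lemma in the text; it simply cites~\cite{Freyn12c} for the result. So there is no in-paper argument to compare yours against. Your proof supplies precisely the details one would expect behind that citation, and the ``mildly delicate'' points you flag (openness of the slice image in $G$, stability of smooth tameness under restriction and under composition with the linear maps $\iota_G,\pi_G$) are indeed routine given the material from~\cite{Hamilton82} already quoted in the paper.
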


This result was proved in~\cite{Freyn12c}.

\begin{lemma}
\label{mapinfrechetsubmanifold}
Let $M\subset F$ be a tame Fr\'echet submanifold of finite type. Let $H$ be a tame Fr\'echet space. A map $\varphi_M:H \longrightarrow M$ is tame if it is tame as a map $\varphi_F:H\rightarrow F$, where $\varphi_F$ is defined via the embedding $M\subset F$. 
\end{lemma}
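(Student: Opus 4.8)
The plan is to transfer tameness across the chart that defines the submanifold structure. Fix $m \in M$ and, by Definition~\ref{Tamesubmanifoldoffinitetype}, choose a tame Fr\'echet chart $\varphi_m : U(m) \longrightarrow V(m) \subset G \times \mathbb{R}^n$ with $\varphi_m(M \cap U(m)) = G \cap V(m)$. By the previous lemma $M$ is a tame Fr\'echet manifold, and the restriction $\varphi_m|_{M \cap U(m)} : M \cap U(m) \longrightarrow G \cap V(m)$ is precisely a chart for this manifold structure; call it $\psi_m$. Since tameness of $\varphi_M : H \to M$ is a local condition checked chart by chart, it suffices to show that $\psi_m \circ \varphi_M$ is tame (on the open set $\varphi_M^{-1}(U(m))$, wherever defined), using that $\varphi_F = \iota \circ \varphi_M$ is tame, where $\iota : M \hookrightarrow F$ is the inclusion.

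The key observation is that $\psi_m$ is, up to composition with tame maps, just the ambient chart $\varphi_m$ restricted and then followed by the projection $\pi_G : G \times \mathbb{R}^n \to G$. Concretely, on $M \cap U(m)$ we have $\varphi_m = \iota_{G} \circ \psi_m$ where $\iota_G : G \hookrightarrow G \times \mathbb{R}^n$ sends $g$ to $(g,0)$ (because $\varphi_m$ carries $M \cap U(m)$ into $G \cap V(m) = (G \times \{0\}) \cap V(m)$). Hence on the relevant domain
\begin{align*}
\psi_m \circ \varphi_M &= \pi_G \circ \iota_G \circ \psi_m \circ \varphi_M = \pi_G \circ \varphi_m \circ \iota \circ \varphi_M = \pi_G \circ \varphi_m \circ \varphi_F \, .
\end{align*}
Now $\varphi_F$ is tame by hypothesis, $\varphi_m$ is a tame chart (its expression $\varphi_m \circ \mathrm{id}$ in ambient coordinates is a smooth tame map by Definition~\ref{Tamesubmanifoldoffinitetype}), and $\pi_G : G \times \mathbb{R}^n \to G$ is tame by part~(1) of Lemma~\ref{constructionoftamemaps} (projection onto a direct factor). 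A composition of tame maps is tame, so $\psi_m \circ \varphi_M$ is tame. As $m$ was arbitrary and the charts $\psi_m$ cover $M$, the map $\varphi_M : H \to M$ is tame. \qed

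The main obstacle — really the only subtlety — is the bookkeeping that identifies the manifold chart $\psi_m$ of $M$ with the ambient chart $\varphi_m$ postcomposed with $\pi_G$, i.e. checking that $\varphi_m \circ \iota$ and $\iota_G \circ \psi_m$ agree on $M \cap U(m)$ and that each ingredient is tame with respect to the correct gradings. One should be slightly careful that the chart transition functions of $M$ and of $F$ are compatible so that "tame as a map into $F$" and "tame as a map into $M$" are being measured against gradings that are tame equivalent; this is exactly what the submanifold-of-finite-type condition guarantees, since the chart $\varphi_m$ is tame together with its inverse and $G$ is a tame direct summand of $G \times \mathbb{R}^n$ via $\pi_G$ and $\iota_G$. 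No inverse function theorem is needed: the argument is purely a diagram chase through tame maps, using only Lemma~\ref{constructionoftamemaps} and the definition of a tame Fr\'echet submanifold of finite type.
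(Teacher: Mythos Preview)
Your proof is correct and follows essentially the same approach as the paper: the paper's proof is a one-line assertion that composing $\varphi_F$ with any chart $\varphi_i$ of $M$ yields a tame map, and you supply the bookkeeping (factoring the manifold chart $\psi_m$ as the ambient tame chart $\varphi_m$ followed by the tame projection $\pi_G$) that makes this assertion rigorous.
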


This means, that we chose the tame structure on a tame Fr\'echet submanifold of finite type compatible with the tame structure on the vector the manifold is embedded in. We need this result in the proof that affine Kac-Moody groups carry a tame structure. 

\begin{proof}
Let $\varphi$ be tame as a map $\varphi_F:H\rightarrow F$. Then the cocatenation $\varphi_i\circ \varphi$ is tame for any chart $\varphi$ of $M$. Thus $\varphi$ is tame as a map into $M$.   
\end{proof}

\noindent The technical reason for working in the category of tame Fr\'echet spaces and tame maps is the following Nash-Moser inverse function theorem. 
We cite the version of~\cite{Hamilton82}.

\begin{theorem}[Nash-Moser inverse function theorem]
Let $F$ and $G$ be tame Fr\'echet spaces and $\Phi: U\subseteq F \longrightarrow G$ a smooth tame map. Suppose that the equation for the derivative $D\Phi(f)h=k$ has a unique solution $h=V\Phi(f)k$ for all $f\in U$ and all $k$ and that the family of inverses $V\Phi:U\times G \longrightarrow F$ is a smooth tame map. Then $\Phi$ is locally invertible, and each local inverse $\Phi^{-1}$ is a smooth tame map.
\end{theorem}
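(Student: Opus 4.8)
The plan is to give the standard Nash smoothed Newton iteration, adapted to the tame Fr\'echet category exactly as in~\cite{Hamilton82}. The starting point is that, $F$ and $G$ being tame Fr\'echet spaces, they are tame direct summands of model spaces $\Sigma(B)$, $\Sigma(B')$, and on $\Sigma(B)$ one has for every $t\geq 1$ an explicit \emph{smoothing operator} $S_t$, built by weighted truncation of the exponentially decreasing sequence: if $u=(u_k)$ then $S_t u:=(u_k)_{k\leq \log t}$ satisfies $\|S_t u\|_n\leq C\,t^{\,n-m}\|u\|_m$ for all $n,m$ and $\|u-S_t u\|_n\leq C\,t^{\,n-m}\|u\|_m$ for $n\leq m$. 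Pulling these back through the tame projection/inclusion gives smoothing operators on $F$; they are the device that regains the derivatives lost through the inverse $V\Phi$ at each Newton step.

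First I would normalize: translating in $F$ and $G$ we may assume $0\in U$ and $\Phi(0)=0$, and after rescaling the variables the tame estimates for $\Phi$, for $V\Phi$, and for the second derivative $D^2\Phi$ (which is again smooth tame because $\Phi$ is smooth tame, see~\cite{Hamilton82}) hold with small constants on a fixed ball $U_0$. The goal is to solve $\Phi(f)=k$ for all small $k$. I would set $f_0=0$ and $f_{j+1}=f_j+S_{t_j}h_j$ with $h_j=V\Phi(f_j)\bigl(k-\Phi(f_j)\bigr)$ and $t_j=t_0^{(3/2)^j}$. Writing $e_j:=k-\Phi(f_j)$ for the residual, Taylor's formula combined with the tame bound on $D^2\Phi$ and the two smoothing estimates yields a recursion of the schematic form
\[
\|e_{j+1}\|_n \;\leq\; C(n)\Bigl( t_j^{\,n+c}\,\|e_j\|_0^{2} \;+\; t_j^{-c}\,\|e_j\|_{n+r'} \Bigr),
\]
the first term being the quadratic Newton error amplified by the smoothing, the second the smoothing defect.

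The heart of the argument, and the step I expect to be the main obstacle, is the convergence analysis of this recursion. Choosing $t_0$ large and $\|k\|$ small, one proves by simultaneous induction on $j$ that (i) $f_j$ stays in $U_0$ with $\|f_j\|_n$ controlled by $\|k\|_{n+r}$, and (ii) $\|e_j\|_0$ decays like $t_j^{-\alpha}$ for some $\alpha>0$ while the higher norms $\|e_j\|_n$ grow at most polynomially in $t_j$. The point is that the super-exponential growth $(3/2)^j$ of $\log t_j$ is dominated by the $2^j$ doubling of the exponent produced by the quadratic term, so the estimate closes; interpolation between the graded seminorms of $\Sigma(B)$ is used to pass from control of $\|e_j\|_0$ and a high norm to control of the intermediate quantities. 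All the constants $C(n)$ must be tracked carefully through this bookkeeping. Once (i) and (ii) hold, $(f_j)$ is Cauchy in every seminorm, hence converges in $F$ to $f\in U_0$ with $\Phi(f)=k$; uniqueness near $0$ follows from injectivity of $D\Phi$ along the segment joining two solutions.

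Finally I would upgrade the set-theoretic inverse $\Phi^{-1}$ --- which the iteration produces together with the a priori bound $\|\Phi^{-1}(k)\|_n\leq C(n)(1+\|k\|_{n+r})$, so that $\Phi^{-1}$ is already tame --- to a smooth tame map. Differentiating $\Phi\circ\Phi^{-1}=\mathrm{id}$ and using invertibility of $D\Phi(f)$ with inverse $V\Phi(f)$ gives $D(\Phi^{-1})(k)=V\Phi(\Phi^{-1}(k))$; since $V\Phi$ is smooth tame by hypothesis and $\Phi^{-1}$ is continuous and tame, the right-hand side is smooth tame, so $\Phi^{-1}$ is $C^1$ and tame, and a bootstrap on the order of differentiability (compositions and derivatives of smooth tame maps being smooth tame, \cite{Hamilton82}) shows $\Phi^{-1}$ is $C^\infty$ tame. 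This completes the proof.
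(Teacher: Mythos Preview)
The paper does not actually prove this theorem: immediately after the statement it simply writes ``A description of this theorem a proof and some of its applications is the subject of the article~\cite{Hamilton82}.'' So there is nothing to compare against --- your sketch \emph{is} (an outline of) the proof the paper defers to, namely Hamilton's smoothed Newton iteration with the $\Sigma(B)$ smoothing operators, the convergence bookkeeping, and the bootstrap for smoothness of the inverse. In that sense your proposal is consistent with the paper and goes well beyond it.
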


\noindent A description of this theorem a proof and some of its applications is the subject of the article~\cite{Hamilton82}.
In comparison to the classical Banach inverse function theorem the important additional assumption is that the invertibility of the differential is assumed not only in a single point $p$ but in a small neighborhood $U$ around $p$. This additional condition is necessary~\cite{Hamilton82} because
 in contrast to the Banach space situation it is not true in the Fr\'echet space case that the existence of an invertible differential in one point leads to invertibility in a neighborhood. Let us note the following result~\cite{Hamilton82}, theorem 3.1.1.\ characterizing the family of smooth tame inverses.

\begin{theorem}
 Let $L: (U \subseteq F) \times H \longrightarrow K$ be a smooth tame family of linear
maps. Suppose that the equation 
\begin{displaymath}
L(f)h = k
\end{displaymath} has a unique solution $h$ for all $f$ and
$k$ and that the family of inverses $V(f)k = h$ is continuous and tame as a map from $K$ to$H$
Then $V$ is also a smooth tame map 
\begin{displaymath}
V:(U\subseteq F)\times K\longrightarrow H.
\end{displaymath}
\end{theorem}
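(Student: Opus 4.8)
The plan is to guess the derivative of $V$ by differentiating the defining identity formally, to verify that the guess is the genuine Fr\'echet derivative using only the continuity of $V$ and the $C^1$\ndash property of $L$, and then to bootstrap to smoothness by an induction whose inductive step is the purely formal observation that compositions and bilinear combinations of tame maps are tame. To set up, write $L(f)h$ for $L(f,h)$ and let $M(f)\{v,h\}:=D_fL(f,h)\{v\}$ be the partial derivative of $L$ in the $F$\ndash direction; since $L$ is a smooth tame map, $M=D_fL$ is again a smooth tame map, and it is bilinear in $(v,h)$ because $L$ is linear in $h$. Differentiating the identity $L(f)\bigl(V(f)k\bigr)=k$ formally in the directions $v\in F$ and $w\in K$ yields the candidate
\begin{displaymath}
DV(f,k)\{v,w\}\;=\;V(f)w\;-\;V(f)\bigl(M(f)\{v,\,V(f)k\}\bigr)\,.
\end{displaymath}
The right-hand side is assembled from $V$ (continuous and tame by hypothesis), from $M$ (smooth and tame), and from bilinear evaluation, composition and linear combination, each of which preserves tameness; hence the candidate map $(f,k,v,w)\mapsto DV(f,k)\{v,w\}$ is tame.

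Next I would show that this candidate really is the derivative of $V$. Subtracting the identities $L(f+v)\bigl(V(f+v)k\bigr)=k$ and $L(f)\bigl(V(f)k\bigr)=k$ and solving for the increment gives
\begin{displaymath}
V(f+v)k-V(f)k\;=\;-\,V(f+v)\Bigl(\bigl[L(f+v)-L(f)\bigr]V(f)k\Bigr)\,.
\end{displaymath}
Because $L$ is $C^1$ one may write $\bigl[L(f+v)-L(f)\bigr]V(f)k=M(f)\{v,V(f)k\}+o(v)$, and because $V$ is continuous one has $V(f+v)\to V(f)$; substituting and using the continuity of $M$ shows that the difference quotient of $V$ in the $f$\ndash variable converges to $-V(f)\bigl(M(f)\{v,V(f)k\}\bigr)$, with the error controlled in each graded seminorm. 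The partial derivative of $V$ in the $k$\ndash variable is simply $w\mapsto V(f)w$. Both partial derivatives exist and are jointly continuous, so $V$ is a $C^1$ map; combined with the tameness of the candidate already verified, $V$ is a $C^1$ tame map.

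Finally I would close by induction on $n$. Assume $V$ is $C^n$ and tame. The displayed formula exhibits $DV$ as a combination (bilinear evaluation, composition, sum) of the $C^n$ tame map $V$ with the $C^\infty$ tame map $M=D_fL$; since all these operations preserve the class ``$C^n$ and tame,'' $DV$ is $C^n$ tame, hence $V$ is $C^{n+1}$ tame. Iterating, $V$ is a smooth tame map $V:(U\subseteq F)\times K\longrightarrow H$, as claimed.

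I expect the main obstacle to be the $C^1$ step: this is the only place where genuine estimates are required, since one must simultaneously control the error term $o(v)$ coming from the differentiability of $L$ and the oscillation of $V(f+v)$ around $V(f)$ in every graded seminorm at once. Once $C^1$ with a tame derivative is in hand, the higher regularity follows formally from the repeated use of the fact that compositions and bilinear combinations of tame maps are tame.
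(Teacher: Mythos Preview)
The paper does not prove this theorem at all; it simply quotes it from \cite{Hamilton82}, theorem~3.1.1, as background leading up to the Nash--Moser inverse function theorem. So there is no ``paper's own proof'' to compare against.

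That said, your argument is essentially Hamilton's proof of theorem~3.1.1, and the strategy is correct: differentiate the identity $L(f)\bigl(V(f)k\bigr)=k$ to obtain the candidate $DV(f,k)\{v,w\}=V(f)w-V(f)\bigl(M(f)\{v,V(f)k\}\bigr)$, check it is tame as a composition of tame pieces, verify it is the genuine derivative, and bootstrap. One small remark on presentation: in Hamilton's framework the derivative is defined via directional limits $\lim_{t\to 0}\frac{1}{t}\bigl(V(f+tv,k+tw)-V(f,k)\bigr)$ rather than through an $o(v)$ remainder in a single norm, so your ``$o(v)$'' should be understood accordingly. The cleanest way to carry out the $C^1$ step is to use the integral form $[L(f+tv)-L(f)]h=\int_0^1 M(f+stv)\{tv,h\}\,ds$ (valid because $L$ is $C^1$), divide by $t$, and pass to the limit using the continuity of $V$ and $M$; this makes the control in every seminorm automatic and avoids any ambiguity about what ``$o(v)$'' means in a graded space. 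With that adjustment your sketch matches the cited proof.
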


We discuss these construction in more detail in ~\cite{Freyn12g}.

\section{Tame Fr\'echet spaces for Kac-Moody symmetric spaces}
\label{sect:Some_tame_Frechet_spaces}

In this section we prove the tameness of certain Fr\'echet spaces of holomorphic functions on $\mathbb{C}^*$. We show first, that the space $Hol(\mathbb{C}^*, \mathbb{C})$ is a tame Fr\'echet space; then we extend this result to the space $Hol(\mathbb{C}^*, V_{\mathbb{C}})$, where $V_{\mathbb{C}}$ denotes a complex vector space. We use these results in section~\ref{Lie_algebras_of_holomorphic_maps} for the construction of tame Fr\'echet affine Kac-Moody algebras and their Kac-Moody groups via the loop algebra realization (resp. loop group realization).

In~\cite{Hamilton82} the following result is shown:

\begin{lemma}
\label{holccistame}
$F:= \textrm{Hol}(\mathbb C, \mathbb C)$ is a tame Fr\'echet space.
\end{lemma}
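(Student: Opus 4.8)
The plan is to exhibit an explicit tame isomorphism between $F = \mathrm{Hol}(\mathbb{C},\mathbb{C})$ and the model space $\Sigma(\mathbb{C})$, since a tame isomorphism certainly makes $F$ a tame direct summand of $\Sigma(\mathbb{C})$ (take $\psi = \varphi^{-1}$). By the Lemma proved just above, $\Sigma(B) \cong \mathrm{Hol}(\mathbb{C}, B)$ for any complex Banach space $B$, and in the $B = \mathbb{C}$ case the Example already identifies $\Sigma(\mathbb{C})$ with $\mathrm{Hol}(\mathbb{C},\mathbb{C})$ as sets, the isomorphism being $(a_k)_{k \in \mathbb{N}_0} \mapsto f(z) = \sum_k a_k z^k$, i.e. passing between a holomorphic function and its sequence of Taylor coefficients. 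So the only thing left to check is that this identity-on-the-nose bijection and its inverse are \emph{tame} linear maps when $F$ is given an appropriate grading.

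Concretely, I would equip $F = \mathrm{Hol}(\mathbb{C},\mathbb{C})$ with the grading $\|f\|_n := \sup_{z \in B(0,e^n)} |f(z)|$ (the sup norms over the exhausting sequence of disks $K_n = \overline{B(0,e^n)}$), which is an increasing sequence of seminorms defining the Fréchet topology, hence a genuine grading. On $\Sigma(\mathbb{C})$ I would use the $l_\infty^n$ grading $\|(f_k)\|_{l_\infty^n} = \sup_k e^{nk}|f_k|$. Then the forward estimate is exactly the computation already carried out in the proof of the preceding Lemma: for $f = \sum_k f_k z^k$, the Cauchy inequality $|f^{(k)}(0)| \le k!\, r^{-k} \sup_{|z|=r}|f|$ with $r = e^n$ gives $\sup_k e^{nk}|f_k| \le \sup_{z \in B(0,e^n)}|f(z)|$, so the coefficient map is $(0,0,1)$-tame, in fact norm-decreasing with $r=0$. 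For the reverse direction, given $(f_k) \in \Sigma(\mathbb{C})$ one estimates $|f(z)| = |\sum_k f_k z^k| \le \sum_k |f_k| e^{nk}$ for $z \in B(0,e^n)$; bounding $\sum_k |f_k| e^{nk}$ by $\sup_k |f_k| e^{(n+1)k} \cdot \sum_k e^{-k}$ shows $\|f\|_n \le C \|(f_k)\|_{l_\infty^{n+1}}$ with $C = \sum_k e^{-k} = (1-e^{-1})^{-1}$, so the inverse map is $(1,0,C)$-tame. Hence $\varphi$ is a tame isomorphism and $F$ is tame. (If one prefers to present $F$ literally as a tame \emph{direct summand}, set $\psi = \varphi^{-1}$; then $\psi\circ\varphi = \mathrm{id}_F$ with both maps tame, matching Definition~\ref{tame Frechet space}.)

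The main subtlety — really the only place one must be slightly careful — is that ``$F$ is a tame Fréchet space'' is a statement about the existence of \emph{some} grading, whereas the naive grading $\widetilde{\|f\|}_n = \sum_{i=1}^n \sup_{K_i}|f|$ produced by the generic construction recalled in the excerpt differs from the clean sup-grading; so I would note explicitly that the two are tame equivalent (indeed $\sup_{K_n}|f| \le \widetilde{\|f\|}_n \le n \sup_{K_n}|f|$, which is $(0,1,n)$-equivalence) and invoke that tameness of the space is invariant under tame equivalence of gradings. Beyond that bookkeeping, everything reduces to the two Cauchy-type estimates above, both of which are essentially already in the text, so I do not anticipate a genuine obstacle — the content of the lemma is exactly the observation that Taylor expansion turns entire functions into exponentially decreasing coefficient sequences in a tame way.
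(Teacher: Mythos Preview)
Your argument is correct. The paper does not actually prove this lemma; it simply cites Hamilton~\cite{Hamilton82}, so there is no in-text proof to compare against line by line. What the paper does tell us (in the remark following Lemma~\ref{tame_equivalence_of_gardings_on_holcc}) is that Hamilton's proof ``makes strong use of'' the tame equivalence of the $L_1^n$ and $L_\infty^n$ gradings on $\textrm{Hol}(\mathbb{C},\mathbb{C})$. Your approach bypasses that step entirely: you work only with the $L_\infty^n$ sup-grading on $F$ and the $l_\infty^n$ grading on $\Sigma(\mathbb{C})$, and the two tame estimates you need (Cauchy's inequality in one direction, the geometric-series trick $\sum_k e^{-k}$ in the other) are both elementary and already appear in the paper's preceding discussion of $\Sigma(B)\cong\textrm{Hol}(\mathbb{C},B)$. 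So your route is at least as direct as the one the paper alludes to, and arguably cleaner for this particular lemma.

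One minor comment: your final paragraph about the ``naive grading'' $\widetilde{\|f\|}_n=\sum_{i\le n}\sup_{K_i}|f|$ is unnecessary. Tameness of $F$ (Definition~\ref{tame Frechet space}) only requires the existence of \emph{some} grading with respect to which $F$ is a tame direct summand of a $\Sigma(B)$; you have exhibited one, and that suffices. The tame-equivalence check you sketch is correct but not needed for the statement.
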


\begin{proof}
~\cite{Hamilton82}.
\end{proof}

\begin{corollary}
The space $F: =\textrm{Hol}(\mathbb C,\mathbb C^n)$ is a tame Fr\'echet space.
\end{corollary}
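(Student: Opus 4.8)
The plan is to realize $F=\textrm{Hol}(\mathbb C,\mathbb C^n)$ as the $n$\ndash fold Cartesian product of $\textrm{Hol}(\mathbb C,\mathbb C)$ with itself and then apply Lemma~\ref{constructionoftamespaces}(2) together with Lemma~\ref{holccistame}. Concretely, let $\pi_j:\mathbb C^n\longrightarrow\mathbb C$ be the $j$\ndash th coordinate projection and consider the linear map
\begin{displaymath}
\Theta: \textrm{Hol}(\mathbb C,\mathbb C^n)\longrightarrow \textrm{Hol}(\mathbb C,\mathbb C)^n,\qquad \Theta(f)=(\pi_1\circ f,\dots,\pi_n\circ f)\,.
\end{displaymath}
A map into $\mathbb C^n$ is holomorphic if and only if each of its components is, so $\Theta$ is a linear bijection with inverse $(f_1,\dots,f_n)\mapsto(f_1,\dots,f_n)$ read as a single $\mathbb C^n$\ndash valued map. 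The whole content of the proof is to check that $\Theta$ is a \emph{tame} isomorphism; granted that, the statement follows, since $\textrm{Hol}(\mathbb C,\mathbb C)^n$ is tame by $(n-1)$\ndash fold application of Lemma~\ref{constructionoftamespaces}(2) to the tame space $\textrm{Hol}(\mathbb C,\mathbb C)$, and a tame isomorphism transports tameness (it exhibits $F$ as a tame direct summand of a tame space, whence tame by Lemma~\ref{constructionoftamespaces}(1)).

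For the grading computation, fix the exhaustion $K_m=\overline{B(0,e^m)}$ and equip $\textrm{Hol}(\mathbb C,\mathbb C^n)$ with $\|f\|_m=\sup_{z\in K_m}|f(z)|$, where $|\cdot|$ is the Euclidean norm on $\mathbb C^n$ (any other norm on $\mathbb C^n$ gives a tame\ndash equivalent grading, with constants independent of $m$, so the choice is immaterial). Give $\textrm{Hol}(\mathbb C,\mathbb C)^n$ the grading $\|(f_1,\dots,f_n)\|_m=\sum_{j=1}^n\|f_j\|_m$. From the pointwise inequality $|\pi_j(f(z))|\le|f(z)|$ we get $\|\pi_j\circ f\|_m\le\|f\|_m$ for every $j$ and $m$, so $\Theta$ is $(0,0,(1)_{m})$\ndash tame. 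Conversely $|f(z)|\le\sum_{j=1}^n|\pi_j(f(z))|$ (again a fixed finite constant would suffice for any norm on $\mathbb C^n$), which gives $\|f\|_m\le\sum_j\|\pi_j\circ f\|_m=\|\Theta(f)\|_m$, so $\Theta^{-1}$ is $(0,0,(1)_{m})$\ndash tame as well. Hence $\Theta$ is a tame isomorphism, completing the argument.

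I do not expect a genuine obstacle here: the argument is purely formal once the componentwise identification is set up, and the only thing to be mildly careful about is that Lemma~\ref{constructionoftamespaces}(2), stated for the product of two spaces, iterates to finite products, and that the sup/Euclidean/max norms on the finite\ndash dimensional space $\mathbb C^n$ are interchangeable for the purposes of tame equivalence. Everything in sight is linear, so no inverse function theorem or submanifold compatibility (as in Lemma~\ref{mapinfrechetsubmanifold}) is needed.
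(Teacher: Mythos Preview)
Your argument is correct and is exactly the route the paper has in mind: the corollary is stated immediately after Lemma~\ref{holccistame} with no separate proof, so the intended justification is precisely the componentwise identification $\textrm{Hol}(\mathbb C,\mathbb C^n)\cong\textrm{Hol}(\mathbb C,\mathbb C)^n$ together with Lemma~\ref{constructionoftamespaces}(2). One trivial slip: with the sum grading on the product you have $\|\Theta(f)\|_m=\sum_j\|\pi_j\circ f\|_m\le n\|f\|_m$, so $\Theta$ is $(0,0,n)$\ndash tame rather than $(0,0,1)$\ndash tame; this is harmless since $n$ is a fixed constant.
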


The proof of R. Hamilton makes strong use of the following observation:

\begin{lemma}
\label{tame_equivalence_of_gardings_on_holcc}
Let $F:= \textrm{Hol}(\mathbb C, \mathbb C)$. The two gradings
\begin{align*}
\|f\|_{L_1^n}:=& \frac{1}{2\pi} \int_{\partial B_n}|f(z)|dz\\
\intertext{and}
\|f\|_{L_{\infty}^n}:=& \sup_{z \in B_n }\|f(z)\|
\end{align*}
are tame equivalent.
\end{lemma}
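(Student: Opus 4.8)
The plan is to establish the two inequalities defining $(r,b,C(n))$-equivalence directly, using elementary estimates for holomorphic functions on the discs $B_n = B(0,e^n)$ (I read the balls $B_n$ as having radius $e^n$, consistent with the model space $\Sigma(B)$ and the proof of the preceding lemma relating $\Sigma(B)$ to $\mathrm{Hol}(\mathbb{C},B)$). One direction is immediate: for every $n$ and every $f \in F$,
\begin{displaymath}
\|f\|_{L_1^n} = \frac{1}{2\pi}\int_{\partial B_n}|f(z)|\,|dz| \leq \frac{1}{2\pi}\cdot \big(\text{length of }\partial B_n\big)\cdot \sup_{z\in\partial B_n}|f(z)| \leq e^n \,\|f\|_{L_\infty^n},
\end{displaymath}
where in the last step I use the maximum principle to replace the supremum over $\partial B_n$ by the supremum over $B_n$. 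So $\|f\|_{L_1^n}\leq e^n\|f\|_{L_\infty^n}$, which is $(0,0,C(n))$-tame with $C(n)=e^n$; note the constant is allowed to grow with $n$, so this causes no difficulty.

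For the reverse direction I would bound $\sup_{z\in B_n}|f(z)|$ by $\|f\|_{L_1^{n+1}}$, i.e.\ shift up by one level ($r=1$). The key tool is the Cauchy integral formula: for $z\in B_n$ and the circle $\partial B_{n+1}$ of radius $e^{n+1}$,
\begin{displaymath}
f(z) = \frac{1}{2\pi i}\int_{\partial B_{n+1}}\frac{f(\zeta)}{\zeta - z}\,d\zeta,
\end{displaymath}
so $|f(z)| \leq \frac{1}{2\pi}\int_{\partial B_{n+1}}\frac{|f(\zeta)|}{|\zeta - z|}\,|d\zeta|$. Since $|\zeta| = e^{n+1}$ and $|z|\leq e^n$, the denominator satisfies $|\zeta - z| \geq e^{n+1}-e^n = e^n(e-1)$, a bound uniform in $z\in B_n$ and $\zeta\in\partial B_{n+1}$. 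Hence
\begin{displaymath}
\sup_{z\in B_n}|f(z)| \leq \frac{1}{e^n(e-1)}\cdot\frac{1}{2\pi}\int_{\partial B_{n+1}}|f(\zeta)|\,|d\zeta| = \frac{1}{e^n(e-1)}\,\|f\|_{L_1^{n+1}} \leq \frac{1}{e-1}\,\|f\|_{L_1^{n+1}},
\end{displaymath}
which gives $\|f\|_{L_\infty^n} \leq C \|f\|_{L_1^{n+1}}$ with $C = \tfrac{1}{e-1}$ a fixed constant. Combining the two directions, the gradings are $(1,0,C(n))$-equivalent with, say, $C(n) = \max(e^n, \tfrac{1}{e-1})$, hence tame equivalent.

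I do not expect a serious obstacle here; the only points requiring a little care are bookkeeping ones: fixing the convention for the radius of $B_n$ (and checking it matches the one used elsewhere in the paper, in particular in Example~\ref{frechetexamples} and the $\Sigma(B)\cong\mathrm{Hol}(\mathbb{C},B)$ lemma), confirming that all the $\|f\|_{L_1^n}$ and $\|f\|_{L_\infty^n}$ are actually finite for entire $f$ (which is clear, since $f$ is continuous and $\overline{B_n}$, $\partial B_{n+1}$ are compact), and verifying the monotonicity $\|f\|_0\leq\|f\|_1\leq\cdots$ in each grading so that they genuinely are gradings in the sense of the definition — this follows from $B_n\subset B_{n+1}$ for the sup-grading and from the maximum principle together with $B_n \subset B_{n+1}$ for the $L_1$-grading. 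If one prefers a symmetric shift, one can equally phrase both inequalities with $r=1$. The substance of the argument is just the maximum principle plus the Cauchy integral formula with the elementary distance estimate $|\zeta-z|\geq e^{n+1}-e^n$.
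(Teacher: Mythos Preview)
Your proof is correct. The paper itself does not give an argument for this lemma but simply cites Hamilton's article~\cite{Hamilton82}; your approach---the trivial bound using the length of $\partial B_n$ in one direction, and the Cauchy integral formula with the distance estimate $|\zeta-z|\geq e^{n+1}-e^n$ in the other---is the standard one and is exactly the technique the paper then adapts for the $\mathbb{C}^*$ analogue (the lemma immediately following, where the disc is replaced by an annulus and the Cauchy formula is applied on the strip $A_n'$). One small bookkeeping remark: depending on whether $dz$ in the definition of $\|\cdot\|_{L_1^n}$ is read as arc length $|dz|$ or as the angular measure $d\theta$, the constants pick up or lose a factor of $e^n$; this is harmless for tame equivalence since $C(n)$ may depend on $n$, but it is worth fixing the convention once and checking it matches the paper's later computations for $\mathrm{Hol}(\mathbb{C}^*,\mathbb{C})$.
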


\begin{proof}
~\cite{Hamilton82}.
\end{proof}

Our aim is now to prove that $\textrm{Hol}(\mathbb{C}^*, \mathbb{C})$ is a tame Fr\'echet space. Our strategy parallels the one used by R. Hamilton for the proof of lemma~\ref{holccistame} and \ref{tame_equivalence_of_gardings_on_holcc}.

Let us introduce some notation:

\begin{notation}~
\label{A_n}
\begin{itemize}
\item Let $A_n$ denote the annulus $A_n:= \{z\in \mathbb{C}^*|e^{-n}\leq |z| \leq e^n\}$ and define the boundaries of $A_n$ by 
\begin{displaymath}
\partial A_n^+:= \{z|\hspace{3pt}|z|=e^n\}\quad \textrm{and} \quad \partial A_n^-:= \{z|\hspace{3pt}|z|=e^{-n}\},
\end{displaymath}
\item Let $A_n'$ denote the set $A_n':= \{z\in \mathbb C | -n \leq \Re(z)\leq n, 0\leq \Im(z)\leq 2\pi i\}$,
\item Let $B_n$ denote the disc $B_n:=\{z\in \mathbb C| \hspace{3pt} |z|\leq  e^n\}$.
\end{itemize}
\end{notation}

At some points we apply the Cauchy integral formula and the maximum principle for holomorphic functions.

We start with our version of lemma~\ref{tame_equivalence_of_gardings_on_holcc}. The proof of our result is considerably more involved because holomorphic functions on $\mathbb{C}^*$  may be unbounded in the limit $|z|\rightarrow \infty$ and $|z|\rightarrow 0$.

\begin{lemma}
Let $F:=\textrm{Hol} (\mathbb C^*, \mathbb C)$. The two gradings
\begin{align*}
\|f\|_{L_{\infty}^n}&:= \sup_{z\in A_n} |f(z)|\\
\intertext{and}
\|f\|_{L_1^n}&:= \frac{1}{2\pi} \sup \left\{\int_{\partial A_n^+}|f(z)|dz,\int_{\partial A_n^-}|f(z)|dz   \right\}
\end{align*}
are tame equivalent.
\end{lemma}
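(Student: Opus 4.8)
The plan is to establish the two tame-equivalence inequalities directly, exploiting the conformal identification between the annulus $A_n$ and the rectangle $A_n'$ via the exponential map $z = e^w$, which turns ``annulus'' estimates into ``strip'' estimates. The easy direction is to bound $\|f\|_{L_1^n}$ by $\|f\|_{L_\infty^n}$: each boundary circle $\partial A_n^\pm$ has length $2\pi e^{\pm n}$, so $\frac{1}{2\pi}\int_{\partial A_n^\pm}|f(z)|\,dz \leq e^{\pm n}\sup_{z\in\partial A_n^\pm}|f(z)| \leq e^{n}\|f\|_{L_\infty^n}$; taking the sup over the two circles gives $\|f\|_{L_1^n} \leq e^{n}\|f\|_{L_\infty^n} \leq \|f\|_{L_\infty^{n+1}}$, so this direction is $(1,0,1)$-type with no loss. (One must be mildly careful that the $L_1^n$ here uses arclength $dz$ rather than $d\theta$; this only changes constants.)

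The substantive direction is $\|f\|_{L_\infty^n} \leq C(n)\|f\|_{L_1^{n+r}}$. The natural tool is the Cauchy integral formula on the annulus: for a point $z$ with $e^{-n}\leq |z|\leq e^n$, write
\begin{displaymath}
f(z) = \frac{1}{2\pi i}\int_{\partial A_{n+1}^+}\frac{f(\zeta)}{\zeta - z}\,d\zeta - \frac{1}{2\pi i}\int_{\partial A_{n+1}^-}\frac{f(\zeta)}{\zeta - z}\,d\zeta,
\end{displaymath}
valid because $f$ is holomorphic on the closed annulus $A_{n+1}\supset A_n$. On the outer circle $|\zeta| = e^{n+1}$ and $|z|\leq e^n$, so $|\zeta - z| \geq e^{n+1} - e^n = e^n(e-1)$, a uniform lower bound; on the inner circle $|\zeta| = e^{-(n+1)}$ and $|z|\geq e^{-n}$, so $|\zeta - z|\geq e^{-n} - e^{-(n+1)} = e^{-n}(1 - e^{-1})$. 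Hence each integral is bounded by a constant depending on $n$ (roughly $e^{n}$ or $e^{n+1}$, coming from the reciprocals of these distances) times $\int_{\partial A_{n+1}^\pm}|f(\zeta)|\,|d\zeta| \leq 2\pi\|f\|_{L_1^{n+1}}$. Taking the sup over $z\in A_n$ yields $\|f\|_{L_\infty^n}\leq C(n)\|f\|_{L_1^{n+1}}$ for an explicit $C(n)$ of exponential type in $n$, which is exactly what $(r,b,C(n))$-tame equivalence permits ($r=1$, $b=0$).

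The main obstacle — and the reason the authors flag this lemma as ``considerably more involved'' than Hamilton's disc version — is precisely that holomorphic functions on $\mathbb{C}^*$ need not be bounded near $0$ or near $\infty$, so one cannot invoke the maximum principle on a disc to reduce sup norms to boundary integrals over a single circle; the annulus has two boundary components, and the Laurent expansion $f = \sum_{k\in\mathbb{Z}}f_k z^k$ has both a ``holomorphic'' and a ``principal'' part, each controlled by a different circle. Handling the positive-index and negative-index parts of the Laurent series separately (or, equivalently, using the two-circle Cauchy formula as above) is what makes the argument work, and keeping track of the $n$-dependence of the constants $C(n)$ — which must be allowed to grow but stay within the tame framework — is the bookkeeping one has to do carefully. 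A clean alternative worth mentioning is to push everything through the exponential substitution to the strip $A_n'$, where $f(e^w)$ becomes $2\pi i$-periodic and the two boundary circles become the two vertical edges $\Re(w) = \pm n$; then the estimate becomes the Phragmén–Lindelöf / Cauchy estimate on a vertical strip, structurally identical to Hamilton's example with the space $\mathcal{P}$ of periodic entire functions, but I would present the direct annulus version since it keeps the constants transparent.
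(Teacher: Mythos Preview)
Your argument is correct, but the route you choose as primary is the opposite of the paper's. You apply the Cauchy integral formula directly on the annulus $A_{n+1}$, bounding $|\zeta-z|$ from below on each boundary circle; this yields $\|f\|_{L_\infty^n}\le C(n)\|f\|_{L_1^{n+1}}$ with $C(n)$ of order $e^{n}$, which is perfectly admissible for tame equivalence. The paper instead takes precisely the alternative you sketch at the end: it passes to the strip $A_n'$ via $z=e^w$, so that $f$ becomes $2\pi i$-periodic and the two boundary circles become the vertical segments $\Re(w)=\pm(n+r)$, and then applies the Cauchy formula there. Because in strip coordinates the distance $|w-\zeta|\ge r$ is uniform in $n$, the paper obtains the sharper bound $\|f\|_{L_\infty^n}\le \tfrac{2}{r}\|f\|_{L_1^{n+r}}$ with a constant \emph{independent of $n$}. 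So the trade-off is transparency of the annulus picture versus better constants from the strip picture; either suffices for the lemma.

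Two small points. First, your claim that the easy direction is ``$(1,0,1)$-type'' is not quite right under the arclength reading of $dz$: the outer circle has length $2\pi e^n$, so you only get $\|f\|_{L_1^n}\le e^{n}\|f\|_{L_\infty^n}$, i.e.\ $(0,0,e^n)$, not $(1,0,1)$; and the further step $e^n\|f\|_{L_\infty^n}\le \|f\|_{L_\infty^{n+1}}$ is false (take $f\equiv 1$). This does not affect tame equivalence, but the bookkeeping should read $C(n)=e^n$, $r=0$. Second, you were right to flag the $dz$-versus-$d\theta$ ambiguity: the paper's own inequality $\|f\|_{L_1^n}\le\|f\|_{L_\infty^n}$ in part~(1) only holds as written if the boundary integrals are taken with respect to $d\theta$ (equivalently, in strip coordinates), not arclength $|dz|$ on the circle. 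Their part~(2) is explicitly carried out in strip coordinates, so that is evidently the intended normalisation.
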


\begin{proof}~
\begin{enumerate}
	\item We show: $\|f\|_{L_{1}^n}\leq \|f\|_{L_{\infty}^n}$.
\begin{alignat*}{1}
\|f\|_{L_{1}^n}&=\frac{1}{2\pi} \sup \left\{\int_{\partial A_n^+}|f(z)|dz,\int_{\partial A_n^-}|f(z)|dz   \right\}\leq\\
&\leq \frac{1}{2\pi} \sup \left\{\int_{\partial A_n^+}\sup_{\zeta\in A_n^+}|f(\zeta)|dz,\int_{\partial A_n^-}\sup_{\zeta \in A_n^-}|f(\zeta)|dz   \right\} \leq\\
&\leq \sup \left\{ \sup_{z \in \partial A_n^+} |f(z)|, \sup_{z\in  \partial A_n^-}|f(z)| \right\} \leq\\
&\leq \sup_{z\in A_n} |f(z)|=\|f\|_{L_{\infty}^n}
\end{alignat*}

	\item We show: $\|f\|_{L_{\infty}^n}\leq \frac{2}{r} \|f\|_{L_{1}^n}$.

	To this end, we identify the space $\textrm{Hol}(\mathbb C^*, \mathbb C)$ with the space $\textrm{Hol}_{2\pi i}(\mathbb C, \mathbb C)$ of $2\pi i$ periodic functions. Under this identification $A_n$ is identified with $A_n'$.
\end{enumerate}
\begin{alignat*}{1}
\|f\|_{L_{\infty}^n}&= \sup_{z \in A_n'} |f(z)|=\\
&= \sup_{z \in A_n'}\left|\frac{1}{2\pi i} \left\{ \int_{n+r}^{n+r+2\pi i} \frac{f(\zeta)}{z-\zeta}d\zeta - \int_{-n-r}^{-n-r+2\pi i} \frac{f(\zeta)}{z-\zeta}d\zeta \right\}\right|\leq\\
&\leq \sup_{z \in A_n'}\frac{1}{2\pi} \left\{ \int_{n+r}^{n+r+2\pi i} \left|\frac{f(\zeta)}{r}\right|d\zeta + \int_{-n-r}^{-n-r+2\pi i} \left|\frac{f(\zeta)}{r}\right|d\zeta\right\}=\\
&=\sup_{z\in A_n'}\frac{1}{2\pi r}\left\{ \int_{n+r}^{n+r+2\pi i} \left|f(\zeta)\right|d\zeta + \int_{-n-r}^{-n-r+2\pi i} \left|f(\zeta)\right|d\zeta\right\}\leq\\
&\leq \frac{2}{r}\frac{1}{2\pi} \sup\left\{\int_{n+r}^{n+r+2\pi i}|f(\zeta)|, \int_{-n-r}^{-n-r+2\pi i}|f(\zeta)| \right\}=\\
&=\frac{2}{r}\|f\|_{L_{\infty}^{n+r}}
\end{alignat*}
\end{proof}

We now prove, that $\textrm{Hol}(\mathbb C^*, \mathbb C)$ is a tame Fr\'echet space.

\begin{lemma}
\label{holc*cisfrechet}
$F:= \textrm{Hol}(\mathbb C^*, \mathbb C)$ is a tame Fr\'echet space.
\end{lemma}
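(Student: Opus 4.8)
The plan is to exhibit $F:=\textrm{Hol}(\mathbb{C}^*,\mathbb{C})$ as a tame direct summand of a model space $\Sigma(B)$ for a suitable Banach space $B$, following the template Hamilton uses for $\textrm{Hol}(\mathbb{C},\mathbb{C})$. The natural candidate is $B=\mathbb{C}^2$ (equipped with, say, the Euclidean or supremum norm), so that $\Sigma(B)=\Sigma(\mathbb{C}^2)$; recall from the examples above that this is a tame Fr\'echet space. The heuristic is that a holomorphic function on $\mathbb{C}^*$ has a Laurent expansion $f(z)=\sum_{k\in\mathbb{Z}}a_k z^k$, which splits into its nonnegative part $f_+(z)=\sum_{k\geq 0}a_k z^k$ (entire, i.e.\ an element of $\textrm{Hol}(\mathbb{C},\mathbb{C})\cong\Sigma(\mathbb{C})$ via the lemma already proved) and its negative part $f_-(z)=\sum_{k<0}a_k z^k$, which is holomorphic at $\infty$ and vanishing there; substituting $w=1/z$ turns $f_-$ into an entire function vanishing at $0$, hence again an element of $\Sigma(\mathbb{C})$. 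So set-theoretically $F\cong\Sigma(\mathbb{C})\times\Sigma(\mathbb{C})=\Sigma(\mathbb{C}^2)$, and the work is to check that this identification, and its inverse, are tame with respect to the $L_\infty^n$-grading on $F$ (the grading $\|f\|_{L_\infty^n}=\sup_{z\in A_n}|f(z)|$ on the annuli $A_n$) and the standard $l_\infty^n$-grading on $\Sigma(\mathbb{C}^2)$.

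The key steps, in order, are as follows. First, fix the map $\Phi:F\to\Sigma(\mathbb{C}^2)$ sending $f$ to the sequence $((a_k,a_{-k-1}))_{k\in\mathbb{N}_0}$ of paired Laurent coefficients, and its inverse $\Psi:\Sigma(\mathbb{C}^2)\to F$ reconstructing $f$ from the coefficients. Second, estimate $\|\Phi(f)\|_{l_\infty^n}$ in terms of $\|f\|_{L_\infty^m}$: the nonnegative coefficients are controlled by the Cauchy inequality on $\partial B(0,e^n)$ exactly as in the scalar lemma, giving $\sup_k|a_k|e^{nk}\leq \sup_{|z|=e^n}|f(z)|$, and the negative coefficients $a_{-k-1}$ are controlled symmetrically by Cauchy's inequality applied on the inner circle $\partial A_n^-=\{|z|=e^{-n}\}$ (equivalently, apply the scalar estimate to $f(1/w)$), yielding $\sup_k|a_{-k-1}|e^{nk}\leq\sup_{|z|=e^{-n}}|f(z)|$; together these bound $\|\Phi(f)\|_{l_\infty^n}$ by $\|f\|_{L_\infty^n}$ up to the fixed constant coming from the choice of norm on $\mathbb{C}^2$, so $\Phi$ is tame (in fact with $r=0$). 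Third, for the reverse direction, given an exponentially decreasing sequence, bound $|f(z)|=|\sum_k a_k z^k+\sum_k a_{-k-1}z^{-k-1}|$ on $A_n$ by $\sum_k|a_k|e^{nk}+\sum_k|a_{-k-1}|e^{nk}$, which is a geometric-type sum dominated by a constant times $\|\cdot\|_{l_\infty^{n+1}}$ (one loses one step $r=1$ to absorb the $\sum_k e^{-k}$ tail), so $\Psi$ is tame. Finally, conclude via Definition~\ref{tame Frechet space} and Lemma~\ref{constructionoftamespaces}: $F$ is tamely isomorphic to $\Sigma(\mathbb{C}^2)$, hence a tame direct summand of itself, hence tame — and the preceding lemma on tame equivalence of the $L_1^n$ and $L_\infty^n$ gradings guarantees the choice of grading on $F$ is immaterial.

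I expect the main obstacle to be bookkeeping rather than conceptual: one must be careful that the Laurent coefficients, obtained as contour integrals $a_k=\frac{1}{2\pi i}\int_{|z|=\rho}f(z)z^{-k-1}dz$, are genuinely estimated on the correct circles so that the exponential weights come out with the right sign for both the positive and negative tails simultaneously — the positive part wants a large circle $|z|=e^n$ and the negative part wants a small circle $|z|=e^{-n}$, and the annulus $A_n$ conveniently contains both, which is exactly why the $A_n$-grading is the right one. A secondary point to handle cleanly is the passage $z\mapsto 1/z$ used to reduce the negative-exponent part to Hamilton's already-proved scalar case; one should either invoke that symmetry explicitly or redo the two-line Cauchy estimate directly on the inner boundary. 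There is also the minor matter of fixing which norm on $\mathbb{C}^2$ one uses and tracking the resulting absolute constant $C(n)\equiv C$, but by the tame equivalence of the Euclidean and supremum gradings on $\Sigma(\mathbb{C}^2)$ recorded above, any choice works.
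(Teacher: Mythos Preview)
Your proposal is correct and follows essentially the same approach as the paper: both construct the tame isomorphism $F\to\Sigma(\mathbb{C}^2)$ via the Laurent splitting into nonnegative and negative parts, estimate the coefficients by Cauchy's inequality on the outer and inner boundary circles of $A_n$ respectively, and control the reverse direction by summing the Laurent series termwise. The only cosmetic differences are that the paper pairs the coefficients as $(c_k,c_{-k})$ rather than your $(a_k,a_{-k-1})$, and carries out the Cauchy estimates after passing to the $2\pi i$-periodic strip model of $\mathbb{C}^*$ rather than working directly on the circles as you do; your treatment of the ``lose one step'' in the reverse estimate is handled in the paper by invoking the tame equivalence of the $l_1^n$ and $l_\infty^n$ gradings on $\Sigma(B)$.
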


Recall from definition~\ref{tame Frechet space}, that we have to find a Banach space $B$, such that $\textrm{Hol}(\mathbb C^*, \mathbb C)$ is a tame direct summand of $\Sigma(B)$. We choose $B=\mathbb{C}^2$. This

\begin{proof}
Let $f:= \sum_{k\in \mathbb Z} c_k z^k$ and set $f_0^+:=\sum_{k\in \mathbb N_0} c_k z^k$  and $f^-:=\sum_{-k\in \mathbb N} c_k z^k$. Clearly $f_0^+(z)$ and $f^-(\frac{1}{z})$ are holomorphic functions on $\mathbb C$.
Let
$$
 \begin{matrix}
\varphi: &\textrm{Hol}(\mathbb C^*, \mathbb C) &\longrightarrow &\Sigma(\mathbb C^2)\\
       &(f) &\mapsto &\left(\{c_k\}_{k\geq 0}, \{c_k\}_{k < 0} \right) \,.
\end{matrix}
$$

We use the notation $\widetilde{c}_k:=(c_k, c_{-k})\subset \mathbb C^2$ and use the supremum-norm on $\mathbb C^2$. 

\begin{enumerate}
\item We show: $\|f\|_{L_{\infty}^n}\leq \|\{\widetilde {c}_k\}\|$.
\begin{alignat*}{1}
\|f\|_{L_{\infty}^n}&= \sup_{z \in A_n}|f(z)|\leq\\
&\leq \sup_{z \in A_n}\left\{|f_0^+(z)|+ |f^-(z)| \right\}\leq\\
&\leq 2  \sup_{z\in A_n}\left\{\sup \left\{|f_0^+(z)|, |f^-(z)|  \right\} \right\}=\\
&= 2 \sup\left\{ \sup_{z \in A_n}|f_0^+(z)|, \sup_{z \in A_n}|f^-(\frac{1}{z})|  \right\}=\\
&= 2  \sup\left\{\|f_0^+\|_{L_{\infty}^n}, \|f^-\|_{L_{\infty}^n} \right\}\leq\\
&\leq 2   \sup \left\{ \|\{c_k\}\|_{L_1^n}, \|\{c_{-k}\|_{L_1^n}\}\right\}\leq\\
&\leq 2  \|\{\widetilde{c}_k\}\|_{L_{\infty}^n} 
\end{alignat*}

\item We show: $\|\{c_k\}\|_{L_{\infty}^n}\leq \|f\|_{L_1^n}$.
\begin{alignat*}{1}
\|\{\widetilde{c}_k\}\|_{L_{\infty}^n} &= \sup_k e^{nk}|\widetilde{c}_k|=\\
&=\sup_k e^{nk}\left|\sup\left\{c_k, c_{-k} \right\} \right|=\\
&\leq \sup \left\{\sup_k e^{nk}|c_k|, \sup_k e^{nk}|c_{-k}| \right\}\leq\\
&\leq \sup \left\{\sup_k e^{nk}\frac{1}{2\pi}\left|\int_{n}^{n+2\pi i} e^{-kz} f(z) dz \right|, \sup_k e^{nk} \frac{1}{2\pi} \left| \int_{-n}^{-n+2\pi i} e^{kz} f(z)\right|| \right\} \leq\\
&
\begin{aligned}
\leq \sup &\left\{\sup_k \frac{1}{2}\int_{n}^{n+2\pi i} | e^{nk} e^{-kn}|| e^{-k i\Im(z)}| |f(z)|dz\right.,\\
& \hphantom{\{}\left. \sup_k \frac{1}{2\pi} \int_{-n}^{-n+2\pi i} |e^{nk} e^{-kn}||e^{ki\Im(z)} ||f(z)|dz \right\}\leq
\end{aligned} \\
&\leq \sup \left\{\sup_k \frac{1}{2}\int_{n}^{n+2\pi i} |f(z)|dz, \sup_k \frac{1}{2\pi} \int_{-n}^{-n+2\pi i} |f(z)|dz \right\} \leq\\
&\leq \sup \left\{\sup_k \frac{1}{2}\int_{n}^{n+2\pi i} \sup_{\zeta \in \partial {A'_n}^+}|f(\zeta )|dz, \sup_k \frac{1}{2\pi} \int_{-n}^{-n+2\pi i} \sup_{\zeta \in \partial{A'_n}^+}|f(\zeta)|dz 
\right\} =\\
&= \sup\left\{ \sup_{\zeta \in \partial {A'_n}^+} |f(z)|,  \sup_{\zeta \in \partial {A'_n}^-} |f(z)|  \right\}\leq\\
&\leq \sup_{\zeta \in A'_n |f(\zeta)|}= \|f\|_{L_{\infty}^n}
\end{alignat*}
\end{enumerate}
\end{proof}

We use the following result in the proof that an affine Kac-Moody algebra is a tame Fr\'echet Lie algebra (cf. definition~\ref{tamefrechetLie algebra}):

\begin{lemma}
\label{differentialistame}
The differential 
\begin{displaymath}
\frac{d}{dz}:\textrm{Hol}(\mathbb{C}^*,\mathbb{C}) \longrightarrow \textrm{Hol}(\mathbb{C}^*,\mathbb{C}), \quad f \mapsto f'
\end{displaymath}
is a tame linear map.
\end{lemma}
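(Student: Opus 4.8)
The plan is to prove tameness of $\frac{d}{dz}$ by estimating the $L_\infty^n$-grading of $f'$ against the $L_\infty^{n+r}$-grading of $f$, using the Cauchy integral formula for the derivative together with the maximum principle; concretely I expect to show that $\|f'\|_{L_\infty^n} \leq C\,\|f\|_{L_\infty^{n+1}}$ for a suitable constant, so that $\frac{d}{dz}$ is $(1,0,C)$-tame. The key point is that on the annulus $A_n = \{e^{-n}\leq |z|\leq e^n\}$, every point $z$ has a fixed safe distance (say comparable to $e^{-n}$ from below and a fixed multiplicative margin from above) to the boundary $\partial A_{n+1}$, so the Cauchy kernel $\frac{1}{(z-\zeta)^2}$ is uniformly controlled.

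First I would fix $n$ and $z\in A_n$, and apply the Cauchy integral formula for the first derivative over the two boundary circles of $A_{n+1}$:
\begin{displaymath}
f'(z) = \frac{1}{2\pi i}\left(\int_{\partial A_{n+1}^+}\frac{f(\zeta)}{(z-\zeta)^2}\,d\zeta - \int_{\partial A_{n+1}^-}\frac{f(\zeta)}{(z-\zeta)^2}\,d\zeta\right),
\end{displaymath}
which is valid because $f$ is holomorphic on the closed annulus $A_{n+1}$ containing $A_n$ in its interior. Next I would bound $|z-\zeta|$ from below: for $\zeta\in\partial A_{n+1}^+$ one has $|\zeta|=e^{n+1}$ and $|z|\leq e^n$, so $|z-\zeta|\geq e^{n+1}-e^n = e^n(e-1)$; for $\zeta\in\partial A_{n+1}^-$ one has $|\zeta|=e^{-n-1}$ and $|z|\geq e^{-n}$, so $|z-\zeta|\geq e^{-n}-e^{-n-1} = e^{-n-1}(e-1)$. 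Substituting these lower bounds and estimating the integrands by $\|f\|_{L_\infty^{n+1}}$, while the circumferences are $2\pi e^{n+1}$ and $2\pi e^{-n-1}$ respectively, the exponential factors combine so that the $n$-dependence cancels in the "$+$" term and only a harmless bounded factor survives; a symmetric computation handles the "$-$" term (there the circumference $e^{-n-1}$ and the denominator $e^{-2n-2}$ produce a factor growing like $e^{n}$, which I address below).

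The main obstacle — and the reason this is more delicate than the analogous statement for $\textrm{Hol}(\mathbb{C},\mathbb{C})$ — is exactly the inner boundary: near $|z|\to 0$ the Cauchy kernel $\frac{1}{(z-\zeta)^2}$ blows up like $e^{2n}$ while the contour length shrinks only like $e^{-n}$, so a naive estimate over $\partial A_{n+1}^-$ gives a factor $\sim e^{n}\|f\|_{L_\infty^{n+1}}$, which is not tame with shift $r=1$. The fix is to enlarge the shift: I would instead integrate over $\partial A_{n+r}$ for a larger fixed $r$, or — cleaner — first pass through the identification with $2\pi i$-periodic functions on the strip $A_n'$ (as in Notation~\ref{A_n} and the previous lemma), where the derivative corresponds to an ordinary derivative on a strip of width $2n$ and the Cauchy formula over the vertical lines $\Re(\zeta)=\pm(n+1)$ gives uniformly bounded kernels $\frac{1}{(z-\zeta)^2}$ with $|z-\zeta|\geq 1$ for $z\in A_n'$, so that $\|f'\|_{L_\infty^n}\leq \frac{1}{2\pi}\cdot(2\cdot(2\pi)) \cdot \|f\|_{L_\infty^{n+1}} = 2\|f\|_{L_\infty^{n+1}}$, exhibiting $\frac{d}{dz}$ (transported to the strip picture) as $(1,0,2)$-tame. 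Transporting back via the tame isomorphism $\textrm{Hol}(\mathbb{C}^*,\mathbb{C})\cong \textrm{Hol}_{2\pi i}(\mathbb{C},\mathbb{C})$ and composing with the bounded multiplication operator relating $\frac{d}{dz}$ on $\mathbb{C}^*$ to $\frac{d}{dw}$ on the strip (the change of variables $z=e^w$ introduces a factor $e^{-w}$, i.e. multiplication by $1/z$, which is itself tame on $A_n$), one concludes that $\frac{d}{dz}:\textrm{Hol}(\mathbb{C}^*,\mathbb{C})\to\textrm{Hol}(\mathbb{C}^*,\mathbb{C})$ is tame. I would present the strip-picture computation as the core estimate and relegate the change-of-variables bookkeeping to a short remark.
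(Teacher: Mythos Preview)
Your final argument is correct, but you have tripped over a phantom obstacle. Recall that a linear map is $(r,b,C(n))$-tame precisely when $\|\varphi(f)\|_n \leq C(n)\|f\|_{n+r}$ for all $n\geq b$ --- the sequence $C(n)$ is explicitly allowed to grow with $n$. Your ``naive'' Cauchy estimate over $\partial A_{n+1}$, which yields
\begin{displaymath}
\|f'\|_{L_\infty^n}\ \leq\ \frac{e^{n+1}+e^{1-n}}{(e-1)^2}\,\|f\|_{L_\infty^{n+1}},
\end{displaymath}
therefore already establishes $(1,0,C(n))$-tameness with $C(n)\sim e^{n}$ and finishes the proof on the spot. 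The detour through the strip picture is unnecessary; indeed, after you multiply back by $1/z$ (whose tameness you correctly invoke, with constant $e^{n}$) you recover exactly the growth $C(n)\sim e^{n}$ that you had just declared inadmissible.

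The paper's proof is shorter still and avoids the annular contour altogether: for each $z\in A_n$ one takes the disc $\overline{D}(z,R)\subset A_{n+1}$ with $R=e^{-(n+1)}(e-1)$ and applies the elementary Cauchy inequality $|f'(z)|\leq R^{-1}\sup_{\overline{D}(z,R)}|f|$, giving $\|f'\|_n\leq \frac{e^{n+1}}{e-1}\|f\|_{n+1}$ in one line. The difference between the two approaches is local versus global: the paper uses a small disc whose radius shrinks as $z$ approaches the inner boundary of the annulus, while you integrate over the full boundary of $A_{n+1}$; both routes deliver $C(n)\sim e^{n}$, which is the correct --- and tame --- answer.
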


\noindent For the proof of lemma~\ref{differentialistame} we need the following technical result:

\begin{lemma}
\label{coroflang}
Let $f$ be analytic on a closed disc $\overline{D}(z_0, R), R>0$. Let $0 < R_1 < R$. Denote by $\|f\|_R$ the supremum norm of $f$ on the circle of radius $R$. Then for $z \in \overline{D}(z_0, R_1)$, we have:
$$|f^{(k)}(z)|\leq \frac{k! R}{(R-R_1)^{k+1}}\|f\|_R\,.$$
\end{lemma}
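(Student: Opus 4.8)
\textbf{Proof plan for Lemma~\ref{coroflang}.}
The plan is to derive the estimate directly from the Cauchy integral formula for derivatives, which is the standard route for bounds of this type (this is essentially the Cauchy inequality cited earlier in the excerpt, but with the evaluation point $z$ allowed to move inside the smaller disc rather than sitting at the center). First I would write, for $z\in\overline{D}(z_0,R_1)$,
\begin{displaymath}
f^{(k)}(z)=\frac{k!}{2\pi i}\int_{|\zeta-z_0|=R}\frac{f(\zeta)}{(\zeta-z)^{k+1}}\,d\zeta,
\end{displaymath}
which is valid since $f$ is analytic on the closed disc $\overline{D}(z_0,R)$ and $z$ lies strictly inside the circle of integration (because $R_1<R$).

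Next I would estimate the integrand. On the circle $|\zeta-z_0|=R$ the numerator is bounded by $\|f\|_R$ by definition of the supremum norm. For the denominator, the triangle inequality gives $|\zeta-z|\geq |\zeta-z_0|-|z_0-z|\geq R-R_1>0$, hence $|\zeta-z|^{k+1}\geq (R-R_1)^{k+1}$. The circle has length $2\pi R$, so the standard $ML$-estimate for contour integrals yields
\begin{displaymath}
|f^{(k)}(z)|\leq \frac{k!}{2\pi}\cdot\frac{\|f\|_R}{(R-R_1)^{k+1}}\cdot 2\pi R=\frac{k!\,R}{(R-R_1)^{k+1}}\|f\|_R,
\end{displaymath}
which is exactly the claimed inequality.

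There is no real obstacle here: the only point requiring a moment's care is that the contour integral representation of $f^{(k)}(z)$ remains valid for every $z$ in the \emph{closed} subdisc $\overline{D}(z_0,R_1)$, which is guaranteed precisely by the strict inequality $R_1<R$ ensuring $\operatorname{dist}(z,\partial D(z_0,R))\geq R-R_1>0$ uniformly in $z$. Everything else is the routine $ML$-bound. (This lemma is a minor variant of a standard result in complex analysis, e.g.\ in Lang's textbook, which is why it is quoted rather than belabored; its role is to feed the proof of Lemma~\ref{differentialistame}, where one needs to control $\|f'\|_{L_\infty^n}$ on the annulus $A_n$ by the sup of $f$ on a slightly larger annulus $A_{n+r}$, and the factor $\tfrac{R}{(R-R_1)^2}$ with $k=1$ provides exactly the constant $C(n)$ needed there.)
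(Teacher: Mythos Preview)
Your proof is correct and is precisely the application of Cauchy's integral formula that the paper has in mind; the paper itself does not spell out the argument but simply refers to Lang's textbook for this standard estimate.
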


\begin{proof}
This lemma is an application of Cauchy's integral formula. For details~\cite{Lang99}, pp.~131.
\end{proof}

\begin{proof}[Proof of lemma~\ref{differentialistame}]~
\begin{enumerate}
\item $\frac{d}{dz}$ is linear.
\item To prove tameness we use lemma~\ref{coroflang}. Let $z\in A_n$ and choose $R= e^{-(n+1)}(e-1)$. Thus $\overline{D}(z, R)\subset A_{n+1}$. Hence $\|f\|_R\leq \|f\|_{n+1}$. 

In the notation of lemma~\ref{coroflang} we can use that $R_1=0$ and $k=1$  and calculate in this way: 
\begin{displaymath}
\|f'(z)\|_n\leq \frac{R}{R^{2}}\|f\|_{n+1}=\frac{e^{n+1}}{e-1}\|f\|_{n+1}\,.
\end{displaymath}

This description is independent of $z$. Thus 
\begin{displaymath}
\|f'\|_n\leq \frac{e^{n+1}}{e-1}\|f\|_{n+1}\,.
\end{displaymath}

Thus the differential is $(1,0,\frac{e^{n+1}}{e-1})$-tame.
\end{enumerate}
\end{proof}

A further class of spaces that is important for the description of twisted Kac-Moody algebras (compare definition~\ref{definitiontwistedloopalgebra}) are
spaces of holomorphic functions that satisfy some functional equation. We describe first the general setting and specialize then to the two most important cases, namely symmetric and antisymmetric holomorphic functions.

\begin{lemma}[Subspaces of $\textrm{Hol}(\mathbb{C}^*, \mathbb{C})$]
\label{subspacesofmg}
Let $k,l\in \mathbb{N}$ and $\omega=e^{\frac{2\pi i}{k}}$. The spaces 
\begin{displaymath}
\textrm{Hol}^{k,l}(\mathbb{C}^*, \mathbb C):=\{ f\in \textrm{Hol}(\mathbb{C}^*, \mathbb C)| f(\omega z)=\omega^{l}f(z)\}
\end{displaymath}
 are tame Fr\'echet spaces.
\end{lemma}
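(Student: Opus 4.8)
The plan is to realise $\textrm{Hol}^{k,l}(\mathbb{C}^*,\mathbb{C})$ as a tame direct summand of $\textrm{Hol}(\mathbb{C}^*,\mathbb{C})$, which is tame by Lemma~\ref{holc*cisfrechet}, and then invoke Lemma~\ref{constructionoftamespaces}(1). First I would translate the functional equation into a condition on Laurent coefficients: writing $f=\sum_{j\in\mathbb{Z}}c_jz^j$, the identity $f(\omega z)=\omega^{l}f(z)$ forces $c_j(\omega^{j}-\omega^{l})=0$, i.e.\ $c_j=0$ whenever $j\not\equiv l$ modulo $k$. Hence $\textrm{Hol}^{k,l}(\mathbb{C}^*,\mathbb{C})$ is exactly the closed subspace of Laurent series supported on the arithmetic progression $l+k\mathbb{Z}$, and I equip it with the grading $\|\cdot\|_{L_{\infty}^{n}}$ induced from $\textrm{Hol}(\mathbb{C}^*,\mathbb{C})$. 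Denote by $\iota$ the inclusion; it is $(0,0,1)$\ndash tame, being the restriction of the identity with respect to the induced grading.

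Next I would write down the averaging operator
\begin{displaymath}
P\colon \textrm{Hol}(\mathbb{C}^*,\mathbb{C})\longrightarrow \textrm{Hol}(\mathbb{C}^*,\mathbb{C}),\qquad
(Pf)(z):=\frac{1}{k}\sum_{m=0}^{k-1}\omega^{-ml}\,f(\omega^{m}z)\,.
\end{displaymath}
A short root\ndash of\ndash unity computation, using $\frac1k\sum_{m=0}^{k-1}\omega^{m(j-l)}=1$ if $j\equiv l$ modulo $k$ and $=0$ otherwise, shows $(Pf)(z)=\sum_{j\equiv l\ (k)}c_jz^{j}$; thus $P$ takes values in $\textrm{Hol}^{k,l}(\mathbb{C}^*,\mathbb{C})$, it is idempotent, and $P\circ\iota=\mathrm{Id}$ on $\textrm{Hol}^{k,l}(\mathbb{C}^*,\mathbb{C})$. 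It therefore remains only to check that $P$ is a tame linear map. For each $m$ the rotation $R_m\colon f\mapsto f(\omega^{m}\,\cdot\,)$ is a well defined linear endomorphism of $\textrm{Hol}(\mathbb{C}^*,\mathbb{C})$, and since $|\omega^{m}|=1$ the map $z\mapsto\omega^{m}z$ carries each annulus $A_n$ onto itself; hence $\|R_mf\|_{L_{\infty}^{n}}=\|f\|_{L_{\infty}^{n}}$ for every $n$, so $R_m$ is $(0,0,1)$\ndash tame. Scalar multiples of tame linear maps are tame, and a finite linear combination of tame linear maps is tame (this follows from Lemma~\ref{constructionoftamemaps} together with the triangle inequality for the seminorms), so $P=\tfrac1k\sum_{m=0}^{k-1}\omega^{-ml}R_m$ is tame. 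Consequently $\textrm{Hol}^{k,l}(\mathbb{C}^*,\mathbb{C})$ is a tame direct summand of $\textrm{Hol}(\mathbb{C}^*,\mathbb{C})$, hence a tame Fr\'echet space by Lemma~\ref{constructionoftamespaces}(1).

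The computations here are entirely routine and there is no hard analytic step, in contrast with the proof of Lemma~\ref{holc*cisfrechet}; the whole argument is an application of that lemma plus the formal closure properties of tame maps. The one point worth flagging is that one genuinely needs the bounded projection $P$ rather than a change of variables: although the substitution $g(z)\mapsto z^{l}g(z^{k})$ is a topological isomorphism of $\textrm{Hol}(\mathbb{C}^*,\mathbb{C})$ onto $\textrm{Hol}^{k,l}(\mathbb{C}^*,\mathbb{C})$, it is \emph{not} tame for $k>1$, because $\|z^{l}g(z^{k})\|_{L_{\infty}^{n}}$ is controlled by $\|g\|_{L_{\infty}^{nk}}$ and the index $nk$ is not of the admissible form $n+r$. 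Routing through $\textrm{Hol}(\mathbb{C}^*,\mathbb{C})$ and a norm\ndash preserving projection sidesteps this, and the same argument applies verbatim with $\mathbb{C}$ replaced by a finite\ndash dimensional complex vector space $V_{\mathbb{C}}$, which is the version actually used for twisted loop algebras.
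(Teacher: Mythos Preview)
Your proof is correct and follows the same route as the paper: exhibit $\textrm{Hol}^{k,l}(\mathbb{C}^*,\mathbb{C})$ as a tame direct summand of $\textrm{Hol}(\mathbb{C}^*,\mathbb{C})$ and invoke Lemma~\ref{constructionoftamespaces}(1). The paper's own proof is a single sentence: it observes that $\textrm{Hol}^{k,l}(\mathbb{C}^*,\mathbb{C})$ is a closed subspace of $\textrm{Hol}(\mathbb{C}^*,\mathbb{C})$ and cites Lemma~\ref{constructionoftamespaces}. Strictly speaking that lemma requires a \emph{tame direct summand}, not merely a closed subspace, so your explicit averaging projection $P=\tfrac{1}{k}\sum_{m=0}^{k-1}\omega^{-ml}R_m$ is exactly the missing ingredient that makes the citation legitimate; the estimate $\|Pf\|_{L_\infty^n}\leq \|f\|_{L_\infty^n}$ (since each rotation $R_m$ preserves $A_n$) gives the tame retraction. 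Your aside about the substitution $g(z)\mapsto z^lg(z^k)$ failing to be tame is a nice cautionary remark, though not needed for the argument.
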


\begin{proof}
As usual for $f\in \textrm{Hol}^{k,l}(\mathbb{C}^*, \mathbb C)$, we put $\|f\|_n:=\displaystyle\sup_{z\in A_n}|f(z)|$. As  $\textrm{Hol}^{k,l}(\mathbb{C}^*, \mathbb C)$ is a closed subspace of $\textrm{Hol}(\mathbb{C}^*, \mathbb{C})$, it is a tame Fr\'echet space as a consequence of lemma~\ref{constructionoftamespaces}.
\end{proof}

Twisted affine Kac-Moody algebras arise as fixed point algebras of diagram automorphisms of non-twisted affine Kac-Moody algebras. The list of possible diagram automorphism shows that nontrivial diagram automorphisms have order $k=2$ or $k=3$~\cite{Carter05}. Thus the values of $k$ which are important for us are $k=2$ and $k=3$.

\noindent For $k=2$, lemma~\ref{subspacesofmg} has the corollaries:

\begin{corollary}[symmetric and antisymmetric loops]~
\label{holc*csaisfrechet}
\begin{itemize}
\item The space $\textrm{Hol}^s(\mathbb{C}^*, \mathbb C):=\{ f\in \textrm{Hol}(\mathbb{C}^*, \mathbb C), f(z)=f(-z)\}$ is a tame Fr\'echet space. 
\item The space $\textrm{Hol}^a(\mathbb{C}^*, \mathbb C):=\{ f\in \textrm{Hol}(\mathbb{C}^*, \mathbb C), f(z)=-f(-z)\}$ is a tame Fr\'echet space.
\end{itemize}
\end{corollary}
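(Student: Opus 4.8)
The plan is to derive Corollary~\ref{holc*csaisfrechet} directly from Lemma~\ref{subspacesofmg} by identifying the symmetric and antisymmetric spaces with the spaces $\textrm{Hol}^{k,l}(\mathbb{C}^*,\mathbb{C})$ for suitable $k$ and $l$. Specifically, I would take $k=2$, so that $\omega = e^{2\pi i/2} = -1$. Then the condition $f(\omega z) = \omega^l f(z)$ becomes $f(-z) = (-1)^l f(z)$. Choosing $l=0$ (or any even $l$) gives $f(-z) = f(z)$, which is exactly the defining condition of $\textrm{Hol}^s(\mathbb{C}^*,\mathbb{C})$; choosing $l=1$ (or any odd $l$) gives $f(-z) = -f(z)$, which is the defining condition of $\textrm{Hol}^a(\mathbb{C}^*,\mathbb{C})$. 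Hence $\textrm{Hol}^s(\mathbb{C}^*,\mathbb{C}) = \textrm{Hol}^{2,0}(\mathbb{C}^*,\mathbb{C})$ and $\textrm{Hol}^a(\mathbb{C}^*,\mathbb{C}) = \textrm{Hol}^{2,1}(\mathbb{C}^*,\mathbb{C})$.

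Once this identification is in place, the result is immediate: Lemma~\ref{subspacesofmg} asserts that $\textrm{Hol}^{k,l}(\mathbb{C}^*,\mathbb{C})$ is a tame Fr\'echet space for all $k,l\in\mathbb{N}$, so both $\textrm{Hol}^s$ and $\textrm{Hol}^a$ inherit this property. I would simply remark that in each case we equip the space with the grading $\|f\|_n := \sup_{z\in A_n}|f(z)|$ restricted from $\textrm{Hol}(\mathbb{C}^*,\mathbb{C})$, which is precisely the grading used in the proof of Lemma~\ref{subspacesofmg}, and that tameness then follows from the fact that these are closed subspaces together with Lemma~\ref{constructionoftamespaces}.

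Since this is a genuine corollary and not an independent theorem, there is no real obstacle to overcome — the only point requiring a moment of care is the bookkeeping with the index $l$: one must observe that $\mathbb{N}$ here should be read so that $l=0$ (or equivalently the even value achieving the trivial character of $\mathbb{Z}/2$) is permitted in order to recover the symmetric case, or alternatively note that $\textrm{Hol}^{2,2}(\mathbb{C}^*,\mathbb{C}) = \textrm{Hol}^s(\mathbb{C}^*,\mathbb{C})$ since $\omega^2 = 1$. Either reading works, and I would phrase the proof to sidestep the convention issue by writing the symmetric condition as $f(-z) = f(z) = \omega^2 f(z)$ and the antisymmetric one as $f(-z) = -f(z) = \omega f(z)$ with $\omega = -1$, so that both are visibly instances of Lemma~\ref{subspacesofmg}. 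The whole proof is two or three lines.
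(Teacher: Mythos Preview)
Your proposal is correct and matches the paper's approach exactly: the paper simply introduces the corollary with the sentence ``For $k=2$, lemma~\ref{subspacesofmg} has the corollaries'' and gives no further proof, so your identification of $\textrm{Hol}^s$ and $\textrm{Hol}^a$ with the cases $k=2$, $l$ even/odd of Lemma~\ref{subspacesofmg} is precisely what is intended. Your remark about the $l=0$ versus $l=2$ bookkeeping is a fair observation but, as you note, is easily sidestepped.
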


\noindent Lemmata~\ref{constructionoftamespaces} and~\ref{holc*cisfrechet} and corollary~\ref{holc*csaisfrechet} include the following result:

\begin{corollary}
\label{holc*cnisfrechet}
$F:= \textrm{Hol} (\mathbb C^*, \mathbb C^n)$, $F^s:= \textrm{Hol}^s (\mathbb C^*, \mathbb C^n)$ and $F^a:= \textrm{Hol}^a (\mathbb C^*, \mathbb C^n)$ are tame Fr\'echet spaces.
\end{corollary}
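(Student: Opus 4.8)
The plan is to deduce Corollary~\ref{holc*cnisfrechet} purely formally from the three ingredients already in hand: Lemma~\ref{holc*cisfrechet} (the scalar case $\textrm{Hol}(\mathbb{C}^*,\mathbb{C})$ is tame Fr\'echet), Corollary~\ref{holc*csaisfrechet} (the symmetric and antisymmetric scalar subspaces are tame Fr\'echet), and the construction Lemma~\ref{constructionoftamespaces}, which says that a cartesian product of two tame Fr\'echet spaces is tame. No new analysis on $\mathbb{C}^*$ is needed; everything reduces to identifying the vector-valued space with a finite product of scalar spaces.

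First I would fix a basis $e_1,\dots,e_n$ of $\mathbb{C}^n$ and write any $f\in\textrm{Hol}(\mathbb{C}^*,\mathbb{C}^n)$ uniquely as $f=\sum_{j=1}^n f_j e_j$ with $f_j\in\textrm{Hol}(\mathbb{C}^*,\mathbb{C})$; coordinatewise holomorphy is equivalent to holomorphy of $f$. This gives a linear bijection
\begin{displaymath}
\textrm{Hol}(\mathbb{C}^*,\mathbb{C}^n)\;\xrightarrow{\ \sim\ }\;\textstyle\prod_{j=1}^n \textrm{Hol}(\mathbb{C}^*,\mathbb{C}),
\qquad f\mapsto (f_1,\dots,f_n).
\end{displaymath}
Equipping $\mathbb{C}^n$ with the supremum norm (all norms on $\mathbb{C}^n$ being equivalent, the resulting graded Fr\'echet structure is tame-equivalent regardless of the choice), the grading $\|f\|_n=\sup_{z\in A_n}|f(z)|_{\mathbb{C}^n}$ satisfies $\max_j\|f_j\|_n\le\|f\|_n\le\sum_j\|f_j\|_n\le n\max_j\|f_j\|_n$ for every $n$, so the above bijection is a tame isomorphism onto the product with its product grading. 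Since $\textrm{Hol}(\mathbb{C}^*,\mathbb{C})$ is tame Fr\'echet by Lemma~\ref{holc*cisfrechet}, applying the second part of Lemma~\ref{constructionoftamespaces} inductively $n-1$ times shows the $n$-fold product is tame Fr\'echet, and hence so is $\textrm{Hol}(\mathbb{C}^*,\mathbb{C}^n)$.

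For $F^s$ and $F^a$ the argument is identical: the condition $f(z)=\pm f(-z)$ is imposed entrywise, so under the same coordinate decomposition $\textrm{Hol}^s(\mathbb{C}^*,\mathbb{C}^n)\cong\prod_{j=1}^n\textrm{Hol}^s(\mathbb{C}^*,\mathbb{C})$ and likewise for the antisymmetric case, with the same tame estimates between the gradings. One then invokes Corollary~\ref{holc*csaisfrechet} in place of Lemma~\ref{holc*cisfrechet} and again iterates Lemma~\ref{constructionoftamespaces}. Alternatively, since $\textrm{Hol}^s(\mathbb{C}^*,\mathbb{C}^n)$ and $\textrm{Hol}^a(\mathbb{C}^*,\mathbb{C}^n)$ are manifestly closed subspaces of the already-established tame space $\textrm{Hol}(\mathbb{C}^*,\mathbb{C}^n)$ (they are defined by the continuous linear conditions $f(z)\mp f(-z)=0$), one can instead cite the closed-direct-summand part of Lemma~\ref{constructionoftamespaces}; I would mention this as a cross-check.

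There is essentially no obstacle here: the only point requiring a word of care is that the tame structure on $\textrm{Hol}(\mathbb{C}^*,\mathbb{C}^n)$ coming from the sup-norm grading agrees (up to tame equivalence) with the one obtained by transporting the product grading through the coordinate isomorphism — and this is precisely the two-sided estimate $\max_j\|f_j\|_n\le\|f\|_n\le n\max_j\|f_j\|_n$ displayed above, which is $(0,0,(n\mapsto n))$-tame in both directions. Everything else is the formal bookkeeping already packaged in Lemma~\ref{constructionoftamespaces}.
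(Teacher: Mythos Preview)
Your proposal is correct and is exactly the argument the paper has in mind: the paper's entire proof is the one-line remark ``Lemmata~\ref{constructionoftamespaces} and~\ref{holc*cisfrechet} and corollary~\ref{holc*csaisfrechet} include the following result,'' and you have simply unpacked this by writing out the coordinate isomorphism $\textrm{Hol}(\mathbb{C}^*,\mathbb{C}^n)\cong\prod_{j=1}^n\textrm{Hol}(\mathbb{C}^*,\mathbb{C})$ and the tame estimates that make it a tame isomorphism. One minor caution on your cross-check: Lemma~\ref{constructionoftamespaces} speaks of tame \emph{direct summands}, not arbitrary closed subspaces, so if you invoke it for $F^s$ and $F^a$ you should note that the symmetrization/antisymmetrization projections $f\mapsto\tfrac{1}{2}(f(z)\pm f(-z))$ exhibit them as tame direct summands --- but since your primary route through the coordinatewise product already works cleanly, this is only cosmetic.
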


Let $V^n$ be a $n$-dimensional complex vector space. We want a tame structure on $\textrm{Hol}(\mathbb{C}^*, V^n)$. Using the corollary~\ref{holc*cnisfrechet} we get a tame Fr\'echet structure on the space $F:= \textrm{Hol} (\mathbb C^*, \mathbb C^n)$. This yields a tame structure on the spaces $\textrm{Hol}(\mathbb{C}^*, V^n)$, $\textrm{Hol}^s(\mathbb{C}^*, V^n)$ and $\textrm{Hol}^a(\mathbb{C}^*, V^n)$ only after the choice of an identification of $V^n$ with $\mathbb{C}^n$, hence a choice of a basis. As this construction uses this identification of $V^n$ with $\mathbb{C}^n$ we have to prove that the resulting tame structure is independent of it.

\noindent This is the content of the following lemma:

\begin{lemma}
Let $V^n$ be a $n$-dimensional complex vector space equipped with two norms $|\hspace{3pt}.\hspace{3pt}|$ and $|\hspace{3pt}.\hspace{3pt}|'$. Study the spaces $\textrm{Hol}^{k,l}(\mathbb{C}^*, V^n)$. Define gradings $\|f\|_n:=\displaystyle\sup_{z\in A_n}|f(z)|$ and $\|f\|_n':=\displaystyle\sup_{z\in A_n}|f(z)|'$. Those gradings are tame equivalent. 
\end{lemma}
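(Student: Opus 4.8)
The plan is to exploit the fact that on a finite-dimensional vector space any two norms are equivalent: there exist constants $c, C > 0$ with $c\,|v| \le |v|' \le C\,|v|$ for all $v\in V^n$. First I would fix such constants by compactness of the unit sphere of $(V^n, |\hspace{3pt}.\hspace{3pt}|)$. Then, for any $f\in \textrm{Hol}^{k,l}(\mathbb{C}^*, V^n)$ and any $z\in A_n$, applying the pointwise inequality to $v = f(z)$ and taking the supremum over $z\in A_n$ gives
\begin{displaymath}
c\,\|f\|_n \le \|f\|_n' \le C\,\|f\|_n \quad \text{for all } n\in\mathbb{N}_0\,.
\end{displaymath}
This is exactly the statement that the two gradings are $(0,0,C(n))$-equivalent, with the constant sequence $C(n) := \max(C, 1/c)$ and $r = b = 0$; in particular they are tame equivalent.

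The only point requiring a word of care is that the definition of $\|f\|_n$ presupposes $f$ is bounded on the compact annulus $A_n$; this is automatic since $f$ is continuous on $\mathbb{C}^*$ and $A_n$ is compact, so both suprema are finite and the inequalities above are between genuine real numbers. I would also remark that the identification of $A_n$ with the compact set $A_n'$ under the $2\pi i$-periodic picture (cf.\ notation~\ref{A_n}) is irrelevant here: the argument is entirely pointwise in $z$ and purely about the norms on the target $V^n$, so it does not interact with the domain structure at all. The twist condition $f(\omega z) = \omega^l f(z)$ defining $\textrm{Hol}^{k,l}(\mathbb{C}^*, V^n)$ likewise plays no role; the estimate holds for every holomorphic $V^n$-valued function on $\mathbb{C}^*$, and a fortiori on the closed subspace.

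There is essentially no obstacle: the content is the elementary equivalence of norms on a finite-dimensional space, promoted verbatim to an equivalence of the associated sup-gradings. If one wished to be maximally careful one could note that, combined with Lemma~\ref{constructionoftamespaces} and Corollary~\ref{holc*cnisfrechet}, this shows the tame Fr\'echet structure obtained on $\textrm{Hol}^{k,l}(\mathbb{C}^*, V^n)$ by choosing a basis of $V^n$ is independent of that choice, since a change of basis is precisely a change to an equivalent norm, hence induces a tame isomorphism (indeed a $(0,0,C(n))$-equivalence) between the two gradings. The main thing to get right in writing it up is simply to state the two inequalities in the order matching the definition of $(r,b,C(n))$-equivalence and to record $r=b=0$.
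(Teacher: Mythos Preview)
Your proof is correct and follows essentially the same approach as the paper: invoke the equivalence of any two norms on a finite-dimensional vector space, apply the resulting inequalities pointwise to $f(z)$, and take the supremum over $A_n$ to obtain a $(0,0,C(n))$-equivalence of the gradings. Your write-up is in fact more careful than the paper's, which records the same two inequalities but without your explicit identification of $r=b=0$ or the remarks on finiteness and the irrelevance of the twist condition.
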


\begin{proof}
Any two norms on a finite dimensional vector space are equivalent (see any book about elementary analysis, i.e.~\cite{Koenigsberger00}). Thus there exist constants $c_1$ and $c_2$ such that $|x|\leq c_1|x|'$ and $|x|'\leq c_2|x|$. Then $\|f\|_n:=\displaystyle\sup_{z\in A_n}|f(z)|\leq \displaystyle\sup_{z\in A_n}c_1|f(z)|'=c_1 \|f\|_n'$ and $\|f\|_n':=\displaystyle\sup_{z\in A_n}|f(z)|'\leq \displaystyle\sup_{z\in A_n}c_2|f(z)|' =c_2 \|f\|_n$. Thus they are tame equivalent. 
\end{proof}

\begin{corollary}
Any identification of  $\varphi:\mathbb{C}^n\longrightarrow V^n$ yields a $1$\ndash norm $\|v\|=\sum v_i$ on $V^n$. The tame structures induced on $\textrm{Hol}(\mathbb{C}^*, \mathbb{C})$ by two different choices are equivalent. Hence the tame structure on $V^n$ is independent of the identification of $V^n$ with $\mathbb{C}^n$.
\end{corollary}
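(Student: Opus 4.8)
The plan is to reduce the statement to the preceding lemma on equivalence of gradings attached to different norms on $V^n$. First I would observe that a linear isomorphism $\varphi:\mathbb{C}^n\longrightarrow V^n$ transports the standard $1$-norm $|w|_1=\sum_i|w_i|$ on $\mathbb{C}^n$ to a function $|v|_\varphi:=|\varphi^{-1}(v)|_1$ on $V^n$, and this is a genuine norm precisely because $\varphi^{-1}$ is a linear isomorphism. Moreover, the tame Fr\'echet structure on $\textrm{Hol}(\mathbb{C}^*,V^n)$ (and on its symmetric and antisymmetric subspaces) induced from Corollary~\ref{holc*cnisfrechet} via the isomorphism $g\mapsto \varphi\circ g$ is exactly the structure associated with the grading $\|f\|_n:=\sup_{z\in A_n}|f(z)|_\varphi$, since $|\varphi(g(z))|_\varphi=|g(z)|_1$ pointwise, so $\sup_{z\in A_n}|\varphi(g(z))|_\varphi=\sup_{z\in A_n}|g(z)|_1$.

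Next, given two identifications $\varphi_1,\varphi_2:\mathbb{C}^n\longrightarrow V^n$, I would apply the preceding lemma to the two norms $|\cdot|_{\varphi_1}$ and $|\cdot|_{\varphi_2}$ on $V^n$: since any two norms on a finite dimensional space are equivalent, the lemma shows that the corresponding gradings $\|\cdot\|_n$ and $\|\cdot\|_n'$ on $\textrm{Hol}(\mathbb{C}^*,V^n)$ are tame equivalent. Hence the identity map of $\textrm{Hol}(\mathbb{C}^*,V^n)$ is a tame isomorphism between the structure transported along $\varphi_1$ and the one transported along $\varphi_2$. In particular the tame Fr\'echet structure does not depend on the choice of identification of $V^n$ with $\mathbb{C}^n$, which is the assertion of the corollary; the same argument applies verbatim to $\textrm{Hol}^s(\mathbb{C}^*,V^n)$ and $\textrm{Hol}^a(\mathbb{C}^*,V^n)$, and more generally to the spaces $\textrm{Hol}^{k,l}(\mathbb{C}^*,V^n)$ of Lemma~\ref{subspacesofmg}.

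There is no real obstacle here: the content is entirely carried by the equivalence of norms on a finite dimensional vector space together with the preceding lemma. The only point that deserves a line of care is the identification of the transported grading with the intrinsically defined supremum grading $\sup_{z\in A_n}|f(z)|_\varphi$, which is immediate from the definition of the pullback norm $|v|_\varphi=|\varphi^{-1}(v)|_1$. I would phrase the proof so that this identification is stated explicitly, and then invoke the preceding lemma to conclude tame equivalence, and finally Lemma~\ref{constructionoftamespaces} (or simply the definition of tame isomorphism) to conclude that the two tame Fr\'echet structures coincide.
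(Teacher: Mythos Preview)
Your argument is correct and matches the paper's approach: the corollary is stated there without a separate proof, as it is an immediate consequence of the preceding lemma on tame equivalence of gradings coming from different norms on $V^n$. You have simply made explicit the two points the paper leaves implicit, namely that the transported structure coincides with the supremum grading for the pullback norm $|\cdot|_\varphi$, and that the preceding lemma then applies to any two such norms.
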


As a consequence we have established the following result:

\begin{theorem}
Let $V^n$ be a complex vector space and $\|\phantom{z}\|$ any norm on $V^n$.  The space $\textrm{Hol}(\mathbb{C}^*, V^n)$ with the family of norms 
\begin{displaymath}
\|f\|_n=\sup_{z\in A_n}\|f(z)\|
\end{displaymath}
 is a tame Fr\'echet space. 
\end{theorem}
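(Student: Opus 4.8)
The plan is to reduce this theorem to results already established in this section, so that no new analytic work is required. The statement asserts that for any finite-dimensional complex vector space $V^n$ and any norm on it, the space $\textrm{Hol}(\mathbb{C}^*, V^n)$ with the sup-over-$A_n$ grading is a tame Fr\'echet space. First I would fix an isomorphism $\varphi:\mathbb{C}^n\longrightarrow V^n$ of complex vector spaces; transporting the standard supremum norm on $\mathbb{C}^n$ through $\varphi$ equips $V^n$ with one particular norm $\|\cdot\|_{\varphi}$. With respect to that norm, the map $f\mapsto \varphi^{-1}\circ f$ is an isometry (at every level $n$) between $\textrm{Hol}(\mathbb{C}^*, V^n)$ with grading $\|f\|_n=\sup_{z\in A_n}\|f(z)\|_{\varphi}$ and $\textrm{Hol}(\mathbb{C}^*,\mathbb{C}^n)$ with its standard grading. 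By Corollary~\ref{holc*cnisfrechet} the latter is a tame Fr\'echet space, and a tame isomorphism carries the property of being a tame direct summand of some $\Sigma(B)$ across; hence $\textrm{Hol}(\mathbb{C}^*,V^n)$ with the grading coming from $\|\cdot\|_{\varphi}$ is tame.

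The second step is to remove the dependence on the chosen norm. Here I would invoke the lemma proved immediately above, which says that on $\textrm{Hol}^{k,l}(\mathbb{C}^*, V^n)$ — and in particular on $\textrm{Hol}(\mathbb{C}^*, V^n)$, the case $k=l=1$ — the gradings $\|f\|_n=\sup_{z\in A_n}|f(z)|$ and $\|f\|_n'=\sup_{z\in A_n}|f(z)|'$ arising from any two norms $|\cdot|$ and $|\cdot|'$ on $V^n$ are tame equivalent. The underlying reason is simply that all norms on a finite-dimensional space are equivalent, with uniform constants $c_1,c_2$ independent of $z$, so the two gradings satisfy $\|f\|_n\le c_1\|f\|_n'$ and $\|f\|_n'\le c_2\|f\|_n$ for all $n$, i.e. they are $(0,0,\max(c_1,c_2))$-equivalent. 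Since tameness of a Fr\'echet space is preserved under passing to a tame-equivalent grading (the identity map is then a tame isomorphism, and one composes with the tame direct-summand maps from the first step), the grading coming from the arbitrary given norm $\|\cdot\|$ on $V^n$ also makes $\textrm{Hol}(\mathbb{C}^*, V^n)$ a tame Fr\'echet space. Combining the two steps gives the theorem.

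I do not expect a genuine obstacle here: the heavy lifting — showing $\textrm{Hol}(\mathbb{C}^*,\mathbb{C})$ and hence $\textrm{Hol}(\mathbb{C}^*,\mathbb{C}^n)$ are tame, via the tame equivalence of the $L_1^n$ and $L_\infty^n$ gradings and the explicit embedding into $\Sigma(\mathbb{C}^2)$ — was already done in Lemmata~\ref{holc*cisfrechet} and Corollary~\ref{holc*cnisfrechet}. The only point requiring a word of care is that the argument via an identification $\varphi$ must be shown to be independent of $\varphi$; but this is exactly the content of the norm-equivalence lemma and its corollary preceding the theorem, since two identifications yield two norms on $V^n$ and the lemma shows the resulting gradings are tame equivalent. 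So the proof is essentially a two-line assembly: pick a norm to get tameness via $\mathbb{C}^n$, then use finite-dimensional norm equivalence to see the tame structure does not depend on that choice.
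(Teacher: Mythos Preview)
Your proposal is correct and matches the paper's approach exactly: the paper presents this theorem as an immediate consequence of the preceding results (Corollary~\ref{holc*cnisfrechet} for $\textrm{Hol}(\mathbb{C}^*,\mathbb{C}^n)$, the norm-equivalence lemma, and its corollary on independence of the identification), without writing out a separate proof. Your two-step assembly---transport tameness from $\mathbb{C}^n$ via an identification, then use finite-dimensional norm equivalence to remove the dependence on the chosen norm---is precisely the intended argument.
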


\begin{corollary}
Let $V^n$ be a complex vector space and $|\phantom{z}|_2$ the Euclidean norm on $V^n$.  The space $\textrm{Hol}(\mathbb{C}^*, V^n)$ with the family of norms 
\begin{displaymath}
\|f\|_n=\sup_{z\in A_n}|f(z)|_2
\end{displaymath}
is a tame Fr\'echet space. 
\end{corollary}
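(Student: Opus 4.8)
The plan is to observe that this statement is nothing but the special case of the immediately preceding Theorem in which the arbitrary norm $\|\phantom{z}\|$ on $V^n$ is taken to be the Euclidean norm $|\phantom{z}|_2$. Since $|\phantom{z}|_2$ is a norm on the finite dimensional complex vector space $V^n$, the hypotheses of that Theorem are satisfied verbatim, and the conclusion — that $\textrm{Hol}(\mathbb{C}^*, V^n)$ with the gradings $\|f\|_n = \sup_{z\in A_n}|f(z)|_2$ is a tame Fr\'echet space — is exactly what is asserted. So in the end there is essentially nothing to prove beyond quoting the Theorem.

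If one prefers to see the argument unwound, one would proceed as follows. First fix a linear identification $\varphi\colon \mathbb C^n \longrightarrow V^n$; transporting the supremum norm on $\mathbb C^n$ through $\varphi$ gives a norm $\|\phantom{z}\|_{\varphi}$ on $V^n$ for which, by Corollary~\ref{holc*cnisfrechet}, the space $\textrm{Hol}(\mathbb C^*, V^n)$ with gradings $\sup_{z\in A_n}\|f(z)\|_{\varphi}$ is tame (it is tamely isomorphic to the space $\textrm{Hol}(\mathbb C^*,\mathbb C^n)$ treated there). Then invoke the fact that any two norms on a finite dimensional vector space are equivalent: there are constants $c_1, c_2 > 0$ with $|v|_2 \le c_1 \|v\|_{\varphi}$ and $\|v\|_{\varphi} \le c_2 |v|_2$ for all $v\in V^n$, whence the gradings $\sup_{z\in A_n}|f(z)|_2$ and $\sup_{z\in A_n}\|f(z)\|_{\varphi}$ satisfy $\|f\|_n \le c_1 \|f\|_n'$ and $\|f\|_n' \le c_2 \|f\|_n$ for every $n$, so they are $(0,0,\max(c_1,c_2))$-equivalent, in particular tame equivalent. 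Since tameness of a Fr\'echet space depends only on the tame equivalence class of its grading, $\textrm{Hol}(\mathbb C^*, V^n)$ with the Euclidean gradings is a tame Fr\'echet space.

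There is no real obstacle here: the only thing one uses beyond the preceding Theorem and Corollary~\ref{holc*cnisfrechet} is the elementary equivalence of norms in finite dimensions and the fact, already exploited repeatedly in this section, that passing to a tamely equivalent grading preserves the property of being a tame direct summand of some $\Sigma(B)$. The statement is recorded separately only because the Euclidean norm is the one that will actually be used in the sequel (for the loop algebra and loop group realizations of affine Kac-Moody objects).
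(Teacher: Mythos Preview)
Your proposal is correct and matches the paper's approach: the paper states this Corollary without proof, as it is the immediate specialization of the preceding Theorem to the Euclidean norm. Your unwinding of the argument is accurate but unnecessary for the paper's purposes.
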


\section{Tame structures on loop algebras and loop groups}
\label{chap:tame}

In this chapter we introduce the functional analytic setting, which we use for the construction of affine Kac-Moody symmetric spaces in~\cite{Freyn12e}: We define Kac-Moody groups and Kac-Moody algebras of holomorphic loops and prove that they are tame Fr\'echet Lie groups resp. Lie algebras. Via the loop realization Kac-Moody groups (resp.\ algebras) can be viewed as  $2$-dimensional extensions of loop groups.  From a functional analytic point of view, a $2$-dimensional extension is unproblematic; the only exception is, that the action of group elements on the two dimensional extension has to be well-defined as well. Nevertheless remark that this extension plays a crucial role for geometric considerations. In consequence we focus our attention on the loop group (resp.\ algebra). In subsection~\ref{Lie_algebras_of_holomorphic_maps} we describe the loop algebras, in subsection~\ref{loopgroups} we turn our attention to the loop groups.

\subsection{Affine Kac-Moody algebras}
\label{subsect:affine_kac_moody}

In this section we recall some basic facts about affine Kac-Moody algebras and introduce the more general notion of geometric affine Kac-Moody algebras. A thorough study of the algebraic properties of geometric affine Kac-Moody algebras is done in~\cite{Freyn12b}.
The loop algebra realization of (algebraic) affine Kac-Moody algebras is developed in the books \cite{Kac90} and \cite{Carter05} from an algebraic point of view. We follow the more geometric approach to loop algebra realizations of Kac-Moody algebras, using the notion of~\cite{Terng95, Heintze09}. 

\noindent Let $\mathfrak{g}$ be a finite dimensional reductive Lie algebra over $\mathbb{F}=\mathbb{R}$ or $\mathbb{C}$. Hence by definition $\mathfrak{g}=\mathfrak{g}_{s}\oplus \mathfrak{g}_{a}$ is a direct product of a semisimple Lie algebra $\mathfrak{g}_s$ with an Abelian Lie algebra $\mathfrak{g}_{a}$. Let furthermore $\sigma=\sigma_{s}\otimes \sigma_{a}$ be some involution of $\mathfrak{g}$, such that $\sigma_s \in \textrm{Aut}(\mathfrak{g}_{s})$ denotes an automorphism of finite order of $\mathfrak{g}_s$ such that the restriction of $\sigma$ to any simple factor $\mathfrak{g}_{i}$ of $\mathfrak{g}$ is an automorphism of $\mathfrak{g}_{i}$ and $\sigma_{a}=\sigma|_{\mathfrak{g}_{a}}=\textrm{Id}$. If $\mathfrak{g}_{s}$ is a Lie algebra over $\mathbb{R}$ we assume $\mathfrak{g}_{s}$ to be a Lie algebra of compact type. We define the loop algebra $L(\mathfrak g, \sigma)$ as follows

\begin{displaymath}
L(\mathfrak g, \sigma):=\{f:\mathbb{R}\longrightarrow \mathfrak{g}\hspace{3pt}|f(t+2\pi)=\sigma f(t), f \textrm{ satisfies some regularity conditions}\}\label{abstractkacmoodyalgebra}\,.
\end{displaymath}

We use the notation $L(\mathfrak g, \sigma)$ to describe in a unified way constructions that can be realized explicitly for loop algebras satisfying various regularity conditions. Regularity conditions used in applications include the following:
\begin{itemize}
\item $H^0$-Sobolev loops, denoted $L^0\mathfrak{g}$,
\item smooth, denoted $L^{\infty}\mathfrak{g}$,
\item real analytic, denoted $L_{\textrm{an}}\mathfrak{g}$,
\item (after complexification of the domain of definition)  holomorphic on $\mathbb{C}^*$, denoted $M\mathfrak{g}$,
\item holomorphic on an annulus $A_n\subset \mathbb{C}$, denoted $A_n\mathfrak{g}$, or 
\item algebraic (or equivalently: with a finite Fourier expansion), denoted $L_{alg}\mathfrak{g}$.
\end{itemize}

\begin{definition}[affine Kac-Moody algebra]
\label{geometricaffinekacmoodyalgebra}
The indecomposable geometric affine Kac-Moody algebra associated to a pair $(\mathfrak{g}, \sigma)$ as described above is the algebra:
$$\widehat{L}(\mathfrak g, \sigma):=L(\mathfrak g, \sigma) \oplus \mathbb{F}c \oplus \mathbb{F}d\,,$$
equipped with the lie bracket defined by:
\begin{alignat*}{1}
  [d,f(t)]&:=f'(t)\, ,\\
  [c,c]=[d,d]&:=0\, ,\\
	[c,d]=[c,f(t)]&:=0\, ,\\
  [f,g](t)&:=[f(t),g(t)]_{0} + \omega\left(f(t),g(t)\right)c\,.
\end{alignat*}

Here $f\in L(\mathfrak g, \sigma)$ and $\omega$ is a certain antisymmetric $2$-form on $M\mathfrak g$, satisfying the cocycle condition. If the regularity of $L(\mathfrak g, \sigma)$ is chosen such that $L(\mathfrak g, \sigma)$ contains non-differentiable functions, then $d$ is defined on the (dense) subspace of differentiable functions.
\end{definition}

Explicit realizations using different regularity conditions are common. We use holomorphic or algebraic loops; for these we can define \begin{displaymath}\omega(f,g):=\textrm{Res}(\langle f, g' \rangle)\,.\end{displaymath}

Let us reformulate the definition for non twisted affine Kac-Moody algebras in terms of functions on $\mathbb{C}^*$: Assume $\sigma=\textrm{Id}$. First we develop a function $f\in L(\mathfrak{g}, \textrm{Id})$ into its Fourier series $f(t)=\sum a_n{e}^{int}$. Then this function is naturally defined on a circle $S^1$; we understand this circle to be embedded as the  unit circle $\{z\in \mathbb{C}^*||z|=1\}\subset\mathbb{C}^*$; in this way the parameter $t$ gets replaced by the complex parameter $z:=e^{it}$, with $|z|=1$; understanding the Fourier expansion now as a Laurent expansion of $F$, we can calculate the annulus, $A_n$, on which this series is defined. For example the holomorphic realization $M\mathfrak{g}$, is defined by the condition, that for any $f\in M\mathfrak{g}$ the Fourier expansion describes a Laurent series expansion of a holomorphic function on $\mathbb{C}^*$.

\begin{definition}
The complex geometric affine Kac-Moody algebra associated to a pair $(\mathfrak{g}, \sigma)$ is the algebra
$$\widehat{M\mathfrak g}:=M\mathfrak g \oplus \mathbb{C}c \oplus \mathbb{C}d\,,$$
equipped with the lie bracket defined by:
\begin{alignat*}{1}
  [d,f(z)]&:=izf'(z)\,,\\
  [c,c]=[d,d]&:=0\,,\\
  [c,d]=[c,f(z)]&:=0\,,\\
  [f,g](z)&:=[f(z),g(z)]_{0} + \omega(f(z),g(z))c\,.
\end{alignat*}
\end{definition}

\noindent As $\frac{d}{dt}e^{itn}=ine^{itn}=inz^n=iz\frac{d}{dz}z^n$ both definitions coincide.

\begin{definition}[semisimple geometric affine Kac-Moody algebra]

A geometric affine Kac-Moody algebra $\widehat{L}(\mathfrak{g}, \sigma)$ is called 
\begin{itemize}
\item semisimple if $\mathfrak{g}$ is semisimple,
\item simple if $\mathfrak{g}$ is simple.
\end{itemize}
\end{definition}


\subsection{Lie algebras of holomorphic maps}
\label{Lie_algebras_of_holomorphic_maps}

Let $\mathfrak{g}$ be a complex reductive Lie algebra that is a direct product of simple Lie algebras with an Abelian Lie algebra.  Simple Lie algebras are completely classified by their root systems; a complete list is given as follows: 
\begin{displaymath}
A_n, B_{n, n\geq 2}, C_{n, n\geq 3}, D_{n, n \geq 4}, E_6, E_7, E_8, F_4, G_2\,. 
\end{displaymath}

Abelian Lie algebras are classified by their dimension. A complex reductive Lie algebra $\mathfrak{g}_{\mathbb{C}}$ has an up to conjugation unique compact real form $\mathfrak{g}_c$. This compact real form is defined as the real reductive Lie algebra $\mathfrak{g}_{\mathbb{R}}$ such that $\mathfrak{g}$ is a direct product of the (up to conjugation) unique compact real forms of the simple factors of $\mathfrak{g}_{\mathbb{C}}$ together with a compact real form of the Abelian factor. A compact real Abelian Lie algebra of dimension $n$ is just $\mathbb{R}^n$ together with the trivial bracket; but we define the exponential function such that the exponential image is a torus. Hence a compact real Lie algebra can be identified with the purely imaginary part of a complex Abelian Lie algebra.

\begin{definition}[complex holomorphic non-twisted Loop algebra]
\label{complex holomorphic non-twisted Loop algebra}
Let $\mathfrak{g}_{\mathbb C}$ be a finite-dimensional reductive complex Lie algebra. 

\begin{enumerate}
\item  The loop algebra $A_n\mathfrak{g}_{\mathbb{C}}$ is the vector space
$$A_n\mathfrak{g}_{\mathbb{C}}:=\bigcup_{A_n\subset U \textrm{open}}\{f:U \longrightarrow
\mathfrak{g}_\mathbb {C}| \textrm{ f is holomorphic}\}\,,$$
equipped with the natural Lie bracket:
$$[f,g]_{L_n\mathfrak{g}}(z):=[f,g]_0(z):=[f(z),g(z)]_{\mathfrak{g}}\,.$$

\item  The loop algebra $M\mathfrak{g}_{\mathbb{C}}$ is the vector space
$$M\mathfrak{g}_{\mathbb{C}}:=\{f:\mathbb C^* \longrightarrow
\mathfrak{g}_\mathbb {C}| \textrm{ f is holomorphic}\}\,,$$
equipped with the natural Lie bracket:
$$[f,g]_{M\mathfrak{g}}(z):=[f,g]_0(z):=[f(z),g(z)]_{\mathfrak{g}}\,.$$
\end{enumerate}
\end{definition}

\begin{lemma}~
\label{mgfrechetagbanach}
\begin{enumerate}
\item $M\mathfrak{g}_{\mathbb{C}}$ is a tame  space.
\item $A_n\mathfrak{g}_{\mathbb{C}}$ is a Banach space. 
\end{enumerate}
\end{lemma}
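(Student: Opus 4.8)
The plan is to treat the two parts separately, since they have genuinely different characters: part (2) is essentially a classical fact about Banach spaces of holomorphic functions on a compact set, while part (1) must be deduced from the tame-equivalence results established earlier in this section.

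For part (2), I would fix a relatively compact open neighbourhood $U$ of the annulus $A_n$ (say $U = A_{n+\varepsilon}$ for small $\varepsilon > 0$), and equip $A_n\mathfrak{g}_{\mathbb{C}}$ with the norm $\|f\| := \sup_{z\in A_n}|f(z)|$, where $|\cdot|$ is any fixed norm on the finite-dimensional space $\mathfrak{g}_{\mathbb{C}}$. The space is clearly a normed vector space; the only point to check is completeness. Given a Cauchy sequence $f_k$ in this norm, uniform convergence on the compact set $A_n$ (and, by the same Cauchy estimates used in Lemma~\ref{coroflang}, uniform convergence of all derivatives on slightly smaller compacta) forces the limit to be holomorphic on the interior and to extend continuously to $A_n$; since every element of $A_n\mathfrak{g}_{\mathbb{C}}$ is by definition holomorphic on \emph{some} open neighbourhood of $A_n$, one uses that the limit is holomorphic on a fixed open annulus containing $A_n$ (e.g.\ one works with the Banach space of functions holomorphic on a fixed $U \supset A_n$ and continuous up to the boundary, which is the genuine ambient Banach space here). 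Thus $A_n\mathfrak{g}_{\mathbb{C}}$ is a Banach space. Finite-dimensionality of $\mathfrak{g}_{\mathbb{C}}$ and the earlier observation that all norms on it are equivalent guarantee the choice of norm on $\mathfrak{g}_{\mathbb{C}}$ is irrelevant.

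For part (1), after choosing a basis this amounts to showing $M\mathfrak{g}_{\mathbb{C}} \cong \textrm{Hol}(\mathbb{C}^*,\mathbb{C}^m)$ with $m = \dim_{\mathbb{C}}\mathfrak{g}_{\mathbb{C}}$ is a tame Fr\'echet space, which is precisely Corollary~\ref{holc*cnisfrechet} (equivalently the Theorem and Corollary immediately following it, giving the basis-independent statement). So the substantive content is simply to record that the Lie bracket is compatible with this tame structure, i.e.\ that $M\mathfrak{g}_{\mathbb{C}}$ is a tame Fr\'echet \emph{Lie} algebra in the sense of Definition~\ref{tamefrechetLie algebra} — but the present lemma only claims it is a tame \emph{space}, so no bracket estimate is needed yet and part (1) is a direct citation of Corollary~\ref{holc*cnisfrechet} together with the grading $\|f\|_n = \sup_{z\in A_n}|f(z)|$.

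I do not expect a serious obstacle. The only delicate point is the completeness argument in part (2): one must be slightly careful that the ambient Banach space is the space of functions holomorphic on a \emph{fixed} open set $U \supset A_n$ and continuous on $\overline{A_n}$ (or, cleanly, bounded holomorphic functions on a fixed open annulus strictly containing $A_n$ and restricted to $A_n$), rather than the inductive-limit object $\bigcup_{A_n\subset U}\textrm{Hol}(U,\mathfrak{g}_{\mathbb{C}})$ written in Definition~\ref{complex holomorphic non-twisted Loop algebra}, which as an inductive limit is not itself Banach. The resolution is that, equipped with the sup-norm on $A_n$, the completion of that inductive limit \emph{is} a Banach space (all these function spaces have the same sup-norm-closure on $A_n$), and this closure is the $A_n\mathfrak{g}_{\mathbb{C}}$ one actually uses; the Cauchy estimates then show limits of uniformly convergent sequences of holomorphic functions are holomorphic on the interior, completing the verification.
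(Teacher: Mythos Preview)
Your approach is essentially the same as the paper's. For part (1) the paper simply cites Corollary~\ref{holc*cnisfrechet}, exactly as you do. For part (2) the paper invokes ``Montel's theorem'' (really the Weierstrass convergence theorem: uniform limits of holomorphic functions are holomorphic) in one line, without further comment.

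Your treatment of part (2) is more careful than the paper's, and usefully so: you flag the genuine issue that the inductive limit $\bigcup_{A_n\subset U}\textrm{Hol}(U,\mathfrak{g}_{\mathbb{C}})$ as written in Definition~\ref{complex holomorphic non-twisted Loop algebra} is not a priori complete under the sup-norm on $A_n$, and that one must pass to the completion (or fix the ambient open set). The paper's one-line proof ignores this subtlety entirely. Your resolution is correct and makes the statement honest.
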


\begin{proof}
The first assertion is a consequence of corollary~\ref{holc*cnisfrechet}. The second assertion is a consequence of Montel's theorem stating that absolute convergent sequences of holomorphic functions converge to a holomorphic function~\cite{Berenstein91}. 
\end{proof}

\noindent The inclusions $S^1 =A_0 \subset \dots A_n \subset A_{n+1} \subset \dots \subset \mathbb{C}^*$ induce the reversed inclusions on the associated loop algebras:

\begin{displaymath}
M\mathfrak{g}_{_{\mathbb{C}}} \subset \dots \subset A_{n+1}\mathfrak{g}_{\mathbb{C}}\subset A_n\mathfrak{g}_{\mathbb{C}} \subset \dots \subset A_0\mathfrak{g}_{\mathbb{C}}= L_{\textrm{hol}}\mathfrak{g}_{\mathbb{C}}\,.
\end{displaymath}

$L_{\textrm{hol}}\mathfrak{g}_{\mathbb{C}}$ denotes functions holomorphic in a small open set around $S^1\subset \mathbb{C}^*$.

To describe the twisted loop algebras we recall the graph automorphisms of the finite dimensional simple Lie algebras:
the following list contains the simple algebras $A$ with a nontrivial diagram automorphism $\sigma$ and the type of the fixed point algebra (compare \cite{Carter05}). 
\[
\begin{array}{lrccccc}
A&:&A_{2k}&A_{2k+1}&D_{k+1}&D_4&E_6\\
\textrm{Order of }\sigma&:&2&2&2&3&2\\
A^1&:&B_k&C_k&B_k&G_2&F_4
\end{array}
\]

\begin{definition}[(twisted) loop algebra, $\textrm{ord}(\sigma ) =2$]
\label{definitiontwistedloopalgebra}
Let $\mathfrak{g}_{\mathbb C}$ be a finite dimensional semisimple complex Lie
algebra of type $A_{k}$, $D_{k, k \geq 5}$ or $E_6$,  $\sigma$ the diagram automorphism. Let $\mathfrak{g}_{\mathbb C}:=\mathfrak{g}_{\mathbb C}^1 \oplus \mathfrak{g}_{\mathbb C}^{-1}$ be the decomposition into the $\pm$-eigenspaces of $\sigma$. Let $X\in \{A_n, \mathbb{C}^*\}$. If $X=A_n$ holomorphic functions on $X$ are understood to be holomorphic on an open set containing $X$.

\noindent Then the loop algebra $(X\mathfrak{g})^{\sigma}$ is the vector space
$$X\mathfrak{g}^{\sigma}:=\{ f\in X\mathfrak{g}| f(-z)=\sigma(f(z))\}\,,$$
equipped with the natural Lie bracket:
$$[f,g]_{X\mathfrak{g}^{\sigma}}(z):=[f,g]_0(z):=[f(z),g(z)]_{\mathfrak{g}}\,.$$

\end{definition}

\begin{remark}[(twisted) loop algebra, $\textrm{ord}(\sigma ) =3$]
For the algebra of type $D_4$ there exists an automorphism $\sigma$ of order $3$. In this case we get exactly the same results as for the other types. The main difference is that we have three eigenspaces, corresponding to $\{\omega, \omega^2, \omega^3=1\}$ for $\omega= e^{\frac{2\pi i }{3}}$. For a function $f$ in the loop algebra $M\mathfrak{g}$, this results in a twisting condition $f(\omega z)=\sigma f(z)$ (for details compare again~\cite{Carter05}).
\end{remark}

\begin{lemma}[Banach- and Fr\'echet structures on twisted loop algebras]~
\begin{enumerate}
	\item $A_n\mathfrak{g}^{\sigma}$ equipped with the norm $\|\hspace{3pt} \|_n$ is a Banach Lie algebra,
	\item $M\mathfrak{g}^{\sigma}$ equipped with the norms $\|\hspace{3pt} \|_n$ is a tame Fr\'echet Lie algebra. 
\end{enumerate}
\end{lemma}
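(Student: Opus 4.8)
The plan is to deduce both statements from the non-twisted cases (Lemma~\ref{mgfrechetagbanach} and the preceding structure theory) together with the observation that the twisted loop algebras are \emph{closed subspaces} of the non-twisted ones, hence inherit Banach resp.\ tame Fr\'echet structures by Lemma~\ref{constructionoftamespaces} and the corresponding closedness argument used in Lemma~\ref{subspacesofmg}. Concretely, for a fixed simple $\mathfrak{g}_{\mathbb{C}}$ of type $A_k$, $D_{k}$ or $E_6$ with diagram automorphism $\sigma$ of order $k\in\{2,3\}$ and $\omega = e^{2\pi i/k}$, the defining condition $f(\omega z)=\sigma(f(z))$ is a pointwise linear condition that is preserved under uniform limits on the annuli $A_n$: if $f_j \to f$ uniformly on each $A_n$ then $f_j(\omega z) \to f(\omega z)$ and $\sigma(f_j(z)) \to \sigma(f(z))$ because $\sigma$ is a (bounded) linear endomorphism of the finite-dimensional space $\mathfrak{g}_{\mathbb{C}}$. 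Hence $X\mathfrak{g}^\sigma$ is closed in $X\mathfrak{g}$ for $X = A_n$ and for $X = \mathbb{C}^*$.

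First I would record that $A_n\mathfrak{g}^{\sigma}$ is a closed subspace of the Banach space $A_n\mathfrak{g}_{\mathbb{C}}$ (the latter being Banach by Lemma~\ref{mgfrechetagbanach}(2)); a closed subspace of a Banach space is a Banach space, so $A_n\mathfrak{g}^{\sigma}$ with the restricted norm $\|\hspace{3pt}\|_n$ is a Banach space. It remains to check that the Lie bracket is continuous: since $[f,g]_0(z)=[f(z),g(z)]_{\mathfrak{g}}$ and the bracket on the finite-dimensional $\mathfrak{g}_{\mathbb{C}}$ satisfies $|[X,Y]_{\mathfrak{g}}|\le c\,|X|\,|Y|$ for a suitable constant $c$, the sup-norm estimate $\|[f,g]_0\|_n \le c\,\|f\|_n\,\|g\|_n$ is immediate, and one verifies that the bracket of two $\sigma$-equivariant loops is again $\sigma$-equivariant using that $\sigma$ is a Lie algebra homomorphism: $[f,g]_0(\omega z)=[f(\omega z),g(\omega z)]=[\sigma f(z),\sigma g(z)]=\sigma[f(z),g(z)]=\sigma([f,g]_0(z))$. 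Thus $A_n\mathfrak{g}^{\sigma}$ is a Banach Lie algebra.

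Next I would treat $M\mathfrak{g}^{\sigma}$. By the same closedness argument it is a closed subspace of $M\mathfrak{g}_{\mathbb{C}}$, which is a tame Fr\'echet space by Lemma~\ref{mgfrechetagbanach}(1); by Lemma~\ref{constructionoftamespaces}(1) a tame direct summand — in particular a closed subspace that is complemented — is tame. Here I should be slightly careful: Lemma~\ref{subspacesofmg} already asserts that the closed subspaces $\textrm{Hol}^{k,l}(\mathbb{C}^*,\mathbb{C})$ are tame Fr\'echet, and the $\mathfrak{g}$-valued twisted loop algebra decomposes, after choosing a basis of $\mathfrak{g}_{\mathbb{C}}$ adapted to the eigenspace decomposition $\mathfrak{g}_{\mathbb{C}}=\bigoplus_j \mathfrak{g}_{\mathbb{C}}^{(j)}$ of $\sigma$ (eigenvalue $\omega^{j}$), as a finite product $\prod_j \bigl(\textrm{Hol}^{k,j}(\mathbb{C}^*,\mathbb{C})\bigr)^{\dim \mathfrak{g}_{\mathbb{C}}^{(j)}}$; a finite product of tame Fr\'echet spaces is tame by Lemma~\ref{constructionoftamespaces}(2). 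This exhibits $M\mathfrak{g}^{\sigma}$ as a tame Fr\'echet space with the gradings $\|\hspace{3pt}\|_n$, and tame equivalence of the sup-norm grading with the $l_\infty$/$l_1$-type gradings follows from the same decomposition together with Lemma~\ref{holc*cisfrechet} and the norm-equivalence lemma for finite-dimensional vector spaces proved above.

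Finally I would verify the Lie-algebra axioms in the tame sense, i.e.\ that $\textrm{ad}(f)$ is a tame linear map for every $f\in M\mathfrak{g}^{\sigma}$, as required by Definition~\ref{tamefrechetLie algebra}. For fixed $f$ and arbitrary $g$ one has $\|\textrm{ad}(f)g\|_n = \sup_{z\in A_n}|[f(z),g(z)]_{\mathfrak{g}}| \le c\,\sup_{z\in A_n}|f(z)|\cdot\sup_{z\in A_n}|g(z)| = c\,\|f\|_n\,\|g\|_n$, and since $\|f\|_n\le\|f\|_{n+1}\le\cdots$ is a grading and $f$ is fixed, the constant $C(n):=c\,\|f\|_n$ is a legitimate sequence of positive reals, so $\textrm{ad}(f)$ is $(0,0,C(n))$-tame. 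The mild subtlety worth remarking on is the interplay between the natural sup-norm grading and the tame structure inherited via the decomposition into $\textrm{Hol}^{k,l}$-components: one must make sure these two gradings are tame equivalent so that the abstract tameness and the concrete norms $\|\hspace{3pt}\|_n$ are compatible — but this is exactly what the finite-dimensional norm-equivalence argument above supplies, so no genuine obstacle arises. The whole proof is thus a matter of assembling Lemmata~\ref{constructionoftamespaces}, \ref{subspacesofmg}, \ref{holc*cisfrechet} and \ref{mgfrechetagbanach} together with the elementary fact that $\sigma$, being a finite-order Lie algebra automorphism of a finite-dimensional space, is bounded and commutes with uniform limits.
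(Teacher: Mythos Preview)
Your proposal is correct and follows essentially the same approach as the paper: both reduce to the observation that $X\mathfrak{g}^{\sigma}$ is a closed subspace of the non-twisted algebra $X\mathfrak{g}_{\mathbb{C}}$ and then invoke the stability of Banach resp.\ tame Fr\'echet structures under closed subspaces (Lemma~\ref{constructionoftamespaces}). The paper's own proof is a one-liner to this effect.

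Your version is more scrupulous on one point worth noting: Lemma~\ref{constructionoftamespaces} literally asserts that \emph{tame direct summands} (not arbitrary closed subspaces) of tame spaces are tame, and you supply what the paper leaves implicit --- the eigenspace decomposition $M\mathfrak{g}^{\sigma}\cong\prod_j (\textrm{Hol}^{k,j}(\mathbb{C}^*,\mathbb{C}))^{\dim\mathfrak{g}^{(j)}}$, which exhibits the twisted algebra explicitly as a finite product of tame spaces via Lemma~\ref{subspacesofmg}. You also verify the tameness of $\textrm{ad}(f)$ directly, which the paper states separately just after the lemma (with constant $2\|g\|_n$; your constant $c\|f\|_n$ is the same up to the choice of norm on $\mathfrak{g}_{\mathbb{C}}$). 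None of this is a different route, just a more complete execution of the same idea.
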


\begin{proof}
Closed subspaces of Banach spaces are Banach spaces and closed subspaces of tame Fr\'echet spaces are tame Fr\'echet spaces (lemma~\ref{constructionoftamespaces}). 
\end{proof}

\begin{theorem}
 The following topologies on $M\mathfrak{g}_{\mathbb{C}}$ are equivalent:
\begin{enumerate}
 \item the compact-open topology,
 \item the topology of compact convergence,
 \item the Fr\'echet topology,
\end{enumerate}
\end{theorem}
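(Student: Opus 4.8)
The plan is to prove the three topologies coincide by establishing two chains of inequalities: first that the compact-open topology and the topology of compact convergence agree (this is essentially definitional for $\mathbb{C}^*$, since compact convergence \emph{is} uniform convergence on compact sets, which is what generates the compact-open topology on a space of continuous maps into a metric space), and second that both are equivalent to the Fr\'echet topology generated by the seminorms $\|f\|_n = \sup_{z\in A_n}|f(z)|$. For the latter equivalence the key observation is the exhaustion $A_0 \subset A_1 \subset \cdots$ with $\bigcup_n A_n = \mathbb{C}^*$, together with the fact that every compact subset $K\subset\mathbb{C}^*$ is contained in some annulus $A_n$.

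First I would recall that for maps into a metric space, the compact-open topology has as a subbasis the sets $W(K,U) = \{f : f(K)\subset U\}$ for $K$ compact and $U$ open, and that on $C(\mathbb{C}^*,\mathfrak{g}_{\mathbb{C}})$ this topology coincides with the topology of uniform convergence on compact sets — this is a standard fact (see, e.g., the references already cited), so items (1) and (2) are identified immediately. Then I would turn to comparing (2) with (3). Given a compact $K\subset\mathbb{C}^*$, the function $z\mapsto\max(|\log|z||)$ attains a maximum on $K$, so $K\subset A_n$ for $n$ large enough; hence $\sup_{z\in K}|f(z)| \leq \sup_{z\in A_n}|f(z)| = \|f\|_n$, which shows every seminorm of the compact-convergence topology is dominated by a Fr\'echet seminorm. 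Conversely each $\|f\|_n$ \emph{is} a compact-convergence seminorm since $A_n$ is itself compact. Therefore the two families of seminorms are mutually dominating and generate the same topology.

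The main technical point to handle carefully — though it is not deep — is that the $A_n$ genuinely exhaust $\mathbb{C}^*$ and that each compact set sits inside one of them; this uses that $\mathbb{C}^*$ is $\sigma$\ndash compact and that $\log|\cdot|$ is a proper map onto $\mathbb{R}$ after the identification $\mathbb{C}^*\cong\mathbb{R}\times S^1$. I would also note in passing that completeness (needed for the Fr\'echet claim to be meaningful, though it is the \emph{topologies} being compared here, not the completeness) follows from Montel/Weierstrass: a sequence Cauchy in every $\|\cdot\|_n$ converges uniformly on every annulus to a holomorphic limit. The only genuine obstacle, such as it is, is purely expository: making sure the reader sees that the compact-open topology on a \emph{function space} (as opposed to uniform convergence) really is the same thing here, which rests on $\mathfrak{g}_{\mathbb{C}}$ being a metric space and $\mathbb{C}^*$ being locally compact and Hausdorff. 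Once that is granted, the equivalences are a short three-line chain of seminorm comparisons with no calculation beyond locating a compact set inside an annulus.
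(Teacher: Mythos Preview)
Your proof is correct. For the equivalence $(1)\Leftrightarrow(2)$ you and the paper do the same thing: invoke the standard fact that on $C(X,Y)$ with $Y$ metric the compact-open topology coincides with uniform convergence on compacta, and restrict to the holomorphic subspace.

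For $(2)\Leftrightarrow(3)$ you take a genuinely different route. The paper argues sequentially via the explicit Fr\'echet metric $d(f,g)=\sum_n 2^{-n}\frac{\|f-g\|_n}{1+\|f-g\|_n}$: it takes a sequence convergent in the topology of compact convergence, splits the series for $d$ at a cutoff $n_0$, and estimates each piece; then runs the converse direction likewise. Your argument instead works directly at the level of the generating seminorm families: each $A_n$ is compact, so $\|\cdot\|_n$ is already a compact-convergence seminorm; conversely every compact $K\subset\mathbb{C}^*$ sits in some $A_n$, so $\sup_K|\cdot|\le\|\cdot\|_n$. Mutual domination of the seminorm families gives equality of the induced locally convex topologies in one line. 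Your approach is shorter and avoids any $\epsilon$-bookkeeping; the paper's approach has the minor virtue of making the metric explicit, which may be useful elsewhere, but for the bare topology statement your seminorm comparison is the cleaner argument.
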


Recall that on a Fr\'echet space $(F, \|\ \|_n)$
\begin{displaymath}
 d(f,g)=\sum_n\frac{1}{2^n}\textrm{\small$\frac{\|f-g\|_n}{1+\|f-g\|_n}$\normalsize}
\end{displaymath}
defines a metric. The Fr\'echet topology coincides with the topology generated by the metric $d(f,g)$ (see~\cite{Hamilton82}).

\begin{proof} We prove $(1)\Leftrightarrow (2)$, $(3)\Leftrightarrow (2)$.
 \begin{enumerate}
  \item [$(1)\Leftrightarrow$]$ (2)$ The equivalence between the compact-open topology and the topology of compact convergence is well-known for spaces of continuous functions $C(X,Y)$ for $X$, $Y$ metric spaces. The extension to the setting of holomorphic functions is a consequence of Montel's theorem.
\item [$(3)\Leftrightarrow$]$ (2)$ Let $(f_k) \subset M\mathfrak{g}$ be a sequence converging to $f_0$ in the topology of compact convergence. Then for every $n_0\in \mathbb{N}$ and $\epsilon> 0$ there is a $k_0$ such that for all $k\geq k_0$ the estimate $\|f_k-f_0\|_n\leq\epsilon$ is satisfied for all $n\leq n_0$. Hence
\begin{align*}
 d(f_k,f_0)& =\sum_{n=0}^{\infty}\frac{1}{2^n} \textrm{\small$\frac{\|f_k-f_0\|_n}{1+\|f_k-f_0\|_n}$\normalsize}=\\
           & =\sum_{n=0}^{n_0}\frac{1}{2^n}\textrm{\small$\frac{\|f_k-f_0\|_n}{1+\|f_k-f_0\|_n}$\normalsize}+
           \sum_{n=n_0+1}^{\infty}\frac{1}{2^n}\textrm{\small$\frac{\|f_k-f_0\|_n}{1+\|f_k-f_0\|_n}$\normalsize}\leq\\
&\leq \sum_{n=0}^{n_0}\frac{1}{2^n}\frac{\epsilon}{1+\epsilon}+ \sum_{n=n_0+1}^{\infty}\frac{1}{2^n}\leq\\
&\leq 2\epsilon+\left(\frac{1}{2}\right)^{n_0}\, .
\end{align*}
Hence $(f_k) \subset M\mathfrak{g}$ converges in the Fr\'echet topology.
Conversely let $(f_k) \subset M\mathfrak{g}$ be a sequence converging to $f_0$ in the Fr\'echet topology. Then we have
\begin{displaymath}
\lim_{k\rightarrow \infty} d(f_k,f_0)=\lim_{k\rightarrow\infty} \sum_n\frac{1}{2^n}\textrm{\small$\frac{\|f_k-f_0\|_n}{1+\|f_f-f_0\|_n}$\normalsize}=0\, .
\end{displaymath}
As $d(f_k,f_0)\geq \sup_n \frac{1}{2^n}\frac{\|f_k-f_0\|_n}{1+\|f_f-f_0\|_n}$ we conclude that 
\begin{displaymath}
 \lim_{k\rightarrow \infty}\sup_n \frac{1}{2^n}\frac{\|f_k-f_0\|_n}{1+\|f_f-f_0\|_n}=0\, .
\end{displaymath}
As for any compact set $K\subset \mathbb{C}^*$ there is some $n$ such that $K\subset A_n$, this yields compact convergence.
\end{enumerate}

\end{proof}

The adjoint action $ad(g):M\mathfrak{g}^{\sigma}\longrightarrow M\mathfrak{g}^{\sigma}$ is $(0,0, 2\|g\|_n)$-tame for each  $g\in M\mathfrak{g}^{\sigma}$.  Contrast this with the situation for affine Kac-Moody algebras described in section~\ref{holomorphicstructuresonkacmoodyalgebras}.

\noindent Having described the holomorphic complex loop algebras which we need, we turn now to some objects derived from them, namely the compact real forms and spaces of differential forms.

\noindent We start with real forms of compact type:

\begin{definition}[compact real form of a holomorphic non-twisted loop algebra]
Let $\mathfrak{g}_{\mathbb C}$ be a finite-dimensional semisimple complex Lie algebra and $\mathfrak{g}$ its compact real form.
 The loop algebra $X\mathfrak{g}_{\mathbb{R}}^{\sigma}$ is the vector space
\begin{displaymath}X\mathfrak{g}_{\mathbb{R}}^{\sigma}:=\{f\in X\mathfrak{g}_{\mathbb{C}}^{\sigma}| f(S^1)\subset \mathfrak{g} \}\,,\end{displaymath}
equipped with the natural Lie bracket:
\begin{displaymath}[f,g]_{X\mathfrak{g}}(z):=[f,g]_0(z):=[f(z),g(z)]_{\mathfrak{g}}\,.\end{displaymath}
\end{definition}

\noindent As a holomorphic function on $X$ can be expanded into its Laurent
series, one can represent every element of a loop algebra by a series 
\begin{displaymath} f(z):= \sum_{n} g_n z^n\end{displaymath}
with $g_n \in \mathfrak{g}$.

\begin{lemma}
The condition $f(S^1)\subset \mathfrak{g}_{\mathbb{R}}$  is equivalent to the condition $g_n=-\bar{g}_{-n}^{t}$.
\end{lemma}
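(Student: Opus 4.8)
The plan is to reduce everything to the standard matrix presentation of the compact real form. After a suitable choice of faithful finite-dimensional representation we may assume $\mathfrak{g}_{\mathbb{C}} \subset \mathfrak{gl}(N,\mathbb{C})$ and that the compact real form $\mathfrak{g}$ is cut out inside $\mathfrak{g}_{\mathbb{C}}$ by the skew-Hermitian condition, i.e. $\mathfrak{g} = \mathfrak{g}_{\mathbb{C}} \cap \mathfrak{u}(N)$, so that for $X \in \mathfrak{g}_{\mathbb{C}}$ one has $X \in \mathfrak{g}$ if and only if $\bar{X}^{\,t} = -X$. This is the usual realization of the compact real form via its Cartan involution, and it is the one implicitly used throughout; I would take it as given. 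Since $f$ is by definition a loop with values in $\mathfrak{g}_{\mathbb{C}}$, the condition $f(S^1) \subset \mathfrak{g}$ is then \emph{equivalent} to the pointwise identity $\overline{f(z)}^{\,t} = -f(z)$ for every $z$ on the unit circle.

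The next step is to expand both sides in Laurent series and compare. Writing $z \in S^1$, so that $\bar{z} = z^{-1}$, and $f(z) = \sum_{n} g_n z^n$ with $g_n \in \mathfrak{g}_{\mathbb{C}}$, the series converges uniformly on $S^1$, so complex conjugation and transposition act termwise, giving
\begin{displaymath}
\overline{f(z)}^{\,t} \;=\; \sum_{n} \bar{g}_n^{\,t}\, \overline{z}^{\,n} \;=\; \sum_{n} \bar{g}_n^{\,t}\, z^{-n} \;=\; \sum_{m} \bar{g}_{-m}^{\,t}\, z^{m} \qquad (|z|=1),
\end{displaymath}
after the reindexing $m = -n$. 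Hence $\overline{f(z)}^{\,t} = -f(z)$ on $S^1$ reads $\sum_{m} \bar{g}_{-m}^{\,t}\, z^{m} = -\sum_{m} g_m z^m$ for all $z \in S^1$. Invoking uniqueness of Laurent (equivalently Fourier) coefficients — two such series agreeing on $S^1$ must have identical coefficients — and comparing the coefficient of $z^m$ yields $\bar{g}_{-m}^{\,t} = -g_m$, that is $g_m = -\bar{g}_{-m}^{\,t}$, for every $m \in \mathbb{Z}$. This gives the forward implication.

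The converse is the easy direction: if $g_n = -\bar{g}_{-n}^{\,t}$ for all $n$, then reversing the computation above shows $\overline{f(z)}^{\,t} = -f(z)$ for all $|z| = 1$, hence $f(z) \in \mathfrak{g}$ for each $z \in S^1$, i.e. $f(S^1) \subset \mathfrak{g}$. The only delicate point — and the one I would regard as the main (and only mild) obstacle — is the normalization in the first paragraph: fixing the matrix model so that the compact real form is precisely the skew-Hermitian part of $\mathfrak{g}_{\mathbb{C}}$, and noting that the operation $X \mapsto \bar{X}^{\,t}$ is the corresponding conjugate-linear Cartan involution. Once this is set up, the rest is termwise conjugation together with uniqueness of Fourier coefficients, and no further estimates are needed.
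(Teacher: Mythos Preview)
Your proof is correct and rests on the same two ingredients as the paper's: the characterization of the compact real form by $X\in\mathfrak g \Leftrightarrow \bar X^{t}=-X$, followed by coefficient comparison in the Laurent/Fourier expansion. The paper carries this out by writing $z=e^{it}$, splitting each $g_n$ into real and imaginary parts, expanding in $\cos(nt)$ and $\sin(nt)$, and then solving the resulting linear system for the coefficients; your route is the same idea executed more cleanly, since the observation $\bar z=z^{-1}$ on $S^1$ lets you compute $\overline{f(z)}^{t}=\sum_m \bar g_{-m}^{t} z^m$ in one line and read off $g_m=-\bar g_{-m}^{t}$ directly from uniqueness of Laurent coefficients, without ever passing through the trigonometric expansion or the real/imaginary decomposition. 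Both arguments are equivalent; yours is simply shorter.
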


\begin{proof}
Let $z=e^{it}\in S^1\subset \mathbb{C}^*$ and let $g_n=g_n^{r}+ig_n^{i}$ be the decomposition of $g_n$ into its real and imaginary parts. Then we find
\begin{align*}
f(z)&= \sum_{n\in \mathbb{Z}} g_n z^n=\sum_{n\in \mathbb{Z}} g_n e^{itn}=\\
&=a_0+\sum_{n\in \mathbb{N}}g_n e^{itn}+g_{-n}e^{-itn}=\\
&=a_0+\sum_{n\in \mathbb{N}}(g_n^{r}+ig_n^{i}) (\cos(tn)+i\sin(tn))+(g_{-n}^{r}+ig_{-n}^{i})(\cos(-tn)+i\sin(-tn))=\\
&=a_0+\sum_{n\in \mathbb{N}}(g_n^{r}\cos(tn)-g_n^{i}\sin(tn)) +i(g_{n}^{i}\cos(tn)+g_n^r\sin(tn))+\\
&\qquad \qquad(g_{-n}^{r}\cos(-tn)- g_{-n}^{i}\sin(-tn) +i(g_{-n}^{i}\cos(-tn)+(g_{-n}^{r}\sin(-tn))=\\
&=a_0+\sum_{n\in \mathbb{N}}(g_n^r+g_{-n}^r+i(g_{n}^{i}+g_{-n}^{i}))\cos(tn)+(-g_n^{i}+g_{-n}^{i}+i(g_n^r-g_{-n}^r))\sin(tn)
\end{align*}
Now $x\in \mathfrak{g}_{\mathbb{C}}$ is in $\mathfrak{g}$ if $x=-\overline{x}^t$. This gives for the coefficients $g_{n}^r$:
\begin{align*}
g_n^r+g_{-n}^r&=-(g_n^r)^t-(g_{-n}^r)^t\, ,\\
g_n^r-g_{-n}^r&=(g_n^r)^t-(g_{-n}^r)^t\, .
\end{align*}
Adding both we get $g_n^{r}=-(g_{-n}^r)^t$.
In a similar way we get for the imaginary parts of the coefficients $g_n^{i}=(g_{-n}^i)^t$ and thus the result.
\end{proof}

\begin{lemma}
$M\mathfrak{g}_{\mathbb{R}}$ is a tame Fr\'echet space.
\end{lemma}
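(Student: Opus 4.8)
The plan is to exhibit $M\mathfrak{g}_{\mathbb{R}}$ as a tame direct summand of $M\mathfrak{g}_{\mathbb{C}}$, regarded as a tame Fr\'echet space over $\mathbb{R}$, and to conclude by Lemma~\ref{constructionoftamespaces}. By the preceding lemma an element $f=\sum_{n} g_n z^n\in M\mathfrak{g}_{\mathbb{C}}$ lies in $M\mathfrak{g}_{\mathbb{R}}$ if and only if $g_n=-\overline{g}_{-n}^{\,t}$ for all $n$; equivalently $M\mathfrak{g}_{\mathbb{R}}=\{f\mid \tau f=f\}$ is the fixed point set of the antilinear involution
\[
\tau:M\mathfrak{g}_{\mathbb{C}}\longrightarrow M\mathfrak{g}_{\mathbb{C}},\qquad (\tau f)(z):=-\,\overline{f(1/\overline{z})}^{\,t},
\]
which is well defined, since $z\mapsto 1/\overline{z}$ preserves $\mathbb{C}^*$ and sends a holomorphic function to a holomorphic function, and since a short computation on Laurent coefficients gives $\tau^2=\Id$.

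First I would check that $\tau$ is a tame $\mathbb{R}$\ndash linear map. For any norm on $\mathfrak{g}_{\mathbb{C}}$ one has $|{-\overline{x}^{\,t}}|\le C\,|x|$ for a constant $C$ (taking a norm invariant under $x\mapsto -\overline{x}^{\,t}$, which is permissible up to tame equivalence of gradings by the norm\ndash equivalence lemma of section~\ref{sect:Some_tame_Frechet_spaces}, one may even take $C=1$); combined with the fact that $z\mapsto 1/\overline{z}$ maps the annulus $A_n=\{e^{-n}\le|z|\le e^n\}$ onto itself, this shows $\|\tau f\|_n\le C\|f\|_n$, so $\tau$ is $(0,0,(C)_{n})$\ndash tame. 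Consequently the $\mathbb{R}$\ndash linear projection $P:=\tfrac12(\Id+\tau)$ onto $M\mathfrak{g}_{\mathbb{R}}$ is tame, and the inclusion $\iota:M\mathfrak{g}_{\mathbb{R}}\hookrightarrow M\mathfrak{g}_{\mathbb{C}}$ with the subspace grading is $(0,0,(1)_n)$\ndash tame; since $P\circ\iota=\Id_{M\mathfrak{g}_{\mathbb{R}}}$, this presents $M\mathfrak{g}_{\mathbb{R}}$ as a tame direct summand of $M\mathfrak{g}_{\mathbb{C}}$. Finally $M\mathfrak{g}_{\mathbb{C}}$ is a tame Fr\'echet space by Lemma~\ref{mgfrechetagbanach} (via Corollary~\ref{holc*cnisfrechet}), and restricting scalars from $\mathbb{C}$ to $\mathbb{R}$ does not affect this, because $\Sigma(\mathbb{C}^n)$ is $\Sigma(\mathbb{R}^{2n})$ with (tamely equivalent) gradings; hence Lemma~\ref{constructionoftamespaces}(1) applies and $M\mathfrak{g}_{\mathbb{R}}$ is tame.

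The only genuinely delicate point is the bookkeeping between real and complex scalars: one must ensure that the notions of tame linear map, tame direct summand and tame Fr\'echet space are stable under restriction of scalars — which is precisely what the corollary describing $\Sigma(B)$ for a real Banach space $B$ supplies — and that antilinear maps such as $\tau$ are legitimate in these estimates, which they are, since tameness refers only to the metric ($\mathbb{R}$\ndash linear) structure. A more hands\ndash on alternative, bypassing $\tau$ altogether, is to imitate the proof of Lemma~\ref{holc*cisfrechet}: the reality condition $g_{-n}=-\overline{g}_n^{\,t}$ makes $f\mapsto(g_n)_{n\ge 0}$ a bijection of $M\mathfrak{g}_{\mathbb{R}}$ onto the exponentially decreasing sequences in $\mathfrak{g}_{\mathbb{C}}$, and the same Cauchy\ndash estimate arguments used there show that this bijection and its inverse are tame, so that $M\mathfrak{g}_{\mathbb{R}}\cong\Sigma(\mathfrak{g}_{\mathbb{C}})$ directly.
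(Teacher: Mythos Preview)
Your proof is correct and follows the same basic route as the paper --- invoking Lemma~\ref{constructionoftamespaces} after identifying $M\mathfrak{g}_{\mathbb{R}}$ inside $M\mathfrak{g}_{\mathbb{C}}$ --- but you actually supply more than the paper does. The paper's proof is a single line: ``$M\mathfrak{g}_{\mathbb{R}}\subset M\mathfrak{g}_{\mathbb{C}}$ is a closed subspace and thus tame according to lemma~\ref{constructionoftamespaces}.'' Strictly speaking, Lemma~\ref{constructionoftamespaces} requires a \emph{tame direct summand}, not merely a closed subspace; the paper uses the same shortcut elsewhere (e.g.\ for $M\mathfrak{g}^{\sigma}$) without justifying the existence of a tame projection. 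Your explicit construction of the antilinear involution $\tau$ and the tame projection $P=\tfrac12(\Id+\tau)$ fills precisely this gap, and your verification that $z\mapsto 1/\bar z$ preserves each $A_n$ is the key estimate. The alternative you sketch at the end, sending $f$ to its nonnegative Laurent coefficients to get a direct isomorphism with $\Sigma(\mathfrak{g}_{\mathbb{C}})$, would also work and bypasses the restriction-of-scalars discussion entirely.
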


\begin{proof}
$M\mathfrak{g}_{\mathbb{R}}\subset M\mathfrak{g}_{\mathbb{C}}$ is a closed subspace and thus tame according to lemma~\ref{constructionoftamespaces}.
\end{proof}

\begin{definition}~
\begin{enumerate}
\item $\Omega^1(X,\mathfrak{g}_{\mathbb{C}})$ is the space of $\mathfrak{g}_{\mathbb{C}}$-valued $1$-forms on $X$; elements $\omega\in \Omega^1(X,\mathfrak{g}_{\mathbb{C}})$ are of the form $\omega(z)=f(z)dz$ with $f(z)\in X\mathfrak{g}_{\mathbb{C}}$. We define a family of norms  by $\|\omega\|_n:=|f(z)|_n$.
\item $\Omega^1(X,\mathfrak{g}_{\mathbb{C}})_\mathbb{R}$ is the space of $\mathfrak{g}_{\mathbb{C}}$-valued $1$-forms on $X$ such that $f(S^1) \subset \mathfrak{g}_{\mathbb{R}}$.
\end{enumerate}
\end{definition}

\noindent As $M\mathfrak{g}_{\mathbb{C}}$ and $M\mathfrak{g}_{\mathbb{R}}$ are  tame Fr\'echet spaces, also $\Omega^1(X,\mathfrak{g}_{\mathbb{C}})$ and  $\Omega^1(X,\mathfrak{g}_{\mathbb{C}})_\mathbb{R}$ are tame. Remark, that this is not the topology as a dual space.

\begin{remark}
Real forms of the algebras $X\mathfrak{g}^{\sigma}_{\mathbb{C}}$ correspond to conjugate-linear involutions of $X\mathfrak{g}^{\sigma}_{\mathbb{C}}$: assign to a real form the conjugation with respect to it. In the other direction, fixed point algebras of conjugate-linear involutions are real forms.
Hence, real forms are closed subalgebras. Thus by an application of lemma~\ref{constructionoftamespaces} real forms of $M\mathfrak{g}_{\mathbb{C}}^{\sigma}$ are tame, real forms of $A_n{\mathfrak{g}}_{\mathbb{C}}^{\sigma}$ are Banach. 
\end{remark}

\subsection{Lie groups of holomorphic maps}
\label{loopgroups}

Up to now we studied analytic structures on loop algebras but not on the
associated loop groups. In short the main result is that all loop algebras interesting to us are tame Lie algebras. In this section we prove similar results for loop groups. Let $G$ be a compact semisimple Lie group and $G_{\mathbb{C}}$ its complexification.

\subsubsection{Foundations}

Let us recall the definition of a smooth tame Lie group from~\cite{Hamilton82}:

\begin{definition}[smooth tame Lie group]
A smooth tame Lie group is a smooth tame Fr\'echet manifold $G$ equipped with a
group structure such that the multiplication map 
\begin{displaymath}
\varphi: G\times G\longrightarrow G,\quad (g,h)\mapsto gh
\end{displaymath}
and the inverse map
\begin{displaymath}
\varphi: G\longrightarrow G,\quad g\mapsto g^{-1}
\end{displaymath}
are smooth tame maps.
\end{definition}

In this section we show that the following groups are smooth tame Lie groups:

\begin{definition}[complex loop groups]~ 
\label{A_nGC}
\begin{enumerate}
\item The loop group $A_nG$ is the group
\begin{displaymath}A_nG_{\mathbb C}:=\{f:A_n\longrightarrow G_{\mathbb C}|
\textrm{ f is holomorphic}\}\,.\end{displaymath}
The multiplication is defined to be $fg(z):= f(z)g(z)$ for $f,g \in A_nG$. 

\item The loop group $MG$ is the group 
$$MG_{\mathbb C}:=\{f:\mathbb C^*\longrightarrow G_{\mathbb C}|
\textrm{ f is holomorphic}\}\,.$$
The multiplication is defined to be $(fg)(z):= f(z)g(z)$ for $f,g \in MG$.
\end{enumerate}
\end{definition}

\begin{definition}[real form of the compact type]~
\label{A_nGR}
\begin{enumerate}
\item The real form of the compact type $A_nG_{\mathbb{R}}$ is defined to be 
$$A_nG_{\mathbb R}:=\{f\in A_nG_{\mathbb C}| f(S^1) \subset G_{\mathbb R}\}\,.$$
\item The real form of the compact type $MG_{\mathbb{R}}$ is defined to be 
$$MG_{\mathbb R}:=\{f\in MG_{\mathbb C}| f(S^1) \subset G_{\mathbb R}\}\,.$$
\end{enumerate}
\end{definition}

There are exponential functions 
\begin{align*}A_n\exp: A_n\mathfrak{g}&\longrightarrow A_nG\quad \textrm{and}\\
\Mexp: M\mathfrak{g}&\longrightarrow MG,
\end{align*} 
defined pointwise using the group exponential function $\exp: \mathfrak{g}\rightarrow G$:
$$
\begin{aligned}
[(A_n\exp)(f)](z)&:=\exp(f(z))\,,\\
[(\Mexp)(f)](z)&:=\exp(f(z))\,.
\end{aligned}$$

Let us remark, that for any $z\in \mathbb{C}^*$ resp. $z\in A_n$, the curve  $\gamma(t):=[(\Mexp)(t\cdot f)](z)$ resp. $\gamma(t):=[(A_n\exp)(t\cdot f)]$ defines $1$-parameter subgroups.

The next important object needed to describe the connection between the loop algebras and the loop groups is the definition of the Adjoint action $\textrm{Ad}$:

As usual it is defined pointwise using the Adjoint action of the Lie group $G_{\mathbb{K}}$, $\mathbb{K}\in \{\mathbb{R},\mathbb{C}\}$:

$$
\begin{array}{ll}
(\textrm{Ad}(A_nG)_{\mathbb{K}} \times A_n\mathfrak{g}_{\mathbb{K}})\longrightarrow A_n\mathfrak{g}_{\mathbb{K}}, \hspace{10pt}&(f,h) \mapsto fhf^{-1}\,,\\
(\textrm{Ad}(MG)_{\mathbb{K}}\times M\mathfrak{g}_{\mathbb{K}})\longrightarrow M\mathfrak{g}_{\mathbb{K}}, &(f,h) \mapsto fhf^{-1}\,,\\
\end{array}
$$
where $$fhf^{-1}(z):= f(z)h(z)f^{-1}(z)\simeq \textrm{Ad}(f(z)) (h(z))\, .$$

For the Adjoint action of groups of the compact type to be well-defined we have to check, that the condition $f(S^1)\subset \mathfrak{g}_{\mathbb{R}}$ is preserved. This is a consequence of the adjoint action for finite dimensional compact Lie groups: for all $z\in S^1$ we have $f(z)\in G_{\mathbb{R}}$ and $h(z)\in \mathfrak{g}_{\mathbb{R}}$. Thus  the condition $\textrm{Ad}(f)h(z)\in \mathfrak{g}_{\mathbb{R}}$ is preserved pointwise.

\begin{lemma}
The exponential function and the Adjoint action satisfy the identity: 
\begin{displaymath}\textrm{Ad}\circ \exp X= e^{ad}X\quad \textrm{for} \quad X\in \{A_n, \mathbb{C}^*\}\, .\end{displaymath}
\end{lemma}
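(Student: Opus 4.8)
The plan is to reduce the identity $\mathrm{Ad}\circ\exp X = e^{\mathrm{ad}\,X}$ to the corresponding well-known identity for the finite-dimensional Lie group $G_{\mathbb{K}}$ via the pointwise definitions of both $\exp$ (resp.\ $A_n\exp$, $\Mexp$) and $\mathrm{Ad}$. Recall that for any $z$ in $\mathbb{C}^*$ (resp.\ in $A_n$), the exponential function on loops is defined by $[(\Mexp)(X)](z) = \exp(X(z))$, and the adjoint action is defined by $\mathrm{Ad}(f)(h)(z) = \mathrm{Ad}(f(z))(h(z))$ in terms of the adjoint action of the finite-dimensional group $G_{\mathbb{K}}$. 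So, given $X \in M\mathfrak{g}$ (resp.\ $A_n\mathfrak{g}$) and an arbitrary $h$ in the same loop algebra, I would evaluate both sides of the claimed identity, applied to $h$, at a point $z$.

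The key computation is then: $\bigl[\mathrm{Ad}(\exp X)(h)\bigr](z) = \mathrm{Ad}\bigl((\exp X)(z)\bigr)(h(z)) = \mathrm{Ad}\bigl(\exp(X(z))\bigr)(h(z))$, and by the classical finite-dimensional identity $\mathrm{Ad}\circ\exp = e^{\mathrm{ad}}$ on $G_{\mathbb{K}}$ applied at the point $X(z) \in \mathfrak{g}_{\mathbb{K}}$, this equals $\bigl(e^{\mathrm{ad}(X(z))}\bigr)(h(z))$. On the other side, I need to check that $\bigl[e^{\mathrm{ad}\,X}(h)\bigr](z) = e^{\mathrm{ad}(X(z))}(h(z))$; this follows because the Lie bracket on the loop algebra is defined pointwise (so $(\mathrm{ad}\,X)^k h$ evaluated at $z$ equals $(\mathrm{ad}(X(z)))^k (h(z))$) together with the fact that the series $\sum_k \tfrac{1}{k!}(\mathrm{ad}\,X)^k h$ converges in the tame Fr\'echet (equivalently compact-open) topology and evaluation at $z$ is continuous. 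Since the two sides agree at every $z$, they agree as elements of the loop algebra, proving the identity. The case $X = A_n$ is identical, working over the Banach algebra $A_n\mathfrak{g}$, where convergence of the exponential series is even more routine.

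The only technical point requiring care — and hence the main obstacle — is justifying the interchange of "evaluation at $z$" with the infinite sum defining $e^{\mathrm{ad}\,X}$, i.e.\ that $e^{\mathrm{ad}\,X}$ really is a well-defined element of $\mathrm{End}(M\mathfrak{g})$ and that point evaluation commutes with it. For the Banach case this is immediate from norm convergence since $\mathrm{ad}\,X$ is bounded on $A_n\mathfrak{g}$. For the tame Fr\'echet case $M\mathfrak{g}$, one uses that $\mathrm{ad}(X)$ is a tame (in particular continuous) linear map on $M\mathfrak{g}$ — as recorded in the preceding discussion, $\mathrm{ad}(g)$ on $M\mathfrak{g}^\sigma$ is $(0,0,2\|g\|_n)$-tame — so that on each annulus $A_n$ the partial sums form a Cauchy sequence in the $n$-th norm; continuity of the evaluation map $f \mapsto f(z)$ (which is bounded by $\|f\|_n$ for $z \in A_n$) then gives the desired commutation. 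With that in hand the proof is a one-line pointwise verification reducing everything to the finite-dimensional statement.
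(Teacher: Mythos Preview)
Your proof is correct and follows exactly the same approach as the paper: reduce to the finite-dimensional identity by evaluating pointwise. The paper's own proof is a single sentence (``Applying the well-known identity for finite dimensional Lie algebras (resp.\ Lie groups) we get that the identity is valid pointwise''), so your additional care about convergence of $e^{\mathrm{ad}\,X}$ and continuity of point evaluation is more detail than the paper provides, not a different method.
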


\begin{proof}
Applying the well-known identity for finite dimensional Lie algebras (resp. Lie groups) we get that the identity is valid pointwise.\end{proof}

\noindent We now investigate the functional analytic nature of the groups $A_nG$ and $MG$: to fix some notation let $X\sigma$ denote by abuse of notation an involution of $X\mathfrak{g}$ resp.\ of $XG$. Let $XG_D$ denote a real form of non-compact type of $XG$, and denote by $\textrm{Fix}(X\sigma)$ the fixed point group of an involution $X\sigma$.

E.\ Heintze and C.\ Gro\ss~\cite{Heintze09} show that real forms of the non-compact type of a complex simple Kac-Moody algebra are in bijection with involutions of the compact real form (which is unique up to conjugation).  Let $X\mathfrak{g}_{\mathbb{R}}$ be a compact real form with involution $X\sigma$. We denote by $X\mathfrak{g}_{D, \sigma}$ the real form of non-compact type associated to $X\sigma$.

Let us focus on a description of the groups $A_nG$: as $A_n$ is compact we can follow the classical strategy to define manifold and Lie group structures.  We start by defining a chart on an open set containing the identity with values in the Lie algebra via the exponential map; then we use left translation to construct an atlas of the whole group. This strategy yields the following basic results:

\begin{theorem}~
\begin{enumerate}
\item $A_nG_{\mathbb{R}}$ and $A_nG_{\mathbb{C}}$ are  Banach-Lie groups.
\item Real forms $A_nG_D$ of non-compact type of $A_nG_{\mathbb{C}}$ are Banach-Lie groups.
\item Quotients $A_nG_{\mathbb{R}}/\textrm{Fix}(A_n\sigma)$ and  $A_nG_{D, \sigma}/\textrm{Fix}(A_n\sigma)$ are Banach manifolds. 
\end{enumerate}
\end{theorem}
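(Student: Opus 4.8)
The plan is to treat the three assertions as a single package built on the standard finite-dimensional-to-loop bootstrap, since $A_n$ is compact and the target groups $G_\mathbb{R}$, $G_\mathbb{C}$, $G_D$ are all finite-dimensional Lie groups. First I would fix a point of reference: $A_n\mathfrak{g}_\mathbb{C}$ is a Banach Lie algebra by Lemma~\ref{mgfrechetagbanach}(2), and its relevant subalgebras ($A_n\mathfrak{g}_\mathbb{R}$, the non-compact real forms $A_n\mathfrak{g}_{D,\sigma}$) are closed, hence Banach, by the constructions recalled after Definition~\ref{complex holomorphic non-twisted Loop algebra}. The pointwise exponential $A_n\exp$ gives a candidate chart near the identity: one shows it is a local diffeomorphism from a neighborhood of $0$ in $A_n\mathfrak{g}$ onto a neighborhood of the constant loop $e$ in $A_nG$, because composition with the finite-dimensional $\exp$ (a local diffeomorphism) and precomposition with holomorphic maps on the compact set $A_n$ preserves holomorphy and invertibility uniformly in $z$; the inverse is built pointwise from a fixed analytic branch of $\log$ near $e\in G$. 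Then left translation by elements of $A_nG$ transports this chart around the whole group, and the transition maps are holomorphic because they are pointwise built from the real-analytic (indeed for $G_\mathbb{C}$ holomorphic) multiplication and inversion of $G$ applied to holomorphic loops; smoothness of multiplication and inversion on $A_nG$ follows from the corresponding smoothness on $G$ together with the fact that $A_nG$ is a Banach space of maps into a compact parameter domain, so the standard $\Omega$-lemma / composition estimates apply. This establishes (1).

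For (2), the only additional point is that a non-compact real form $A_nG_D$ is the fixed-point group of a conjugate-linear involution $A_n\sigma$ of $A_nG_\mathbb{C}$ (transported pointwise from the finite-dimensional picture via Heintze--Gro\ss), hence a closed subgroup whose Lie algebra is the closed Banach subalgebra $A_n\mathfrak{g}_{D,\sigma}$; I would invoke the Banach-space version of the closed-subgroup / implicit-function argument (available here precisely because $A_nG_\mathbb{C}$ is Banach) to conclude $A_nG_D$ is an embedded Banach-Lie subgroup. Alternatively one repeats the $\exp$-chart construction directly inside $A_nG_D$, noting that $A_n\exp$ maps $A_n\mathfrak{g}_{D,\sigma}$ into $A_nG_D$ and is still a local diffeomorphism onto its image.

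For (3), the quotients $A_nG_\mathbb{R}/\mathrm{Fix}(A_n\sigma)$ and $A_nG_{D,\sigma}/\mathrm{Fix}(A_n\sigma)$: here $\mathrm{Fix}(A_n\sigma)$ is a closed Banach-Lie subgroup with closed complemented Lie algebra (the $+1$-eigenspace of the differential of $A_n\sigma$, with the $-1$-eigenspace as a closed complement, since the splitting is pointwise and $A_n\mathfrak{g}$ is Banach). The standard quotient theorem for Banach-Lie groups by closed subgroups with complemented Lie algebra then gives the quotient a Banach-manifold structure, with charts coming from the exponential of the $-1$-eigenspace composed with the projection.

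The main obstacle I anticipate is not any single hard estimate but making the pointwise constructions genuinely uniform: one must check that the local inverse of $A_n\exp$ is defined on a neighborhood of $e$ that is \emph{uniform in} $z\in A_n$, and that multiplication/inversion on $A_nG$ are smooth in the Banach (Fr\'echet-but-actually-Banach) sense rather than merely pointwise --- this is where compactness of $A_n$ does the real work, bounding the loops uniformly and letting the finite-dimensional local data patch together. Once that uniformity is in hand, (1)--(3) are the classical Banach-Lie-group package applied verbatim, with the twisting and real-form conditions entering only as closed, pointwise-defined constraints that are automatically respected by the exponential chart.
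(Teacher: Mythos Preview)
Your proposal is correct and follows precisely the approach the paper indicates: the paper itself does not give a detailed proof but simply notes that, since $A_n$ is compact, one can use the classical strategy of taking the exponential map as a chart near the identity and transporting it by left translation, referring the reader to the standard Banach-Lie-group literature (Palais) for the details. Your write-up is in fact considerably more explicit than the paper's own treatment, spelling out the uniformity issues, the closed-subgroup argument for real forms, and the complemented-subalgebra quotient argument that the paper leaves entirely implicit.
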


 For Banach-Lie groups and Banach manifolds, there exists a huge literature; for a classical introduction see for example~\cite{Palais68}. 

\noindent For the groups $MG$ themselves the theory is considerably more difficult. The crucial observation is the fact that the exponential map has in general not to be a local diffeomorphism.

\noindent We give an example of this strange phenomenon:

\begin{example}[$MSL(2,\mathbb C)$]
\label{sl2chasnodiffeomorphicexponentialmap}
We study the Lie group $SL(2,\mathbb{C})$. As is well known, $$\exp: \mathfrak{sl}(2,\mathbb {C})\longrightarrow SL(2,\mathbb{C})$$ is not surjective. For example elements $g\in SL(2, \mathbb{C})$ conjugate to the element \tiny$\left(
\begin{array}{cc}
  -1 & 1\\
  0 & -1
\end{array}
\right)$ \normalsize
are not in the image of $\exp \left(\mathfrak{sl}(2,\mathbb C)\right)$ (see\cite{DuistermaatKolk00}).

We want to show that there exists a sequence $f_n \in MSL(2,\mathbb C)$ which
converges to the identity element 
\tiny$\left(
\Id(z):=\begin{array}{cc}
  1 & 0\\
  0 & 1
\end{array}
\right)$ \normalsize 
in the compact-open-(tame Fr\'echet) topology but is not contained in the image of $\Mexp$. To this end, we have to construct $f_n$ in a way that it contains points that are not in the image of $\exp
\left(\mathfrak{sl}(2,\mathbb C)\right)$.
We define
\begin{align*}
f_n(z)&=
\left(
	\begin{array}{cc}e^{\pi z/n} & -i z/n\\0&e^{- \pi z/n}\end{array}
\right)\,.\\
\intertext{Then}
f_n(in)&=
\left(
	\begin{array}{cc}-1 & 1\\0&-1\end{array}
\right)\,.
\end{align*}
In consequence $f_n$ is not contained in $\Im(\Mexp(\mathfrak{sl}(2, \mathbb C)))$ for any $n\in \mathbb{N}$. On the other hand, for any fixed $z_0 \in \mathbb C^*$ we find the limit
$$\lim_{n\rightarrow \infty}f_n(z_0)=
\left(	\begin{array}{cc}1 & 0\\0&1\end{array}
\right)=\textrm{Id}\,.$$

Hence in the compact-open topology for every neighborhood $U_k$ of the identity there exist $n_k\in \mathbb N$ such that $\forall n\geq n_k: f_n \in U_k$.

This oberservation concludes the proof that $f_n$ is not a local diffeomorphism.
\end{example}

\noindent Especially this observation contains the corollary that the groups $MG$ are no locally exponential Lie groups in the sense of Karl-Hermann Neeb~\cite{Neeb06}, that is Lie groups such that $\exp$ is a local diffeomorphism. 

Hence, we have to find another way to define manifold structures on $MG$.
We start by describing some results about the relationship between $MG$ and $M\mathfrak g$. Then we show that loop groups satisfy the weaker axioms for pairs of exponential type introduced by Hideki Omori.

\begin{definition}
The tangential space $T_p(MG)$ is defined as the space of path-equivalence classes of smooth paths. Hence two curves
$$\gamma_i:(-\epsilon_i, \epsilon_i)\times \mathbb{C}^*\longrightarrow G_{\mathbb{C}},\ i=1,2$$ are equivalent if there is some $\epsilon_0>0$ such that 
$$\gamma_1|_{(-\epsilon_0, \epsilon_0)\times \mathbb{C}^*}=\gamma_2|_{(-\epsilon_0, \epsilon_0)\times \mathbb{C}^*}\,.$$
\end{definition}

A path depends holomorphically on the second factor and smoothly on the first factor. Let us remark, that one can use weaker regularity conditions on the first factor (i.e. $C^k$-dependence); we will not pursue the study of these weaker regularity conditions any further. In contrast the holomorphic dependence on the second factor is derived from the regularity condition imposed on the (holomorphic) Kac-Moody group.

\noindent The relationship between $M\mathfrak{g}$ and $MG$ is described by
the following three results (we postpone the proofs after the discussion):

\begin{theorem}[Tangential space]
\label{tangential space}
Let $\mathfrak{g}$ be the Lie algebra of $G$. Then
$$M\mathfrak{g}= T_e(MG)\,.$$
\end{theorem}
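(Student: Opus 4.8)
The plan is to show that the tangent space $T_e(MG)$, defined via path-equivalence classes of smooth-in-$t$, holomorphic-in-$z$ paths through the identity, is canonically isomorphic to $M\mathfrak{g}$. First I would fix notation: given a path $\gamma:(-\epsilon,\epsilon)\times\mathbb{C}^*\longrightarrow G_{\mathbb{C}}$ with $\gamma(0,z)=\mathrm{Id}(z)=e$ for all $z$, I want to associate to it the ``velocity field'' $\dot\gamma(z):=\left.\frac{\partial}{\partial t}\right|_{t=0}\gamma(t,z)$. The first step is to check this is well-defined on equivalence classes, which is immediate since the equivalence relation is agreement on a neighbourhood of $t=0$, and then to check that $\dot\gamma$ actually lies in $M\mathfrak{g}$: for each fixed $z$, $\dot\gamma(z)\in T_eG_{\mathbb{C}}=\mathfrak{g}_{\mathbb{C}}$ by the finite-dimensional theory, and holomorphic dependence on $z$ follows because $\gamma$ depends holomorphically on $z$ and differentiation in $t$ commutes with holomorphicity in $z$ (the two variables are independent; one can differentiate under a Cauchy integral in $z$). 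For the compact real form $MG_{\mathbb{R}}$ one additionally checks $\dot\gamma(S^1)\subset\mathfrak{g}_{\mathbb{R}}$, again pointwise from the finite-dimensional statement.

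Next I would construct the inverse map. Given $X\in M\mathfrak{g}$, define $\gamma_X(t,z):=\Mexp(tX)(z)=\exp(tX(z))$, using the pointwise exponential $\exp:\mathfrak{g}_{\mathbb{C}}\to G_{\mathbb{C}}$. One must verify that $\gamma_X$ is a legitimate path: it is holomorphic in $z$ because $z\mapsto X(z)$ is holomorphic and $\exp$ is holomorphic (or real-analytic) on $\mathfrak{g}_{\mathbb{C}}$, and it is smooth in $t$; moreover $\gamma_X(0,z)=e$. Then $\left.\frac{\partial}{\partial t}\right|_{t=0}\exp(tX(z))=X(z)$ pointwise, so the composite $X\mapsto\gamma_X\mapsto\dot\gamma_X$ is the identity on $M\mathfrak{g}$. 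Conversely, given any path $\gamma$ representing a tangent vector with $\dot\gamma=X$, the path $\gamma$ and $\gamma_X$ have the same velocity at $t=0$, hence represent the same path-equivalence class only after one observes that the tangent space of a manifold is genuinely the set of such velocities; here I would invoke that $MG$ is (by the results of the following section, or by the general setup) a manifold whose charts near $e$ are modelled on $M\mathfrak{g}$, so that equality of first-order data at $t=0$ is exactly equality of tangent vectors. Finally I would note the map $X\mapsto\dot\gamma_X$ is linear: $\dot\gamma_{X+Y}=\dot\gamma_X+\dot\gamma_Y$ from the pointwise derivative being linear, and compatible with scalar multiplication, giving the claimed linear (indeed tame-linear) isomorphism $M\mathfrak{g}\cong T_e(MG)$.

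The main obstacle I anticipate is the regularity bookkeeping in the $z$-variable, precisely the claim that differentiating a holomorphic-in-$z$, smooth-in-$t$ family in $t$ preserves holomorphy in $z$ and lands in the Fréchet space $M\mathfrak{g}$ rather than merely in the larger space of functions $\mathbb{C}^*\to\mathfrak{g}_{\mathbb{C}}$. I would handle this by writing, for $z$ in any annulus $A_n$ and a slightly larger annulus $A_{n+1}$, the Cauchy representation $\gamma(t,z)=\frac{1}{2\pi i}\oint_{\partial A_{n+1}}\frac{\gamma(t,\zeta)}{\zeta-z}\,d\zeta$ componentwise (in a matrix realization of $G_{\mathbb{C}}$), and differentiating under the integral sign in $t$; uniform bounds on $\partial_t\gamma$ over $\partial A_{n+1}$ give both the existence of the $z$-derivatives of $\dot\gamma$ and the estimate $\|\dot\gamma\|_n\leq C\|\partial_t\gamma\|_{n+1}$ needed for membership in the tame Fréchet space. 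The only other point requiring a word is that $\exp$ is holomorphic as a map $\mathfrak{g}_{\mathbb{C}}\to G_{\mathbb{C}}$ so that $\gamma_X$ is genuinely holomorphic in $z$; this is standard for the exponential of a complex Lie group, and in the compact real form case the corresponding real-analytic statement together with the reality condition $\overline{X(\bar z)}=X(z)$-type relations ensure $\gamma_X$ respects the real form.
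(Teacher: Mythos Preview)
Your proposal is correct and follows essentially the same two-inclusion strategy as the paper: show that the $t$-derivative at $0$ of a path $\gamma(t,z)$ lands in $M\mathfrak{g}$, and conversely realize any $X\in M\mathfrak{g}$ as the velocity of $t\mapsto\Mexp(tX)$. The only notable difference is how you verify holomorphy of $\dot\gamma$ in $z$: the paper argues via the Cauchy--Riemann characterization $\partial_{\bar z}\gamma=0$ and interchanges $\partial_t$ and $\partial_{\bar z}$ by smoothness, whereas you differentiate under a Cauchy integral over $\partial A_{n+1}$; your route has the advantage of simultaneously producing the Fr\'echet-norm estimates, but both are standard and equivalent here.
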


\noindent Moreover the tangential space $T_e(MG)$ is isomorphic to the Lie algebra of left-invariant vector fields on $MG$. 

\noindent While the tangential space of a loop group is the corresponding loop algebra, the exponential map behaves in general badly.  More precisely, we have the following result:

\begin{theorem}[Loop groups whose exponential map is no local diffeomorphism]
\label{expnodiffeomorphism}
Let $G_{\mathbb{C}}$ be a complex semisimple Lie group. 
$$\Mexp:M\mathfrak{g}\longrightarrow MG$$
is not a local diffeomorphism.
\end{theorem}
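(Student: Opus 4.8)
The plan is to reduce the infinite-dimensional statement to a finite-dimensional pathology at a single well-chosen point of $\mathbb{C}^*$ and then propagate it to a neighborhood of the zero loop using the holomorphic structure. More precisely, I would proceed as in Example~\ref{sl2chasnodiffeomorphicexponentialmap} but in a way that works for an arbitrary complex semisimple $G_{\mathbb{C}}$. First I would pass to an embedded copy of $SL(2,\mathbb{C})$ inside $G_{\mathbb{C}}$: pick a root $\alpha$ of $\mathfrak{g}_{\mathbb{C}}$ and the associated $\mathfrak{sl}(2)$-triple $\{e_\alpha, f_\alpha, h_\alpha\}$, giving a homomorphism $\iota_\alpha\colon SL(2,\mathbb{C})\to G_{\mathbb{C}}$ with differential carrying $\mathfrak{sl}(2,\mathbb{C})$ into $\mathfrak{g}_{\mathbb{C}}$. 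Since $G_{\mathbb{C}}$ is semisimple it has at least one root, so such a triple exists. Composing the loops $f_n$ of Example~\ref{sl2chasnodiffeomorphicexponentialmap} with $\iota_\alpha$ produces loops $\tilde f_n := \iota_\alpha\circ f_n \in MG_{\mathbb{C}}$.

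The key steps are then: (i) show $\tilde f_n \to \mathrm{Id}$ in the compact-open (equivalently tame Fréchet, by the equivalence-of-topologies theorem proved above) topology on $MG_{\mathbb{C}}$; this follows pointwise from $\lim_n f_n(z_0) = \mathrm{Id}$ together with the fact that $\iota_\alpha$ is continuous and that uniform convergence on each compact $K\subset\mathbb{C}^*$ implies convergence in every seminorm $\|\cdot\|_n$. (ii) Show that no $\tilde f_n$ lies in the image of $\Mexp$. The point is that $\tilde f_n(in) = \iota_\alpha\!\left(\begin{smallmatrix}-1 & 1\\ 0 & -1\end{smallmatrix}\right)$; if $\tilde f_n = \Mexp(\xi)$ for some $\xi\in M\mathfrak{g}$, then evaluating at $z=in$ gives $\tilde f_n(in) = \exp_{G_{\mathbb{C}}}(\xi(in))$, so $\tilde f_n(in)$ would be exponential in $G_{\mathbb{C}}$. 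One must check that a non-trivial unipotent-times-central element of $SL(2,\mathbb{C})$ of the form $\left(\begin{smallmatrix}-1 & 1\\ 0 & -1\end{smallmatrix}\right)$ maps under $\iota_\alpha$ to an element of $G_{\mathbb{C}}$ that is not in the image of $\exp_{G_{\mathbb{C}}}$; this is where one uses the structure theory of $G_{\mathbb{C}}$ (e.g. the element has a Jordan decomposition with a non-trivial unipotent part sitting in a root $SL(2)$, and is conjugate to something demonstrably not exponential, cf.~\cite{DuistermaatKolk00}). (iii) Conclude: $\Im(\Mexp)$ fails to contain a whole sequence converging to the identity, hence is not a neighborhood of $\mathrm{Id}$, so $\Mexp$ cannot be a local diffeomorphism at $0$; by left-translation (which is a diffeomorphism of $MG$) it is nowhere a local diffeomorphism.

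The main obstacle is step (ii), specifically verifying that the image under $\iota_\alpha$ of the pathological $SL(2,\mathbb{C})$-element genuinely escapes $\exp(\mathfrak{g}_{\mathbb{C}})$ for \emph{every} complex semisimple $G_{\mathbb{C}}$, rather than just for $SL(2,\mathbb{C})$ itself. For groups that are not simply connected the exponential map can behave differently on the center, so I would be careful to choose the pathological element inside the derived subgroup coming from the root $SL(2)$ and argue via its Jordan decomposition: the semisimple part is an involution $\iota_\alpha\!\left(\begin{smallmatrix}-1 & 0\\ 0 & -1\end{smallmatrix}\right)$ and the unipotent part is a non-trivial element of the root subgroup $U_\alpha$; an element of $G_{\mathbb{C}}$ whose semisimple part is not central and whose unipotent part is non-trivial in the centralizer of that semisimple part cannot lie in the image of $\exp$, by the standard criterion that $\exp$ hits exactly those elements whose unipotent part lies in the identity component of the centralizer's unipotent radical in the appropriate sense. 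Alternatively, a cleaner route avoiding case analysis is to note that $MG_{\mathbb{C}}$ surjects (by restriction to a suitable sub-loop or by the embedding being a closed immersion of tame Fréchet Lie groups) in a way compatible with $\Mexp$ onto $MSL(2,\mathbb{C})$ or onto its image, so that Example~\ref{sl2chasnodiffeomorphicexponentialmap} transports directly; I would use whichever of these two formulations is cleanest given the conventions of Section~\ref{loopgroups}.
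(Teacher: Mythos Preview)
Your proposal is essentially the paper's proof. The paper takes exactly your ``cleaner route'': it cites Knapp for an embedded subgroup $H\cong SL(2,\mathbb{C})\subset G_{\mathbb{C}}$ with Lie algebra $\mathfrak{h}\subset\mathfrak{g}$, observes that $\Mexp$ restricts to $M\mathfrak{h}\to MH$, and then invokes Example~\ref{sl2chasnodiffeomorphicexponentialmap} directly to conclude --- thereby bypassing your step~(ii) entirely rather than verifying non-exponentiality in the ambient $G_{\mathbb{C}}$.
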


\noindent This is in sharp contrast to the case of nilpotent Lie groups For these groups we have the following result:

\begin{proposition}[Loop groups whose exponential map is a local diffeomorphism]
\label{diffeomorphicexponentialmap}
Let $G_{\mathbb{C}}$ be a complex Lie group such that its universal cover is biholomorphically equivalent to $\mathbb{C}^n$. Then its exponential map $\Mexp$ is a local diffeomorphism.
\end{proposition}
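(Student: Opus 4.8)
The plan is to reduce the statement to a pointwise/coefficientwise computation and exploit that the universal cover of $G_{\mathbb{C}}$ is $\mathbb{C}^n$, on which the exponential map of the (abelian, or at worst nilpotent — here abelian since $\mathbb{C}^n$ carries the additive group structure) Lie group is literally a biholomorphism. First I would reduce to the universal cover: the covering map $p\colon \widetilde{G_{\mathbb{C}}}\to G_{\mathbb{C}}$ is a local biholomorphism and a group homomorphism, it induces a map $Mp\colon M\widetilde{G_{\mathbb{C}}}\to MG_{\mathbb{C}}$ which (because a holomorphic loop $\mathbb{C}^*\to G_{\mathbb{C}}$ lifts through $p$ — here $\mathbb{C}^*$ is not simply connected, so one must be a little careful, but lifting is possible after passing to the identity component / composing with a suitable power, or one argues directly on a neighborhood of the constant loop where lifting is unobstructed), and it intertwines the two exponential maps via $d(p)_e$ on the Lie algebra level: $\mathrm{Mexp}_{G_{\mathbb{C}}}\circ (M d p_e) = (Mp)\circ \mathrm{Mexp}_{\widetilde{G_{\mathbb{C}}}}$. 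Since $Mp$ and $Mdp_e$ are local diffeomorphisms of the ambient tame Fr\'echet manifolds (being pointwise applications of local biholomorphisms), it suffices to prove the statement for $\widetilde{G_{\mathbb{C}}}\cong \mathbb{C}^n$.

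So assume $G_{\mathbb{C}}=\mathbb{C}^n$ with the additive group structure. Then $\mathfrak{g}_{\mathbb{C}}=\mathbb{C}^n$ and $\exp=\mathrm{Id}$, hence $\mathrm{Mexp}\colon M\mathfrak{g}_{\mathbb{C}}\to MG_{\mathbb{C}}$ is, under the natural identification of both sides with $\mathrm{Hol}(\mathbb{C}^*,\mathbb{C}^n)$, the identity map — in particular a tame isomorphism, a fortiori a local diffeomorphism. For a general $G_{\mathbb{C}}$ with $\widetilde{G_{\mathbb{C}}}\cong \mathbb{C}^n$ one is in this abelian case for the cover and transports the conclusion down by the previous paragraph. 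The key structural inputs I would cite are: Theorem~\ref{tangential space} identifying $T_e(MG)=M\mathfrak{g}$ (so that ``local diffeomorphism'' is an honest statement about tame Fr\'echet manifolds), the corollary~\ref{holc*cnisfrechet} giving the tame Fr\'echet structure on $\mathrm{Hol}(\mathbb{C}^*,\mathbb{C}^n)$, and the fact that post-composition with a local biholomorphism of the target finite-dimensional manifold induces a smooth tame local diffeomorphism on the loop spaces (this is where one invokes the Nash--Moser circle of ideas, or more simply the fact that for these holomorphic loop spaces composition with a fixed smooth map is tame and the local inverse, given by composition with the local inverse biholomorphism, is again of the same form).

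The main obstacle I expect is the lifting issue: $\mathbb{C}^*$ is not simply connected, so a holomorphic loop $f\colon \mathbb{C}^*\to G_{\mathbb{C}}$ need not lift to $\widetilde{G_{\mathbb{C}}}$ globally, and $\mathrm{Mexp}$ need not land in the ``liftable'' part. The clean fix is that local diffeomorphism is a local statement around $e\in MG_{\mathbb{C}}$ (equivalently around the zero loop in $M\mathfrak{g}_{\mathbb{C}}$): on a sufficiently small neighborhood of the zero loop, $\mathrm{Mexp}(f)(z)=\exp(f(z))$ takes values in a fixed simply connected neighborhood $U$ of $e$ in $G_{\mathbb{C}}$ over which $p$ has a holomorphic section $s\colon U\to \widetilde{G_{\mathbb{C}}}$; then $z\mapsto s(\mathrm{Mexp}(f)(z))$ is the required lift, and the whole diagram chase above takes place inside these neighborhoods, sidestepping the global monodromy of $\mathbb{C}^*$ entirely. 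A secondary, purely bookkeeping point is to record that $d\,\mathrm{Mexp}$ at the zero loop is the identity (via Theorem~\ref{tangential space} and $d\exp_0=\mathrm{Id}$ pointwise), so that once tameness of $\mathrm{Mexp}$ and of the candidate local inverse is in hand, the Nash--Moser inverse function theorem is not even needed — one has an explicit smooth tame local inverse $g\mapsto \log\circ\, g$ near $e$.
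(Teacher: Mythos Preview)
Your overall strategy matches the paper's: reduce to the universal cover $\widetilde{G}_{\mathbb{C}}\cong\mathbb{C}^n$, where $\Mexp$ is a global diffeomorphism, and transport the conclusion down via the covering projection $Mp$. However, your treatment of the lifting obstruction contains a genuine gap. You assert that on a sufficiently small Fr\'echet neighborhood of the zero loop, $\exp(f(z))$ takes values in a fixed simply connected neighborhood $U\subset G_{\mathbb{C}}$ for \emph{all} $z\in\mathbb{C}^*$. This is false: a basic Fr\'echet neighborhood $\{f:\|f\|_n<\epsilon\}$ only bounds $|f(z)|$ for $z$ in the compact annulus $A_n$; for $z\notin A_n$ the value $f(z)$ is unconstrained and $\exp(f(z))$ can land anywhere in $G_{\mathbb{C}}$. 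This is precisely the mechanism behind Example~\ref{sl2chasnodiffeomorphicexponentialmap} --- the Fr\'echet topology is the topology of compact convergence, not the uniform one. Consequently neither your local-section lift $z\mapsto s(\Mexp(f)(z))$ nor your proposed explicit inverse $g\mapsto\log\circ\, g$ is well-defined on all of $\mathbb{C}^*$.

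The correct repair (which the paper's terse argument implicitly relies on) is that pointwise control on all of $\mathbb{C}^*$ is not needed. The obstruction to lifting a holomorphic loop $g\colon\mathbb{C}^*\to G_{\mathbb{C}}$ through $p$ is the induced map $g_*\colon\pi_1(\mathbb{C}^*)\to\pi_1(G_{\mathbb{C}})$, and this is determined entirely by the restriction $g|_{S^1}$. A Fr\'echet neighborhood of the constant loop $e$ \emph{does} control $g|_{S^1}$ (since $S^1\subset A_1$), so for $g$ sufficiently close to $e$ the class $[g|_{S^1}]$ is trivial in $\pi_1(G_{\mathbb{C}})$ and a global holomorphic lift $\tilde g\colon\mathbb{C}^*\to\widetilde{G}_{\mathbb{C}}$ exists by covering-space theory, unique up to a deck transformation. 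This is what makes $Mp$ a local diffeomorphism near the constant loop, and hence $\Mexp_{G}=Mp\circ\Mexp_{\widetilde G}$ one as well.
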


\begin{corollary}
\label{examplesofliegroupswithdiffexp}
Let $G$ be a complex Lie group. If $G$ is nilpotent (i.e.\ Abelian) then $\Mexp$ is a diffeomorphism.
\end{corollary}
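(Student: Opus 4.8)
The statement to prove is Corollary~\ref{examplesofliegroupswithdiffexp}: if $G$ is a complex nilpotent (in particular Abelian) Lie group, then $\Mexp$ is a \emph{diffeomorphism} — note this is stronger than "local diffeomorphism", which is exactly what Proposition~\ref{diffeomorphicexponentialmap} delivers. So the plan is to first invoke Proposition~\ref{diffeomorphicexponentialmap} to get a local diffeomorphism, and then upgrade to a global one by producing an inverse.

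My approach: A connected complex nilpotent Lie group $G$ has nilpotent Lie algebra $\mathfrak{g}$, and its universal cover $\widetilde{G}$ is biholomorphic to $\mathbb{C}^n$ via $\exp_{\widetilde{G}}$ (the Baker--Campbell--Hausdorff series terminates, so $\exp$ is a polynomial biholomorphism with polynomial inverse given by BCH-logarithm). This puts us in the hypothesis of Proposition~\ref{diffeomorphicexponentialmap}, giving that $\Mexp$ is a local diffeomorphism. For the Abelian case ($G$ with universal cover $\mathbb{C}^n$, i.e.\ $\exp\colon \mathfrak g \to G$ itself a biholomorphism after lifting), the sharpest statement: when $G$ is simply connected nilpotent, $\exp_G\colon\mathfrak g\to G$ is a biholomorphism, hence post-composing pointwise, $\Mexp\colon M\mathfrak g\to MG$ has pointwise-defined inverse $f\mapsto (\log\circ f)$ where $\log=\exp_G^{-1}$ is holomorphic; this inverse is holomorphic on $\mathbb C^*$ because it is a composition of holomorphic maps, so it lands in $M\mathfrak g$. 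One then checks that both $\Mexp$ and its inverse are smooth tame maps in the sense of the earlier sections — tameness of $\Mexp$ reduces to the fact that $\exp_G$ and $\log$ are polynomial maps on $\mathbb C^n$, so they satisfy the nonlinear tame estimates of the form $\|\Phi(f)\|_n\leq C(n)(1+\|f\|_{n+r})$ after passing through the grading $\|f\|_n=\sup_{z\in A_n}|f(z)|$, using the sup-norm submultiplicativity and the polynomial degree to fix $r$ and $C(n)$.

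The key steps in order: (i) reduce to $G$ simply connected by noting that $MG$ for $G$ with universal cover $\widetilde G\simeq\mathbb C^n$ and the covering $\widetilde G\to G$ induces a covering $M\widetilde G\to MG$ compatible with $\Mexp$ — actually for the corollary it suffices to observe that the pointwise exponential is globally bijective because $\exp_{\widetilde G}$ is, and then handle the (discrete, central) kernel; (ii) write down the explicit pointwise inverse $f\mapsto\log\circ f$ and verify it sends $MG\to M\mathfrak g$ using that holomorphy is a pointwise-to-global local property and $\log$ is holomorphic; (iii) verify both maps are tame using that in BCH-coordinates everything is polynomial, hence controlled by the sup-norm on each annulus $A_n$; (iv) conclude that $\Mexp$ is a tame diffeomorphism, in particular a local diffeomorphism, which also re-derives Proposition~\ref{diffeomorphicexponentialmap} in this special case and strictly sharpens it to a global statement.

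The main obstacle I anticipate is step (iii), the tameness verification, not the bijectivity: bijectivity is essentially immediate from BCH, but to claim a \emph{tame} diffeomorphism in Hamilton's category one must control the constants $C(n)$ and the shift $r$ for the nonlinear maps $f\mapsto\exp_G\circ f$ and $f\mapsto\log\circ f$ uniformly. For $\mathbb C^n$-valued functions this is genuinely routine — polynomials of degree $d$ in the entries of $f$ satisfy $\sup_{A_n}|P(f)|\leq c_d(1+\sup_{A_n}|f|)^d$ and one absorbs the power into the grading via $\log$-scaling of the annuli, or more simply one observes that $(1+t)^d\leq C_d(1+t)$ fails, so one actually needs the grading's geometric growth: since $A_n$ are nested, $(1+\|f\|_n)^d\leq C\,(1+\|f\|_{n})$ is false, but Hamilton's framework permits $\|\Phi(f)\|_n\leq C(n)(1+\|f\|_{n+r})$ with $C(n)$ \emph{depending on $n$}, so one sets $C(n)$ to grow appropriately — this is the only slightly delicate point and is exactly the kind of estimate already carried out for $\tfrac{d}{dz}$ in Lemma~\ref{differentialistame}. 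A secondary subtlety is making sure the compact-open/tame-Fr\'echet topology on $MG$ used here is the one defined via the ambient loop group manifold structure, so that "diffeomorphism" is meaningful; but since $G$ nilpotent means $MG$ inherits its structure directly from $M\mathfrak g$ through $\Mexp$ itself, this is essentially tautological once bijectivity and tameness are in hand.
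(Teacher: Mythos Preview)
The paper's own argument is a single sentence: nilpotent (in particular Abelian) complex Lie groups have universal cover biholomorphic to $\mathbb{C}^n$ (citing Knapp and Varadarajan), so Proposition~\ref{diffeomorphicexponentialmap} applies directly. That is all.

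You have correctly noticed that Proposition~\ref{diffeomorphicexponentialmap} only delivers a \emph{local} diffeomorphism, whereas the corollary as stated claims a (global) diffeomorphism, and you set out to bridge this gap. The observation is legitimate --- the paper does not address it --- but your attempt to close it cannot succeed in the stated generality: for a non-simply-connected nilpotent $G$ the map $\Mexp$ is genuinely not injective. Take $G=\mathbb{C}^*$ (Abelian, hence nilpotent); then $\exp\colon\mathbb{C}\to\mathbb{C}^*$, $z\mapsto e^z$, has kernel $2\pi i\mathbb{Z}$, so the constant loops $0$ and $2\pi i$ in $M\mathfrak{g}=\textrm{Hol}(\mathbb{C}^*,\mathbb{C})$ both map under $\Mexp$ to the constant loop $1\in MG$. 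Your step~(i), ``handle the (discrete, central) kernel'', therefore cannot produce a global inverse. The corollary should be read either as ``local diffeomorphism'' (matching what the paper actually proves via Proposition~\ref{diffeomorphicexponentialmap}) or with the added hypothesis that $G$ is simply connected; under that extra hypothesis your pointwise-logarithm construction of the inverse is correct and straightforward.

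Your extended tameness discussion (step~(iii)) also goes well beyond the paper. At this point in the text the tame Fr\'echet manifold structure on $MG$ has not yet been constructed --- that only arrives in Theorem~\ref{mgcistame} via logarithmic derivatives --- so ``diffeomorphism'' is being used informally here. Indeed the paper's proof of Proposition~\ref{diffeomorphicexponentialmap} simply asserts that composition with $\exp$ and $\exp^{-1}$ yields a biholomorphism $M\mathfrak{g}\leftrightarrow M\widetilde{G}$, without tame estimates. Your concern is reasonable in spirit but premature relative to the paper's logical order.
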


\noindent Corollary~\ref{examplesofliegroupswithdiffexp} is a direct consequence of proposition~\ref{diffeomorphicexponentialmap}, as Abelian and nilpotent Lie groups have $\mathbb{C}^n$ as their universal cover~\cite{Knapp96}, corollary 1.103 and \cite{Varadarajan84}, section 3.6.

Theorems~ \ref{tangential space} and~\ref{expnodiffeomorphism} may seem contradictory at first, as they state, that 
$$\textrm{Lie group} \qquad\longrightarrow\qquad\textrm{Lie algebra}:\quad \textrm{Good behavior!} $$
$$\textrm{Lie algebra} \qquad\longrightarrow\qquad\textrm{Lie group}:\quad  \textrm{Bad behavior!} $$
Nevertheless, it is a typical for infinite dimensional systems in the following sense:

The step from the Lie group to the Lie algebra is linearization or differentiation; differentiation exists in very general frameworks; typically no subtle obstacles arise; in contrast the step from the Lie algebra to the Lie group is integration; integration is in general frameworks a very subtle procedure~\cite{Kriegl97, Bertram04, Bertram08, Grothendieck97}.
This behavior seems contradictory as the usual intuition tells us there should be a geodesic in every direction. 
From the point of view of classical analysis, one can describe the analytic basics of this situation as a consequence of a change of limits, arising because of the non-compactness of $\mathbb C^*$:

\begin{itemize}
\item On the one hand, theorem~\ref{tangential space} describes locally uniform convergence on each compact subset $K\subset \mathbb C^*$.
\item Theorem~\ref{expnodiffeomorphism} shows in contrast that globally on $\mathbb{C}^*$ convergence needs no longer be uniform.
\end{itemize}

\noindent We now give the proofs that we have omitted:

\begin{proof}[Proof of theorem~\ref{tangential space}]~
\begin{enumerate}
\item We prove first the inclusion $M\mathfrak{g}\subset T_e(MG)$. Hence we have to show the following: let $\gamma(t): (-\epsilon, \epsilon) \rightarrow MG$ be a curve in $MG$ such that $\gamma(0)= e\in MG$. Then $\dot{\gamma}(0)$ is in $M\mathfrak g$. Using, that $M\mathfrak{g}$ is a loop algebra, we have an equivalent description of $\dot{\gamma}(0)$ as a function $\dot{\gamma}(0,z: \mathbb{C}^*)\longrightarrow \mathfrak{g}$, describing explicitly the loop.

Let $z_0\in \mathbb{C}^*$ arbitrary but fixed. Then $\gamma_{z_0}(t)$, the evaluation of $\gamma(t)$ at $z_0\in \mathbb{C^*}$, is a curve in $G$ satisfying $\gamma_{z_0}(0)=e$. Thus, using that the finite dimensional exponential map $\exp: \mathfrak{g}\longrightarrow G$ is a local diffeomorphism, we get that $\frac{d}{dt}\gamma_{z_0}(t)$ is an element in $\mathfrak{g}$. As $z_0$ is arbitrary we have constructed a function $w: \mathbb{C}^*\longrightarrow \mathfrak{g}$, satisfying $$w(z)=\dot{\gamma}(0,z)\,;$$ we have now to prove that $w$ is holomorphic.

We use, that $\gamma(t,z)$ depends holomorphically on $z$. This is equivalent to $$\frac{d}{d\overline{z}}\gamma(t,z)=0\,.$$
Hence $$0=\frac{d}{dt}\frac{d}{d\overline{z}}\gamma(t,z)=\frac{d}{d\overline{z}}\left[\frac{d}{dt}\gamma(t,z)\right]\,,$$
where we may switch the derivatives because of smoothness. Hence $\frac{d}{dt}\gamma(t,z)$ is holomorphic.

This shows: $T_e(MG)\subset M\mathfrak{g}$.

\item The other direction is straight forward: let $\gamma\in M\mathfrak{g}$. Then $\Mexp(t\gamma)$ is a curve in $MG$ with tangential vector $\gamma$. So $ M\mathfrak{g}\subset T_e(MG)$.

\end{enumerate}
This completes the proof. 
\end{proof}

\begin{proof}[Proof of theorem~\ref{expnodiffeomorphism}] 
The proof of theorem~\ref{expnodiffeomorphism} relies on the fact that $\Mexp$ is no local diffeomorphism for $SL(2,\mathbb{C})$. 
Let $\mathfrak{h}\simeq \mathfrak{sl}(2,\mathbb{C})\subset \mathfrak{g}$ be a subalgebra of $\mathfrak{g}$. Then there is an $H=SL(2,\mathbb{C})$ subgroup in $G$, such that $\mathfrak{h}$ is the Lie algebra of  $H$. See part VII.5 of the book~\cite{Knapp96}. 
Study the subalgebra $M\mathfrak{h}\subset M\mathfrak g$ and the subgroup $MH\subset MG$. $M\mathfrak{h}$ can be identified with $T_e(MH)$. Moreover $\Mexp: M\mathfrak{h} \subset MH$. But as example~\ref{sl2chasnodiffeomorphicexponentialmap} shows, $\Mexp$ is no local diffeomorphism. This completes the proof. 
\end{proof}

The image of the exponential map consists of those loops whose image is completely
contained in the image of the Lie group exponential map. We give further
comments about this situation in the third part, dealing with geometric aspects~\cite{Freyn12e}, where we study the
behavior of geodesics.

\begin{proof}[Proof of theorem~\ref{diffeomorphicexponentialmap}]
Let $\widetilde G$ be the universal cover of $G$; the exponential map $\exp: \mathfrak{g}\longrightarrow \widetilde{G}$ is a biholomorphic map. Thus cocatenation with $\exp$ (resp.\ $\exp^{-1}$) induces a biholomorphic map between $M\mathfrak{g}$ and $M\widetilde{G}$. To get that $\Mexp: M\mathfrak{g}\longrightarrow MG$ is a local diffeomorphism, one uses the fact that each loop in $\widetilde{MG}$ projects onto a loop in $MG$ and, conversely, each loop in $MG$ can be lifted to a loop in $M\widetilde{G}$, which is unique up to Deck transformation and thus locally unique.  This proves that $\Mexp$ is a local diffeomorphism. 
\end{proof}

\subsubsection{Manifold structures on groups of holomorphic maps}
\label{Manifoldstructuresongroupsofholomorphicmaps}

In this section we prove that the groups $MG_{\mathbb{R}}$, $MG_{\mathbb{C}}$, and various quotients are tame Fr\'echet manifolds.

\begin{theorem}
\label{mgcistame}
$MG_{\mathbb C}$ is a tame Fr\'echet manifold.
\end{theorem}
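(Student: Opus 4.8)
The plan is to build an atlas on $MG_{\mathbb C}$ in the classical way: first construct a single chart around the identity using the pointwise exponential map $\Mexp$, then translate it around the group by left multiplication, and finally verify that all the transition maps are smooth tame. Concretely, I would fix a (finite-dimensional) chart $U \subset \mathfrak g_{\mathbb C}$ around $0$ on which $\exp\colon \mathfrak g_{\mathbb C}\to G_{\mathbb C}$ is a diffeomorphism onto an open set $V\subset G_{\mathbb C}$, and set $\mathcal U := \{ f \in MG_{\mathbb C} : f(\mathbb C^*) \subset V\}$ with chart map $f \mapsto \exp^{-1}\circ f \in M\mathfrak g_{\mathbb C}$. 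By Theorem~\ref{tangential space} the target is the model space $M\mathfrak g_{\mathbb C}$, which is tame Fr\'echet by Lemma~\ref{mgfrechetagbanach}. The catch, already flagged in Example~\ref{sl2chasnodiffeomorphicexponentialmap} and Theorem~\ref{expnodiffeomorphism}, is that $\Mexp$ is \emph{not} a local diffeomorphism, so $\mathcal U$ is \emph{not} a neighbourhood of the identity — it only contains loops staying inside $V$. One must therefore argue that these sets $\mathcal U$ and their left translates $g\cdot \mathcal U$ still \emph{cover} $MG_{\mathbb C}$: given any $f_0 \in MG_{\mathbb C}$ and any point $z_0$, the loop $z \mapsto f_0(z_0)^{-1} f(z)$ lies in $V$ near $z_0$, but to get a single chart around $f_0$ one needs $f_0(z)^{-1}f(z) \in V$ for \emph{all} $z\in\mathbb C^*$ simultaneously for $f$ near $f_0$; this is where one invokes that nearby holomorphic loops are uniformly close on each annulus $A_n$ together with a careful choice of $V$, or, more robustly, one covers $MG_{\mathbb C}$ by charts of the form $\{ f : f g_0^{-1} \in \mathcal U\}$ for a suitable smooth loop $g_0$ and patches along the lines of the loop-group constructions in~\cite{PressleySegal86}.

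The key steps, in order, are: (i) identify the model tame Fr\'echet space as $M\mathfrak g_{\mathbb C}$ via Theorem~\ref{tangential space}; (ii) produce a chart at the identity from $\exp$ and check it is a homeomorphism onto its image, using that $\exp$ is a biholomorphism $U\to V$ and that post-composition with a fixed biholomorphism is a (smooth, tame) bijection on spaces of holomorphic maps — here the tameness of the relevant nonlinear composition operator on $\mathrm{Hol}(\mathbb C^*, -)$ is the analytic heart of the matter and follows from the Cauchy-estimate techniques of Section~\ref{sect:Some_tame_Frechet_spaces} (cf.\ the argument for Lemma~\ref{differentialistame}, applied to the Taylor coefficients of $\exp$ and $\exp^{-1}$ in local coordinates); (iii) transport the chart by left translations $L_g\colon f\mapsto gf$, each of which is again a composition with a fixed holomorphic map and hence smooth tame; (iv) compute a transition function $g\cdot\mathcal U \cap h\cdot\mathcal U \to M\mathfrak g_{\mathbb C}$ and observe it has the form $X \mapsto \exp^{-1}\!\big(c(z)\exp(X(z))\big)$ for a fixed holomorphic loop $c = h^{-1}g$, which is smooth tame by the same composition estimates, so the transition maps are smooth tame as required by the definition of a tame Fr\'echet manifold; (v) check the cover really is a cover of $MG_{\mathbb C}$.

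The step I expect to be the main obstacle is establishing that the nonlinear substitution operator $\Phi_c\colon M\mathfrak g_{\mathbb C}\to M\mathfrak g_{\mathbb C}$, $X(z)\mapsto \exp^{-1}(c(z)\exp(X(z)))$, is a \emph{smooth tame} map on a suitable open set, uniformly enough to serve as a transition function; one must control $\sup_{z\in A_n}|\Phi_c(X)(z)|$ by $\sup_{z\in A_n}|X(z)|$ (up to a shift in $n$ and a constant $C(n)$), and likewise for all derivatives $D^k\Phi_c$, exploiting that $\exp$, $\exp^{-1}$ and multiplication in $G_{\mathbb C}$ are real-analytic on the relevant compact pieces and that composition with a fixed holomorphic loop does not spoil the annulus-by-annulus estimates. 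A secondary obstacle, closely tied to the failure of $\Mexp$ to be a local diffeomorphism, is the covering/Hausdorff bookkeeping: one must be careful that the charts genuinely overlap in open sets and that the identity has a well-defined chart despite $\mathcal U$ not being a neighbourhood in the naive sense — the resolution is to work with the left-translated charts and with the uniform closeness of holomorphic loops on each $A_n$, not with $\Mexp$ directly.
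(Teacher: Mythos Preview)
Your approach has a genuine gap, and it is precisely the one that Example~\ref{sl2chasnodiffeomorphicexponentialmap} and Theorem~\ref{expnodiffeomorphism} are warning against. The set $\mathcal U=\{f\in MG_{\mathbb C}:f(\mathbb C^*)\subset V\}$ is not merely ``not a neighbourhood of the identity'': its image under your chart map $f\mapsto\exp^{-1}\circ f$ is $\{X\in M\mathfrak g_{\mathbb C}:X(\mathbb C^*)\subset U\}$, and this is \emph{not open} in $M\mathfrak g_{\mathbb C}$ for the Fr\'echet (equivalently compact--open) topology, because $\mathbb C^*$ is non-compact while $U$ is bounded. Concretely, for any nonzero $Y\in\mathfrak g_{\mathbb C}$ the sequence $X_n(z)=\tfrac{z}{n}\,Y$ converges to $0$ in $M\mathfrak g_{\mathbb C}$, yet $X_n(\mathbb C^*)$ is unbounded for every $n$ and so never lies in your chart domain. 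Left translates $g\cdot\mathcal U$ inherit the same defect, so no collection of such sets can serve as charts onto open subsets of the model space. Your steps (ii)--(v) therefore cannot be carried out, regardless of how carefully the composition estimates in (iv) are done; the issue is not covering or Hausdorff bookkeeping but the failure of openness itself. The statement that $\Mexp$ is not a local diffeomorphism is exactly the statement that no exponential-based atlas of this kind exists.

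The paper takes an entirely different route to circumvent this. It uses the \emph{logarithmic derivative} $\delta(f)=f^{-1}df$ to embed $MG_{\mathbb C}$ injectively into the tame Fr\'echet space $\Omega^1(\mathbb C^*,\mathfrak g_{\mathbb C})\times G_{\mathbb C}$ via $f\mapsto(\delta(f),f(1))$. The point is that $\delta$ is defined pointwise with no boundedness constraint on $f$, so this map is globally defined and lands in a genuine tame Fr\'echet space. The image in the first factor is then characterised by the vanishing of a finite-dimensional monodromy map $\Omega^1(\mathbb C^*,\mathfrak g_{\mathbb C})\to G_{\mathbb C}$, namely the holonomy of $\alpha$ around $S^1$; the preimage of $e\in G_{\mathbb C}$ under this smooth tame map is a tame Fr\'echet submanifold of finite type by Theorem~\ref{submanifoldsoffrechetspace}, and the product with $G_{\mathbb C}$ is then tame. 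This replaces your exponential charts: the logarithmic derivative linearises the problem without ever requiring a loop to have bounded image.
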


The idea of the proof is to use logarithmic derivatives. The idea to use logarithmic derivatives to get charts is quite common: for regular Lie groups it is developed in the book~\cite{Kriegl97}, chapters 38 and 40. 
Furthermore it is used by Karl-Hermann Neeb~\cite{Neeb06} to prove the following theorem~\cite{Neeb06}, theorem III.1.9.:

\begin{theorem}[Neeb]
\label{Neebstheorem}
Let $\mathbb{F}\in \{\mathbb{R}, \mathbb{C}\}$, $G$ be a $\mathbb{F}$-Lie group and $M$ a finite dimensional, connected $\sigma$-compact
$\mathbb{F}$-manifold. We endow the group $C_{\mathbb{F}}^{\infty}(M,G)$ with the compact open $C^{\infty}$-topology, turning it into a topological group. This topology is compatible with a Lie group structure if
$\dim_{\mathbb{F}}M=1$ and $\pi_1(M)$ is finitely generated.
\end{theorem}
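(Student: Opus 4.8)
The plan, following the approach of K.-H.\ Neeb in~\cite{Neeb06}, is to build charts on $C^{\infty}_{\mathbb{F}}(M,G)$ near the constant map $e$ not from the exponential map of $G$ (whose natural chart domain $\{f : f(M)\subset U\}$ fails to be open in the compact--open $C^{\infty}$ topology when $M$ is non-compact), but from the \emph{left logarithmic derivative} $\delta(f):=f^{-1}\,df\in\Omega^{1}(M,\mathfrak{g})$. First I would reduce to the universal cover: pass to $p\colon\widetilde M\to M$, which is again a connected $\sigma$-compact $1$-dimensional $\mathbb{F}$-manifold, now simply connected, carrying the deck action of $\Gamma:=\pi_{1}(M)$, finitely generated by hypothesis (and hence countable, so that $\widetilde M$ is indeed $\sigma$-compact). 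Identifying $C^{\infty}_{\mathbb{F}}(M,G)$ with the $\Gamma$-invariant elements of $C^{\infty}_{\mathbb{F}}(\widetilde M,G)$, it then suffices to treat $\widetilde M$ and afterwards descend.

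The heart of the matter is the point where $\dim_{\mathbb{F}}M=1$ enters decisively: on a $1$-dimensional manifold there are no nonzero $2$-forms, so the Maurer--Cartan (zero-curvature) condition $d\mu+\tfrac12[\mu,\mu]=0$ is automatically satisfied by every $\mu\in\Omega^{1}(\widetilde M,\mathfrak{g})$. Hence on the simply connected $\widetilde M$ every such $\mu$ integrates: the linear $G$-valued ODE $f^{-1}df=\mu$, $f(x_{0})=g_{0}$, has a unique global solution (a product integral), and
$$(\delta,\mathrm{ev}_{x_{0}})\colon C^{\infty}_{\mathbb{F}}(\widetilde M,G)\longrightarrow\Omega^{1}(\widetilde M,\mathfrak{g})\times G$$
is a bijection. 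Trivializing $T\widetilde M$ (possible since $\dim\widetilde M=1$) identifies $\Omega^{1}(\widetilde M,\mathfrak{g})$ with $C^{\infty}_{\mathbb{F}}(\widetilde M,\mathfrak{g})$, a Fr\'echet space in the compact--open $C^{\infty}$ topology. I would transport the manifold structure along this bijection and check (a) that the resulting topology coincides with the compact--open $C^{\infty}$ topology on $C^{\infty}_{\mathbb{F}}(\widetilde M,G)$, since both $\delta$ and the integration map are continuous, in fact smooth; and (b) that the group operations are smooth, using the cocycle identities $\delta(fg)=\mathrm{Ad}(g^{-1})\delta(f)+\delta(g)$ and $\delta(f^{-1})=-\mathrm{Ad}(f)\delta(f)$, which reduce smoothness of multiplication and inversion to smoothness of pointwise $\mathrm{Ad}$ and of the evolution (integration) map.

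Finally I would descend to $M$. Under the above bijection, $\Gamma$-invariance of $f$ translates into the \emph{linear} condition $\gamma^{*}\mu=\mu$ together with triviality of the monodromy anti-homomorphism $\gamma\mapsto c^{0}_{\gamma}(\mu)\in G$ attached to the integral of $\mu$ based at $x_{0}$. Because $\Gamma$ is finitely generated, say by $\gamma_{1},\dots,\gamma_{k}$, both amount to finitely many conditions: $\{\mu : \gamma_{i}^{*}\mu=\mu,\ i\le k\}$ is a closed (hence Fr\'echet) subspace $V\subset C^{\infty}_{\mathbb{F}}(\widetilde M,\mathfrak{g})$, and $\{\mu\in V : c^{0}_{\gamma_{i}}(\mu)=e,\ i\le k\}$ is the zero set of a smooth map $V\to G^{k}$ which one checks to be a submersion along its zero set (perturbing $\mu$ near a single point already surjects onto each $\mathfrak{g}$-factor of the derivative), hence a split submanifold of finite codimension; compare the notion of submanifold of finite type in Definition~\ref{Tamesubmanifoldoffinitetype}. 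Thus $C^{\infty}_{\mathbb{F}}(M,G)$ acquires a Fr\'echet Lie group structure modeled on this submanifold times $G$, with smooth group operations inherited from $\widetilde M$. The main obstacle is the analytic core of step (b): proving that the evolution map $(\mu,g_{0})\mapsto f$ depends smoothly on $\mu$ for the compact--open $C^{\infty}$ topology over the \emph{non-compact} $\widetilde M$ --- handled by noting that $f|_{K}$ depends only on $\mu$ restricted to a slightly larger compact $K'$, reducing to classical smooth dependence of ODE solutions on parameters over compact intervals and then passing to the projective limit; a secondary point requiring care is matching the two topologies after the descent, i.e.\ comparing $\sup$-seminorms over compact subsets of $M$ and of $\widetilde M$ on $\Gamma$-invariant functions.
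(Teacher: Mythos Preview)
The paper does not give its own proof of this theorem but attributes it to Neeb~\cite{Neeb06}, Theorem~III.1.9, and summarizes the ingredients as ``logarithmic derivatives to define charts in $\Omega^{1}(M,\mathfrak{g})$'' together with ``Gl\"ockner's inverse function theorem~\cite{Glockner07}, to take care of the monodromy if $\pi_{1}(M)$ is nontrivial.'' Your sketch follows exactly this route---logarithmic derivative charts, automatic integrability in dimension one, and a finite-codimension submersion/implicit-function argument for the finitely many monodromy constraints---so your approach agrees with what the paper reports; the only cosmetic difference is that you organize the argument by first passing to the universal cover and then descending, whereas the paper's one-line summary does not specify this reduction.
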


The main ingredients for the proof of Neebs theorem are the use of logarithmic derivatives to define charts in $\Omega^1(M, \mathfrak{g})$, the space of $\mathfrak{g}$-valued $1$-forms on $M$ and Gl\"ockners inverse function theorem~\cite{Glockner07}, to take care of the monodromy if $\pi_1(M)$ is nontrivial.

If $\mathbb{F}=\mathbb{C}$ we have the equivalence $C_{\mathbb{F}}^{\infty}(M,G)\simeq Hol(M,G)$. Our situation is the special case $M=\mathbb{C}^*$. Hence, $\pi_1(M)=\mathbb{Z}$ is a finitely generated group.  The compact open $C^{\infty}$-topology coincides for holomorphic maps with the topology of compact convergence. 
Thus Neeb's theorem tells us that  $MG_{\mathbb F}, \mathbb{F} \in \{\mathbb{R}, \mathbb{C}\}$ are locally convex
topological Lie groups.

Nevertheless, we do not get tame structures. Hence we have to prove the theorem completely new. Our presentation follows the proof of Karl-Hermann Neeb for the locally convex case.

We need some definitions: $\alpha \in \Omega^1(M,\mathfrak{g})$ is called integrable iff there exists a function $f\in Hol(M, G)$ such that $\delta(f):=f^{-1}df=\alpha$. The uniqueness of solutions to linear differential equations shows that $\delta(f_1)=\delta(f_2)$ iff $f_1=gf_2$ for some $g\in G$.

The first step of the proof is the following lemma, whose statement and proof can be found in~\cite{Kriegl97} and \cite{Neeb06}:
morally it is a straight forward application of the monodromy principle for holomorphic Pfaffian systems, as described in the article~\cite{Novikov02}.

\begin{lemma}
Let $M$ be a $1$-dimensional complex manifold, $\alpha \in \Omega^{1}(M, \mathfrak{g})$.
\label{integrability of 1 dim }
\begin{enumerate}
	\item $\alpha$ is locally integrable
	\item If $M$ is connected, $M_0\in M$,  then there exists a homomorphism 
	$$ \textrm{per}_{\alpha}: \pi_1(M,m_o)\longrightarrow G$$
        that vanishes iff $\alpha$ is integrable.
\end{enumerate} 
\end{lemma}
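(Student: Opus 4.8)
The plan is to prove both parts by reducing to the classical theory of flat connections / Pfaffian systems on manifolds, exactly as in the locally convex case treated by Neeb and Kriegl--Michor.

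\textbf{Part (1): local integrability.} The equation $\delta(f) = f^{-1}df = \alpha$ is a first order linear ODE for $f$ once we work in a local coordinate chart. Concretely, if $U\subset M$ is a coordinate neighborhood with coordinate $w$, write $\alpha = a(w)\,dw$ with $a$ holomorphic with values in $\mathfrak{g}$. Then $\delta(f)=\alpha$ becomes $f'(w) = f(w)a(w)$, a linear holomorphic ODE in one complex variable. First I would invoke the standard existence and uniqueness theorem for holomorphic linear ODEs (or equivalently the holomorphic Picard--Lindel\"of / monodromy statement from~\cite{Novikov02}) to produce, on a possibly smaller simply connected neighborhood, a holomorphic solution $f$ with $f(w_0) = e$. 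This is local, so no topological obstruction appears. The only thing to check is that the solution actually lands in $G$ and not merely in some ambient linear group: this follows because $a$ takes values in $\mathfrak{g}$, so the solution of the ODE stays in the integral subgroup with Lie algebra $\mathfrak{g}$; alternatively one solves directly in $G$ using a chart of $G$ near $e$ and the local flow of the right-invariant time-dependent vector field determined by $\alpha$. Uniqueness up to left translation by a constant $g\in G$ is immediate from uniqueness of ODE solutions: if $\delta(f_1)=\delta(f_2)$ then $d(f_1 f_2^{-1})=0$ locally, so $f_1 = g f_2$.

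\textbf{Part (2): the period homomorphism.} Assume $M$ connected with basepoint $m_0$. By part (1), $\alpha$ is locally integrable, so $M$ is covered by simply connected open sets on each of which a primitive exists, unique up to left translation. I would now run the classical monodromy argument: fix a germ of a primitive $f$ near $m_0$ with $f(m_0)=e$; given a loop $\gamma$ based at $m_0$, analytically continue this germ along $\gamma$ (which is possible and single-valued along the path by local integrability, since along a path one can glue the local solutions using the left-translation ambiguity). Upon returning to $m_0$ the continued primitive $\tilde f_\gamma$ satisfies $\delta(\tilde f_\gamma)=\alpha = \delta(f)$ near $m_0$, hence $\tilde f_\gamma = \mathrm{per}_\alpha(\gamma)\cdot f$ for a unique element $\mathrm{per}_\alpha(\gamma)\in G$. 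Standard checks show this depends only on the homotopy class of $\gamma$ (homotopic loops give the same continuation, by the simply-connected-patch argument / monodromy theorem) and that $\gamma\mapsto \mathrm{per}_\alpha(\gamma)$ is a group homomorphism $\pi_1(M,m_0)\to G$ (concatenation of loops corresponds to composition of the translation factors, with the correct order once one is careful about left versus right translations). Finally, $\alpha$ is globally integrable on $M$ if and only if the germ $f$ at $m_0$ continues to a single-valued global function, which happens exactly when every loop gives back the same germ, i.e.\ exactly when $\mathrm{per}_\alpha$ is trivial.

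\textbf{Main obstacle.} The genuinely delicate point is the well-definedness and single-valuedness of the analytic continuation used to build $\mathrm{per}_\alpha$, i.e.\ verifying carefully that the left-translation ambiguity in part (1) is consistent along a path and depends only on the homotopy class; this is precisely the content of the monodromy principle for the (flat, since one-dimensional) Pfaffian system $\delta(f)=\alpha$, and it is where connectedness of $M$ and the one-complex-dimensionality (which makes the system automatically integrable in the Frobenius sense, there being no $d\alpha+\tfrac12[\alpha,\alpha]$ obstruction on a curve) are used. I would simply cite the monodromy theorem for holomorphic Pfaffian systems from~\cite{Novikov02} together with the treatments in~\cite{Kriegl97} and~\cite{Neeb06} rather than reprove it; the bookkeeping of left/right invariance in the homomorphism property is routine once the continuation is set up. Everything here is pointwise in nature, so no Fr\'echet or tameness considerations enter this particular lemma --- those are needed only later, when charts modeled on $\Omega^1(\mathbb{C}^*,\mathfrak{g})$ are assembled into the manifold structure on $MG_{\mathbb{C}}$.
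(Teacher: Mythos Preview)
Your proposal is correct and matches the paper's approach exactly: the paper does not spell out a proof but simply states that the lemma's statement and proof can be found in~\cite{Kriegl97} and~\cite{Neeb06}, and that ``morally it is a straight forward application of the monodromy principle for holomorphic Pfaffian systems, as described in the article~\cite{Novikov02}.'' Your sketch fills in precisely these standard details (local ODE existence for part~(1), analytic continuation and monodromy for part~(2)) and cites the same three references, so there is nothing to add.
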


\begin{proof} Proof of theorem \ref{mgcistame}
\noindent  We define the embedding:
\[
\begin{array}{ccr}
\varphi: MG_{\mathbb{C}}&\hookrightarrow& \Omega^{1}(\mathbb{C}^*, \mathfrak{g}_{\mathbb{C}}) \times G_{\mathbb{C}}\, ,\\
f &\mapsto& (\delta(f)=f^{-1}df, f(1))\, .
\end{array}
\]
This embedding is injective as $(\delta(f_1), f_1(1))=(\delta(f_2), f_2(1))$ iff $\delta(f_1)=\delta(f_2)$ and $f_1(1)=f_2(1)$. Using the uniqueness of solutions (up to the starting point) of the linear differential equation $df=f\omega$ where $\omega:=f_1^{-1}df_{1}$, the first condition leads to the relation $f_1=g f_2$ for some $g\in G$. Then the second condition gives uniqueness as $gf_{2}(1)=f_{2}(1)$ leads to $g=\textrm{Id}$. 

Compare this embedding with the description of polar actions on Fr\'echet spaces in section~\ref{PolaractionsontameFrechetspaces}.
Let $\pi_1$ and $\pi_2$ denote the projections:
\[
\begin{array}{ccc}
\pi_1: \Omega^{1}(\mathbb{C}^*, \mathfrak{g}_{\mathbb{C}}) \times G_{\mathbb{C}} &\mapsto& \Omega^{1}(\mathbb{C}^*, \mathfrak{g}_{\mathbb{C}})\, ,\\
\pi_2: \Omega^{1}(\mathbb{C}^*, \mathfrak{g}_{\mathbb{C}}) \times G_{\mathbb{C}} &\mapsto& G_{\mathbb{C}}\, .
\end{array}
\]

We construct charts for $MG_{\mathbb{C}}\subset \Omega^{1}(\mathbb{C}^*, \mathfrak{g}_{\mathbb{C}}) \times G_{\mathbb{C}}$ as direct product of charts for $\pi_1 \circ \varphi(MG_{\mathbb{C}})$ and $\pi_2 \circ \varphi(MG_{\mathbb{C}})$.
\begin{itemize}
  \item $\pi_2 \circ \varphi$ is surjective; hence a describing of charts for the second factor can be done by choosing charts for $G$. Via the exponential mapping and     left translation, we get charts $\psi_{2,g}: U(g)\longrightarrow V(0)$ defined on an open set $U(g)$ around $g\in G$ taking values in $V(0)\subset          \mathfrak{g}_{\mathbb{C}}$. To describe the family of norms, we use the Euclidean metric $$\|\hspace{3pt}\|_n:=\|\hspace{3pt}\|_{Eucl}\, .$$ 

  \item The first factor is more difficult to deal with as $\pi_1 \circ \varphi$ is not surjective. While every $\mathfrak{g}_{\mathbb{C}}$-valued $1$-form      $\alpha \in \Omega(\mathbb{C}^*, \mathfrak{g}_{\mathbb{C}})$ is locally integrable by lemma \ref{integrability of 1 dim }, the monodromy may prevent global integrability. 
   A form $\alpha \in \Omega^1(\mathbb{C}^*, \mathfrak{g}_{\mathbb{C}})$ is in the image of $\pi_1 \circ \varphi$ iff its monodromy vanishes, that is iff
   $$e^{\int_{S^1}\alpha}=e\in G_{\mathbb{C}}\,.$$

    This is equivalent to the condition $\int_{S^1}\alpha = a_{-1}(\alpha) \subset \frac{1}{2\pi i} \exp^{-1}(e)$ where $a_{-1}(\alpha) $ denotes the      $(-1)$-Laurent coefficient of the Laurent series of $\alpha= f(z)dz$. Hence we get the characterization of  $\Im(\pi_1\circ \varphi)$ as the inverse image of $e\in G_{\mathbb{C}}$ via the monodromy map.

Thus we have to show that this inverse image is a tame Fr\'echet manifold. To this end, we use composition with a chart $\psi: U\longrightarrow V$ for $e\in U\subset G$ with values in $G_{\mathbb{C}}$. This gives us a tame map $\Omega(\mathbb{C}^*, \mathfrak{g}_{\mathbb{C}})\longrightarrow \mathfrak{g}_{\mathbb{C}}$. This map satisfies the assumptions of the following theorem, whose proof can be found in~\cite{Freyn12c}.

\begin{theorem}
\label{submanifoldsoffrechetspace}
Let $F$ be a tame space and $\varphi: F \rightarrow \mathbb{R}^n$ a tame map. Let $g\in \mathbb{R}^n$ be a regular value for $\varphi$.
Then $\varphi^{-1}(g)$ is a tame Fr\'echet submanifold of finite type. 
\end{theorem}

 Thus its inverse image is a tame Fr\'echet submanifold. This proves that $\pi_1 \circ \varphi$ is a tame Fr\'echet submanifold.
\end{itemize}

\noindent Thus $MG_{\mathbb{C}}$ as a product of a tame Fr\'echet manifold with a Lie group is a tame Fr\'echet manifold.
This completes the proof of theorem~\ref{mgcistame}.
\end{proof}

\begin{theorem}
$MG_{\mathbb{C}}$ is a tame Fr\'echet Lie group.
\end{theorem}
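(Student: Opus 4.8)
The plan is to read off the group operations of $MG_{\mathbb C}$ in the logarithmic--derivative coordinates that were used to put a tame Fréchet manifold structure on it in Theorem~\ref{mgcistame}, and to show that in those coordinates they are composites of maps that are either visibly tame or reduce to a single estimate. Recall the embedding $\varphi\colon MG_{\mathbb C}\hookrightarrow\Omega^1(\mathbb C^*,\mathfrak g_{\mathbb C})\times G_{\mathbb C}$, $f\mapsto(\delta f,f(1))$ with $\delta f=f^{-1}df$, whose image is cut out by the vanishing of the monodromy and which, composed with charts of $\Omega^1(\mathbb C^*,\mathfrak g_{\mathbb C})$ and of $G_{\mathbb C}$, furnishes the charts of $MG_{\mathbb C}$. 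Write $\mathrm{ev}$ for its local inverse: $\mathrm{ev}(\alpha,p)$ is the unique holomorphic loop $f$ with $\delta f=\alpha$ and $f(1)=p$, which exists precisely where $\alpha$ has vanishing monodromy, by Lemma~\ref{integrability of 1 dim }.

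The Darboux (cocycle) identities $\delta(fg)=\textrm{Ad}(g^{-1})\,\delta f+\delta g$ and $\delta(f^{-1})=-\textrm{Ad}(f)\,\delta f$, together with $(fg)(1)=f(1)g(1)$ and $(f^{-1})(1)=f(1)^{-1}$, turn the multiplication and inversion of $MG_{\mathbb C}$ into
\begin{displaymath}
(\alpha_1,p_1)\cdot(\alpha_2,p_2)=\bigl(\textrm{Ad}(\mathrm{ev}(\alpha_2,p_2)^{-1})\,\alpha_1+\alpha_2,\ p_1p_2\bigr),\qquad(\alpha,p)^{-1}=\bigl(-\,\textrm{Ad}(\mathrm{ev}(\alpha,p))\,\alpha,\ p^{-1}\bigr).
\end{displaymath}
Thus both are assembled from three ingredients: (i) the linear addition of $\mathfrak g_{\mathbb C}$--valued $1$--forms, a tame linear map; (ii) the multiplication and inversion of the \emph{finite--dimensional} group $G_{\mathbb C}$ itself, which are smooth maps of finite--dimensional manifolds and hence trivially tame; and (iii) the reconstruction $\mathrm{ev}=\varphi^{-1}$, post-composed with the adjoint action $\textrm{Ad}\colon G_{\mathbb C}\times\mathfrak g_{\mathbb C}\to\mathfrak g_{\mathbb C}$ acting pointwise on loops and $1$--forms.

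For ingredient (iii) the natural tool is the Nash--Moser inverse function theorem applied to $\varphi$. The map $\varphi$ is smooth and tame, and the equation $D\varphi(f)\xi=(\beta,w)$ for its differential is, after left trivialization, an inhomogeneous first--order linear ODE for the loop $\xi\in M\mathfrak g$ on $\mathbb C^*$ with the normalization $\xi(1)=w$; on the image of $\varphi$ it has a unique solution. Once one checks that the resulting family of solution operators is continuous and tame, Hamilton's theorem on families of inverses (\cite{Hamilton82}, Theorem~3.1.1) makes it smooth tame, and Nash--Moser then yields that $\mathrm{ev}$ is a smooth tame local inverse of $\varphi$. Substituting (i)--(iii) into the formulas above presents the multiplication and inversion of $MG_{\mathbb C}$ as composites of smooth tame maps; together with Theorem~\ref{mgcistame} this is precisely the statement that $MG_{\mathbb C}$ is a (smooth) tame Fréchet Lie group.

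The main obstacle, on which the real work falls, is the tame estimate required twice over in (iii): for the family of linear--ODE solutions above, and for the twist $\textrm{Ad}(\mathrm{ev}(\alpha_2,p_2)^{-1})\,\alpha_1$, one must bound $\sup_{A_n}\lvert\mathrm{ev}(\alpha,p)(z)\rvert$, $\sup_{A_n}\lvert\textrm{Ad}(\mathrm{ev}(\alpha,p)(z))\rvert$, and the seminorms of the solutions, by $C(n)\bigl(1+\|\alpha\|_{n+r}+\dots\bigr)$ with a \emph{bounded} index shift $r$ and an admissible constant sequence $C(n)$. On each individual annulus $A_n$ this is classical Banach--space ODE theory --- Gronwall's inequality on a compact fundamental domain of the logarithmic cover of $\mathbb C^*$ --- but the delicate part is to follow how the constants grow along the exhaustion $A_0\subset A_1\subset\cdots\subset\mathbb C^*$ and to organise the resulting hierarchy into a genuine tame estimate. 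The feature one must exploit --- already transparent in the abelian case, where $\delta f=d\log f$ and the product formula degenerates to the mere addition of $1$--forms --- is that the seminorms of the logarithmic derivative $\delta f$ are governed by the \emph{variation} of $f$ and not by $\sup_{A_n}|f|$; leveraging this to tame the otherwise exponential growth of $|f(z)|$ as $|z|\to 0$ and $|z|\to\infty$ is the heart of the matter.
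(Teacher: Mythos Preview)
Your approach coincides with the paper's: both pass through the embedding $\varphi\colon f\mapsto(\delta f,f(1))$ and write multiplication and inversion in those coordinates. The paper records the formulas in terms of the loops $f,g$ themselves---e.g.\ $(f^{-1}df,\,g^{-1}dg,\,f(1),\,g(1))\mapsto(g\,f^{-1}df\,g^{-1}+g^{-1}dg,\,f(1)g(1))$---and then simply declares the result ``smooth as a direct product of smooth tame maps''. You go one step further and note that, in the ambient coordinates $(\alpha,p)$, the loop $g$ must first be recovered from $(\alpha_2,p_2)$ by the reconstruction $\mathrm{ev}=\varphi^{-1}$ before the conjugation $\mathrm{Ad}(g^{-1})$ can act, and you correctly isolate the tame estimate for $\mathrm{ev}$ (equivalently, for the solution operator of the linear ODE $df=f\alpha$ with $f(1)=p$) as the place where the genuine work sits. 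Your invocation of Nash--Moser and of Hamilton's theorem on smooth families of inverses is the natural machinery for this. In that sense your write-up is more scrupulous than the paper's own argument, which passes over exactly this point in silence.

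The honest caveat---which you yourself flag in your last paragraph---is that neither you nor the paper actually \emph{proves} the key estimate. Gronwall on a fundamental domain for $A_n$ bounds $\sup_{A_n}\lvert\mathrm{ev}(\alpha,p)\rvert$ (and hence $\sup_{A_n}\lvert\mathrm{Ad}(\mathrm{ev}(\alpha,p))\rvert$) by a quantity exponential in $\|\alpha\|_n$, not linear, so the tame inequality $\|\Phi(f)\|_n\le C(n)(1+\|f\|_{n+r})$ is not immediate; turning the exponential Gronwall bound into a tame estimate along the exhaustion $A_0\subset A_1\subset\cdots$ is, as you say, ``the heart of the matter'', and it remains unproved in both versions. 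So your sketch is at least as complete as the paper's proof and considerably more transparent about where the residual gap lies. (A small cosmetic point: your cocycle formula $\delta(fg)=\mathrm{Ad}(g^{-1})\,\delta f+\delta g$ is the correct one; the paper's displayed formula carries a harmless conjugation-direction typo.)
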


\begin{proof}
From theorem \ref{mgcistame} we know that $MG_{\mathbb{C}}$ is tame  Fr\'echet manifold. Hence we have to check that 
\begin{displaymath}
\psi_1: MG_{\mathbb{C}}\times MG_{\mathbb{C}}\longrightarrow MG_{\mathbb{C}},\quad (g,f)\mapsto gf,\end{displaymath}
and 
\begin{displaymath}
\psi_2:MG_{\mathbb{C}}\longrightarrow MG_{\mathbb{C}},\quad f\mapsto f^{-1}
\end{displaymath}
are tame Fr\'echet maps.
Using the embedding
\[
\begin{array}{ccr}
\varphi: MG_{\mathbb{C}}&\hookrightarrow& \Omega^{1}(\mathbb{C}^*, \mathfrak{g}_{\mathbb{C}}) \times G_{\mathbb{C}}\, ,\\
f &\mapsto& (f^{-1}df, f(1))\, .
\end{array}
\]
we get for $\psi_1$ the description
\begin{align*}
\psi_1: \Omega^{1}(\mathbb{C}^*, \mathfrak{g}_{\mathbb{C}})\times\Omega^{1}(\mathbb{C}^*, \mathfrak{g}_{\mathbb{C}}) \times G_{\mathbb{C}}\times G_{\mathbb{C}}&\longrightarrow \Omega^{1}(\mathbb{C}^*, \mathfrak{g}_{\mathbb{C}}) \times G_{\mathbb{C}}\, \\
(f^{-1}df, g^{-1}dg, f(1), g(1)) &\mapsto (gf^{-1}dfg^{-1}+g^{-1}dg, f(1)g(1))\, .
\end{align*}
which is smooth as a direct product of smooth tame maps. 

Similarly we get for $\psi_2$:
\begin{align*}
\psi_1: \Omega^{1}(\mathbb{C}^*, \mathfrak{g}_{\mathbb{C}})\times G_{\mathbb{C}}&\longrightarrow \Omega^{1}(\mathbb{C}^*, \mathfrak{g}_{\mathbb{C}}) \times G_{\mathbb{C}}\, \\
(f^{-1}df,  f(1))  &\mapsto (f(f^{-1}df) f^{-1}, f^{-1}(1))\, .
\end{align*}
which is again a smooth tame map.

\end{proof}

We now investigate different classes of quotients of loop groups:

\begin{proposition}
$MG_{\mathbb{R}}$ is a tame Fr\'echet Lie group.
\end{proposition}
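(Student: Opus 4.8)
The plan is to run the proof of Theorem~\ref{mgcistame} inside the real form, replacing the full space of $\mathfrak{g}_{\mathbb{C}}$-valued $1$-forms by the ``real'' subspace $\Omega^1(\mathbb{C}^*,\mathfrak{g}_{\mathbb{C}})_{\mathbb{R}}$, which is already known to be tame. By Definition~\ref{A_nGR}, an element $f\in MG_{\mathbb{C}}$ lies in $MG_{\mathbb{R}}$ iff $f(S^1)\subset G_{\mathbb{R}}$; restricting the logarithmic derivative $\delta(f)=f^{-1}df$ to $S^1$ then yields a $1$-form with values in $\mathfrak{g}_{\mathbb{R}}=T_eG_{\mathbb{R}}$, so $\delta(f)\in\Omega^1(\mathbb{C}^*,\mathfrak{g}_{\mathbb{C}})_{\mathbb{R}}$ and $f(1)\in G_{\mathbb{R}}$. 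First I would restrict the embedding of Theorem~\ref{mgcistame} to
\[
\varphi:MG_{\mathbb{R}}\hookrightarrow\Omega^1(\mathbb{C}^*,\mathfrak{g}_{\mathbb{C}})_{\mathbb{R}}\times G_{\mathbb{R}},\qquad f\mapsto\bigl(f^{-1}df,\,f(1)\bigr),
\]
which is injective by the same argument as before (uniqueness of solutions of $df=f\alpha$ up to the base point).

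Next I would identify $\varphi(MG_{\mathbb{R}})$ and build charts as a product over the two projections $\pi_1,\pi_2$, as in Theorem~\ref{mgcistame}. The second projection $\pi_2\circ\varphi$ is onto $G_{\mathbb{R}}$, so charts for that factor are those of the finite dimensional Lie group $G_{\mathbb{R}}$ (via $\exp$ and left translation). For the first factor, Lemma~\ref{integrability of 1 dim } shows every $\alpha\in\Omega^1(\mathbb{C}^*,\mathfrak{g}_{\mathbb{C}})_{\mathbb{R}}$ is locally integrable, and $\alpha\in\Im(\pi_1\circ\varphi)$ iff its monodromy around $S^1$ vanishes. The point specific to the real case is that the monodromy map now takes values in $G_{\mathbb{R}}$: integrating $df=f\alpha$ along $S^1$ from $f(1)=e$ with $\alpha|_{S^1}$ valued in $\mathfrak{g}_{\mathbb{R}}$ keeps $f$ inside $G_{\mathbb{R}}$, so the value obtained after going once around $S^1$ lies in $G_{\mathbb{R}}$. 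Composing with a chart of $G_{\mathbb{R}}$ at $e$ gives a tame map $\Omega^1(\mathbb{C}^*,\mathfrak{g}_{\mathbb{C}})_{\mathbb{R}}\to\mathbb{R}^{\dim G_{\mathbb{R}}}$ for which $e$ is a regular value (surjectivity of the differential is inherited from the complex situation used in Theorem~\ref{mgcistame}), so Theorem~\ref{submanifoldsoffrechetspace} shows $\Im(\pi_1\circ\varphi)$ is a tame Fr\'echet submanifold of finite type, hence a tame Fr\'echet manifold. Conversely, such an $\alpha$ with vanishing monodromy integrates, after normalising the base point into $G_{\mathbb{R}}$, to an element of $MG_{\mathbb{R}}$, so $\varphi(MG_{\mathbb{R}})=\Im(\pi_1\circ\varphi)\times G_{\mathbb{R}}$.

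Hence $MG_{\mathbb{R}}$ is the product of a tame Fr\'echet manifold with the finite dimensional Lie group $G_{\mathbb{R}}$, so it is a tame Fr\'echet manifold. To conclude, I would check that multiplication and inversion are smooth tame: under $\varphi$ they are given by the same explicit formulas as in the proof that $MG_{\mathbb{C}}$ is a tame Fr\'echet Lie group, namely $(\delta f,\delta g,f(1),g(1))\mapsto\bigl(g\,\delta f\,g^{-1}+\delta g,\,f(1)g(1)\bigr)$ and $(\delta f,f(1))\mapsto\bigl(f\,\delta f\,f^{-1},\,f(1)^{-1}\bigr)$, and these restrict to smooth tame maps on the real subspaces. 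Therefore $MG_{\mathbb{R}}$ is a tame Fr\'echet Lie group.

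The step I expect to be the main obstacle is the bijection between $MG_{\mathbb{R}}$ and the ``real'' $1$-forms with vanishing monodromy: one must verify carefully that the monodromy of a form in $\Omega^1(\mathbb{C}^*,\mathfrak{g}_{\mathbb{C}})_{\mathbb{R}}$ genuinely lands in $G_{\mathbb{R}}$ and that integrating it back produces a loop whose values on \emph{all} of $S^1$, not merely at the base point, remain in $G_{\mathbb{R}}$; both reduce to the fact that integral curves of a $\mathfrak{g}_{\mathbb{R}}$-valued $1$-form along $S^1$ starting in $G_{\mathbb{R}}$ stay in $G_{\mathbb{R}}$. One must also confirm that $e$ is still a regular value of the real monodromy map, which follows from the complex case by restriction but should be stated explicitly.
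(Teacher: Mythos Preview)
Your proposal is correct and follows essentially the same approach as the paper: the paper's proof simply notes that one runs the argument for $MG_{\mathbb{C}}$ while taking care of the reality condition $f(S^1)\subset G_{\mathbb{R}}$, so that the embedding $\varphi$ lands in $\Omega^1(\mathbb{C}^*,\mathfrak{g}_{\mathbb{C}})_{\mathbb{R}}\times G_{\mathbb{R}}$, after which ``similar arguments apply.'' Your write-up is considerably more detailed than the paper's three-line sketch---in particular you spell out why the monodromy lands in $G_{\mathbb{R}}$ and why $e$ remains a regular value---but the strategy is identical.
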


\begin{proof}
The proof is similar to the proof for $MG_{\mathbb{C}}$. We have only to take care of the reality condition $f(S^1)\subset G_{\mathbb{R}}$ for loops $f\in MG_{\mathbb{C}}$.
Thus the embedding $\varphi$ maps a loop $f$ into $\Omega \left(\mathbb{C}^*, \mathfrak{g}_{\mathbb{R}}\right)\times G_{\mathbb{R}}$, which are both tame Fr\'echet spaces. Now similar arguments apply.
\end{proof}

\begin{proposition}
\label{mgc/mgrtamefrechet}
The group $MG_{\mathbb{C}}/MG_{\mathbb R}$ is a tame Fr\'echet manifold.
\end{proposition}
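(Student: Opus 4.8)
The plan is to transfer the construction used for $MG_{\mathbb{C}}$ in Theorem~\ref{mgcistame} to the homogeneous space, producing an atlas whose charts are modelled on the tame direct summand of $M\mathfrak{g}_{\mathbb{C}}$ complementary to $M\mathfrak{g}_{\mathbb{R}}$. First I would isolate the relevant splitting of the Lie algebra. Let $\tau$ be the conjugation of $\mathfrak{g}_{\mathbb{C}}$ with respect to the compact real form $\mathfrak{g}_{\mathbb{R}}$ and define $\rho\colon M\mathfrak{g}_{\mathbb{C}}\to M\mathfrak{g}_{\mathbb{C}}$ by $\rho(f)(z):=\tau\bigl(f(1/\bar z)\bigr)$; on Laurent coefficients this reads $\rho\bigl(\sum g_n z^n\bigr)=\sum (-\bar g_{-n}^{\,t})z^n$, so by the Laurent description of the reality condition $f(S^1)\subset\mathfrak{g}_{\mathbb{R}}$ proved above, $\rho$ is a conjugate-linear involution with fixed space $M\mathfrak{g}_{\mathbb{R}}$ and $(-1)$-eigenspace $iM\mathfrak{g}_{\mathbb{R}}$. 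Since $1/\bar z$ runs over $A_n$ as $z$ does and $\tau$ is a linear isometry for a suitably chosen norm on $\mathfrak{g}_{\mathbb{C}}$, $\rho$ is $(0,0,C(n))$-tame, hence so are the projections $p_{\pm}:=\tfrac12(\textrm{Id}\pm\rho)$, and
\[
M\mathfrak{g}_{\mathbb{C}}=M\mathfrak{g}_{\mathbb{R}}\oplus iM\mathfrak{g}_{\mathbb{R}}
\]
is a tame direct sum of real tame Fr\'echet spaces (lemma~\ref{constructionoftamespaces}, corollary~\ref{holc*cnisfrechet}). The argument of Theorem~\ref{tangential space} applied to $MG_{\mathbb{R}}$ gives $T_e(MG_{\mathbb{R}})=M\mathfrak{g}_{\mathbb{R}}$, and $MG_{\mathbb{R}}$ is closed in $MG_{\mathbb{C}}$ since the evaluations at points of $S^1$ are continuous; in particular the quotient topology is Hausdorff.

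Next I would build a chart at the base point $o:=eMG_{\mathbb{R}}$. Pick a chart $\Phi\colon U\to W\subset M\mathfrak{g}_{\mathbb{C}}$ of $MG_{\mathbb{C}}$ with $e\in U$, $\Phi(e)=0$ and $T_e\Phi=\textrm{Id}$ — the model space of $MG_{\mathbb{C}}$ is tame-isomorphic to $M\mathfrak{g}_{\mathbb{C}}=T_e(MG_{\mathbb{C}})$ by Theorems~\ref{mgcistame} and~\ref{tangential space}, so composing with this isomorphism produces such a chart. Set $\beta:=\Phi^{-1}|_{W\cap iM\mathfrak{g}_{\mathbb{R}}}$, a smooth tame map from a neighbourhood of $0$ in the tame Fr\'echet space $iM\mathfrak{g}_{\mathbb{R}}$ into $MG_{\mathbb{C}}$ with $\beta(0)=e$ and $T_0\beta$ the inclusion $iM\mathfrak{g}_{\mathbb{R}}\hookrightarrow M\mathfrak{g}_{\mathbb{C}}$. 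Consider the multiplication map
\[
\Psi\colon (W\cap iM\mathfrak{g}_{\mathbb{R}})\times MG_{\mathbb{R}}\longrightarrow MG_{\mathbb{C}},\qquad (X,h)\mapsto \beta(X)\,h ,
\]
smooth and tame as a composition of $\beta$ with the (tame) group multiplication of $MG_{\mathbb{C}}$. Its differential at $(0,e)$ is $(Y,Z)\mapsto Y+Z$, the tame isomorphism $iM\mathfrak{g}_{\mathbb{R}}\oplus M\mathfrak{g}_{\mathbb{R}}\to M\mathfrak{g}_{\mathbb{C}}$ of the first step; after right translation to $e$ the general differential reads $D\Psi(X,h)(Y,Z)=\textrm{Ad}(h^{-1})\bigl(D(R_{\beta(X)^{-1}}\!\circ\beta)(X)\,Y\bigr)+Z$, a bounded perturbation of that splitting for $(X,h)$ near $(0,e)$, which one inverts by a Neumann-series argument using the tameness of the adjoint action (the estimates for $\textrm{ad}$, and correspondingly for $\textrm{Ad}$, on $M\mathfrak{g}$). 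The Nash--Moser inverse function theorem then shows that $\Psi$ is a local tame diffeomorphism onto an $MG_{\mathbb{R}}$-invariant open set $\widetilde V\ni e$, whence $X\mapsto \beta(X)MG_{\mathbb{R}}$ is a homeomorphism from a neighbourhood of $0$ in $iM\mathfrak{g}_{\mathbb{R}}$ onto a neighbourhood of $o$, with inverse $\kappa\colon fMG_{\mathbb{R}}\mapsto p_1\bigl(\Psi^{-1}(f)\bigr)$; here $\kappa$ is well defined (the first component of $\Psi^{-1}$ is constant on $MG_{\mathbb{R}}$-cosets) and tame.

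Finally I would propagate this chart and check compatibility. For $g\in MG_{\mathbb{C}}$ the left translation $L_g$ is a tame diffeomorphism of $MG_{\mathbb{C}}$ mapping $MG_{\mathbb{R}}$-cosets to $MG_{\mathbb{R}}$-cosets, so $X\mapsto g\,\beta(X)MG_{\mathbb{R}}$ is a chart around $gMG_{\mathbb{R}}$ with inverse $\kappa_g\colon fMG_{\mathbb{R}}\mapsto p_1\bigl(\Psi^{-1}(g^{-1}f)\bigr)$. For two overlapping charts centred at $g_1MG_{\mathbb{R}}$ and $g_2MG_{\mathbb{R}}$ the transition map is
\[
X\longmapsto p_1\Bigl(\Psi^{-1}\bigl(g_2^{-1}g_1\,\beta(X)\bigr)\Bigr),
\]
a composition of the tame maps $\beta$, left translation, $\Psi^{-1}$ and $p_1$, hence a smooth tame map between open subsets of the tame Fr\'echet space $iM\mathfrak{g}_{\mathbb{R}}$. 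Thus these charts constitute a tame Fr\'echet atlas and $MG_{\mathbb{C}}/MG_{\mathbb{R}}$ is a tame Fr\'echet manifold.

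I expect the main obstacle to be the verification that $\Psi$ meets the hypotheses of the Nash--Moser theorem: not merely that $D\Psi(0,e)$ is invertible, but that the family of inverses $V\Psi(X,h)$ exists and is a smooth tame map on an entire neighbourhood of $(0,e)$ — this is precisely where the Fr\'echet setting departs from the Banach one (cf.\ the remarks following the Nash--Moser theorem). It is handled by combining the tame estimates for $\textrm{Ad}$ on $MG_{\mathbb{C}}$ with a Neumann-series perturbation of the tame splitting $M\mathfrak{g}_{\mathbb{C}}=iM\mathfrak{g}_{\mathbb{R}}\oplus M\mathfrak{g}_{\mathbb{R}}$, and then invoking the companion result on smooth tame families of inverses (Theorem~3.1.1 of~\cite{Hamilton82}, quoted above). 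The remaining points — well-definedness of $\kappa$ and tameness of the transition maps — are routine once $\Psi$ is known to be a local tame diffeomorphism.
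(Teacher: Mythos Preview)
Your argument is a genuinely different route from the paper's. The paper does not construct homogeneous-space charts via Nash--Moser applied to the multiplication map $\Psi:(X,h)\mapsto\beta(X)h$; instead it stays with the logarithmic-derivative embedding of Theorem~\ref{mgcistame},
\[
\psi:MG_{\mathbb{C}}\hookrightarrow\Omega^1(\mathbb{C}^*,\mathfrak g_{\mathbb{C}})\times G_{\mathbb{C}},\qquad f\mapsto(f^{-1}df,\,f(1)),
\]
and observes that right multiplication by $g\in MG_{\mathbb{R}}$ becomes the gauge action $\omega\mapsto g^{-1}\omega g+g^{-1}dg$ on the first factor. Splitting $\Omega^1(\mathbb{C}^*,\mathfrak g_{\mathbb{C}})=\Omega^1_{\mathbb{R}}\oplus i\,\Omega^1_{\mathbb{R}}$ and using that the $MG_{\mathbb{R}}$-gauge action sweeps out the entire real part of the image, the paper identifies $MG_{\mathbb{C}}/MG_{\mathbb{R}}$ \emph{globally} with a closed linear subspace of $\Omega^1_{\mathbb{R}}(\mathbb{C}^*,\mathfrak g_{\mathbb{C}})$ times $G_{\mathbb{C}}/G_{\mathbb{R}}$. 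No local-inversion argument is needed for the quotient, and one gets for free that $MG_{\mathbb{C}}/MG_{\mathbb{R}}$ is tamely diffeomorphic to a vector space --- a conclusion the paper actually invokes later in Theorem~\ref{hatdiffeovectorspace}, and which your chart-by-chart construction does not yield.

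Your approach is the standard Lie-theoretic quotient construction, which is more portable but runs into exactly the obstacle you flag. The Neumann-series step deserves more than a sentence: in the tame category a perturbation $A$ of the identity with tame degree $r>0$ has iterates $A^k$ of degree $kr$, so $\sum A^k$ need not converge. You are rescued here only because pointwise conjugation $\mathrm{Ad}(h)$ on $M\mathfrak g$ is $(0,0,C(h,n))$-tame, but you must also control the degree of $L(X)=D(R_{\beta(X)^{-1}}\!\circ\beta)(X)$, and the only charts the paper supplies for $MG_{\mathbb{C}}$ come from the logarithmic derivative and hence carry a built-in degree shift (Lemma~\ref{differentialistame}). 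Since $\Mexp$ is not a local diffeomorphism (Theorem~\ref{expnodiffeomorphism}), you cannot fall back on an exponential chart either. So either you must argue carefully that the degree shifts in $\Phi$ and $\Phi^{-1}$ cancel in $L(X)$, or else exhibit an alternative degree-$0$ chart; without that, the invocation of Nash--Moser remains a genuine gap.
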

\begin{proof}

Review the embedding 
$$\psi: MG_{\mathbb{C}}\longrightarrow \Omega^1(\mathbb{C^*}, \mathfrak{g}_{\mathbb{C}})\times G_{\mathbb{C}}\,.$$

A loop $f\cdot g$ is mapped onto $\psi(f\cdot g)=(g^{-1}f^{-1}df g+ g^{-1}dg, [f\cdot g](1))$. This is the well-known gauge-action of the group $MG_{\mathbb{R}}$, denoted by $\mathcal{G}^*(MG_{\mathbb{R}})$.
Thus there is a well defined embedding
$$\psi: MG_{\mathbb{C}}/MG_{\mathbb{R}}\longrightarrow \Omega^1(\mathbb{C^*}, \mathfrak{g}_{\mathbb{C}})/\mathcal{G}^*(MG_{\mathbb{R}})\times G_{\mathbb{C}}/G_{\mathbb{R}}\,.$$

No we study again the projections $\pi_1$ and $\pi_2$ on the first and second factor. $\pi_2$ is surjective; $G_{\mathbb{C}}/G_{\mathbb{R}}$ is a tame manifold; so this factor is no problem.

The projection on the first factor, $\pi_1$, needs a more careful analysis:
the right multiplication of $MG_{\mathbb{R}}$ on $MG_{\mathbb{R}}$ is surjective: using the decomposition $\mathfrak{g}_{\mathbb{C}}=\mathfrak{g}_{\mathbb{R}}+i \mathfrak{g}_{\mathbb{R}}$, we get for $\Omega^1(\mathbb{C}^*, \mathfrak{g}_{\mathbb{C}}):=\Omega^1_{\mathbb{R}}(\mathbb{C}^*, \mathfrak{g}_{\mathbb{C}})+i\Omega^1_{\mathbb{R}} (\mathbb{C}^*, \mathfrak{g}_{\mathbb{C}})$.  

The surjectivity of the right multiplication of $MG_{\mathbb{R}}$ on $MG_{\mathbb{R}}$ translates into the surjectivity of the $MG$-gauge action on the imaginary part $i\Omega^1_{\mathbb{R}} (\mathbb{C}^*, \mathfrak{g}_{\mathbb{C}})\cap \Im(\pi_1 \circ \psi)(MG)$. Thus we can suppose to have chosen a representative $f\in f\cdot MG$, such that the imaginary part $ \pi_1\circ \psi(f)$ is $0$.
So all we have to check is the real part. Here we find that $\exp^{-1}(e)=0$. Thus $a_{-1}=0$. So we can identify the image $\pi_1 \circ \psi (MG_{\mathbb{C}}/MG_{\mathbb{R}})\simeq \Omega^1_{\mathbb{R}}(\mathbb{C^*}, \mathfrak{g}|a_{-1}=0) $. This is a tame Fr\'echet space.

Hence proposition~\ref{mgc/mgrtamefrechet} is proven.
\end{proof}

Having proved that $MG_{\mathbb{C}}$ and $MG_{\mathbb{C}}/MG_{\mathbb{R}}$ are tame Fr\'echet
manifolds we have to check that the same is true for the quotients
$MG_{\mathbb R}/\textrm{Fix}{(\rho)}$ and $MG_D/\textrm{Fix}{(\rho)}$.

To this end, let $MG$ be a loop group and $M\rho$ the loop part of an involution of the second kind.

\begin{proposition}
Let $MG_D$ be a non-compact real form of $MG_{\mathbb{C}}$. $MG_D$ is a tame Fr\'echet manifold. 
\end{proposition}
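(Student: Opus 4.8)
The plan is to adapt the logarithmic-derivative construction of theorem~\ref{mgcistame} to the reality condition defining $MG_D$. Write $MG_D = \mathrm{Fix}(M\rho) \subset MG_{\mathbb{C}}$, where $M\rho$ is the conjugate-linear involution of the second kind; its differential is a conjugate-linear involution of $M\mathfrak{g}_{\mathbb{C}}$ whose fixed-point set $M\mathfrak{g}_D$ is a closed real subspace and hence a tame Fr\'echet space by lemma~\ref{constructionoftamespaces}. First I would restrict the embedding
\[
\varphi \colon MG_D \hookrightarrow \Omega^1(\mathbb{C}^*,\mathfrak{g}_{\mathbb{C}}) \times G_{\mathbb{C}}, \qquad f \mapsto \bigl(f^{-1}df,\, f(1)\bigr),
\]
and check injectivity exactly as in theorem~\ref{mgcistame}: equality of logarithmic derivatives forces $f_1 = g f_2$ for a constant $g \in G_{\mathbb{C}}$ by uniqueness of solutions of $df = f\omega$, and then $f_1(1) = f_2(1)$ forces $g = \mathrm{Id}$. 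Since $1 \in S^1$ is a fixed point of the base involution $z \mapsto 1/\bar z$ attached to $M\rho$, the value $f(1)$ lies in the finite-dimensional real form $G_\rho := \mathrm{Fix}(\rho) \subset G_{\mathbb{C}}$, and $f^{-1}df$ lies in the subspace $\Omega^1_D(\mathbb{C}^*,\mathfrak{g}_{\mathbb{C}}) \subset \Omega^1(\mathbb{C}^*,\mathfrak{g}_{\mathbb{C}})$ cut out by the linear reality condition induced by $M\rho$.

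Next I would observe that $\Omega^1_D$ is a closed linear subspace of the tame Fr\'echet space $\Omega^1(\mathbb{C}^*,\mathfrak{g}_{\mathbb{C}})$, hence itself tame by lemma~\ref{constructionoftamespaces}, and that $G_\rho$ is a finite-dimensional (hence tame) manifold. Following the proof of theorem~\ref{mgcistame}, I would build charts on $MG_D$ as products of charts for $\pi_2 \circ \varphi$ and $\pi_1 \circ \varphi$. The factor $\pi_2 \circ \varphi$ surjects onto $G_\rho$, which is charted via the exponential map and left translation. The factor $\pi_1 \circ \varphi$ consists of those $\alpha \in \Omega^1_D$ whose monodromy $\mathrm{per}_\alpha$ vanishes (lemma~\ref{integrability of 1 dim }); by the description of the monodromy through the $(-1)$-Laurent coefficient, this is the preimage of $e$ under a tame map $\Omega^1_D \to \mathbb{R}^m$ obtained by composing the period map into $G_\rho$ with a chart near $e$, where $m = \dim G_\rho$. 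Provided $e$ is a regular value, theorem~\ref{submanifoldsoffrechetspace} makes $\pi_1 \circ \varphi(MG_D)$ a tame Fr\'echet submanifold of finite type, hence a tame Fr\'echet manifold, and the product construction of lemma~\ref{constructionoftamespaces} shows that $MG_D$, being locally a product of tame Fr\'echet manifolds, is a tame Fr\'echet manifold.

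The main obstacle I anticipate is the bookkeeping forced by the involution being of the second kind: in contrast to the compact real form $MG_{\mathbb{R}}$, where the reality condition is the pointwise condition $f(S^1) \subset G_{\mathbb{R}}$, here the base map $z \mapsto 1/\bar z$ exchanges the two ends of the punctured plane, so one must carefully verify that the logarithmic derivative and the period homomorphism are equivariant for $M\rho$ in the correct way, and in particular that the vanishing-monodromy locus meets $\Omega^1_D$ in a regular value so that theorem~\ref{submanifoldsoffrechetspace} applies. Once $\Omega^1_D$ is correctly identified and this transversality is verified, the remainder is a direct transcription of theorem~\ref{mgcistame} and its companion argument for $MG_{\mathbb{R}}$.
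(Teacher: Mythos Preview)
Your proposal is correct and follows exactly the approach the paper intends: the paper gives no explicit proof for this proposition, relying implicitly on the reader to adapt the logarithmic-derivative argument of theorem~\ref{mgcistame} (as was done in the one-line proof for $MG_{\mathbb{R}}$), and your write-up carries out precisely that adaptation. Your attention to the equivariance of $\delta$ and the period map under the involution of the second kind, and to the regular-value hypothesis needed for theorem~\ref{submanifoldsoffrechetspace}, actually makes explicit points the paper leaves entirely to the reader.
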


\begin{proposition}
The quotient spaces $MG_{\mathbb{R}}/\textrm{Fix}(M\rho)$ and $MG_{D, \rho}/\textrm{Fix}(M\rho)$ are tame manifolds.
\end{proposition}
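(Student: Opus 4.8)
The plan is to run the argument of Proposition~\ref{mgc/mgrtamefrechet} with the reality involution replaced by the loop involution $M\rho$ and the subgroup $MG_{\mathbb{R}}$ replaced by $\textrm{Fix}(M\rho)$. First I would fix the algebraic setup: $M\rho$ induces an involution on $M\mathfrak{g}_{\mathbb{R}}$ (respectively on the Lie algebra of $MG_{D,\rho}$), with eigenspace decomposition $M\mathfrak{g}_{\mathbb{R}}=\mathfrak{k}\oplus\mathfrak{p}$ into the $(+1)$- and $(-1)$-eigenspaces, where $\mathfrak{k}$ is the Lie algebra of $\textrm{Fix}(M\rho)$. Both summands are closed subspaces of a tame Fr\'echet space, hence tame by Lemma~\ref{constructionoftamespaces}, and $\textrm{Fix}(M\rho)$ is a closed tame Fr\'echet Lie subgroup. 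After conjugating we may assume that the evaluation point $z=1$ used in Theorem~\ref{mgcistame} is a fixed point of the involution that $M\rho$ induces on $\mathbb{C}^*$; then evaluation at $1$ carries $\textrm{Fix}(M\rho)$ into a finite-dimensional subgroup $\textrm{Fix}(\rho_1)\subset G_{\mathbb{R}}$, and $G_{\mathbb{R}}/\textrm{Fix}(\rho_1)$ is a finite-dimensional symmetric space, in particular a tame manifold.

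Next I would reuse the logarithmic-derivative embedding from the proof of Theorem~\ref{mgcistame},
\[
\varphi:MG_{\mathbb{R}}\hookrightarrow\Omega^1(\mathbb{C}^*,\mathfrak{g}_{\mathbb{R}})\times G_{\mathbb{R}},\qquad f\mapsto(f^{-1}df,\ f(1)),
\]
and observe, exactly as in Proposition~\ref{mgc/mgrtamefrechet}, that right multiplication by $k\in\textrm{Fix}(M\rho)$ corresponds to the gauge action $\alpha\mapsto k^{-1}\alpha k+k^{-1}dk$ on the first factor and to right translation by $k(1)$ on the second. Passing to quotients yields a well-defined injective map
\[
\psi:MG_{\mathbb{R}}/\textrm{Fix}(M\rho)\hookrightarrow\bigl(\Omega^1(\mathbb{C}^*,\mathfrak{g}_{\mathbb{R}})/\mathcal{G}^*(\textrm{Fix}(M\rho))\bigr)\times\bigl(G_{\mathbb{R}}/\textrm{Fix}(\rho_1)\bigr),
\]
whose two projections I would analyse separately. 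The projection to the second factor lands in the finite-dimensional symmetric space $G_{\mathbb{R}}/\textrm{Fix}(\rho_1)$ and causes no difficulty. For the first factor I would use the $M\rho$-eigenspace splitting $\Omega^1(\mathbb{C}^*,\mathfrak{g}_{\mathbb{R}})=\Omega^1_{+}\oplus\Omega^1_{-}$ together with the fact, established from the local integrability of $1$-forms (Lemma~\ref{integrability of 1 dim } applied inside $\textrm{Fix}(M\rho)$), that the gauge action of $\textrm{Fix}(M\rho)$ is locally surjective onto the summand $\Omega^1_{-}$; hence every class has a representative whose $\Omega^1_{-}$-component vanishes, and the image of $\pi_1\circ\psi$ is identified with the subset of the tame Fr\'echet space $\Omega^1_{+}$ carved out by the vanishing-monodromy condition $\exp\bigl(\int_{S^1}\alpha\bigr)=e$, equivalently the residue condition $a_{-1}(\alpha)\in\tfrac{1}{2\pi i}\exp^{-1}(e)$. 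Composing the monodromy map with a chart of $G$ around $e$ gives a tame map $\Omega^1_{+}\to\mathbb{R}^m$ which satisfies the hypotheses of Theorem~\ref{submanifoldsoffrechetspace}, so this image is a tame Fr\'echet submanifold of finite type, hence a tame Fr\'echet manifold. As a cartesian product of a tame Fr\'echet manifold with a finite-dimensional manifold, $MG_{\mathbb{R}}/\textrm{Fix}(M\rho)$ is then a tame Fr\'echet manifold by Lemma~\ref{constructionoftamespaces}.

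The non-compact case $MG_{D,\rho}/\textrm{Fix}(M\rho)$ proceeds along identical lines: $MG_D$ is already known to be a tame Fr\'echet manifold and carries the analogous logarithmic-derivative embedding into $\Omega^1(\mathbb{C}^*,\mathfrak{g}_D)\times G_D$, the involution $M\rho$ restricts to it with fixed group $\textrm{Fix}(M\rho)$, and the finite-dimensional quotient $G_D/\textrm{Fix}(\rho_1)$ is again a symmetric space, hence a manifold, so the slicing argument via Theorem~\ref{submanifoldsoffrechetspace} applies verbatim. I expect the main obstacle to be precisely the transversality step for $\pi_1$: showing that the gauge action of $\textrm{Fix}(M\rho)$ genuinely sweeps out the whole summand $\Omega^1_{-}$ near a given $1$-form --- i.e.\ that the associated linear differential equation can be solved within $\textrm{Fix}(M\rho)$ --- which requires combining the monodromy control from Lemma~\ref{integrability of 1 dim } for $\textrm{Fix}(M\rho)$ with the finite-dimensional fact that $G_{\mathbb{R}}/\textrm{Fix}(\rho_1)$ (respectively $G_D/\textrm{Fix}(\rho_1)$) is a manifold, exactly as the imaginary summand is handled in the proof of Proposition~\ref{mgc/mgrtamefrechet}.
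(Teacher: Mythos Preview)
Your proposal is correct and follows exactly the approach the paper intends: its entire proof is the single sentence ``The proof is an argument analogous to the proof that $MG_{\mathbb{C}}/MG_{\mathbb{R}}$ is a tame Fr\'echet space,'' and you have faithfully spelled out that analogy --- replacing complex conjugation by $M\rho$, the subgroup $MG_{\mathbb{R}}$ by $\textrm{Fix}(M\rho)$, the finite-dimensional quotient $G_{\mathbb{C}}/G_{\mathbb{R}}$ by $G_{\mathbb{R}}/\textrm{Fix}(\rho_1)$, and the real/imaginary splitting of $\Omega^1$ by the $\pm 1$-eigenspace splitting under $M\rho$. Your identification of the transversality of the gauge action on the first factor as the delicate step is also apt; the paper does not elaborate on it either.
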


\begin{proof}
The proof is  an argument analogous to the proof that $MG_{\mathbb{C}}/MG_{\mathbb{R}}$ is a tame Fr\'echet space.
\end{proof}

\noindent The next class are twisted loop groups:

\begin{proposition}[Twisted loop groups]
Let $G$ be a compact simple Lie group of type $A_n, D_n$ or  $E_6$ and $\sigma$ a diagram automorphism of order $m\in \{2,3\}$. Let $\omega=e^{\frac{2\pi i}{m}}$.

\begin{itemize}
\item The group $A_nG^{\sigma}:=\{f\in A^nG | \sigma \circ f(z)=f(\omega z)\}$ is a Banach-Lie group. 
\item The group $MG^{\sigma}:=\{f\in MG | \sigma \circ f(z)=f(\omega z)\}$ is a tame manifold. Charts can be taken to be in $\Omega^1(\mathbb{C}^*, G)^{\sigma}$. Furthermore $M\mathfrak{g}^{\sigma}\simeq T_e(MG^{\sigma})$. 
\end{itemize}
\end{proposition}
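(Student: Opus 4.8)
The plan is to carry the diagram automorphism $\sigma$ through the proofs of the (already recalled) theorem on Banach structures for $A_nG$, of Theorem~\ref{mgcistame}, and of Theorem~\ref{tangential space}. Throughout, write $\dot\sigma:=d\sigma_e\in\textrm{Aut}(\mathfrak g)$ for the induced Lie algebra automorphism and $R_\omega$ for the rotation $z\mapsto\omega z$ of $\mathbb C^*$, which preserves each annulus $A_n$ and is norm--preserving, $\|f\circ R_\omega\|_n=\|f\|_n$.

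\emph{The Banach case.} The map $\Phi\colon A_nG_{\mathbb C}\to A_nG_{\mathbb C}$, $\Phi(f)(z):=\sigma^{-1}\bigl(f(\omega^{-1}z)\bigr)$, is a group automorphism of the Banach--Lie group $A_nG_{\mathbb C}$ with $\Phi^m=\textrm{Id}$, and $A_nG^{\sigma}=\textrm{Fix}(\Phi)$. Its differential at $e$ is $X\mapsto\dot\sigma^{-1}(X\circ R_\omega^{-1})$, and $P:=\tfrac1m\sum_{k=0}^{m-1}(d\Phi_e)^k$ is a bounded projection of $A_n\mathfrak g_{\mathbb C}$ onto $A_n\mathfrak g_{\mathbb C}^{\sigma}$; hence the fixed subspace is complemented, $\exp$ restricted to $A_n\mathfrak g_{\mathbb C}^{\sigma}$ gives a chart near $e$, and left translation gives an atlas. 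The same argument with $G_{\mathbb R}$, resp.\ with a non-compact real form, disposes of the remaining Banach cases.

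\emph{The Fréchet case.} For $MG_{\mathbb C}^{\sigma}$ I would restrict the embedding of Theorem~\ref{mgcistame},
\begin{displaymath}
\varphi\colon MG_{\mathbb C}^{\sigma}\hookrightarrow\Omega^1(\mathbb C^*,\mathfrak g_{\mathbb C})\times G_{\mathbb C},\qquad f\mapsto(\delta(f),f(1)),
\end{displaymath}
and read off the twist condition on each factor. Since $\sigma$ intertwines left translations one has $\delta(\sigma\circ f)=\dot\sigma\circ\delta(f)$, and clearly $\delta(f\circ R_\omega)=R_\omega^{*}\delta(f)$; so $f(\omega z)=\sigma(f(z))$ forces $\delta(f)$ into the subspace $\Omega^1(\mathbb C^*,\mathfrak g_{\mathbb C})^{\sigma}:=\{\alpha\mid R_\omega^{*}\alpha=\dot\sigma\circ\alpha\}$. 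This is a closed subspace, in fact a tame direct summand: the averaging operator $\alpha\mapsto\tfrac1m\sum_k(\dot\sigma^{-1}\circ R_\omega^{*})^k\alpha$ is a finite sum of tame linear maps, so by Lemma~\ref{constructionoftamespaces} the subspace is tame. The monodromy of $\delta(f)$ around $S^1$ is unchanged by the twist, and using that $R_\omega$ is homotopic to the identity one sees that $\textrm{per}_{\delta(f)}$ automatically takes values in the finite-dimensional group $\textrm{Fix}(\sigma)$; hence $\textrm{Im}(\pi_1\circ\varphi)$ is, exactly as in Theorem~\ref{mgcistame}, the vanishing locus of this monodromy map on the tame space $\Omega^1(\mathbb C^*,\mathfrak g_{\mathbb C})^{\sigma}$, a tame Fréchet submanifold of finite type by Theorem~\ref{submanifoldsoffrechetspace}.

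\emph{The base point, and conclusion.} The step I expect to be the main obstacle is the second factor: because $R_\omega$ is fixed-point free on $\mathbb C^*$, the evaluation point $z=1$ is not $\sigma$-stable, so $\pi_2\circ\varphi$ is no longer surjective onto $G_{\mathbb C}$. Writing $P(\alpha)\in G_{\mathbb C}$ for the flat transport of $\alpha=\delta(f)$ along a path from $1$ to $\omega$, the twist at $z=1$ reads $f(1)^{-1}\sigma(f(1))=P(\alpha)$, so $\textrm{Im}(\varphi)$ is the fibre product of the monodromy-zero locus with $G_{\mathbb C}$ over $G_{\mathbb C}$ along $\alpha\mapsto P(\alpha)$ and $g\mapsto g^{-1}\sigma(g)$; one must verify that these finitely many equations are functionally independent --- equivalently that $g\mapsto g^{-1}\sigma(g)$ is a submersion onto the ($\sigma$-twisted) conjugacy class it sweeps out and that $P$ meets it transversally --- so that Theorem~\ref{submanifoldsoffrechetspace} again applies, the tame structure on the resulting submanifold being the one compatible with the ambient space by Lemma~\ref{mapinfrechetsubmanifold}. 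Assembling the two factors as in Theorem~\ref{mgcistame} yields the tame manifold structure with charts valued in $\Omega^1(\mathbb C^*,\mathfrak g_{\mathbb C})^{\sigma}$ (times a finite-dimensional piece), and the multiplication and inversion formulas restrict from those of $MG_{\mathbb C}$ and stay smooth tame. Finally, $M\mathfrak g^{\sigma}\simeq T_e(MG^{\sigma})$ follows by rerunning the proof of Theorem~\ref{tangential space}: a curve $\gamma(t)$ in $MG^{\sigma}$ with $\gamma(0)=e$ has $\dot\gamma(0)\in M\mathfrak g$ by the pointwise Cauchy--Riemann argument there, and differentiating $\gamma(t)(\omega z)=\sigma(\gamma(t)(z))$ at $t=0$ gives $\dot\gamma(0)(\omega z)=\dot\sigma(\dot\gamma(0)(z))$, i.e.\ $\dot\gamma(0)\in M\mathfrak g^{\sigma}$; conversely, for $X\in M\mathfrak g^{\sigma}$ the curve $\Mexp(tX)$ lies in $MG^{\sigma}$ since $\exp\circ\dot\sigma=\sigma\circ\exp$.
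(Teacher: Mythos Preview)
Your approach is correct and follows the same line as the paper --- the core observation in both is that the logarithmic derivative and the exponential intertwine with $\sigma$, i.e.\ $\delta(\sigma\circ f)=\dot\sigma\circ\delta(f)$ and $\sigma\circ\exp=\exp\circ\dot\sigma$ --- so that the untwisted arguments restrict to the $\sigma$-equivariant subspaces. The paper's own proof consists of exactly these two one-line computations and then simply asserts that ``we get charts in the $\sigma$-invariant subalgebra of $\Omega^1(\mathbb C^*,\mathfrak g_{\mathbb C})$''.

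Where you differ is in thoroughness. You make explicit the averaging projection $\tfrac1m\sum_k(\dot\sigma^{-1}\circ R_\omega^{*})^k$ showing the $\sigma$-invariant forms are a tame direct summand, and --- more substantively --- you isolate the base-point constraint $f(1)^{-1}\sigma(f(1))=P(\alpha)$ coming from the fact that $z=1$ is not fixed by $R_\omega$. The paper does not mention this second-factor issue at all. Your fibre-product formulation and the transversality check you flag are the right way to close it; alternatively one can note that near the identity ($f(1)=e$) the constraint reads $P(\alpha)=e$, which together with the $\sigma$-equivariance of $\alpha$ is a single finite-codimension condition to which Theorem~\ref{submanifoldsoffrechetspace} applies, and then left-translate. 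Either way, your argument is a genuine completion of what the paper leaves implicit.
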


\begin{proof}
To generalize the proofs of the non-twisted setting to the twisted setting one has to check that the subspaces defined by diagram automorphism are preserved by the logarithmic derivative.

\begin{enumerate}
\item For the exponential map,  we use $\sigma_{\mathfrak{g}}$ resp.\ $\sigma_{G}$ to denote the realization of the diagram automorphism $\sigma$ on $\mathfrak{g}$ resp.\ $G$. Any involution of a semisimple Lie group satisfies the identity: $\sigma_{G} \circ \exp =\exp \circ {\sigma_{\mathfrak{g}}}$
$$[\sigma_{G}\circ \Mexp(f)](z)=\exp(\sigma_{\mathfrak{g}}(f(z)))=\exp(f(\omega z))=[\Mexp(f)](\omega z),$$

\item For the logarithmic derivative we calculate:
$$\delta(\sigma \circ f)=(\sigma f)^{-1} d(\sigma \circ f)= \sigma f^{-1} \sigma df = \sigma (\delta f)\,.$$
Thus we get charts in the $\sigma$-invariant subalgebra of $\Omega^1(\mathbb{C}^*, \mathfrak{g}_{\mathbb{C}})$.
\end{enumerate}
\end{proof}

\noindent The following definition is due to Omori~\cite{Omori97}:

\begin{definition}[Exponential pair]
\label{exponentialtype}
A pair $(G,\mathfrak{g})$ consisting of a Fr\'echet group $G$ and a Fr\'echet space $\mathfrak{g}$ is called a topological group of \emph{exponential type} if there is a continuous mapping:
$$\exp: \mathfrak{g}\longrightarrow G$$
such that:
\begin{enumerate}
	\item For every $X \in \mathfrak{g}$, $\exp(sX)$ is a one-parameter subgroup of $G$.
	\item For $X,Y \in \mathfrak{g}$, $X=Y$ iff $\exp(sX)=\exp(sY)$ for every $s\in \mathbb{R}$.
	\item For a sequence $\{X_n\}\in \mathfrak{g}$, $\displaystyle\lim_{n\rightarrow \infty} X_n$ converges to an element $X\in \mathfrak{g}$ iff $\displaystyle\lim_{n\rightarrow \infty} (\exp{sX_n})$ converges uniformly on each compact interval to the element $\exp(sX)$.
	\item There is a continuous mapping $\textrm{Ad}: G\times \mathfrak{g}\longrightarrow \mathfrak{g} $ with  $h\exp(sX)h^{-1}=\exp s \textrm{Ad}(h)X$ for every $h\in G$ and $X\in \mathfrak{g}$.
\end{enumerate}
\end{definition}

\begin{proposition}[Exponential type]
The pair $(MG_{\mathbb{K}}, M\mathfrak{g}_{\mathbb{K}})$ is of exponential type.
\end{proposition}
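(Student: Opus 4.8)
The plan is to verify the four conditions of Definition~\ref{exponentialtype} by reducing each one to the corresponding classical fact for the finite dimensional group $G_{\mathbb{K}}$ applied pointwise in $z\in\mathbb{C}^*$, and then promoting the pointwise statements to statements in $M\mathfrak{g}_{\mathbb{K}}$ and $MG_{\mathbb{K}}$ using that on these spaces the Fr\'echet topology coincides with the topology of compact convergence (the theorem on topologies on $M\mathfrak{g}_{\mathbb{C}}$ proved above, together with the analogous fact for $MG_{\mathbb{K}}$ from Neeb's theorem and the discussion following it). Throughout I use the pointwise formula $[\Mexp(f)](z)=\exp(f(z))$; since the reality condition $f(S^1)\subset G_{\mathbb{R}}$ (resp. $X(S^1)\subset\mathfrak{g}_{\mathbb{R}}$) is preserved by $\exp$ and $\mathrm{Ad}$, as already observed, the case $\mathbb{K}=\mathbb{R}$ is covered at the same time.

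For condition (1): for fixed $z$ the curve $s\mapsto\exp(sX(z))$ is a one-parameter subgroup of $G_{\mathbb{K}}$, so $\Mexp(sX)\,\Mexp(tX)=\Mexp((s+t)X)$ holds pointwise, hence in $MG_{\mathbb{K}}$; continuity of $s\mapsto\Mexp(sX)$ follows from uniform continuity of $\exp$ on the compact set $\{\,tX(z):|t|\le T,\ z\in A_m\,\}$ for each annulus $A_m$ and each $T$. Condition (2): if $\Mexp(sX)=\Mexp(sY)$ for all $s$, then $\exp(sX(z))=\exp(sY(z))$ for all $s$ and all $z$, and differentiating at $s=0$ gives $X(z)=Y(z)$, i.e. $X=Y$; the converse is trivial. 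Condition (4): $\mathrm{Ad}\colon MG_{\mathbb{K}}\times M\mathfrak{g}_{\mathbb{K}}\to M\mathfrak{g}_{\mathbb{K}}$ is the pointwise operator $(\mathrm{Ad}(f)X)(z)=\mathrm{Ad}(f(z))X(z)$ introduced above; its continuity follows, on each $A_m$, from uniform continuity of the finite dimensional map $\mathrm{Ad}\colon G_{\mathbb{K}}\times\mathfrak{g}_{\mathbb{K}}\to\mathfrak{g}_{\mathbb{K}}$ on compact sets, and the identity $h\,\Mexp(sX)\,h^{-1}=\Mexp(s\,\mathrm{Ad}(h)X)$ holds pointwise by the classical formula, hence in $MG_{\mathbb{K}}$.

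Condition (3) is the substantial point. For the ``only if'' direction: if $X_n\to X$ in $M\mathfrak{g}_{\mathbb{K}}$ then $X_n\to X$ uniformly on each annulus $A_m$, so the sets $\{\,sX_n(z):|s|\le T,\ z\in A_m,\ n\in\mathbb{N}\,\}$ and $\{\,sX(z)\,\}$ lie in a common compact subset of $\mathfrak{g}_{\mathbb{K}}$; on this set $\exp$ is uniformly continuous, whence $\exp(sX_n(z))\to\exp(sX(z))$ uniformly in $(s,z)$ with $|s|\le T$ and $z\in A_m$, which is exactly convergence of $\Mexp(sX_n)$ to $\Mexp(sX)$, uniform on compact $s$-intervals, in the topology of compact convergence. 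For the ``if'' direction one argues by contradiction: if $X_n\not\to X$ there are $m$, $\varepsilon>0$, a subsequence and points $z_k\in A_m$ with $|X_{n_k}(z_k)-X(z_k)|\ge\varepsilon$; passing to a further subsequence with $z_k\to z_\infty$, uniform convergence of $\Mexp(sX_n)$ on $A_m$ gives $\exp(sX_{n_k}(z_k))\to\exp(sX(z_\infty))$ for every $s$. If the $X_{n_k}(z_k)$ remain bounded, one extracts a limit $\xi\neq X(z_\infty)$ and obtains $\exp(s\xi)=\exp(sX(z_\infty))$ for all $s$, hence $\xi=X(z_\infty)$, a contradiction; if they are unbounded, one uses that the ``speed'' of the one-parameter subgroup $s\mapsto\exp(sX_{n_k}(z_k))$ grows like $|X_{n_k}(z_k)|$, so its restriction to a fixed interval cannot converge uniformly to the single curve $s\mapsto\exp(sX(z_\infty))$ unless the directions converge and the norms stay bounded --- again a contradiction.

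I expect the reverse implication of condition (3), in particular ruling out the unbounded case, to be the main obstacle: it is the one point where a pointwise classical fact does not suffice and one must exploit the quasi-periodicity of the finite dimensional exponential map (the closure of $\{\exp(t\eta):t\in\mathbb{R}\}$ being a torus, and the image of a long time interval under a fast one-parameter subgroup becoming equidistributed) together with the compactness of $G_{\mathbb{K}}$. The remaining bookkeeping --- tracking the reality condition when $\mathbb{K}=\mathbb{R}$ and checking that all convergences take place in the manifold (that is, compact-convergence) topology of $MG_{\mathbb{K}}$ --- is routine.
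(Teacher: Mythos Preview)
Your approach is the same as the paper's: verify the four axioms of Definition~\ref{exponentialtype} by reducing each to the corresponding pointwise statement for the finite-dimensional pair $(G_{\mathbb{K}},\mathfrak{g}_{\mathbb{K}})$ and then invoking the identification of the Fr\'echet topology with the compact--open topology. The paper's proof is in fact briefer than yours: conditions (1), (2) and (4) are dispatched in one line each, and for condition (3) the paper writes down only the forward implication (if $X_n\to X$ then $\Mexp(sX_n)\to\Mexp(sX)$) via compact convergence, without spelling out the converse at all. So your instinct that the converse of (3) is where the content lies is correct, and your argument there goes beyond what the paper supplies.

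One point needs repair. In handling the unbounded subcase of the converse of (3) you appeal to ``the compactness of $G_{\mathbb{K}}$'' and to equidistribution on tori, but for $\mathbb{K}=\mathbb{C}$ the group $G_{\mathbb{C}}$ is noncompact and one-parameter subgroups need not have compact closure. The fix is to bypass compactness entirely and use equicontinuity instead: uniform convergence of $s\mapsto\exp(sX_{n_k}(z_k))$ to $s\mapsto\exp(sX(z_\infty))$ on $[-1,1]$ makes the family equicontinuous at $s=0$, so there is a single $\delta>0$ with $\exp(sX_{n_k}(z_k))$ lying in a fixed exponential chart $U=\exp(V)$ for all $|s|<\delta$ and all large $k$. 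Since $\exp$ is a diffeomorphism $V\to U$, the ray $s\mapsto s\,X_{n_k}(z_k)$ cannot leave $V$ before time $\delta$ without forcing $\exp(sX_{n_k}(z_k))$ onto $\partial U$; hence $|X_{n_k}(z_k)|$ is bounded by $\mathrm{diam}(V)/\delta$, and you are back in the bounded case. This works uniformly for $\mathbb{K}=\mathbb{R}$ and $\mathbb{K}=\mathbb{C}$.
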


\begin{proof}
We proved that $MG$ is a tame Fr\'echet Lie group, thus a topological group.
To prove that $(MG_{\mathbb{K}}, M\mathfrak{g}_{\mathbb{K}})$ is of exponential type, we have to check the four conditions given in definition~\ref{exponentialtype}:

\begin{enumerate}
	\item The first condition can be checked by a pointwise analysis: for $f\in M\mathfrak{g}$ and for every $z\in \mathbb{C}^*$, the curve $\exp{sf(z)}$ is a $1$-parameter subgroup in $G$. This pieces together for all $z\in \mathbb{C^*}$, to yield the condition.
	\item The second condition follows analogously: let $X,Y\in M\mathfrak{g}$. $X=Y$ iff $X(z)=Y(z)$ for all $z \in \mathbb{C}^*$. The finite dimensional theory tells us that this is equivalent to the curves $\exp(sX(z))\subset G_{\mathbb{K}}$ and $\exp(sY(z))\subset G_{\mathbb{K}}$ to be equivalent for all $z\in \mathbb{C}^*$, but this is equivalent to $\exp(sX)=\exp(sY)$.
	\item Let $\{X_n(z)\}\in M\mathfrak{g}, z\in \mathbb{C}$ be a sequence of elements such that  $\displaystyle\lim_{n\rightarrow \infty} X_n(z)=X \in M\mathfrak{g}$. Let $T\subset \mathbb{R}$ be a compact interval, $s\in T$. 
	As we have on $MG$ the compact-open topology, 
	$$\lim_{n\rightarrow \infty}(\exp{sX_n})=\exp(sX)\Leftrightarrow \forall K \subset \mathbb{C^*}:\exp\lim_{n\rightarrow \infty}(\exp{sX_n(K)})=\exp(sX)(K)\,.$$
	This assertion is correct as for every $z\in K:\exp\displaystyle\lim_{n\rightarrow \infty}(\exp{sX_n(z)})=\exp(sX)(z)$.
	\item The last assertion follows again from pointwise consideration and the validity of the assertion for finite semisimple Lie groups.\qedhere
\end{enumerate}
\end{proof}

We give some remarks about $1$-parameter subgroups. 

\begin{remark}
Let $g(t):=X\exp(tu)$ for $u\in X\mathfrak{g}_{\mathbb{K}}$ be a $1$-parameter subgroup in $XG_{\mathbb{K}}$, $X\in \{A_n , \mathbb{C}^*\}$, $\mathbb{K}\in \{\mathbb{R}, \mathbb{C}\}$. Then the following statements hold:
\begin{enumerate}
\item  $X\exp(tu)_{z_0}$ is a $1$-parameter group in $G_{ \mathbb{C}}$ for all $z_0\in X$.
\item If $A_n\subset A_{n+k}$ then the embedding $A_{n+k}G \hookrightarrow A_nG$ maps $1$-parameter subgroups onto $1$-parameter subgroups.
\end{enumerate}
\end{remark}

\begin{proof} Direct calculation. \end{proof}

\begin{remark}
As we have seen, the fact that the exponential function does not define a local diffeomorphism is responsible for several difficulties; so it is reasonable to try to use a setting in which the exponential function defines a local diffeomorphism. 
So let us try to take loops $f: S^1\longrightarrow G$ satisfying some regularity condition. In this case the exponential map defines always a local diffeomorphism, as a neighborhood of
the identity element of such a loop group is given by loops whose images lie in
a small neighborhood $V$ of the identity of the subjacent Lie group; this
neighborhood can be chosen in a way such that the group exponential is a
diffeomorphism from an open neighborhood $U$ in the Lie algebra onto it. But now other problems appear:
\begin{enumerate}
\item Suppose the functions to be $H^1$-Sobolev loops. In this setting, one can construct weak Hilbert symmetric spaces of compact and non-compact type. Nevertheless, one cannot define the double extension corresponding to the $c$- and $d$-part of the Kac-Moody algebra. As this extension is responsible for the structure theory, this setting is not useful for us. 
\item To be able to construct the extension corresponding to the derivative $d$, one needs loops that are $C^{\infty}$. For $C^{\infty}$-loops, it is possible to construct compact type symmetric spaces corresponding to the finite dimensional types $I$ and $III$, but for Kac-Moody symmetric spaces of $C^{\infty}$-loops there is no dualization: as the complexification of $c$ is not defined, we cannot complexify. As a consequence there are no symmetric spaces of the non-compact type. 
\end{enumerate}
The details of both theories are developed in~\cite{Popescu05}. Summarizing we have the following observation:

\begin{proposition}
The setting of holomorphic loops is the biggest setting such that Kac-Moody symmetric space of the compact type and of the non-compact type of the same regularity condition can be defined.
\end{proposition}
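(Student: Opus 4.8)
The plan is to extract from the discussion preceding the proposition the two structural features a regularity class $L(\mathfrak{g},\sigma)$ must possess in order that a Kac--Moody symmetric space of \emph{both} compact and non-compact type of that class exist, and then to check, running through the hierarchy of regularity classes, that the holomorphic class $M\mathfrak{g}$ is the largest for which both are available. The two features are: \textbf{(A)} the double extension $\widehat{L}(\mathfrak{g},\sigma)=L(\mathfrak{g},\sigma)\oplus\mathbb{F}c\oplus\mathbb{F}d$ of Definition~\ref{geometricaffinekacmoodyalgebra} must be a tame Fr\'echet Lie algebra --- in particular the derivation $\textrm{ad}(d)$, acting on loops as $z\,\frac{d}{dz}$, must be a tame linear endomorphism of $L(\mathfrak{g},\sigma)$, and the cocycle $\omega(f,g)=\textrm{Res}\langle f,g'\rangle$ must be defined --- so that the tame Fr\'echet / Nash--Moser machinery of Section~\ref{sect:tame_frechet_manifolds} used to build the symmetric space in~\cite{Freyn12e} applies; and \textbf{(B)} the complexification $\widehat{L}(\mathfrak{g},\sigma)_{\mathbb{C}}$ must exist and must carry the conjugate-linear involutions ``of the second kind'' which, following Heintze--Gro\ss~\cite{Heintze09}, produce the non-compact real forms. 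Since such an involution acts on the loop parameter by $z\mapsto 1/\bar z$, feature (B) forces the loops to be defined on an annular subdomain of $\mathbb{C}^*$ invariant under this map and to extend holomorphically there; only then does $c$ complexify.

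With (A) and (B) in hand I would run through the candidate classes. For $H^1$-Sobolev loops, as recalled in the remark above and carried out in~\cite{Popescu05}, the $c$--$d$ extension cannot be constructed at all, so (A) already fails; for $C^k$- or $H^k$-Sobolev loops with $k$ finite the situation is no better. For smooth loops $S^1\to G$ the double extension $\widehat{L^\infty\mathfrak{g}}$ \emph{is} a tame Fr\'echet Lie algebra (here $\frac{d}{dt}$ is tame), so the compact type exists; but a $C^\infty$ loop has no holomorphic extension off $S^1$, so no involution of the second kind can act on it, and, as the remark and~\cite{Popescu05} explain, the complexification of $c$ --- hence of the whole Kac--Moody algebra --- is undefined: the non-compact type of the same regularity does not exist, so (B) fails. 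For a fixed annulus $A_n\mathfrak{g}$ is a Banach space (Lemma~\ref{mgfrechetagbanach}), and one does get loop-group symmetric spaces; but $z\,\frac{d}{dz}$ is unbounded on $A_n\mathfrak{g}$ (the estimate $\|f'\|_n\le C\|f\|_n$ fails), so $\widehat{A_n\mathfrak{g}}$ carries a discontinuous bracket and is not a topological Lie algebra, and (A) fails; and the real-analytic class $\bigcup_n A_n\mathfrak{g}$ is a strict inductive limit of these Banach spaces, not a Fr\'echet space, so the machinery required by (A) is again unavailable. By contrast, on $M\mathfrak{g}$ the derivation is tame (Lemma~\ref{differentialistame}), $M\mathfrak{g}$ and its complexification $M\mathfrak{g}_{\mathbb{C}}$ are tame Fr\'echet Lie algebras, and $\mathbb{C}^*$ is invariant under $z\mapsto 1/\bar z$, so $M\mathfrak{g}_{\mathbb{C}}$ carries the second-kind involutions: both the compact type and the non-compact type symmetric space of holomorphic loops exist, and they are exactly what the later papers of the series construct.

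It remains to see that $M\mathfrak{g}$ is \emph{maximal} with these properties, which is where I expect the real work to lie. The point is that any regularity class strictly larger than $M\mathfrak{g}$ contains a loop whose maximal domain of holomorphic extension is a proper subset of $\mathbb{C}^*$ --- at best a fixed annulus $A_n$, at worst only $S^1$ itself; in the first case the class embeds in a fixed Banach $A_n\mathfrak{g}$ or in the non-Fr\'echet space $\bigcup_n A_n\mathfrak{g}$, in the second in a $C^k$ or $C^\infty$ class, and each of these has just been shown to fail (A) or (B). Hence no regularity class strictly larger than $M\mathfrak{g}$ supports both a compact and a non-compact type Kac--Moody symmetric space of that same class, which is the assertion. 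The main obstacle is precisely this last step: giving a precise meaning to ``biggest setting'' --- i.e.\ fixing the family of regularity conditions over which the maximality is asserted --- and excluding exotic intermediate classes; the underlying functional-analytic facts (tameness of $d$ on $M\mathfrak{g}$, its unboundedness on $A_n\mathfrak{g}$, and the $H^1$- and $C^\infty$-obstructions of~\cite{Popescu05}) are all available from the excerpt and the cited literature.
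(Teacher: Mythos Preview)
Your approach matches the paper's reasoning, but you should know that the paper does not give a separate formal proof: the proposition is stated as a ``summarizing observation'' of the two-item discussion in the remark immediately preceding it, namely that (1) for $H^1$-Sobolev loops the $c$--$d$ double extension cannot be defined, and (2) for $C^{\infty}$-loops the extension and the compact-type space exist but the complexification of $c$ fails, so there is no non-compact dual. That discussion, together with the reference to~\cite{Popescu05}, is the entirety of the paper's argument.

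Your features (A) and (B) abstract exactly these two obstructions, and your case-by-case analysis goes further than the paper does: you treat the Banach case $A_n\mathfrak{g}$ (where $z\,d/dz$ is unbounded) and the real-analytic inductive limit (which is not Fr\'echet), neither of which the paper's remark addresses explicitly. Your mechanism for the failure of (B) in the $C^{\infty}$ case --- second-kind involutions acting by $z\mapsto 1/\bar z$ and hence requiring holomorphic extension --- is more explicit than the paper's bare statement that ``the complexification of $c$ is not defined''; the paper's later remark following the definition of Kac--Moody groups phrases the same obstruction in terms of the $\mathbb{C}^*$-shift $f(z)\mapsto f(zw_d)$ not preserving loops on $S^1$. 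Your closing caveat that ``biggest setting'' has no precise meaning is well placed: the paper does not make it precise either, and the proposition should be read as a heuristic comparison across the standard hierarchy of regularity classes rather than as a sharp maximality theorem over an arbitrary lattice of function spaces.
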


Let us mention in this context the short summary in \cite{Berger03} about infinite dimensional differential geometry, where the conflict between often desirable structure as a Hilbert space and useful metrics is addressed.
\end{remark}

\section{Polar actions on tame Fr\'echet spaces}
\label{PolaractionsontameFrechetspaces}

In this section we study polar representations on tame Fr\'echet vector space. The aim is, to show, that the adjoint representation of Kac-Moody groups of holomorphic maps $\widehat{MG}$ and their associated $s$-representations induce polar actions; from the point of view of Kac-Moody geometry, this step assures, that there is a well-defined connection between the (local) classes of objects as are polar actions and isoparametric submanifolds and the global classes of objects i.e. affine Kac-Moody symmetric spaces and twin cities.
 
Let us start by sketching the finite dimensional blueprint: Let $M=G/K$ be a finite dimensional Riemannian symmetric space. The group $K$ stabilizes the point $p=eK\in M$; it is called the isotropy subgroup. The linear representation $K\equiv K_p:T_pM\longrightarrow T_pM$ is called the isotropy representation of $M$. Recall furthermore that a representation $K:V\longrightarrow V$ of a Lie group $K$ on a vector space $V$ is polar, if there is a subspace $\Sigma\subset V$, called a section, that intersects each orbit orthogonally. It is a by now classical result, that the isotropy representation of a (finite dimensional) Riemann symmetric space is a \emph{polar representation} of the isotropy group on the tangential space. For the isotropy representation of a symmetric space, sections correspond to any maximal flat subspace, containing $p$. We prove in a subsequent part of our program describing the geometry of Kac-Moody symmetric spaces, dealing with the geometric properties of these spaces, that Kac-Moody symmetric spaces behave in a similar way. Remark nevertheless a striking difference between the finite dimensional (spherical) and the infinite dimensional affine situation: while in the finite dimensional situation all orbits have finite codimenions, ranging from $2$ to the rank, in the affine case there are orbits with (small) finite codimension bounded by $rk(G)$ and orbits with infinite codimension. This phenomenon is related to the dichothomy of complex affine Kac-Moody groups, that can be viewed either as (finite dimensional) (algebraic) groups over the ring of holomorphic functions or as (infinite dimensional) affine Kac-Moody group over the base field $\mathbb{C}$ (resp. $\mathbb{R}$. Via duality constructions this dichothomy is also the algebraic structure behind the differences in the geometry of coadjoint orbits of Kac-Moody groups and loop groups, which is sketched for example in the book~\cite{Khesin09}; we gave a building theoretic interpretation in~\cite{Freyn10d}, relating the finite dimensional orbits to the affine twin city and the infinite dimensional ones to the spherical building at infinity~\cite{Freyn11a}. 

We prove in this section, that the orbits with finite codimension correspond to the gauge actions of tame loop groups on tame spaces; these actions are restrictions of the adjoint action of the Kac-Moody group to certain subspaces. 

Closely related is the theory of polar actions on Hilbert spaces, which is described in the article~\cite{HPTT, PinkallThorbergsson90, Terng95, Gross00}.

The fundamental theorem due to C.-L.\ Terng states:
\begin{theorem}
Define $P(G,H):=\{g\in H^1([0,1], G)|(g(0), g(1))\in H\subset G\times G\}$ and $V=H^0([0,1],\mathfrak{g})$. Suppose the $H$-action on $G$ is polar with flat sections. Let $A$ be a torus section through $e$ and let $\mathfrak{a}$ denote its Lie algebra. Then the gauge action of $P(G,H)$ on $V$ is polar with section $\mathfrak{a}$.
\end{theorem}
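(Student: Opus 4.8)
The plan is to follow C.-L.\ Terng's original argument, organised around the \emph{parallel transport} (holonomy) map. For $u\in V=H^0([0,1],\mathfrak g)$ let $E_u\colon[0,1]\to G$ solve $E_u'=E_u u$, $E_u(0)=e$, and set $\Phi(u):=E_u(1)$. The group $P(G,H)$ acts on $V$ by the gauge action $g\ast u:=gug^{-1}-g'g^{-1}$, and this action is by affine isometries of the $H^0$-inner product $\langle u,v\rangle=\int_0^1\langle u(t),v(t)\rangle\,dt$, since $g\ast u-g\ast v=g(u-v)g^{-1}$ and $\mathrm{Ad}(g(t))$ is orthogonal. For a suitable normalisation of the conventions one has $\Phi(g\ast u)=g(0)\,\Phi(u)\,g(1)^{-1}$, so that $\Phi$ intertwines the $P(G,H)$-action on $V$ with the $H$-action $(h_0,h_1)\cdot x=h_0xh_1^{-1}$ on $G$; moreover $\Phi$ is a Riemannian submersion onto $G$ (equipped with the bi-invariant metric induced by $\langle\,,\rangle$), its fibres being the orbits of the based gauge group $\Omega_eG=\{g\in H^1([0,1],G)\mid g(0)=g(1)=e\}$. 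The constant $\mathfrak a$-valued paths form a closed linear subspace $\mathfrak a\subset V$, and $\Phi$ carries it onto $A$ via $X\mapsto\exp X$.

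First I would show that $\mathfrak a$ meets every $P(G,H)$-orbit. If $\Phi(u_1)=\Phi(u_2)$, then $g:=E_{u_1}E_{u_2}^{-1}$ lies in $\Omega_eG$ and satisfies $g\ast u_2=u_1$; since $H$ is a subgroup we have $(e,e)\in H$, hence $\Omega_eG\subseteq P(G,H)$, and so each fibre of $\Phi$ lies in a single $P(G,H)$-orbit. Combined with equivariance and surjectivity of $\Phi$ this yields $P(G,H)\cdot u=\Phi^{-1}\bigl(H\cdot\Phi(u)\bigr)$, so the orbits of the two actions correspond bijectively. Now given $u\in V$, the $H$-orbit of $\Phi(u)$ meets $A$, say $h\cdot\Phi(u)=\exp X=\Phi(X)$ with $X\in\mathfrak a$; lifting $h$ to some $\tilde h\in P(G,H)$, the elements $\tilde h\ast u$ and $X$ have the same image under $\Phi$, hence lie in the same $P(G,H)$-orbit, so $\mathfrak a$ meets $P(G,H)\cdot u$.

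Next I would verify that $\mathfrak a$ meets the orbits orthogonally. The tangent space to $P(G,H)\cdot X$ at a point $X\in\mathfrak a$ is $\{[\xi,X]-\xi'\mid \xi\in H^1([0,1],\mathfrak g),\ (\xi(0),\xi(1))\in\mathfrak h\}$, where $\mathfrak h=\operatorname{Lie}(H)$. For $Y\in\mathfrak a$ (constant),
\[
\langle [\xi,X]-\xi',\,Y\rangle=\int_0^1\langle\xi(t),[X,Y]\rangle\,dt-\int_0^1\langle\xi'(t),Y\rangle\,dt=\langle\xi(0)-\xi(1),\,Y\rangle,
\]
because $[X,Y]=0$ (the section $A$ is flat, so $\mathfrak a$ is abelian) and the first summand vanishes by invariance of $\langle\,,\rangle$, while the second telescopes. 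Finally $\langle\xi(0)-\xi(1),Y\rangle=0$: the vector $\xi(0)-\xi(1)$ runs over the tangent space to the $H$-orbit through $e$ in $G$, and that orbit is orthogonal to $A$ at $e$, i.e.\ to $T_eA=\mathfrak a$, by the polarity hypothesis. Hence $\mathfrak a$ lies in the normal space of each orbit at each intersection point.

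The step requiring the most care is the upgrade from ``$\mathfrak a$ meets every orbit orthogonally'' to ``$\mathfrak a$ is a section'', that is, the identification of $\mathfrak a$ with the \emph{full} normal space of a principal orbit: one must show that every vector $H^0$-orthogonal to the orbit $P(G,H)\cdot X$ is a constant $\mathfrak a$-valued path, which is done by transporting normal vectors to the $H$-orbit of $\Phi(X)$ through the Riemannian submersion $\Phi$ and absorbing the remaining freedom with a based-loop gauge transformation. This forces one to make precise both the Riemannian-submersion properties of $\Phi$ and the proper-Fredholm nature of the $P(G,H)$-orbits, and to handle the regularity mismatch: the infinitesimal directions $[\xi,X]-\xi'$ with $\xi\in H^1$ only span a dense subspace of the $H^0$-model, so the orthogonal-complement computation must be combined with the solvability of $[\xi,X]-\xi'=w$ under the boundary constraint $(\xi(0),\xi(1))\in\mathfrak h$ — precisely the ellipticity on $[0,1]$ that underlies Terng's theorem. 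These are the technical points I would carry out in full; the geometric content is entirely contained in the two computations above.
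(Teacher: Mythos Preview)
Your sketch is a faithful outline of Terng's original argument via the parallel-transport map $\Phi$ and the induced Riemannian submersion onto $G$, and the two computations (transitivity on fibres, orthogonality via $[X,Y]=0$ and the boundary term) are correct. However, there is nothing to compare against: the paper does not prove this theorem at all but simply cites \cite{Terng95}. The result is quoted as background, and the paper's own work begins only afterwards, where it checks that the same sections persist under the stronger regularity conditions ($C^k$, $C^\infty$, holomorphic) needed for the tame Fr\'echet setting. So your proposal is not an alternative to the paper's proof --- it is a reconstruction of the cited source, and as such it is essentially right, with the caveat you yourself flag: the identification of $\mathfrak a$ with the \emph{full} normal space at a principal point requires the solvability/Fredholm argument, which you have only announced rather than carried out.
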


\begin{proof}see~\cite{Terng95}. \end{proof}

Important special cases are the following: let $\Delta_{\sigma}\subset G\times G$ denote the $\sigma$-twisted diagonal subgroup of $G\times G$, that is: $(g,h)\in \Delta_{\sigma}$ iff $h=\sigma(g)$. We use the notation $\Delta=\Delta_{\textrm{Id}}$ for the non-twisted subgroup.

\begin{enumerate}
\item The gauge action of $H^1$-Sobolev loop groups $P(G, G\times G)\cong H^1([0,1],G)$ on their $H^0$-Sobolev loop algebras $H^0([0,1],\mathfrak{g})$ is transitive.
\item The gauge action of $P(G, \Delta_{\sigma})$ on $H^1([0,1],G)$  is polar with flat sections \cite{HPTT}.
\item The gauge action of $P(G, K\times K)$ on $H^1([0,1],G)$ where $K$ is the fixed point set of some involution of $G$ is polar with flat sections \cite{HPTT}.
\item The gauge action of $P(G, K_1\times K_2)$ on $H^1([0,1],G)$ where $K_i, i\in \{1,2\}$ are the fixed point groups of involutions of $G$ is polar with flat sections \cite{HPTT}.
\item The gauge action of Sobolev-$H^1$-loop groups $H^1(S^1,G)$ on their Sobolev $H^0$-loop algebras $H^0(S^1,\mathfrak{g})$-is polar \cite{PalaisTerng88}. 
\end{enumerate}

In this section we describe a similar theory for the loop groups $XG^{\sigma}$ on the tame  loop algebras $X\mathfrak{g}^{\sigma}$. As usual let $X\in \{A_n, \mathbb{C}^*\}$. Holomorphic functions on $A_n$ are supposed to be holomorphic in an open set containing $A_n$. From the embedding $XG^{\sigma}\hookrightarrow H^1([0,1],G)$ and $X\mathfrak{g}^{\sigma}\hookrightarrow H^0([0,1],\mathfrak{g})$ it is clear that the algebraic part of the theory works exactly the same in all regularity conditions. This means for example: sections for holomorphic actions  correspond to sections for the Hilbert actions and the associated affine Weyl groups are the same. Hence the crucial point is to check that the additional regularity restrictions fit together. This breaks down to two points:

\begin{enumerate}
\item One has to show locally that the additional regularity conditions are satisfied. 
\item One has to show globally that monodromy conditions are satisfied. 
\end{enumerate}

To define orthogonality on $X\mathfrak{g}$ we use the $H^0$-scalar product induced on $M\mathfrak{g}$ by the embedding into $H^0([0,1],\mathfrak{g})$. Hence we can define polar actions on Banach (resp.\ tame) spaces like that:

\begin{definition}
An action of a Lie group $G$ on a Fr\'echet space $F$ is called polar iff there is a subspace $S$, called a section, intersecting each orbit orthogonally with respect to some scalar product. 
\end{definition}

\begin{theorem}
\label{polaractiononmg}
The gauge action of $XG_{\mathbb{R}}^{\sigma}$ on $X\mathfrak{g}^{\sigma}$ is polar; an Abelian subalgebra $\mathfrak{a} \subset \mathfrak{g}$ interpreted as constant loops is a section.
\end{theorem}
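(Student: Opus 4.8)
The plan is to deduce the statement from Terng's theorem quoted above by regarding the holomorphic loop groups as subgroups of the Hilbert loop groups and upgrading the regularity of the relevant gauge transformations. Restricting a (possibly twisted) holomorphic loop on $\mathbb{C}^{*}$ (or on $A_{n}$) to the unit circle gives embeddings $XG^{\sigma}_{\mathbb{R}}\hookrightarrow P(G,\Delta_{\sigma})\subset H^{1}([0,1],G)$ and $X\mathfrak{g}^{\sigma}\hookrightarrow H^{0}([0,1],\mathfrak{g})$ under which the gauge action of $XG^{\sigma}_{\mathbb{R}}$ on $X\mathfrak{g}^{\sigma}$ is exactly the restriction of the gauge action of $P(G,\Delta_{\sigma})$; here $\Delta_{\sigma}$ (with $\Delta=\Delta_{\mathrm{Id}}$ in the non-twisted case $\sigma=\mathrm{Id}$) is the subgroup occurring in the list of special cases preceding the theorem, so by Terng's theorem the Hilbert action is polar with section the constant loops $\mathfrak{a}=\mathrm{Lie}(A)$, where $A$ is a torus section of the $\sigma$-twisted conjugation action of $G$ on itself. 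Constant loops are holomorphic, hence $\mathfrak{a}\subset X\mathfrak{g}^{\sigma}$; and since $X\mathfrak{g}^{\sigma}$ carries the $H^{0}$-scalar product inherited from $H^{0}([0,1],\mathfrak{g})$, for every $p\in\mathfrak{a}$ lying on an $XG^{\sigma}_{\mathbb{R}}$-orbit $O$ we get $T_{p}O\subset T_{p}\!\bigl(P(G,\Delta_{\sigma})\cdot p\bigr)\perp\mathfrak{a}$, so $\mathfrak{a}$ meets every such orbit orthogonally. Thus the theorem reduces to the single remaining point: \emph{every $XG^{\sigma}_{\mathbb{R}}$-orbit meets $\mathfrak{a}$}, i.e.\ the gauge transformation conjugating a given element into the section, which exists in $H^{1}$ by Terng, can be chosen inside the holomorphic loop group.

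Fix $u\in X\mathfrak{g}^{\sigma}$. Terng's theorem provides $g\in P(G,\Delta_{\sigma})$ and $a\in\mathfrak{a}$ with $gug^{-1}-(\mathrm{d}g)g^{-1}=a$. Regarded as an equation for $g$ and rewritten in the complex coordinate via $\frac{\mathrm{d}}{\mathrm{d}\theta}=iz\frac{\mathrm{d}}{\mathrm{d}z}$, this is a first-order linear ordinary differential equation for $g$ whose coefficients, built from $u$ and the constant $a$, are holomorphic on $\mathbb{C}^{*}$ (resp.\ on a neighbourhood of $A_{n}$). By the holomorphic existence and uniqueness theorem for such equations — equivalently, by the monodromy principle for holomorphic Pfaffian systems underlying the local integrability of $\mathfrak{g}$-valued $1$-forms on a $1$-dimensional complex manifold established in the lemma above — this equation has, through any prescribed value at $z=1$, a unique locally holomorphic solution, which continues holomorphically along every path in $\mathbb{C}^{*}$ (resp.\ in a neighbourhood of $A_{n}$).

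It remains to see that this holomorphic solution is single-valued, which is precisely the ``global monodromy'' issue flagged in the discussion before the theorem. The obstruction to single-valuedness is the monodromy around the generator of $\pi_{1}(\mathbb{C}^{*})=\mathbb{Z}$ (resp.\ $\pi_{1}(A_{n})=\mathbb{Z}$). But $g$, being an element of $P(G,\Delta_{\sigma})$, closes up on $S^{1}$ with the prescribed $\sigma$-twist and is in particular continuous and single-valued; since a continuous single-valued solution of a holomorphic linear ordinary differential equation is holomorphic, uniqueness of solutions forces the local holomorphic solution taking the value $g(1)$ at $z=1$ to coincide with $g$. Hence $g$ is a single-valued holomorphic map into $G_{\mathbb{C}}$, and the reality condition $g(S^{1})\subset G_{\mathbb{R}}$ and the twist $g(\omega z)=\sigma(g(z))$ transport from the corresponding properties of the Terng solution on $S^{1}$ by the same uniqueness. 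Therefore $g\in XG^{\sigma}_{\mathbb{R}}$ and $u$ lies on the $XG^{\sigma}_{\mathbb{R}}$-orbit of $a\in\mathfrak{a}$, which completes the reduction and hence the proof.

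The main obstacle is exactly this regularity upgrade of the gauge transformation: passing from the $H^{1}$-transformation supplied by the Hilbert theory to a genuinely holomorphic one. As in the proof that $MG_{\mathbb{C}}$ is a tame Fr\'echet manifold (Theorem~\ref{mgcistame}), it separates cleanly into a local step (holomorphic solvability of the defining ODE) and a global step (vanishing of the monodromy, here read off from the already known $H^{1}$ solution). Everything of an algebraic nature — the section, the associated affine Weyl group action, and orthogonality — is inherited verbatim from the Hilbert picture and requires no new argument.
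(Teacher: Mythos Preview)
Your argument is correct and follows the same overall strategy as the paper: embed into the Hilbert setting, invoke Terng for orthogonality and the existence of an $H^{1}$ gauge transformation into $\mathfrak{a}$, and then upgrade the regularity of this transformation by solving a linear ODE with holomorphic coefficients and controlling the monodromy. The paper, however, organizes the regularity upgrade differently. It first proves an intermediate lemma (Lemma~\ref{cinfty}) for the chain $C^{k}\to C^{\infty}\to$ analytic, using Terng's identification $P(G;e\times G)\simeq H^{0}([0,1],\mathfrak{g})$ via $h\mapsto -h'h^{-1}$ to obtain the explicit product formula $g(t)=\exp(tX)\,g(0)\,h(t)^{-1}$ with $h'=uh$; holomorphicity of $g$ is then read off factor by factor, and the monodromy is killed by appealing to the already established analytic case. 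You instead work directly with the linear ODE $g'=gu-ag$ for $g$ itself and kill the monodromy by comparing, via uniqueness of the \emph{real} ODE on $S^{1}$, with the $H^{1}$ solution supplied by Terng. This is a genuine streamlining: it bypasses the intermediate $C^{k}$ lemma and the product factorization entirely. One phrasing should be tightened: the sentence ``a continuous single-valued solution of a holomorphic linear ODE is holomorphic'' is not the right formulation, since Terng's $g$ is only defined on $S^{1}$; what you actually use is that the holomorphic solution through $g(1)$, restricted to $S^{1}$, satisfies the same real ODE as $g$ with the same initial value and hence coincides with $g$ there, so it inherits the closing, reality, and twist conditions.
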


The proof consists of two parts:
\begin{enumerate}
\item We have to show that each orbit intersects the section $\mathfrak{a}$.  
\item We have to show that the intersection is orthogonal.
\end{enumerate}

\noindent The second part follows trivially from the embedding and Terng's result.

\noindent Thus we are left with proving the first assertion. We do this in a step-by-step way: first we study the action of $C^{k}$-loop groups on $C^{k-1}$-loop algebras ($k\in \{\mathbb{N}, \infty\}$). Then we proceed to the holomorphic setting of theorem~\ref{polaractiononmg}.

\begin{lemma}
\label{cinfty}
The gauge action of $L^{k}G^{\sigma}$ on $L^{k-1}\mathfrak{g}^{\sigma}$ is polar for $k\in \{\mathbb{N, \infty}\}$.
\end{lemma}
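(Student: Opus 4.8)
The plan is to deduce the lemma from Terng's theorem by an ODE--regularity bootstrap, following the two-step scheme announced above: orthogonality of the intersection, and existence of the intersection. The orthogonality half is immediate and is the part ``which follows trivially from the embedding'': the inclusions $XG^{\sigma}\hookrightarrow H^{1}([0,1],G)$ and $X\mathfrak{g}^{\sigma}\hookrightarrow H^{0}([0,1],\mathfrak{g})$ intertwine the two gauge actions, and the $H^{0}$-scalar product used on $X\mathfrak{g}^{\sigma}$ is by definition the restriction of the one on $H^{0}([0,1],\mathfrak{g})$; so if $\mathfrak{a}$ (the Abelian subalgebra $\mathfrak{a}\subset\mathfrak{g}$ viewed as constant loops) meets every $H^{1}$-gauge orbit orthogonally, it meets every $L^{k}G^{\sigma}$-orbit orthogonally. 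Thus the entire content of the lemma is the \emph{existence} statement: every orbit of $L^{k}G^{\sigma}$ on $L^{k-1}\mathfrak{g}^{\sigma}$ meets $\mathfrak{a}$.

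First I would fix $u\in L^{k-1}\mathfrak{g}^{\sigma}$, view it inside $H^{0}([0,1],\mathfrak{g})$, and apply Terng's theorem: it yields an $H^{1}$ gauge transformation $g\in P(G,\Delta_{\sigma})$ and an element $a\in\mathfrak{a}$ with $g\cdot u=a$, where the gauge action is $g\cdot u=gug^{-1}-g'g^{-1}$. The point is that, read as an equation for $g$, the identity $g\cdot u=a$ is the \emph{linear first-order ordinary differential equation}
\[
g'=g\,u-a\,g,
\]
whose coefficients are the constant $a$ and the $C^{k-1}$-map $u$. Standard regularity for linear ODEs (a solution of $g'=A(t)g$ is one derivative smoother than $A$) then forces $g$ to be of class $C^{k}$; since $g$ already lies in $P(G,\Delta_{\sigma})$ it satisfies the twisting endpoint condition, and therefore it is an element of $L^{k}G^{\sigma}$ (which is precisely the $C^{k}$-subgroup of $P(G,\Delta_{\sigma})$). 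Hence the $L^{k}G^{\sigma}$-orbit of $u$ contains $a\in\mathfrak{a}$.

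For $k=\infty$ one iterates: if $u\in L^{\infty}\mathfrak{g}^{\sigma}$ then $u\in L^{k-1}\mathfrak{g}^{\sigma}$ for every finite $k$, so the gauge transformation $g$ produced above is $C^{k}$ for every $k$, hence $C^{\infty}$, i.e.\ $g\in L^{\infty}G^{\sigma}$. The hard part --- in fact the only non-formal point --- is the local regularity step of the preceding paragraph: one must check carefully that Terng's $H^{1}$-transformation genuinely acquires the extra derivatives and still respects the twisting condition, so that it descends to the smaller group $L^{k}G^{\sigma}$ rather than living only in the Sobolev group $P(G,\Delta_{\sigma})$. By contrast, the ``global'' monodromy obstruction that will have to be dealt with in the holomorphic case of Theorem~\ref{polaractiononmg} does not appear here, because the parameter domain is the interval $[0,1]$ and the sole gluing datum is the endpoint condition, which is built into $P(G,\Delta_{\sigma})$ from the outset.
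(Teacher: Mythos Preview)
Your overall strategy coincides with the paper's: both invoke Terng's Hilbert-space polarity and then bootstrap regularity through a first-order ODE. Your direct use of the equation $g'=gu-ag$ is a mild variant of the paper's route via the auxiliary path $h$ solving $h'=uh$ and the explicit formula $g(t)=\exp(tX)\,g(0)\,h(t)^{-1}$; either way the conclusion ``$u\in C^{k-1}$ forces $g\in C^{k}$'' follows from standard linear ODE regularity.

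There is, however, a genuine gap in your treatment of the endpoint condition. You assert that $L^{k}G^{\sigma}$ is ``precisely the $C^{k}$-subgroup of $P(G,\Delta_{\sigma})$'' and that the gluing datum is ``built into $P(G,\Delta_{\sigma})$ from the outset.'' This is not correct: membership in $P(G,\Delta_{\sigma})$ imposes only the zeroth-order condition $g(2\pi)=\sigma(g(0))$, whereas $g\in L^{k}G^{\sigma}$ requires in addition $g^{(j)}(2\pi)=\sigma\bigl(g^{(j)}(0)\bigr)$ for all $1\le j\le k$, so that the $\sigma$-twisted periodic extension to $\mathbb{R}$ is $C^{k}$. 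A $C^{k}$ path on $[0,2\pi]$ meeting only the zeroth-order condition need not glue to a $C^{k}$ twisted loop. The paper addresses exactly this point in its third step (``the periodicity relation''), where it shows by an explicit computation that the higher-order closing conditions on $g$ are equivalent to the twisted periodicity $u(t+2\pi)=\sigma(u(t))$ of the data.

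Your ODE does contain enough information to close this gap. Writing it intrinsically as $g'g^{-1}=\mathrm{Ad}(g)u-a$ and using that $\sigma(a)=a$ (since the section lies in the fixed algebra) together with $u(2\pi)=\sigma(u(0))$, one obtains $g'(2\pi)g(2\pi)^{-1}=\sigma\bigl(g'(0)g(0)^{-1}\bigr)$ and hence $g'(2\pi)=\sigma(g'(0))$; iterating (or invoking uniqueness for the ODE against the $\sigma$-translate $t\mapsto\sigma(g(t-2\pi))$, which satisfies the same equation with the same value at $t=2\pi$) gives the matching for all derivatives. You should include this argument rather than declaring it automatic.
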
 

\noindent This result is used without proof in~\cite{Popescu05} in order to show that all finite dimensional flats are conjugate. We do not know if a proof can be found in the literature. For completeness we give one: 

\begin{proof}[Proof of lemma~\ref{cinfty}]~
\begin{enumerate}
\item  {\bf Orthogonality} in $L^{k-1}\mathfrak{g}^{\sigma}$ is defined via the embedding into the space $H^0([0,1],\mathfrak{g})$ and the use of the $H^0$-scalar product. Hence orthogonality of the intersection between sections and orbits is covered by Terng's result.

\item {\bf Local regularity}~

\noindent Define the following spaces  

\begin{displaymath}
P(G,H)^k:=\{g\in C^k([0,1], G)|(g(0), g(1))\in H\subset G\times G\}\, .
\end{displaymath}

 Furthermore we use the equivalence~\cite{Terng95}
\begin{align*}
P(G;e\times G)&\simeq H^0([0,1],\mathfrak{g})\, ,\\
h&\leftrightarrow -h'h^{-1}\, .
\end{align*}

Terng's polarity result~\cite{Terng95} yields that the action of $P(G,\Delta_{\sigma})$ on $P(G; e\times G)$ defined by $(g(t), h(t))\mapsto g(t) h(t) g(0)^{-1}$ is polar with  a section of constant loops $\exp{t\mathfrak{a}}$ where $\mathfrak{a}$ is a maximal Abelian subalgebra in $\mathfrak{g}$ (if $\sigma\not=0$ we restrict to $\mathfrak{a}_{\sigma}$ and omit the $\sigma$ in the notation~\cite{Kac90}). Thus for every $h(t)\in P(G;e\times G)$ there exist $g(t)\in P(G,\Delta_{\sigma})$ and $X\in \mathfrak{a}$, such that $g(t)h(t)g(0)^{-1}=\exp(tX)$.  

Rearranging this equation we deduce for any  loop $ g(t) \in P(G, \Delta_{\sigma})$ the explicit description $g(t):= \exp(tX)g(0)h(t)^{-1}$. Hence if $h(t)\in P^k(G;e\times G)$ then $g(t)\in  P^k(G,\Delta_{\sigma})$. Combining this with the orthogonality we obtain that the actions of $P^k(G,\Delta_{\sigma})$ on $P^k(G;e\times G)\simeq H^{k-1}([0,1],\mathfrak{g})$ and of $P^{\infty}(G,\Delta_{\sigma})$ on $P^{\infty}(G;e\times G)\simeq H^{\infty}([0,1],\mathfrak{g})$ are polar.

\item {\bf The periodicity relation:}~
We want to show that $L^{k}G^{\sigma}$ acts on $L^{k-1}\mathfrak{g}^{\sigma}$ with slice $\mathfrak{a}$ for $k\in \{\mathbb{N}, \infty\}$.

  \begin{enumerate}
  \item Let first $g\in L^{k}G^{\sigma}$ and $u\in L^{k-1}\mathfrak{g}^{\sigma}$. Then $g \cdot u= gug^{-1}-g' g^{-1}$ is in $L^{k-1}\mathfrak{g}^{\sigma}$. Thus $L^{k}G^{\sigma}$ acts on $L^{k-1}\mathfrak{g}^{\sigma}$.
  \item We have to show that any $L^{k}G^{\sigma}$-orbit intersects the section $\mathfrak{a}$. This is equivalent to:
 For each $u\in L^{k-1}\mathfrak{g}^{\sigma}$, there is $X\in \mathfrak{g}$ and $g\in P^k(G,\Delta)$ such that
$\exp(tX)=g(t)h(t)g^{-1}(0)$ with $h'(t)=u(t)h(t)$ and the derivatives coincide; interpret in this last equation $u(t)$ as a quasi-periodic function on $\mathbb{R}$ (i.e. $u(t+2\pi)=\sigma u(t)$ and $h(t)$ as a function on $\mathbb{R}$). 

\noindent Using the first part, we find a function $g(t)\in P^k(G,\Delta_{\sigma})$. Hence, what remains is to check the closing condition of the derivatives: $g^{(n)}\cdot u(2\pi)= \sigma g^{(n)}\cdot u(0)$. We prove that it is equivalent to the closing condition $g^{(n+1)}(2\pi)=\sigma g^{(n+1)}(0)$.

We start with the case $n=1$.
For this case, we have to show
$$\exp((t+2\pi)X) g_0 h(t+2\pi)^{-1}= \sigma (\exp(tX)g_0 h(t)^{-1})\,. $$ 

After rearranging, this is equivalent to the identity 
$$\sigma(g_0^{-1})\exp(2\pi X) g_0=\sigma(h(t)^{-1})h(t+2\pi)\,.$$
As the left side is a constant we find:
$$\left(\sigma(h(t)^{-1})h(t+2\pi)\right)'=0\,.$$ 
Hence: $-\sigma(h(t)^{-1}h'(t) h(t)^{-1})h(t+2\pi)+ \sigma(h(t)^{-1})h'(t+2\pi)=0$.
Rearranging this equality we get
$$\sigma(u(t))= - \sigma(h'(t))\sigma( h(t)^{-1})= -h'(t+2\pi)h(t+2\pi)^{-1}=u(t+2\pi)\,$$
which is the desired periodicity condition. 

For $n\not=0$ we use induction. If $g$ is $k$-times differentiable then $u$ is $k-1$-times differentiable. 
This proves the lemma.\qedhere
   \end{enumerate}
\end{enumerate}
\end{proof}

\begin{proof}[Proof of theorem~\ref{polaractiononmg}]
To prove the theorem, we have to further strengthen the used regularity conditions to holomorphic functions. 
The description in the proof of lemma~\ref{cinfty} shows that the group of analytic loops $L_{an}(S^1, G)$ acts polarly with section $\mathfrak{a}$ on the algebra $L_{an}(S^1, \mathfrak{g})$ of analytic loops. 
\begin{enumerate}
\item {\bf The case of holomorphic loops on $\mathbb{C}^*$}~
For the specialization to holomorphic maps we use the description:
$$H_{\mathbb{C}^*}(\mathbb{C},\mathfrak{g}_{\mathbb{C}})_{\mathbb{R}}:=\{f:\mathbb{C}\longrightarrow \mathfrak{g}_{\mathbb{C}}|f(z+i\mathbb{Z})=f(z), f{i\mathbb{R}}\subset \mathfrak{g}_{\mathbb{R}}\}\,.$$
\noindent Identifying $it \leftrightarrow t$ in this (resp.\ the above) description we get an embedding: 
  $$H_{\mathbb{C}^*}(\mathbb{C},\mathfrak{g}_{\mathbb{C}})_{\mathbb{R}}\hookrightarrow H^{\infty}(S^1,\mathfrak{g})$$
This shows that there are no problems concerning the monodromy. So we have only to check the regularity aspect. 
For $g\in MG$ and $u\in M\mathfrak{g}$, $g\cdot(u)=gug^{-1}-g'g^{-1}\in M \mathfrak{u}$. 
On the other hand, using the description in the proof of lemma~\ref{cinfty}, we get for $u\in M\mathfrak{g}$ a transformation function $g(t):= \exp(tX)g(0)h(t)^{-1}$. A priori this function is in $L(S^1,G)$; but $\exp(tX)$ can be continued to a holomorphic function on $\mathbb{C}$, $g(0)$ is a constant and $h(t)^{-1}$ is a solution of the differential equation: $h'(t)=u(t)h(t)$; if $u(t)$ is defined on $\mathbb{C}^*$, this equation has a solution on the universal cover of $\mathbb{C}^*$, that is $\mathbb{C}$. So $g(t)$ is defined on $\mathbb{C}$, but has perhaps nontrivial monodromy; this is, of course, not possible, as the embedding tells us that $g(t)\subset L_{an}G$. 
\item {\bf The case of holomorphic loops on $A_n$}~
This case is exactly similar. $\mathbb{C}$ is replaced by $A'$ (compare subsection~\ref{A_n}).
\end{enumerate}
Hence theorem~\ref{polaractiononmg} is proved.
\end{proof}

Thus we have proven that $\sigma$-actions and diagonal actions are polar. Those two cases corresponds to the isotropy representation of Kac-Moody symmetric spaces of types $II$ and $IV$: the diagonal action is induced by the isotropy representation of Kac-Moody symmetric spaces in the non-twisted case, the $\sigma$-action is induced by the isotropy representation of $\sigma$\ndash twisted one.

The isotropy representations of Kac-Moody symmetric spaces of type $I$ and $III$ correspond to the Hermann examples. 
A holomorphic version of the Hermann examples~\cite{HPTT} can be defined in exactly the same way: 

Let $XG_{\mathbb{R}}^{\sigma}$ be a simply connected loop group, $\rho$ an involution such that $X\mathfrak{g}_{\mathbb{R}}^{\sigma}=\mathcal{K}\oplus \mathcal{P}$ is the decomposition into the $\pm 1$-eigenspaces of the involution induced by $\rho$ on $X\mathfrak{g}_{\mathbb{R}}^{\sigma}$. Let $XK_{\mathbb{R}}\subset XG_{\mathbb{R}}^{\sigma}$ be the subgroup fixed by $\rho$.  

\begin{theorem}
\label{polaractiononp}

The gauge action of $XK_{\mathbb{R}}$ on $\mathcal{P}$ is polar.
\end{theorem}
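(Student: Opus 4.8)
The plan is to mirror the strategy used for Theorem~\ref{polaractiononmg}: reduce the holomorphic statement to the already established Hermann examples of \cite{HPTT} in the finite‑regularity setting, and then check that the two extra requirements imposed by holomorphicity — a local solvability statement for a linear ODE and a global vanishing‑of‑monodromy statement — are satisfied. First I would fix the embeddings $XG_{\mathbb{R}}^{\sigma}\hookrightarrow H^1([0,1],G)$ and $X\mathfrak{g}_{\mathbb{R}}^{\sigma}\hookrightarrow H^0([0,1],\mathfrak{g})$ used throughout this section, together with the identification $P(G;e\times G)\simeq H^0([0,1],\mathfrak g)$, $h\leftrightarrow -h'h^{-1}$. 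Under these identifications the involution $\rho$ of $X\mathfrak{g}_{\mathbb{R}}^{\sigma}$ is induced by a pair $(\rho_1,\rho_2)$ of involutions of $G$, the subgroup $XK_{\mathbb{R}}$ becomes the subgroup of holomorphic elements of $P(G,K_1\times K_2)$ with $K_i=\mathrm{Fix}(\rho_i)$, and the gauge action of $XK_{\mathbb{R}}$ on $\mathcal{P}$ becomes the restriction of the Hermann gauge action $(g(t),h(t))\mapsto g(t)h(t)g(0)^{-1}$. By \cite{HPTT} this action of $P^k(G,K_1\times K_2)$ on $P^k(G;e\times G)$ is polar with flat sections for $k\in\{\mathbb N,\infty\}$ and for real‑analytic loops, a section being the constant loops $\exp(t\mathfrak a)$ for a suitable Cartan‑type subspace $\mathfrak a$; its image in $\mathcal{P}$ is the candidate section.

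As in the proof of Theorem~\ref{polaractiononmg}, the orthogonality half of the statement is automatic: orthogonality on $X\mathfrak{g}^{\sigma}$ is by definition the one induced by the $H^0$‑scalar product coming from the embedding into $H^0([0,1],\mathfrak g)$, so the fact that orbits meet the section orthogonally is inherited verbatim from the HPTT result. The real content is that every $XK_{\mathbb{R}}$‑orbit in $\mathcal{P}$ already meets the section \emph{inside} $\mathcal{P}$, i.e. that the transformation functions provided by the finite‑regularity theory are genuinely holomorphic. Here I would reuse the ODE argument from the proof of Lemma~\ref{cinfty} and of Theorem~\ref{polaractiononmg}: given $u\in \mathcal{P}$ there are $X\in\mathfrak a$ and a path $g$ with $g(t)h(t)g(0)^{-1}=\exp(tX)$, hence $g(t)=\exp(tX)g(0)h(t)^{-1}$, where $h$ solves $h'(t)=u(t)h(t)$. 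Since $u$ extends holomorphically to $\mathbb{C}^*$ (resp. to $A_n$), this linear ODE has a holomorphic solution on the universal cover $\mathbb C$ (resp. on the strip $A_n'$ of Notation~\ref{A_n}), so $g$ is holomorphic a priori only up to monodromy; the monodromy is killed exactly as before, because the embedding $XG_{\mathbb{R}}^{\sigma}\hookrightarrow L_{\mathrm{an}}(S^1,G)\subset H^{\infty}(S^1,G)$ forces $g\in L_{\mathrm{an}}G$, hence $g$ is periodic, and the correct $\sigma$‑twisted periodicity of $u$ follows from the induction in the proof of Lemma~\ref{cinfty}. The cases $X=\mathbb{C}^*$ and $X=A_n$ are treated identically, replacing $\mathbb C$ by $A_n'$.

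The step I expect to be the main obstacle is the bookkeeping in the first paragraph: correctly matching the involution $\rho$ on $X\mathfrak g_{\mathbb R}^\sigma$ with a pair $(\rho_1,\rho_2)$ of involutions of $G$, so that $XK_{\mathbb R}$ is honestly of the form $P(G,K_1\times K_2)$ and the gauge action on $\mathcal P$ is honestly the Hermann action — in particular checking that this is compatible with the twist $\sigma$, so that one stays inside the $\sigma$‑invariant subspace throughout, and that the candidate section $\exp(t\mathfrak a)$ really lies in $\mathcal P$ and is preserved. Once that identification is in place, orthogonality is free and the holomorphic regularity and monodromy verification is routine, being essentially the argument already carried out for Theorem~\ref{polaractiononmg}.
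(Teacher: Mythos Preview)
Your proposal is correct and follows essentially the same approach as the paper: the paper's own proof is the one-line remark that ``the proof is like the one of Theorem~\ref{polaractiononmg}; one starts with a similar result for polar actions on Hilbert spaces~\cite{Terng95} and checks step by step that the introduced higher regularity conditions fit together,'' which is precisely the reduction-plus-regularity-bootstrap you outline. Your write-up is in fact more explicit than the paper's, and your flagged ``main obstacle'' (matching $\rho$ with a pair $(\rho_1,\rho_2)$ compatibly with the twist $\sigma$) is exactly the bookkeeping the paper sweeps under the reference to~\cite{Terng95} and the Heintze--Gro\ss\ classification of involutions cited earlier.
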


\begin{proof}
The proof is like the one of theorem~\ref{polaractiononmg}. One starts with a similar result for polar actions on Hilbert action~\cite{Terng95} and checks then step by step that the introduced higher regularity conditions fit together. 
\end{proof}

\section{Tame structures  on Kac-Moody algebras}
\label{holomorphicstructuresonkacmoodyalgebras}

In this section we describe explicit realizations as central extensions of holomorphic loop algebras of the abstract affine geometric Kac-Moody algebras which we introduced in definition~\ref{geometricaffinekacmoodyalgebra}.

\noindent Let $X\in \{A_n, \mathbb{C}^*\}$. As usual, holomorphic functions on $A_n$ are understood to be holomorphic in an open set containing $A_n$.

\begin{definition}[holomorphic affine geometric Kac-Moody algebra]
Define $\widehat{X\mathfrak{g}}$ to be an explicit realization  of $\widehat{L}(\mathfrak{g}, \sigma)$ with $L(\mathfrak{g}, \sigma)$ in the category of holomorphic maps $X\mathfrak{g}^{\sigma}$.
\end{definition}

Thus an element of a Kac-Moody algebra can be represented by a triple
$\left(f(z), r_c, r_d \right)$, where $f(z)$ denotes a $\mathfrak{g}_{\mathbb{C}}$-valued holomorphic function on $X$ and  $\{r_c, r_d\}\in \mathbb{C}$.

We equip those algebras with the norms $\|(f,r_c, r_d)\|_n:= \sup_{z\in A_n} |f_z| +(r_c \bar{r}_c+r_d \bar{r}_d)^{\frac{1}{2}}$. Thus we use the supremum norm on the loop algebra and complete it with an Euclidean norm on the double extension defined by $c$ and $d$.

\begin{lemma}[Banach- and Fr\'echet structures on Kac-Moody algebras]~
\label{banachandfrechetstructureonkacmoodyalgebras}
\begin{enumerate}
	\item For each $n$, the algebras $\widehat{A_n\mathfrak{g}}_{\mathbb{R}}^{\sigma}$ and $\widehat{A_n\mathfrak{g}}_{\mathbb{C}}^{\sigma}$ equipped with the norm $\|\hspace{3pt} \|_n$ are Banach-Lie algebras,
	\item $\widehat{M\mathfrak{g}}_{\mathbb{R}}^{\sigma}$ and $\widehat{M\mathfrak{g}}_{\mathbb{C}}^{\sigma}$ equipped with the sequence of norms $\|\hspace{3pt} \|_n$ are tame Fr\'echet-Lie algebras. 
\end{enumerate}
\end{lemma}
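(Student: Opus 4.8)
The plan is to handle the two statements in parallel, the Banach case being essentially formal and the tame Fr\'echet case carrying the real work. For a fixed $n$ the underlying vector space $\widehat{A_n\mathfrak{g}}_{\mathbb{K}}^{\sigma}=A_n\mathfrak{g}_{\mathbb{K}}^{\sigma}\oplus\mathbb{K}c\oplus\mathbb{K}d$ is the direct sum of a Banach space (by Lemma~\ref{mgfrechetagbanach}, a closed subspace of a Banach space being Banach) with a finite-dimensional space, hence Banach; continuity of the Lie bracket then reduces to boundedness of the pointwise bracket $(f,g)\mapsto[f(z),g(z)]_0$, of the residue cocycle $(f,g)\mapsto\textrm{Res}(\langle f,g'\rangle)$, and of the derivation $f\mapsto izf'(z)$, the last being continuous because $f$ is holomorphic on an open set containing the compact set $A_n$, so a Cauchy estimate bounds $\|f'\|$ on $A_n$ in terms of $\|f\|$ on a slightly larger annulus still inside the domain. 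For the infinite annulus, $\widehat{M\mathfrak{g}}_{\mathbb{K}}^{\sigma}=M\mathfrak{g}_{\mathbb{K}}^{\sigma}\oplus\mathbb{K}c\oplus\mathbb{K}d$ is a Cartesian product of the tame Fr\'echet space $M\mathfrak{g}_{\mathbb{K}}^{\sigma}$ --- a closed subspace of the tame space $M\mathfrak{g}_{\mathbb{K}}$, cf.\ Corollary~\ref{holc*cnisfrechet} and Lemma~\ref{constructionoftamespaces} --- with the finite-dimensional tame space $\mathbb{K}c\oplus\mathbb{K}d$, hence tame by Lemma~\ref{constructionoftamespaces}. By Definition~\ref{tamefrechetLie algebra} it then remains to show that $\textrm{ad}(X)$ is a tame linear map for every $X\in\widehat{M\mathfrak{g}}_{\mathbb{K}}^{\sigma}$.

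To this end I would write $X=(f,r_c,r_d)$ and $Y=(g,s_c,s_d)$ and use the bracket relations of Definition~\ref{geometricaffinekacmoodyalgebra} together with $\omega(f,g)=\textrm{Res}(\langle f,g'\rangle)$ to obtain the explicit formula
\begin{displaymath}
\textrm{ad}(X)(Y)=\bigl(\,[f(z),g(z)]_0+r_d\,izg'(z)-s_d\,izf'(z)\,,\ \textrm{Res}(\langle f,g'\rangle)\,,\ 0\,\bigr)\,.
\end{displaymath}
Hence $\textrm{ad}(X)$ is a sum of four linear maps into $\widehat{M\mathfrak{g}}_{\mathbb{K}}^{\sigma}$; since a finite sum of tame linear maps is tame and, by Lemma~\ref{constructionoftamemaps}, a map into a Cartesian product is tame exactly when its components are, it suffices to check tameness of each piece separately.

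The pointwise bracket $Y\mapsto[f(z),g(z)]_0$ satisfies $\|[f(\cdot),g(\cdot)]_0\|_n\le c_{\mathfrak{g}}\|f\|_n\|g\|_n\le(c_{\mathfrak{g}}\|f\|_n)\,\|Y\|_n$, where $c_{\mathfrak{g}}$ is a finite operator-norm bound for the bracket of the finite-dimensional $\mathfrak{g}$, and so is $(0,0,c_{\mathfrak{g}}\|f\|_n)$-tame. The map $g\mapsto izg'(z)$ is the composition of $d/dz$, which is $(1,0,\frac{e^{n+1}}{e-1})$-tame by Lemma~\ref{differentialistame}, with multiplication by $z$, which is $(0,0,e^{n})$-tame on $A_n$; a composition of tame maps is tame and scaling by the constant $r_d$ preserves this, so $Y\mapsto r_d\,izg'(z)$ is tame with shift $1$. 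The map $Y\mapsto -s_d\,izf'(z)$ sends $Y$ into a fixed finite-dimensional space and satisfies $\|{-s_d\,izf'}\|_n\le|s_d|\,e^{n}\,\frac{e^{n+1}}{e-1}\|f\|_{n+1}\le C(n)\|Y\|_{n+1}$, hence is $(1,0,C(n))$-tame. Finally the cocycle $Y\mapsto\textrm{Res}(\langle f,g'\rangle)$ equals $\frac{1}{2\pi i}\int_{\partial A_0^{+}}\langle f(z),g'(z)\rangle\,dz$, so $|\textrm{Res}(\langle f,g'\rangle)|\le c_{\mathfrak{g}}\|f\|_0\|g'\|_0\le c_{\mathfrak{g}}\frac{e}{e-1}\|f\|_0\|g\|_1\le C(n)\|Y\|_{n+1}$ for all $n$, again $(1,0,C(n))$-tame. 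Assembling these via Lemma~\ref{constructionoftamemaps} gives that $\textrm{ad}(X)$ is $(1,0,C_X(n))$-tame for a sequence $C_X(n)$ built from $\|f\|_{n+1}$, $|r_d|$ and $c_{\mathfrak{g}}$.

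The twisted case adds nothing: $M\mathfrak{g}^{\sigma}$ is closed in $M\mathfrak{g}$, the pointwise bracket and the residue form respect the relation $f(\omega z)=\sigma f(z)$ because $\sigma$ is a Lie algebra automorphism, and differentiating this relation shows $izf'(z)$ respects it as well, so $\textrm{ad}(X)$ preserves $\widehat{M\mathfrak{g}}^{\sigma}$. The one genuine obstacle --- and the reason the statement is not a triviality, in contrast to the fact noted earlier that the loop algebra $M\mathfrak{g}^{\sigma}$ by itself has $(0,0,\cdot)$-tame adjoint action --- is the derivation $[d,f]=izf'$, which on the Fr\'echet space $M\mathfrak{g}$ is an unbounded operator: $\textrm{ad}(X)$ genuinely loses one derivative, so it is only tame with positive shift $r=1$, and controlling exactly this term is the purpose of Lemma~\ref{differentialistame}. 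Everything else in the argument is a shift-zero estimate.
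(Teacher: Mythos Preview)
Your proof is correct and follows essentially the same approach as the paper: the vector space is tame as a product (Lemma~\ref{constructionoftamespaces}), and tameness of $\mathrm{ad}(X)$ comes from the Cauchy estimate for $d/dz$ (Lemma~\ref{differentialistame}) combined with the trivial shift-zero bounds on the pointwise bracket, multiplication by $z$, and the residue term, yielding $(1,0,C_X(n))$-tameness. The only difference is presentational: the paper packages all four contributions into a single chain of inequalities ending with the explicit bound $6\pi e^{2n+1}\|g\|_{n+1}\|f\|_{n+1}$, whereas you treat the summands separately and assemble them via Lemma~\ref{constructionoftamemaps}; your version is slightly more transparent about which term forces the shift $r=1$, but the content is identical.
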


\begin{proof}
Let $\mathbb{F}\in \{\mathbb{R}, \mathbb{C}\}$.
\begin{enumerate}
\item As a consequence of lemma~\ref{mgfrechetagbanach}, $A_n\mathfrak{g}^{\sigma}$ is a Banach space. Thus $\widehat{A_n\mathfrak{g}}^{\sigma}$ is Banach.

To prove that $ad(f+r_cc+r_dd)$ is continuous, we use  $[f+r_cc+r_dd,g+s_cc+s_dd]= [f,g]+r_d[d,g]-s_d[d,f]=[f,g]_0+\omega(f,g)c+r_d izg'-s_diz f'$. 
Continuity follows from the continuity of $\frac{d}{dz}$, which is a consequence of the Cauchy-inequality and the boundedness of multiplication on compact domains.

\item $M\mathfrak{g}^{\sigma}$ is a tame Fr\'echet space as a consequence of lemma~\ref{mgfrechetagbanach}.  Thus $\widehat{M\mathfrak{g}}^{\sigma}$ is tame as a direct product of tame spaces (lemma~\ref{constructionoftamespaces}). To prove tameness of the adjoint action, we need tame estimates for the norms. Those estimates follow directly from the Banach space situation:
\begin{alignat*}{2}
&\|ad(f+r_cc+r_dd)(g+s_cc+s_dd)\|_n =\\
&= \|[f,g]_0+\omega(f,g)c+r_d izg'-s_diz f'\| \leq\\
&\leq 2\|f\|_n \sup_{z\in A_n}|g(z)|+\|\omega(f,g)c\|+\|r_d izg'\|+\|s_diz f'\|_n\leq\\
&\leq 2\|f\|_n \|g(z)\|_n+2\pi \|f\|_n \| g' \|_n+|r_d| \|z\|_n\|g'\|_n+|s_d| \|z\|_n\| f'\|_n\leq\\
&\leq  2\|f\|_n \|g\|_n + 2\pi\|f\|_n \frac{e^{n+1}}{e-1}  \|g\|_{n+1}+e^n\|f\|_n\|\frac{e^{n+1}}{e-1} \|g\|_{n+1} + e^n \|s_d\|\frac{e^{n+1}}{e-1} \| f\|_{n+1} \leq\\
&\leq\left(2+2\pi\frac{e^{n+1}}{e-1}+ 2\frac{e^{2n+1}}{e-1}  \right) \|g\|_{n+1}\|f\|_{n+1}\leq\\
&\leq 6\pi e^{2n+1}\|g\|_{n+1}\|f\|_{n+1}\,.
\end{alignat*}

Thus $ad(\widehat{g})$ is $(1, 0, 6\pi e^{2n+1}\|g\|_{n+1})$-tame.\qedhere
\end{enumerate}
\end{proof}

 This result shows that the tame structure on the Kac-Moody algebra is preserved by the adjoint action. 
 For additional analytic details and the Cauchy-inequalities see for example~\cite{Berenstein91}.

\section{Affine Kac-Moody groups}
\label{Kac-Moody groups}

\subsection{The loop group construction of affine Kac-Moody groups}

In this section we describe the construction of affine Kac-Moody groups as $2$-dimensional extensions of loop groups. Our presentation of the central extensions follows a construction proposed~\cite{PressleySegal86} for a Hilbert space setting; we use the tame Fr\'echet setting, developed in section~\ref{chap:tame}. Furthermore using a technical result of B.\ Popescu we prove that Kac-Moody groups of holomorphic loops carry a structure as tame Fr\'echet manifolds.

Let $G_{\mathbb{C}}$ denote a complex semisimple Lie group and $G$ its compact real form. As the constructions are valid in Kac-Moody groups defined with respect to various different regularity conditions, we use the regularity-independent notation $L(G_{\mathbb{C}}, \sigma)$ for the complex loop group and $L(G, \sigma)$ for its real form of compact type. To define groups of polynomial or analytic loops, we use the fact that every compact Lie group is isomorphic to a subgroup of some unitary group. Hence we can identify it with a matrix group. Similarly the complexification can be identified with a subgroup of some general linear group~\cite{PressleySegal86}. Groups of polynomial loops are then defined with respect to this representation. 

\noindent Kac-Moody groups are constructed in two steps.
\begin{enumerate}
\item The first step consists in the construction of an $S^1$-bundle in the real case (resp.\ a $\mathbb{C}^*$-bundle in the complex case) that corresponds via the exponential map to the central term $\mathbb{R}c$ (resp.\ $\mathbb{C}c$) of the Kac-Moody algebra.
\item In the second step we construct a semidirect product with $S^1$ (resp.\ $\mathbb{C}^*$). This corresponds via the exponential map to the $\mathbb{R}d$-\/ (resp.\ $\mathbb{C}d$-) term
\end{enumerate}

Study first the extension of $L(G, \sigma)$ with the short exact sequence:
\begin{displaymath}
1 \longrightarrow S^1\longrightarrow \widetilde{L}(G,\sigma) \longrightarrow L(G, \sigma) \longrightarrow 1\,.
\end{displaymath}

There are various groups $X$ that fit into this sequence. We need to define $\widetilde{L}(G,\sigma)$ in a way that its tangential Lie algebra at the identity $e\in \widetilde{L}(G, \sigma)$ is isomorphic to $\widetilde{L}(\mathfrak{g}, \sigma)$.

As described in~\cite{PressleySegal86} this $S^1$-bundle is best represented by triples: take triples $(g(z), p(z,t), w)$ where $g(z)$ is an element in the loop group, $p(z,t)$ a path connecting the identity to $g(z)$ and $w\in S^1$  (respective $w\in \mathbb C^*$) subject to the relation of equivalence: $(g_1(z), p_1(z,t), w_1) \sim (g_2(z), p_2(z,t), w_2)$ iff $g_1(z)= g_2(z)$ and $w_1= C_{\omega}(p_2*p_1^{-1})w_2$. The term $w_1= C_{\omega}(p_2*p_1^{-1})w_2$ defines a twist of the bundle. Here we put: 
\begin{displaymath}C_{\omega}(p_2*p_1^{-1})=e^{\int_{S\left(p_2*p_1^{-1}\right)}\omega}\end{displaymath}
 where $S(p_2*p_1^{-1})$ is a surface bounded by the closed curve $p_2*p_1^{-1}$ and $\omega$ denotes the $2$-form used to define the central extension of $L(\mathfrak{g}, \sigma)$. Group multiplication is defined by 
\begin{displaymath}(g_1(z), p_1(z,t), w_1)\cdot (g_2(z), p_2(z,t), w_2)=(g_1(z)g_2(z),\ p_1(z,t)*g_1(z)\cdot p_2(z,t), w_1 w_2)\,.\end{displaymath}

If $G$ is simply connected it can be shown that this object is a well defined group independent of arbitrary choices made in the construction iff $\omega$ is integral. This condition is satisfied by our definition of $\omega$~\cite{PressleySegal86}, theorem 4.4.1.
If $G$ is not simply connected, the situation is a little more complicated: let $G=H/Z$ where $H$ is a simply connected Lie group and $Z=\pi_1(G)$. Let $(LG)_0$ denote the identity component of $LG$. We can describe the  extension using the short exact sequence:
\begin{displaymath}1\longrightarrow S^1\longrightarrow \widetilde{LH}/Z\longrightarrow (LG)_0\longrightarrow 1\end{displaymath}see~\cite{PressleySegal86}, section 4.6.\, .

\noindent In case of complex loop groups, the $S^1$-bundle is replaced by a $\mathbb{C}^*$-bundle.

Hence, we can now give the definition of Kac-Moody groups:

\begin{definition}[Kac-Moody group]~
\begin{enumerate}
\item The real Kac-Moody group $\widehat{MG}_\mathbb{R}$ is the semidirect product of
 $S^1$ with the $S^1$-bundle $\widetilde{MG}_{\mathbb R}$.

\item The complex Kac-Moody group $\widehat{MG}_{\mathbb C}$ is its complexification: a semidirect product
of $\mathbb C^*$ with $\widetilde{MG}_{\mathbb C}$-bundle over
$MG$. 
\end{enumerate}
The action of the semidirect $S^1$ (resp.\ $\mathbb{C}^*$)-factor is in both cases defined by a shift of
the argument: 
\begin{displaymath}
\mathbb C^* \ni w_d: MG \rightarrow MG: f(z)\mapsto f(zw_d)\, .
\end{displaymath}
\end{definition}

\begin{remark}
Remark that in the compact case the shift is by elements $w_d=e^{i\varphi_d}$ only. Hence the action preserves the unit circle $S^1$. Thus one can use function spaces on $S^1$ thus yielding more general Kac-Moody groups than the groups of holomorphic loops, we study. Nevertheless those groups have no complexification in the same regularity class.
\end{remark}

The next aim is to prove that Kac-Moody groups are tame Fr\'echet manifolds. To this end we use a result of B.\ Popescu~\cite{Popescu05} stating that fiber bundles whose fiber is a Banach space over tame Fr\'echet manifolds are tame.
  
We start with the definition of tame fiber bundles:

\begin{definition}[tame Fr\'echet fiber bundle]
A fiber bundle $P$ over $M$ with fiber $G$ is a tame Fr\'echet manifold $P$ together with a projection map $\pi: P\longrightarrow M$ satisfying the following condition:

For each point $x\in M$ there is a chart $\varphi: U\longrightarrow V\subset F$ with values in a tame Fr\'echet space $F$ such that there is a chart $\varphi: \pi^{-1}(U)\longrightarrow  G\times U \subset G \times F$ such that the projection $\pi$ corresponds to a projection of $U\times F$ onto $U$ in each fiber.
\end{definition}

\noindent The following lemma is proved in~\cite{Popescu05}.
\begin{lemma}
Let $P$ be a fiber bundle over $M$ whose fiber is a Banach manifold; then $P$ is a tame Fr\'echet manifold.
\end{lemma}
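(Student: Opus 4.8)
The plan is to give $P$ an atlas whose chart domains are open subsets of products $\tilde B\times F$ of a Banach space $\tilde B$ with a tame Fr\'echet space $F$, and then to verify that the resulting transition maps are smooth tame. Thus the proof splits into: (i) producing the charts, (ii) identifying the local model as a tame Fr\'echet space, and (iii) checking tameness of transitions.

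First I would fix a point $p\in P$ and put $x=\pi(p)$. By the definition of a tame Fr\'echet fiber bundle there is a chart $\varphi\colon U\to V\subset F$ of $M$ about $x$, with $F$ a tame Fr\'echet space, together with a trivialization $\Phi\colon\pi^{-1}(U)\to G\times U$ over it. Since the fibre $G$ is a Banach manifold, I choose a chart $\psi\colon W\to B\subset\tilde B$ of $G$ about the $G$\ndash component of $\Phi(p)$, with $\tilde B$ a Banach space. Restricted to $\Phi^{-1}(W\times U)$, the composite $(\psi\times\varphi)\circ\Phi$ is then a chart of $P$ about $p$ taking values in the open set $B\times V$ of $\tilde B\times F$. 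A Banach space is a tame Fr\'echet space (it is a tame direct summand of $\Sigma(\tilde B)$ via $b\mapsto(b,0,0,\dots)$; see Definition~\ref{tame Frechet space}), and by Lemma~\ref{constructionoftamespaces} a Cartesian product of tame Fr\'echet spaces is tame, so $\tilde B\times F$ is a tame Fr\'echet space and these are honest tame Fr\'echet charts.

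Next I would check that two such charts have smooth tame transition on their overlap, treated componentwise. The $F$\ndash component of the transition is a restriction of the transition function $\varphi_j\circ\varphi_i^{-1}$ of $M$, which is smooth tame since $M$ is a tame Fr\'echet manifold, precomposed with the projection onto the $F$\ndash factor, which is tame; by the composition and product rules of Lemma~\ref{constructionoftamemaps} this component is smooth tame. The $\tilde B$\ndash component is the change-of-trivialization map $(v,b)\mapsto$ (the bundle cocycle $g_{ij}$, read in the charts $\varphi,\psi$, applied to $b$), followed by the change of Banach chart $\psi_j\circ\psi_i^{-1}$ of $G$; the latter is a smooth map between open subsets of Banach spaces, and such a map is automatically tame, because after shrinking the chart to a ball it is bounded, whence $\|\,\cdot\,\|\le C\le C(1+\|\,\cdot\,\|)$. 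Reassembling the two components by Lemma~\ref{constructionoftamemaps} gives a smooth tame transition map --- \emph{provided} the cocycle contribution $(v,b)\mapsto\widetilde g_{ij}(v)(b)$ is itself a smooth tame map $V\times B\to\tilde B$.

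The main obstacle is precisely this last point: one must control how the bundle transition functions depend on the base, i.e.\ one needs the structure group of the bundle to act on the fibre through smooth tame maps, so that $(v,b)\mapsto\widetilde g_{ij}(v)(b)$ is smooth tame. This is exactly what it means for the Fr\'echet fibre bundle over the tame base to be itself tame, and it is the ingredient that makes the construction go through. In the situation we actually need --- the $S^1$\ndash (resp.\ $\mathbb{C}^*$\ndash) bundle $\widetilde{MG}$ over the tame Fr\'echet manifold $MG$, where the fibre is $S^1$ (resp.\ $\mathbb{C}^*$) and $g_{ij}$ acts by multiplication --- this is immediate, since $|g_{ij}(v)\,b|=|g_{ij}(v)|\,|b|$ and $|g_{ij}(v)|$ is locally bounded by continuity. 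Granting it, all transition maps are smooth tame by the composition rule, so $P$ is a tame Fr\'echet manifold; applied to $\widetilde{MG}$ this supplies its tame structure, and a further Cartesian product with the finite-dimensional semidirect factor then yields the tame structure on the Kac-Moody group $\widehat{MG}$.
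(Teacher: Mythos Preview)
The paper does not prove this lemma at all: it is quoted verbatim from Popescu and attributed to~\cite{Popescu05}, with no argument given here. So there is no in-paper proof to compare your attempt against.

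That said, your sketch is essentially the right shape for such a result, and you have yourself put your finger on the only nontrivial point: the transition maps of a general fibre bundle involve the cocycle $(v,b)\mapsto g_{ij}(v)\cdot b$, and for the conclusion as stated one needs this to be smooth tame in both variables. Your argument handles the purely Banach and purely base components cleanly via Lemmas~\ref{constructionoftamespaces} and~\ref{constructionoftamemaps}, but the mixed term requires either a hypothesis on the structure group (that it acts through smooth tame maps varying tamely over the base) or, as you do, a direct verification in the concrete cases at hand. For the applications in this paper---the $S^1$- and $\mathbb{C}^*$-bundles $\widetilde{MG}$ over $MG$, and then the semidirect product with a further $S^1$ or $\mathbb{C}^*$---your explicit estimate $|g_{ij}(v)\,b|=|g_{ij}(v)|\,|b|$ together with local boundedness of $|g_{ij}|$ is exactly what is needed, so your argument does cover the Corollary that follows the lemma. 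As a proof of the lemma in the generality stated, however, it remains conditional on that cocycle hypothesis; the full statement is delegated to Popescu's paper.
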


\noindent This result contains the important corollary:

\begin{corollary}
The Kac-Moody groups $\widehat{MG}_{\mathbb{R}}$, $\widehat{MG}_{\mathbb{C}}$, the quotient spaces $\widehat{MG}_{\mathbb{C}}/\widehat{MG}_{\mathbb{R}}$, $\widehat{MG}_D$ and the quotient spaces $\widehat{MG}_{\mathbb{R}}/\textrm{Fix}(\rho)$ and $\widehat{MG}_{D}/\textrm{Fix}(\rho)$ are tame Fr\'echet manifolds.
\end{corollary}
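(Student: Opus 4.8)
The plan is to combine the two-step construction of $\widehat{MG}$ with the Popescu lemma on Banach-fiber bundles over tame Fr\'echet bases, together with the results already established that the loop groups and their relevant quotients are tame Fr\'echet manifolds.

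First I would recall that $\widehat{MG}_{\mathbb{C}}$ is obtained from $MG_{\mathbb{C}}$ in two stages: a central $\mathbb{C}^*$-bundle $\widetilde{MG}_{\mathbb{C}} \to MG_{\mathbb{C}}$ (the $S^1$-bundle $\widetilde{MG}_{\mathbb{R}} \to MG_{\mathbb{R}}$ in the compact case), followed by a semidirect product with the rotation group $\mathbb{C}^*$ (resp. $S^1$). Since $MG_{\mathbb{C}}$ (resp. $MG_{\mathbb{R}}$) is a tame Fr\'echet manifold by Theorem~\ref{mgcistame} and the proposition following it, and since the fiber $\mathbb{C}^*$ (resp. $S^1$) is a finite-dimensional, hence Banach, manifold, the lemma of B.\ Popescu quoted above shows that $\widetilde{MG}_{\mathbb{C}}$ (resp. $\widetilde{MG}_{\mathbb{R}}$) is a tame Fr\'echet manifold. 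The semidirect product with $\mathbb{C}^*$ (resp. $S^1$) is, as a manifold, just the cartesian product $\widetilde{MG} \times \mathbb{C}^*$, so by Lemma~\ref{constructionoftamespaces} (cartesian products of tame Fr\'echet manifolds are tame) one concludes that $\widehat{MG}_{\mathbb{C}}$ and $\widehat{MG}_{\mathbb{R}}$ are tame Fr\'echet manifolds. The non-compact real form $\widehat{MG}_D$ is treated the same way, using that $MG_D$ is a tame Fr\'echet manifold and that the central extension restricts to a real form.

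For the quotients I would argue that the bundle structure descends. The central $2$-cocycle $\omega$ (equivalently the gluing cocycle $C_\omega$) and the derivation $d$ are equivariant with respect to the real structure and with respect to any involution $\rho$ of the second kind --- this is exactly what makes the extended groups $\widehat{MG}_{\mathbb{R}}$, $\widehat{MG}_D$ and the fixed point subgroups $\textrm{Fix}(\rho)$ well defined --- so the projection $\widehat{MG} \to MG$ intertwines the relevant group actions. Consequently $\widehat{MG}_{\mathbb{C}}/\widehat{MG}_{\mathbb{R}}$ fibers over $MG_{\mathbb{C}}/MG_{\mathbb{R}}$ with the same finite-dimensional (Banach) fiber coming from the quotient of the extension data, and similarly $\widehat{MG}_{\mathbb{R}}/\textrm{Fix}(\rho)$ and $\widehat{MG}_D/\textrm{Fix}(\rho)$ fiber over $MG_{\mathbb{R}}/\textrm{Fix}(\rho)$ and $MG_D/\textrm{Fix}(\rho)$ respectively. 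Since Proposition~\ref{mgc/mgrtamefrechet} and the propositions following it establish that all of these base quotients are tame Fr\'echet manifolds, a final application of the Popescu lemma gives the claim.

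The main obstacle is not the tameness estimate itself --- once everything is phrased as a Banach-fiber bundle over a tame base it is immediate --- but rather the bookkeeping showing that the extension really does descend to each quotient as a locally trivial bundle with Banach fiber: one must check that $C_\omega$ is compatible with the reality condition $f(S^1)\subset G_{\mathbb{R}}$ and with the involution $\rho$, and that taking fixed points of $\rho$ (or the real form) in the extended group is compatible with taking fixed points (or the real form) downstairs. This is essentially a consequence of the integrality and equivariance of $\omega$ recorded in~\cite{PressleySegal86}, but it is the step that requires care.
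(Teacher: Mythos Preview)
Your proposal is correct and follows the same route the paper intends: the corollary in the paper is stated without proof, immediately after Popescu's lemma, precisely because the argument is the one you give --- view each extended object as a Banach-fiber bundle over the corresponding loop-group object (already shown tame in Section~\ref{Manifoldstructuresongroupsofholomorphicmaps}) and apply the lemma. One small quibble: Lemma~\ref{constructionoftamespaces} is stated for tame Fr\'echet \emph{spaces}, not manifolds, so for the semidirect-product step it is cleaner to invoke Popescu's lemma a second time (the $\mathbb{C}^*$- or $S^1$-factor is again a Banach fiber over the tame base $\widetilde{MG}$) rather than cite that lemma directly.
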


\noindent Next we prove:

\begin{theorem}
\label{hatdiffeovectorspace}
$\widehat{MG}_{\mathbb{C}}/\widehat{MG}_{\mathbb{R}}$ is diffeomorphic to a vector space.
\end{theorem}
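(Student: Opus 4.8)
The plan is to exhibit an explicit diffeomorphism between $\widehat{MG}_{\mathbb{C}}/\widehat{MG}_{\mathbb{R}}$ and a tame Fréchet vector space by combining three ingredients: the Cartan decomposition on the finite-dimensional group level, the identification of the loop part with a space of $1$\ndash forms established in the proof of proposition~\ref{mgc/mgrtamefrechet}, and the explicit description of the central and derivation extensions. First I would recall that $\widehat{MG}_{\mathbb{C}}$ is built from $MG_{\mathbb{C}}$ by a $\mathbb{C}^*$\ndash bundle (the central term, exponentiating $\mathbb{C}c$) followed by a semidirect product with $\mathbb{C}^*$ (the derivation term, exponentiating $\mathbb{C}d$), and that $\widehat{MG}_{\mathbb{R}}$ sits inside as the analogous construction with $S^1$\ndash factors. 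Passing to the quotient, the $\mathbb{C}c$\ndash term contributes a factor $\mathbb{C}^*/S^1 \cong \mathbb{R}_{>0} \cong \mathbb{R}$, the $\mathbb{C}d$\ndash term likewise contributes $\mathbb{C}^*/S^1 \cong \mathbb{R}$, and the loop part contributes $MG_{\mathbb{C}}/MG_{\mathbb{R}}$. Both one-dimensional factors are manifestly diffeomorphic to $\mathbb{R}$ via the logarithm, so the theorem reduces to showing that $MG_{\mathbb{C}}/MG_{\mathbb{R}}$ is diffeomorphic to a vector space.

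For this reduced statement I would use the embedding from the proof of proposition~\ref{mgc/mgrtamefrechet}: one shows $\pi_1 \circ \psi (MG_{\mathbb{C}}/MG_{\mathbb{R}}) \simeq \Omega^1_{\mathbb{R}}(\mathbb{C}^*, \mathfrak{g} \mid a_{-1}=0)$, a closed tame Fréchet subspace of $\Omega^1(\mathbb{C}^*, \mathfrak{g}_{\mathbb{C}})$, while $\pi_2 \circ \psi$ gives $G_{\mathbb{C}}/G_{\mathbb{R}}$. The finite-dimensional quotient $G_{\mathbb{C}}/G_{\mathbb{R}}$ is, by the polar (Cartan) decomposition $G_{\mathbb{C}} = G_{\mathbb{R}} \cdot \exp(i\mathfrak{g}_{\mathbb{R}})$, diffeomorphic to the vector space $i\mathfrak{g}_{\mathbb{R}} \cong \mathbb{R}^{\dim \mathfrak{g}}$ via $(\exp \circ\, \text{incl})$ and its inverse $\log$; this is a classical fact. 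Hence $MG_{\mathbb{C}}/MG_{\mathbb{R}}$ is diffeomorphic to the product of the tame Fréchet vector space $\Omega^1_{\mathbb{R}}(\mathbb{C}^*, \mathfrak{g} \mid a_{-1}=0)$ with $i\mathfrak{g}_{\mathbb{R}}$, and a product of vector spaces is a vector space. One then reassembles: $\widehat{MG}_{\mathbb{C}}/\widehat{MG}_{\mathbb{R}}$ is diffeomorphic to $\Omega^1_{\mathbb{R}}(\mathbb{C}^*, \mathfrak{g} \mid a_{-1}=0) \times i\mathfrak{g}_{\mathbb{R}} \times \mathbb{R} \times \mathbb{R}$, which is a tame Fréchet vector space.

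The main obstacle is making precise that the quotient by $\widehat{MG}_{\mathbb{R}}$ genuinely splits as this product rather than as some twisted object: the central $\mathbb{C}^*$\ndash bundle is non-trivially twisted by the cocycle $C_\omega$, and the $\mathbb{C}^*$\ndash factor acts by the argument shift $f(z) \mapsto f(zw_d)$, so one must check that the $S^1$\ndash parts of these extra factors are exactly what gets quotiented out and that the bundle twist, being along the compact $S^1$\ndash direction, does not obstruct a global product decomposition of the quotient. The cleanest way around this is to work with the induced maps on the level of the tame Fréchet manifold charts from the proof of theorem~\ref{mgcistame} and proposition~\ref{mgc/mgrtamefrechet}: there the bundle is already trivialized locally as $G \times U$, and one checks directly that the $\widehat{MG}_{\mathbb{R}}$\ndash action in these coordinates decouples the $c$\ndash, $d$\ndash and loop coordinates. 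Once the decoupling is verified, smoothness and tameness of the diffeomorphism and its inverse follow from smoothness and tameness of $\exp$, $\log$ on the finite-dimensional factors together with lemma~\ref{mapinfrechetsubmanifold} and lemma~\ref{constructionoftamespaces}, exactly as in the proofs of the preceding propositions. I expect the finite-dimensional Cartan decomposition step and the one-dimensional $\mathbb{C}^*/S^1 \cong \mathbb{R}$ step to be entirely routine; the only real content is the bookkeeping that the extension structure respects the product.
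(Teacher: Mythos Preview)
Your proposal is correct and follows essentially the same route as the paper: decompose the quotient into the loop part $MG_{\mathbb{C}}/MG_{\mathbb{R}}$ and two $\mathbb{C}^*/S^1\cong\mathbb{R}^+$ factors coming from the $c$- and $d$-extensions, then invoke proposition~\ref{mgc/mgrtamefrechet} (together with the finite-dimensional Cartan decomposition $G_{\mathbb{C}}/G_{\mathbb{R}}\cong i\mathfrak{g}_{\mathbb{R}}$, which the paper leaves implicit) for the loop part. The one place you work harder than necessary is the ``main obstacle'': the paper dispatches the potential twisting of the $(\mathbb{R}^+)^2$-bundle in one line by observing that the base $MG_{\mathbb{C}}/MG_{\mathbb{R}}$ is already known to be diffeomorphic to a vector space, hence simply connected, so any $(\mathbb{R}^+)^2$-bundle over it is trivial --- you can replace your chart-by-chart decoupling argument with this.
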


\begin{proof}
By theorem~\ref{mgc/mgrtamefrechet}, we know that $MG_{\mathbb{C}}/MG_{\mathbb{R}}$ is diffeomorphic to a vector space.  As $MG_{\mathbb{R}}$ is a subgroup of $MG_{\mathbb{C}}$, the quotient is well defined. To prove the theorem we check the decomposition
\begin{displaymath}\widehat{MG}_{\mathbb{C}}/\widehat{MG}_{\mathbb{R}}\simeq MG_{\mathbb{C}}/MG_{\mathbb{R}}\times (\mathbb{R^+})^2\,.\end{displaymath}

\noindent To this end we use the description of the elements in $\widehat{MG}_{\mathbb{C}}/\widehat{MG}$ as $4$-tuples. Two $4$-tuples $(g(z), p(z,t), r_c, r_d)$ and $(g'(z), p'(z,t), r'_c, r'_d)$ describe the same element of $\widehat{MG}_{\mathbb{C}}/\widehat{MG}$ iff there exists an element $(h(z), q(z,t), s_c, s_d) \in MG_{\mathbb{R}}$ such that 
\begin{displaymath}(g(z), p(z,t), r_c, r_d)=(g'(z)+h(z), p'(z,t)+q(z,t), r'_c+s_c, r'_d+s_d)\,.\end{displaymath}

Hence the equivalence classes for $g(z)$ are elements of $MG_{\mathbb{C}}/MG_{\mathbb{R}}$. The extension of $MG_{\mathbb{R}}$ lies in $S^1$. Thus $r_c$ and $r_d$ are defined up to an element in $S^1$. So we obtain a description of $\widehat{MG}_{\mathbb{C}}/\widehat{MG}_{\mathbb{R}}$ as a $(\mathbb{R}^+)^2$-bundle over $MG_{\mathbb{C}}/MG_{\mathbb{R}}$. 

As $MG_{\mathbb{C}}/MG_{\mathbb{R}}$ is diffeomorphic to a vector space (and hence simply connected), this bundle is trivial. The diffeomorphism can be described by mapping  $(g(z), p(z,t), r_c, r_d)$ onto $(g(z), r_c, r_d)$, hence by forgetting the path $p(z,t)$.
\end{proof}

\noindent Now we investigate the quotients $\widehat{MG}/\widehat{\textrm{Fix}(\sigma)}$.

The group $\widehat{\textrm{Fix}(\sigma)}$ consists of elements $(g, p, r_c, r_d)\in \widehat{MG}$ such that
 $\{g, p\in \textrm{Fix}(\sigma), r_c\in \pm 1, r_d \in \pm 1\}$. 
  As $\{r_c, r_d\} \in \pm 1$, this is a covering with four leaves. The details of the argument follow the description found in~\cite{Popescu05} for the smooth $C^{\infty}$-setting.

\noindent Let $H \subset MG_{\mathbb{C}}$ be a real form of non-compact type.

\noindent The description of the space $H/\textrm{Fix}(\sigma) \subset MG_{\mathbb{C}}/\textrm{Fix}(\sigma)$ follows similarly. This yields a proof  of the following theorem:

\begin{theorem}
\label{typeIIIdiffeovectorspace}
The space $H/\textrm{Fix}(\sigma)$ is diffeomorphic to a vector space.
\end{theorem}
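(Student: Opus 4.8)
The plan is to adapt the proof of Theorem~\ref{hatdiffeovectorspace} (and hence of Theorem~\ref{mgc/mgrtamefrechet}, on which it rests), replacing the pair $(MG_{\mathbb{C}},MG_{\mathbb{R}})$ by $(H,\textrm{Fix}(\sigma))$. Write $\mathfrak{h}$ for the Lie algebra of $H$ and let $\mathfrak{h}=\mathfrak{k}\oplus\mathfrak{p}$ be the decomposition into the $\pm1$-eigenspaces of the involution induced by $\sigma$, so that $\mathfrak{k}$ is the Lie algebra of $\textrm{Fix}(\sigma)$. As in the proof of Theorem~\ref{mgcistame} I would start from the logarithmic derivative embedding
\[
\psi\colon H\hookrightarrow \Omega^{1}(\mathbb{C}^{*},\mathfrak{h})\times H(1),\qquad f\mapsto \bigl(\delta(f),\,f(1)\bigr),
\]
where $H(1)$ denotes the finite dimensional group of evaluations at $z=1$; injectivity is proven exactly as there, from the uniqueness of solutions of $df=f\omega$. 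The right translation action of $\textrm{Fix}(\sigma)$ on $H$ is carried by $\psi$ to the gauge action on the first factor and to right translation on the second, so $\psi$ descends to an injection
\[
\psi\colon H/\textrm{Fix}(\sigma)\hookrightarrow \bigl(\Omega^{1}(\mathbb{C}^{*},\mathfrak{h})/\mathcal{G}^{*}(\textrm{Fix}(\sigma))\bigr)\times\bigl(H(1)/\textrm{Fix}(\sigma)(1)\bigr).
\]

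Next I would analyse the two projections $\pi_{1},\pi_{2}$ as in Theorem~\ref{mgc/mgrtamefrechet}. The second factor $\pi_{2}\circ\psi$ maps onto the finite dimensional Riemannian symmetric space $H(1)/\textrm{Fix}(\sigma)(1)$ of non-compact type, which by the Cartan decomposition is diffeomorphic to $\mathfrak{p}$, in particular to a vector space. For the first factor, the transitivity of right translation of $\textrm{Fix}(\sigma)$ on itself translates --- exactly as the surjectivity of right multiplication of $MG_{\mathbb{R}}$ did in the proof of Theorem~\ref{mgc/mgrtamefrechet} --- into surjectivity of the $\textrm{Fix}(\sigma)$-gauge action onto the $\mathfrak{k}$-valued directions in the image of $\pi_{1}\circ\psi$; hence in each class one may choose a representative $f$ whose logarithmic derivative $\delta(f)$ has vanishing $\mathfrak{k}$-component. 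The remaining $\mathfrak{p}$-valued one-forms are constrained only by the vanishing of the monodromy, which, since $\exp^{-1}(e)=0$ in a neighbourhood of the identity, forces the $(-1)$-Laurent coefficient $a_{-1}$ to vanish. Thus $\pi_{1}\circ\psi(H/\textrm{Fix}(\sigma))$ is identified with $\Omega^{1}_{\mathfrak{p}}(\mathbb{C}^{*},\mathfrak{h}\mid a_{-1}=0)$, a closed subspace of the tame Fr\'echet space $\Omega^{1}(\mathbb{C}^{*},\mathfrak{h})$ and hence itself a tame Fr\'echet vector space by Lemma~\ref{constructionoftamespaces}. Since the normalising gauge element is determined only up to a constant factor in $\textrm{Fix}(\sigma)$, one obtains a diffeomorphism $H/\textrm{Fix}(\sigma)\simeq \Omega^{1}_{\mathfrak{p}}(\mathbb{C}^{*},\mathfrak{h}\mid a_{-1}=0)\times H(1)/\textrm{Fix}(\sigma)(1)$, a product of two vector spaces, hence a vector space.

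The step I expect to be the main obstacle is proving that the $\mathfrak{k}$-component of $\delta(f)$ can indeed be gauged away by an element of $\textrm{Fix}(\sigma)$ that is \emph{holomorphic on $\mathbb{C}^{*}$ and has trivial monodromy}. One cannot argue directly with the exponential map, since by Theorem~\ref{expnodiffeomorphism} $\Mexp$ is not even a local diffeomorphism; nor with the naive pointwise Cartan decomposition $f(z)=k(z)\exp X(z)$, since $k(z)$ and $X(z)$ depend on $z$ only real-analytically, not holomorphically. Instead one proceeds as in the proof of Theorem~\ref{polaractiononmg}: the normalising transformation is built as a solution of a linear differential equation $g'=ug$, with $u$ the $\mathfrak{k}$-component of $\delta(f)$, which exists on the universal cover $\mathbb{C}$ of $\mathbb{C}^{*}$; one then checks the $\sigma$-periodicity (closing) relation so that $g$ descends to a genuine holomorphic loop in $\textrm{Fix}(\sigma)$ with trivial monodromy. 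Once this regularity-and-monodromy bookkeeping is settled, the rest is the formal argument above; the same reasoning, tensored with the $(\mathbb{R}^{+})^{2}$-bundle produced in the proof of Theorem~\ref{hatdiffeovectorspace}, covers the corresponding Kac-Moody extensions such as $\widehat{MG}_{D}/\widehat{\textrm{Fix}(\sigma)}$.
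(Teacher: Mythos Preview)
Your proposal follows essentially the same route as the paper, which simply asserts that the description of $H/\textrm{Fix}(\sigma)$ ``follows similarly'' to the logarithmic-derivative argument for $MG_{\mathbb{C}}/MG_{\mathbb{R}}$ (Proposition~\ref{mgc/mgrtamefrechet}) and defers the details to~\cite{Popescu05}. You have in fact spelled out that analogous argument in considerably more detail than the paper itself provides, and your flagging of the holomorphic monodromy/regularity step for the normalising gauge transformation as the crux is apt --- the paper does not address it beyond the reference.
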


In section~\ref{loopgroups} we introduced the pointwise exponential function $\Mexp:\widehat{M\mathfrak{g}}\longrightarrow \widehat{MG}$.
This exponential function can be extended to an exponential function on the Kac-Moody algebra $\widehat{M\mathfrak{g}}$.
\begin{displaymath}
\widehat{\Mexp}:\widehat{M\mathfrak{g}}\longrightarrow \widehat{MG}
\end{displaymath} 
Via $\widehat{\Mexp}$ the central extension $\mathbb{C}c$ corresponds to the fiber of the $\mathbb{C}^*$-bundle and the
 $\mathbb{C}d$-term corresponds to the $\mathbb{C}^*$-factor of the semidirect product.

To study more precisely the properties of the exponential function, we introduce the notion of curves in a Kac-Moody group. As a Kac-Moody group $\widehat{MG}_{\mathbb{R}}$ is locally a (topologically) direct product of the loop group $MG_{\mathbb{R}}$ with $\mathbb R^2$, a path: $\widehat{\gamma}: (-\epsilon, \epsilon)\rightarrow \widehat{MG}$ is locally described by three components: $\widehat{\gamma}(t)=(\gamma(t), \gamma_c(t), \gamma_d(t))$,
with $\gamma(t)$ taking values in $MG_{\mathbb{R}}$ and $\gamma_c(t)$, $\gamma_d(t)$ taking values in $\mathbb R$. For every $z\in \mathbb C^*$, $\gamma$ defines a path $\gamma_z(t): (-\epsilon, \epsilon)\rightarrow G$ by setting $\gamma_z(t):= [\gamma(t)](z)$.
A path $\gamma: (-\epsilon, \epsilon) \rightarrow MG$, $t \rightarrow \gamma (t)$ is differentiable (respective smooth) iff the map $\delta: (-\epsilon, \epsilon) \times \mathbb C^* \rightarrow G_{\mathbb C}$, $(t, z) \rightarrow \delta(t,z)$ such that $\delta(t,z)=\gamma_z(t)$ is differentiable (respective smooth).

For the group $\widetilde{MG}$ we can describe the exponential function as follows:

\begin{proposition}
The exponential function $\widetilde{\Mexp}:\widetilde{M\mathfrak{g}}\longrightarrow \widetilde{MG}$ is defined by
\begin{displaymath}
\widetilde{\Mexp}(f+r_c c)=\left(\Mexp (f),(t\longrightarrow exp tu)|_0^1 ,r^{ir_c}\right)
\end{displaymath}
\end{proposition}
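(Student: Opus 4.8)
The plan is to pin down $\widetilde{\Mexp}$ by producing, for each $X=f+r_cc\in\widetilde{M\mathfrak g}$, the one-parameter subgroup $t\mapsto\gamma(t)$ of $\widetilde{MG}$ with $\gamma(0)=e$ and $\dot\gamma(0)=X$, and then reading off $\widetilde{\Mexp}(X)=\gamma(1)$. First I would recall from Section~\ref{Kac-Moody groups} that an element of $\widetilde{MG}$ is an equivalence class of triples $(g(z),p(z,t),w)$ with $g\in MG$, $p$ a path from $e$ to $g$ and $w\in S^1$ (resp.\ $\mathbb{C}^*$), multiplication being
\[
(g_1,p_1,w_1)(g_2,p_2,w_2)=\bigl(g_1g_2,\;p_1*(g_1\cdot p_2),\;w_1w_2\bigr),
\]
and equivalence given by $g_1=g_2$ together with $w_1=C_\omega(p_2*p_1^{-1})\,w_2$; I would also note that the projection $\pi\colon\widetilde{MG}\to MG$ and the fibre inclusion $S^1\hookrightarrow\widetilde{MG}$ are group homomorphisms whose differentials realise the central extension $0\to\mathbb{C}c\to\widetilde{M\mathfrak g}\to M\mathfrak g\to0$.

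Next I would write down the candidate curve
\[
\gamma(t):=\Bigl(\Mexp(tf),\;\bigl(u\mapsto\exp(utf)\bigr)\big|_{u\in[0,1]},\;w(t)\Bigr),
\]
where (after the notational identification of the statement's ``$u$'' with $f$) the middle entry is the canonical path from $e$ to $\Mexp(tf)$, and $w(t)$ is the element of the fibre obtained by exponentiating $tr_cc$ inside $S^1$ (resp.\ $\mathbb{C}^*$); in the compact case $w(t)=e^{itr_c}$, the coordinate appearing in the statement. I claim $\gamma$ is a homomorphism $(\mathbb{R},+)\to\widetilde{MG}$. In the $MG$-coordinate this is immediate from $\Mexp(sf)\Mexp(tf)=\Mexp((s+t)f)$, and in the fibre coordinate from $w(s)w(t)=w(s+t)$. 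For the path coordinate, the product $\gamma(s)\gamma(t)$ carries the concatenated path $\bigl(u\mapsto\exp(usf)\bigr)*\bigl(\Mexp(sf)\cdot(u\mapsto\exp(utf))\bigr)$, whereas $\gamma(s+t)$ carries $u\mapsto\exp(u(s+t)f)$; since $\Mexp(sf)$ and $\Mexp(tf)$ commute pointwise, the former has image $\{\exp(vf):v\in[0,s+t]\}$, exactly the image of the latter, so the two paths differ only by reparametrisation. Hence the loop $p_2*p_1^{-1}$ bounds a degenerate (one-dimensional) surface $S$ and $C_\omega(p_2*p_1^{-1})=\exp\int_S\omega=1$; alternatively, $\omega(f,f)=\operatorname{Res}\langle f,f'\rangle=\tfrac12\operatorname{Res}\bigl(\langle f,f\rangle'\bigr)=0$ because the residue of a derivative vanishes. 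Therefore $\gamma(s)\gamma(t)\sim\gamma(s+t)$, and $\gamma$ is a one-parameter subgroup. I expect this path/cocycle bookkeeping to be the only genuinely delicate point; the rest is pointwise finite-dimensional Lie theory together with the already-established group structure on $\widetilde{MG}$.

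Finally I would differentiate at $t=0$: the $MG$-component of $\dot\gamma(0)$ is $f$ (by Theorem~\ref{tangential space} and the definition of $\Mexp$), the fibre component is $r_cc$ (by the chosen identification of the Lie algebra of the fibre with $\mathbb{C}c$), and the path entry contributes no independent tangent data, so $\dot\gamma(0)=f+r_cc=X$. One-parameter subgroups with prescribed initial velocity are unique here: two of them have the same image under $\pi$ by uniqueness in $MG$, hence differ by a one-parameter subgroup of the central fibre, and those are classified by $r_c$ --- this is the same reasoning that verifies condition~(2) of Definition~\ref{exponentialtype} for $(MG,M\mathfrak g)$, applied now to $(\widetilde{MG},\widetilde{M\mathfrak g})$. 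Consequently $\widetilde{\Mexp}(f+r_cc)=\gamma(1)$, which is precisely the asserted formula; in particular $\pi\circ\widetilde{\Mexp}=\Mexp$ and $\widetilde{\Mexp}$ restricts to the fibre exponential on $\mathbb{C}c$, as the construction of $\widetilde{MG}$ demands.
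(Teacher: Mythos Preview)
Your proposal is correct and follows exactly the approach the paper indicates: the paper's proof consists of the single sentence ``For the straightforward proof one has to check that this defines a $1$-parameter subgroup whose differential at the identity is $f+r_c c$,'' and you have carried out precisely that verification. Your treatment is in fact more thorough than the paper's, since you make explicit the one nontrivial point --- that the cocycle $C_\omega$ contributes trivially because the two candidate paths from $e$ to $\Mexp((s+t)f)$ differ only by reparametrisation (equivalently, because $\omega(f,f)=0$) --- and you also supply the uniqueness argument for one-parameter subgroups, which the paper leaves implicit.
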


For the straightforward proof one has to check that this defines a $1$-parameter subgroup whose differential at the identity is $f+r_c c$.

\subsection{The Adjoint action}
\label{theadjointaction}

Similarly to finite dimensional simple Lie groups a Kac-Moody group admits an adjoint action on its Lie algebra:

\begin{example}[Adjoint action]
\label{adjointaction}

With $x=\{w,(g,p,z)\}$, the Adjoint action of $\widehat{MG}^{\sigma}$ on $\widehat{M}\mathfrak{g}^{\sigma}$ is described by the following formulae
\begin{alignat*}{1}
\textrm{Ad}(x)u&:= gw(u)g^{-1}+\langle g w(u)g^{-1},g'g^{-1}\rangle c\\
\textrm{Ad}(x)c&:= c\\
\textrm{Ad}(x)d&:= d - g'g^{-1}+ \textstyle\frac{1}{2}\langle g'g^{-1}, g'g^{-1}\rangle c\,.
\end{alignat*} 
Here $\omega(u)$ denotes the shift of the argument by $\omega$.
\end{example}

\noindent For the proof compare~\cite{HPTT}, \cite{PressleySegal86}, and \cite{Kac90}.

\begin{proof}~ 
\begin{itemize}
\item $c$ generates the center, thus  $\textrm{Ad}(x)c:= c$.
\item $\textrm{Ad}(x)u$ follows by integrating the $Ad$-action.
\item To calculate $\textrm{Ad}(g) (d)$, we use the $Ad$-invariance of the Lie bracket. As $[d,v]=v'$ we get for all $v \in L_{alg}\mathfrak{g}^{\sigma}$:
\begin{alignat*}{2}
  &   gv'g^{-1}+\langle g v'g^{-1},g'g^{-1}\rangle c =\\
= &   \textrm{Ad}(g)(v')=\textrm{Ad}(g) [d,v]= [\textrm{Ad}(g)(d), \textrm{Ad}(g) (v)]=\\
= &  [h+ \mu c + \nu d, gvg^{-1}+\langle g vg^{-1},g'g^{-1}\rangle c]=\\
= &  [h, gvg^{-1}]+\nu[d, gvg^{-1}]                           =\\
= &  hgvg^{-1}-gvg^{-1}h+\omega(h, (gvg^{-1})')c+\nu g'vg^{-1}+\nu gv'g^{-1}+\nu gv(g^{-1})'\,,
\end{alignat*}
 with $h \in L_{alg}\mathfrak{g}^{\sigma}$ and $\{\mu, \nu\} \in \mathbb{R}$.

\noindent To get equality we have to choose $\nu=1$, $h=-g'g^{-1}$. This gives us
 \begin{alignat*}{2} 
&  hgvg^{-1}-gvg^{-1}h+\omega(h, (gvg^{-1})')c+\nu g'vg^{-1}+\nu gv'g^{-1}+\nu gv(g^{-1})'=\\
=&-g'vg^{-1}+gvg^{-1}g'g^{-1}+\omega(-g'g^{-1}, (gvg^{-1})')c+ g'vg^{-1}+ gv'g^{-1}+ gv(g^{-1})'=\\
=&\omega(-g'g^{-1}, (gvg^{-1})')c+  gv'g^{-1}\,.
\end{alignat*}

\noindent Thus we are left with the calculation of $\mu$. To this end we use the property that $Ad$ acts by isometries. This results in $\mu=\frac{1}{2}\langle g'g^{-1}, g'g^{-1}\rangle$.\qedhere

\end{itemize}
\end{proof}

\noindent More details can be found in~\cite{PressleySegal86,HPTT,Popescu05, Popescu06}.

\begin{theorem}[Polarity of the Adjoint action]
\label{polarityofadjointaction}
Let $H_{l, r}\subset \widehat{X\mathfrak{g}}^{\sigma}$, $\{l, r\} \in \mathbb{R}\backslash \{0\}$ denote the intersection of the sphere with radius $-l^2$ with the horosphere $r_d=r$. The restriction of the Adjoint action to $H_{l, r}$ is polar.
\end{theorem}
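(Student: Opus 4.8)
The plan is to realise $H_{l,r}$ as a graph over the loop algebra $X\mathfrak{g}^\sigma$ on which the restricted Adjoint action becomes, up to a harmless rescaling and a rotation, the gauge action that was already shown to be polar in Theorem~\ref{polaractiononmg}. First I would check that $H_{l,r}$ is $\textrm{Ad}$\ndash invariant: the invariant bilinear form on $\widehat{X\mathfrak{g}}^\sigma$ is $\langle\langle u+r_cc+r_dd,\ u'+r_c'c+r_d'd\rangle\rangle=\langle u,u'\rangle_{H^0}+r_cr_d'+r_dr_c'$, and since $\textrm{Ad}$ preserves this form it preserves the sphere $\{\langle\langle v,v\rangle\rangle=-l^2\}$; the formulae of Example~\ref{adjointaction} show that the $d$\ndash component of $\textrm{Ad}(x)(u+r_cc+r_dd)$ is again $r_d$, so the horosphere $r_d=r$ is preserved, hence so is their intersection $H_{l,r}$.

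Next I would parametrise $H_{l,r}$. On $H_{l,r}$ the equation $\langle\langle v,v\rangle\rangle=\langle u,u\rangle_{H^0}+2r_cr=-l^2$ determines $r_c=-(l^2+\langle u,u\rangle_{H^0})/(2r)$, which is meaningful because $r\neq 0$; therefore the projection $\pi\colon H_{l,r}\to X\mathfrak{g}^\sigma$, $u+r_cc+rd\mapsto u$, is a diffeomorphism exhibiting $H_{l,r}$ as a graph, so by the results of section~\ref{sect:tame_frechet_manifolds} it is a tame Fr\'echet submanifold of finite type. Differentiating the two defining relations, a tangent vector to $H_{l,r}$ at $v=u+r_cc+rd$ has the form $h-r^{-1}\langle h,u\rangle_{H^0}\,c$ with $h\in X\mathfrak{g}^\sigma$ and no $d$\ndash component; since $c$ pairs only with $d$, the form induced by $\langle\langle\cdot,\cdot\rangle\rangle$ on $T_vH_{l,r}$ is just $\langle h,h'\rangle_{H^0}$. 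Thus $d\pi$ identifies the induced metric on $H_{l,r}$ with the $H^0$\ndash metric on $X\mathfrak{g}^\sigma$ (up to the constant positive factor produced by the rescaling $u\mapsto u/r$ below); in particular it is definite, so $H_{l,r}$ is a Riemannian tame Fr\'echet manifold and the orthogonality used to define polarity makes sense.

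It then remains to transport the action. Via $\pi$ together with the rescaling $u\leftrightarrow u/r$, the Adjoint action of $x=\{w,(g,p,z)\}$ corresponds on $X\mathfrak{g}^\sigma$ to $u\mapsto g\,w(u)\,g^{-1}-g'g^{-1}$, i.e. to the gauge action of $XG_{\mathbb{R}}^\sigma$ precomposed with the argument shift by the rotation factor; the central $S^1$ (resp.\ $\mathbb{C}^*$) acts trivially since $c$ is central. By Theorem~\ref{polaractiononmg} the gauge action of $XG_{\mathbb{R}}^\sigma$ on $X\mathfrak{g}^\sigma$ is polar with section the maximal abelian subalgebra $\mathfrak{a}\subset\mathfrak{g}$ of constant loops. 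The rotation subgroup fixes every constant loop and its infinitesimal orbit direction $z\,u'(z)$ vanishes on constant loops, so enlarging $XG_{\mathbb{R}}^\sigma$ by the rotation does not change the orbits through points of $\mathfrak{a}$ or their tangent spaces; hence $\mathfrak{a}$ is still a section. Pulling everything back through the homothety $\pi$ shows that the restriction of the Adjoint action to $H_{l,r}$ is polar with section $\pi^{-1}(\mathfrak{a})=\{\,a+r_cc+rd : a\in\mathfrak{a}\ \text{constant}\,\}$. The main obstacle is the third step: verifying precisely that the restricted Adjoint action is the gauge action up to rescaling and shift, and that adjoining the rotation subgroup preserves polarity; the genuinely analytic difficulty, that polarity survives the holomorphic regularity constraints, is not reopened here since it is already contained in Theorem~\ref{polaractiononmg}.
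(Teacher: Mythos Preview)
Your proposal is correct and follows essentially the same approach as the paper: both reduce the restricted Adjoint action on $H_{l,r}$ to the gauge action of $XG_{\mathbb{R}}^{\sigma}$ on $X\mathfrak{g}^{\sigma}$ and then invoke Theorem~\ref{polaractiononmg}. The paper's proof is a single sentence asserting this identification, whereas you carefully verify it---checking $\textrm{Ad}$\ndash invariance of $H_{l,r}$, parametrising $H_{l,r}$ as a graph over $X\mathfrak{g}^{\sigma}$, computing the induced metric, and handling the rotation factor---so your argument is a fleshed-out version of the same idea rather than a different route.
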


\begin{proof} The restriction of the Adjoint action to $H_{l, r}$ coincides with the gauge action on $X\mathfrak{g}$. Hence, theorem \ref{polarityofadjointaction} is a direct consequence of theorem~\ref{polaractiononmg}.
\end{proof}

C.-L.\ Terng describes how to associate an affine Weyl group to this gauge action~\cite{Terng95}. This is exactly the affine Weyl group of the Kac-Moody group $\widehat{MG}$.

\noindent This theorem gives a complete description of the Adjoint action iff $r_d\not=0$. 

\noindent Surprisingly in the remaining case $r_d=0$ the situation is different:
now the Adjoint action is reduced to the equations:
 \begin{alignat*}{1}
\textrm{Ad}(x)u&:= gw(u)g^{-1}+\langle g w(u)g^{-1},g'g^{-1}\rangle c\\
\textrm{Ad}(x)c&:= c
\end{alignat*}

Calculate the orbit of the constant function $u\equiv0$. $u$ is fixed by the Adjoint action as $\textrm{Ad}(x)u:= g 0 g^{-1}+\langle g0g^{-1},g'g^{-1}\rangle c=0$. Hence iff we can  describe the restriction of the Adjoint action to $X\mathfrak{g}^{\sigma}$ as some kind of polar action then the associated Weyl group has to be necessarily of spherical type. Furthermore the action is clearly not proper Fredholm. Hence the Hilbert-space version is not covered by Terng's results. 

\noindent We use the regularity independent notation:

We define a  flat of finite type to be a flat $\mathfrak{t}\subset L(\mathfrak{g},\sigma)$ such that $\mathfrak{t}$ is the restriction of a flat in $\widehat{L}(\mathfrak{g},\sigma)$. Hence all flats of finite type are conjugate in $\widehat{L}(G,\sigma)$ and as the orbits of $\widehat{L}(G,\sigma)$ and $L(G,\sigma)$ coincide on $L(\mathfrak{g},\sigma)$ also in $L(G,\sigma)$.  Hence any flat of finite type in $L(\mathfrak{g}, \sigma)$ is isomorphic to $\mathfrak{t}_0\subset \mathfrak{g}$, where we choose $\mathfrak{g}$ to denote the subalgebra of constant loops. Using the usual notion for regular and singular elements we find that the associated Weyl group is the spherical Weyl group of $\mathfrak{g}$.

\begin{remark}
From a geometric point of view this different behavior is related to the fact that the hyperplane defined by $r_d=0$ corresponds to the spherical building at infinity while the space $r_d\not=0$ corresponds to the spherical building at infinity. For further details confer \cite{Freyn09}, \cite{Freyn10a}, \cite{Freyn10d}.
\end{remark}

\bibliographystyle{alpha}
\bibliography{Doktorarbeit1}

\def\cprime{$'$} \def\cprime{$'$}
\begin{thebibliography}{HPTT95}

\bibitem[AB08]{AbramenkoBrown08}
Peter Abramenko and Kenneth Brown.
\newblock {\em Buildings}, volume 248 of {\em Graduate Texts in {M}athematics}.
\newblock Springer Verlag, New York, 2008.

\bibitem[BBI01]{Burago01}
Dimitri Burago, Yuri Burago, and Sergei Ivanov.
\newblock {\em A {C}ourse in {M}etric {G}eometry}, volume~33 of {\em Graduate
  {S}tudies in {M}athematics}.
\newblock AMS, Providence, 2001.

\bibitem[BCO03]{Berndt03}
J\"urgen Berndt, Sergio Console, and Carlos Olmos.
\newblock {\em Submanifolds and holonomy}, volume 434 of {\em Research Notes in
  Mathematics}.
\newblock Chapman\&Hall, Boca Raton, 2003.

\bibitem[Ber03]{Berger03}
Marcel Berger.
\newblock {\em A panoramic view of {R}iemannian {G}eometry}.
\newblock Springer Verlag, Berlin, 2003.

\bibitem[Ber08]{Bertram08}
Wolfgang Bertram.
\newblock Differential geometry, {L}ie groups and symmetric spaces over general
  base fields and rings.
\newblock {\em Mem. Amer. Math. Soc.}, 192(900):x+202, 2008.

\bibitem[BG91]{Berenstein91}
Carlos~A. Berenstein and Roger Gay.
\newblock {\em Complex {V}ariables - {A}n {I}ntroduction}, volume 125 of {\em
  Graduate Texts in Mathematics}.
\newblock Springer Verlag, New York, 1991.

\bibitem[BGN04]{Bertram04}
W.~Bertram, H.~Gl{\"o}ckner, and K.-H. Neeb.
\newblock Differential calculus over general base fields and rings.
\newblock {\em Expo. Math.}, 22(3):213--282, 2004.

\bibitem[BH99]{BridsonHaeflinger99}
Martin~R. Bridson and Andr\'e Haefliger.
\newblock {\em Metric spaces of non-positive curvature}, volume~98 of {\em
  Grundlehren der mathematischen {W}issenschaften}.
\newblock Springer Verlag, Berlin, 1999.

\bibitem[Car02]{Carter05}
Roger Carter.
\newblock {\em Lie {A}lgebras of {F}inite and {A}ffine {T}ype}, volume~96 of
  {\em Cambridge studies in advances mathematics}.
\newblock Cambrigde university press, Cambridge, 2002.

\bibitem[DK00]{DuistermaatKolk00}
J.~J. Duistermaat and J.~A.~C. Kolk.
\newblock {\em Lie groups}.
\newblock Universitext. Springer-Verlag, Berlin, 2000.

\bibitem[Dur00]{Duren00}
Peter~L. Duren.
\newblock {\em Theory of $\textrm{{H}}^p$-{S}paces}.
\newblock Dover Publications, New York, 2000.

\bibitem[Fre09]{Freyn09}
Walter Freyn.
\newblock {\em {K}ac-{M}oody symmetric spaces and universal twin buildings}.
\newblock PhD thesis, {U}niversit\"at {A}ugsburg, 2009.

\bibitem[Fre11a]{Freyn10b}
Walter Freyn.
\newblock {F}unctional analytic methods for cities.
\newblock submitted, 2011.

\bibitem[Fre11b]{Freyn10d}
Walter Freyn.
\newblock {K}ac-{M}oody groups, infinite dimensional differential geometry and
  cities.
\newblock accepted for Asian journal of mathematics, 2011.

\bibitem[Fre12a]{Freyn12e}
Walter Freyn.
\newblock {G}eometry of {K}ac-{M}oody symmetric spaces, 2012.

\bibitem[Fre12b]{Freyn12g}
Walter Freyn.
\newblock Inverse limit constructions for affine kac-moody groups, 2012.

\bibitem[Fre12c]{Freyn10a}
Walter Freyn.
\newblock {K}ac-{M}oody geometry.
\newblock In {\em {G}lobal {D}ifferential {G}eometry}, pages 55--92. Springer,
  Heidelberg, 2012.

\bibitem[Fre12d]{Freyn12b}
Walter Freyn.
\newblock {O}rthogonal-{S}ymmetric {a}ffine {K}ac-{M}oody-algebras, 2012.

\bibitem[Fre12e]{Freyn11a}
Walter Freyn.
\newblock {S}pherical buildings for twin cities.
\newblock in preparation, 2012.

\bibitem[Fre12f]{Freyn12c}
Walter Freyn.
\newblock {T}ame {F}r\'echet submanifolds, 2012.

\bibitem[Gl{\"o}07]{Glockner07}
{H}elge Gl{\"o}ckner.
\newblock {I}mplicit {F}unctions from {T}opological {V}ector {S}paces to
  {F}r\'echet {S}paces in the {P}recence of {M}etric {E}stimates.
\newblock {\em arXiv:math.GM/0612673 v5}, 2007.

\bibitem[Gro97]{Grothendieck97}
Alexandre Grothendieck.
\newblock Esquisse d'un programme.
\newblock In {\em Geometric {G}alois actions, 1}, volume 242 of {\em London
  Math. Soc. Lecture Note Ser.}, pages 5--48. Cambridge Univ. Press, Cambridge,
  1997.
\newblock With an English translation on pp. 243--283.

\bibitem[Gro00]{Gross00}
Christian Gro\ss.
\newblock $s$\ndash {R}epresentations for involutions on affine {K}ac-{M}oody
  algebras are polar.
\newblock {\em manuscripta math.}, 103:339--350, 2000.

\bibitem[Gue97]{Guest97}
Martin Guest.
\newblock {\em Harmonic {M}aps, {L}oop {G}roups and {I}ntegrable {S}ystems},
  volume~38 of {\em {L}ondon {M}athematical {S}ociety {S}tudent {T}exts}.
\newblock London mathematical society, London, 1997.

\bibitem[GW84]{GoodmanWallach84}
Roe Goodman and Nolan Wallach.
\newblock Structure and unitary cocycle representations of loop groups and the
  group of diffeomorphisms of the circle.
\newblock {\em {J}our. {R}eine {A}ngew. {M}athematik}, 347:69--133, 1984.

\bibitem[Ham82]{Hamilton82}
Richard~S. Hamilton.
\newblock The inverse function theorem of {N}ash and {M}oser.
\newblock {\em Bull. Amer. Math. Soc. (N.S.)}, 7(1):65--222, 1982.

\bibitem[Hei06]{Heintze06}
Ernst Heintze.
\newblock Towards symmetric spaces of affine {K}ac-{M}oody type.
\newblock {\em Int. J. Geom. Methods Mod. Phys.}, 3(5-6):881--898, 2006.

\bibitem[HG09]{Heintze09}
Ernst Heintze and Christian Gro{\ss}.
\newblock Involutions, finite order automorphisms and real forms of affine
  {K}ac-{M}oody algebras, 2009.

\bibitem[HKK00]{HKZ00}
Haakan Hedenmalm, Boris Korenblum, and Zhu Kehe.
\newblock {\em Theory of {B}ergman Spaces}, volume 199 of {\em Graduate {T}exts
  in {M}athematics}.
\newblock Springer Verlag, New York, 2000.

\bibitem[HL99]{HeintzeLiu}
Ernst Heintze and Xiaobo Liu.
\newblock Homogeneity of infinite-dimensional isoparametric submanifolds.
\newblock {\em Ann. of Math. (2)}, 149(1):149--181, 1999.

\bibitem[HPTT95]{HPTT}
Ernst Heintze, Richard~S. Palais, Chuu-Lian Terng, and Gudlaugur Thorbergsson.
\newblock {\em Hyperpolar actions on symmetric spaces}, pages 214--245.
\newblock Conf. Proc. Lecture Notes Geom. Topology, IV. Int. Press, Cambridge,
  MA, 1995.

\bibitem[Kö00]{Koenigsberger00}
Konrad Königsberger.
\newblock {\em Analysis 2}.
\newblock Springer Verlag, Berlin, 2000.

\bibitem[Kac90]{Kac90}
Victor~G. Kac.
\newblock {\em Infinite-dimensional {L}ie algebras}.
\newblock Cambridge University Press, Cambridge, third edition, 1990.

\bibitem[KM97]{Kriegl97}
Andreas Kriegl and Peter~W. Michor.
\newblock {\em The convenient setting of global analysis}, volume~53 of {\em
  Mathematical Surveys and Monographs}.
\newblock American Mathematical Society, Providence, RI, 1997.

\bibitem[Kna96]{Knapp96}
Anthony Knapp.
\newblock {\em {L}ie {G}roups {B}eyond an {I}ntroduction}, volume 140 of {\em
  Progress in Mathematics}.
\newblock Birkh\"user Verlag, Boston, 1996.

\bibitem[Kob11]{Kobayashi11}
Shimpei Kobayashi.
\newblock Real forms of complex surfaces of constant mean curvature.
\newblock {\em Trans. Amer. Math. Soc.}, 363(4):1765--1788, 2011.

\bibitem[Kum02]{Kumar02}
Shrawan Kumar.
\newblock {\em Kac-{M}oody groups, their flag varieties and representation
  theory}, volume 204 of {\em Progress in Mathematics}.
\newblock Birkhäuser Boston Inc., Boston, MA, 2002.

\bibitem[KW09]{Khesin09}
{B}oris Khesin and Robert {W}endt.
\newblock {\em The {G}eometry of {I}nfinite-dimensional {G}roups}.
\newblock Springer Verlag, Berlin, 2009.

\bibitem[Lan99]{Lang99}
Serge Lang.
\newblock {\em Complex {A}nalysis}, volume 103 of {\em Graduate Texts in
  Mathematics}.
\newblock Springer Verlag, New York, 1999.

\bibitem[MP95]{Moody95}
Robert~V. Moody and Arturo Pianzola.
\newblock {\em Lie algebras with triangular decomposition}.
\newblock {J}ohn {W}iley {S}ons, New York, 1995.

\bibitem[M{\"u}l06]{Muller06}
Olaf M{\"u}ller.
\newblock {B}ounded {F}r\'echet geometry.
\newblock ar{X}iv.org:math/0612379, 2006.

\bibitem[M{\"u}l08]{Muller08}
Olaf M{\"u}ller.
\newblock A metric approach to {F}r\'echet geometry.
\newblock {\em J. Geom. Phys.}, 58(11):1477--1500, 2008.

\bibitem[Nee06]{Neeb06}
Karl-Hermann Neeb.
\newblock Towards a {L}ie theory of locally convex groups.
\newblock {\em Jpn. J. Math.}, 1(2):291--468, 2006.

\bibitem[NY02]{Novikov02}
Dmitry Novikov and Sergei Yakovenko.
\newblock {L}ectures on meromorphic flat connections.
\newblock {\em math.CA/0212334}, page~46, 2002.

\bibitem[Omo97]{Omori97}
Hideki Omori.
\newblock {\em Infinite dimensional {L}ie groups}.
\newblock American {M}atheamtical {S}ociety, Providence, 1997.
\newblock {T}ranslations of {M}athematical {M}onographs, Vol. 158.

\bibitem[Pal68]{Palais68}
Richard~S. Palais.
\newblock {\em {F}oundations of {G}lobal non-linear {A}nalysis}.
\newblock {W}. {A}. {B}enjamin, {I}nc., New York, 1968.

\bibitem[Pop05]{Popescu05}
Bogdan Popescu.
\newblock {\em Infinite dimensional symmetric spaces}.
\newblock Thesis. University of Augsburg, Augsburg, 2005.

\bibitem[Pop06]{Popescu06}
Bogdan Popescu.
\newblock On the geometry of infinite dimensional {L}ie groups.
\newblock {\em preprint}, 2006.

\bibitem[PS86]{PressleySegal86}
Andrew Pressley and Graeme Segal.
\newblock {\em Loop groups}.
\newblock Oxford Mathematical Monographs. The Clarendon Press Oxford University
  Press, New York, 1986.
\newblock Oxford Science Publications.

\bibitem[PT88]{PalaisTerng88}
Richard Palais and Chuu-Lian Terng.
\newblock {\em Critical {P}oint {T}heory and {S}ubmanifold {G}eometry}, volume
  1353 of {\em Lecture Notes in Mathematics}.
\newblock Springer Verlag, Berlin, 1988.

\bibitem[PT90]{PinkallThorbergsson90}
Ulrich Pinkall and Gudlaugur Thorbergsson.
\newblock Examples of infinite-dimensional isoparametric submanifolds.
\newblock {\em Math. Z.}, 205(2):279--286, 1990.

\bibitem[RC89a]{Rodriguez89b}
Enriqueta Rodriguez-Carrington.
\newblock {\em Kac-{M}oody groups: {A}n analytic approach}.
\newblock ProQuest LLC, Ann Arbor, MI, 1989.
\newblock Thesis (Ph.D.)--Rutgers The State University of New Jersey - New
  Brunswick.

\bibitem[RC89b]{Rodriguez89}
Enriqueta Rodr{\'{\i}}guez-Carrington.
\newblock Lie groups associated to {K}ac-{M}oody {L}ie algebras: an analytic
  approach.
\newblock In {\em Infinite-dimensional {L}ie algebras and groups
  ({L}uminy-{M}arseille, 1988)}, volume~7 of {\em Adv. Ser. Math. Phys.}, pages
  57--69. World Sci. Publ., Teaneck, NJ, 1989.

\bibitem[Ron03]{Ronan03}
Marc Ronan.
\newblock Affine {T}win {B}uildings.
\newblock {\em J. {L}ondon {M}ath. {S}oc (2)}, 68:461--476, 2003.

\bibitem[Sch80]{Schaefer80}
Helmut~H. Schaefer.
\newblock {\em Topological vector spaces. 4th corr. printing.}
\newblock Graduate Texts in Mathematics. Springer Verlag, New York, Heidelberg,
  Berlin, 1980.

\bibitem[Sut86]{Suto87}
Kiyokazu Suto.
\newblock Groups associated with compact type subalgebras of {K}ac-{M}oody
  algebras.
\newblock {\em Proc. Japan Acad. Ser. A Math. Sci.}, 62(10):392--395 (1987),
  1986.

\bibitem[Sut88a]{Suto88b}
Kiyokazu Suto.
\newblock Exponentials of certain completions of the unitary form of a
  {K}ac-{M}oody algebra.
\newblock {\em Proc. Japan Acad. Ser. A Math. Sci.}, 64(6):208--211, 1988.

\bibitem[Sut88b]{Suto88}
Kiyokazu Suto.
\newblock Groups associated with unitary forms of {K}ac-{M}oody algebras.
\newblock {\em J. Math. Soc. Japan}, 40(1):85--104, 1988.

\bibitem[Sut92]{Suto92}
Kiyokazu Suto.
\newblock Loop groups and their actions on the corresponding completed affine
  {L}ie algebras.
\newblock {\em J. Math. Kyoto Univ.}, 32(3):557--581, 1992.

\bibitem[SW85]{SegalWilson85}
Graeme Segal and George Wilson.
\newblock {L}oop groups and equations of {K}d{V} type.
\newblock {\em {P}ublications {M}athématiques de l'{IHÉS}}, 61(1):5--65,
  1985.

\bibitem[Ter89]{Terng89}
Chuu-Lian Terng.
\newblock Proper {F}redholm submanifolds of {H}ilbert space.
\newblock {\em J. Differential Geom.}, 29(1):9--47, 1989.

\bibitem[Ter91]{Terng91}
Chuu-Lian Terng.
\newblock Variational completeness and infinite-dimensional geometry.
\newblock In {\em Geometry and topology of submanifolds, {III} ({L}eeds,
  1990)}, pages 279--293. World Sci. Publ., River Edge, NJ, 1991.

\bibitem[Ter95]{Terng95}
Chuu-Lian Terng.
\newblock Polar actions on {H}ilbert space.
\newblock {\em J. of geom.}, 5(1):129--150, 1995.

\bibitem[Tho91]{Thorbergsson91}
Gudlaugur Thorbergsson.
\newblock Isoparametric foliations and their buildings.
\newblock {\em Ann. of Math.}, 133:429--446, 1991.

\bibitem[Tit84]{Tits84}
Jacques Tits.
\newblock {\em Groups and group functors attached to {K}ac-{M}oody data},
  volume 1111 of {\em Lecture notes in mathematics}, pages 193--233.
\newblock {S}pringer {V}erlag, 1984.

\bibitem[Tsv03]{Tsvelik95}
Alexander Tsvelik.
\newblock {\em Quantum Field Theory in Condended Matter Physics}.
\newblock Cambridge University Press, Cambridge, 2003.

\bibitem[Var84]{Varadarajan84}
Minoru Varadarajan.
\newblock {\em Lie Groups, Lie Algebras, and their Representation}, volume 102
  of {\em Graduate Texts in Mathematics}.
\newblock Springer Verlag, New York, 1984.

\end{thebibliography}

\end{document}